\documentclass[11pt, reqno]{amsart}

\usepackage[top=3.75cm, bottom=3cm, left=3cm, right=3cm]{geometry}
\usepackage{amsmath}
\usepackage{amsthm}
\usepackage{amsfonts}
\usepackage{amssymb}
\usepackage{mathtools}
\usepackage[mathscr]{euscript}
\usepackage{bm}
\usepackage[colorlinks=true, citecolor=darkblue, urlcolor=darkblue, linkcolor=darkblue, linktocpage = true]{hyperref}
\usepackage{color}
\usepackage[all,cmtip]{xy}
\usepackage{enumitem}
\usepackage{tikz}  
\usepackage{graphicx}
\usepackage{scalerel}
\usepackage{comment}
\usepackage{stmaryrd}
\usepackage{tikz-cd}

\usepackage[normalem]{ulem}

\definecolor{darkblue}{rgb}{0,0,.75}

\numberwithin{equation}{section}

\theoremstyle{plain}
\newtheorem{theorem}{Theorem}[section]
\newtheorem{lemma}[theorem]{Lemma}
\newtheorem{proposition}[theorem]{Proposition}
\newtheorem{corollary}[theorem]{Corollary}

\newtheorem{question}[theorem]{Question}

\theoremstyle{definition}
\newtheorem{definition}[theorem]{Definition}
\newtheorem{example}[theorem]{Example}
\newtheorem{remark}[theorem]{Remark}
\newtheorem{notation}[theorem]{Notation}

\newtheorem{setup}[theorem]{Setup}

\makeatletter
\newtheoremstyle{italicsname}% <name>
 {3pt}% <Space above>
 {3pt}% <Space below>
 {\itshape}% <Body font>
 {}% <Indent amount>
{\bf}% <Theorem head font>
 {.}% <Punctuation after theorem head>
 {.5em}% <Space after theorem heading>
 {\thmname{#1}\thmnumber{\@ifnotempty{#1}{ }#2}%
 \thmnote{ {\the\thm@notefont(#3)}}}% <Theorem head spec (can be left empty, meaning `normal')>
\makeatother
\theoremstyle{italicsname}

\setlist[itemize]{topsep=5pt,itemsep=3pt}
\setlist[enumerate]{topsep=5pt,itemsep=3pt}
\newcommand{\sth}{~\vline~}
\newcommand{\set}[1]{\left\{ \, #1 \, \right\}}

\newcommand{\cin}[1]{{\tau}_{#1}^{-}}
\newcommand{\ccin}[1]{{\tau}_{#1}^{+}}

\newcommand{\ol}{\overline}

\newcommand{\Db}{\mathrm{D^b}}
\newcommand{\Dm}{\mathrm{D^-}}
\newcommand{\Dperf}{\mathrm{D}_{\mathrm{perf}}}
\newcommand{\Dqc}{\mathrm{D}_{\mathrm{qc}}}

\newcommand{\BN}{\mathscr{BN}}

\newcommand{\emphsf}[1]{{\sf #1}}

\DeclareMathOperator{\colim}{colim}

\newcommand{\cB}{\mathscr{B}}

\newcommand{\op}{\mathrm{op}}

\newcommand{\kk}{\Bbbk}

\DeclareMathOperator{\cRHom}{\mathrm{R}\mathcal{H}\!{\it om}}
\DeclareMathOperator{\RHom}{\mathrm{RHom}}

\DeclareMathOperator{\Hom}{Hom}

\DeclareMathOperator{\Ext}{Ext}

\DeclareMathOperator{\HH}{HH}

\DeclareMathOperator{\Vect}{Vect}
\newcommand{\fd}{\mathrm{fd}}

\newcommand{\id}{\mathrm{id}}
\newcommand{\pr}{\mathrm{pr}}

\newcommand{\Coh}{\mathrm{Coh}}

\DeclareMathOperator\coker{coker}
\DeclareMathOperator\im{im}

\newcommand{\bF}{\mathbf{F}}

\newcommand{\ev}{\mathrm{ev}}

\DeclareMathOperator{\perv}{perv}

\DeclareMathOperator\Rep{Rep}

\DeclareMathOperator{\cone}{cone}

\newcommand{\cO}{\mathcal{O}}
\newcommand{\cA}{\mathscr{A}}
\newcommand{\cC}{\mathscr{C}}
\newcommand{\cD}{\mathscr{D}}

\newcommand{\cF}{\mathscr{F}}

\newcommand{\cH}{\mathcal{H}}

\newcommand{\cP}{\mathcal{P}}

\newcommand{\cR}{\mathcal{R}}

\newcommand{\cT}{\mathscr{T}}

\newcommand{\rD}{\mathrm{D}}
\newcommand{\rE}{\mathrm{E}}

\newcommand{\rH}{\mathrm{H}}

\newcommand{\rK}{\mathrm{K}}
\newcommand{\rS}{\mathrm{S}}

\newcommand{\fa}{\mathfrak{a}}

\newcommand{\bC}{\mathbf{C}}

\newcommand{\bZ}{\mathbf{Z}}
\newcommand{\bP}{\mathbf{P}}
\newcommand{\bQ}{\mathbf{Q}}
\newcommand{\bR}{\mathbf{R}}

\newcommand{\hcD}{\widehat{\cD}}
\newcommand{\hcC}{\widehat{\cC}}
\newcommand{\htau}{\widehat\tau}

\newcommand{\hgamma}{\widehat{\gamma}}

\DeclareMathOperator{\ext}{ext}

\usepackage{xpatch}
\makeatletter   
\xpatchcmd{\@tocline}
{\hfil\hbox to\@pnumwidth{\@tocpagenum{#7}}\par}
{\ifnum#1<0\hfill\else\dotfill\fi\hbox to\@pnumwidth{\@tocpagenum{#7}}\par}
{}{}
\makeatother 

\makeatletter
\renewcommand\part{%
   \if@noskipsec \leavevmode \fi
   \par
   \addvspace{4ex}%
   \@afterindentfalse
   \secdef\@part\@spart}

\def\@part[#1]#2{%
    \ifnum \c@secnumdepth >\m@ne
      \refstepcounter{part}%
      \addcontentsline{toc}{part}{Part \thepart.\hspace{1em}#1}%
    \else
      \addcontentsline{toc}{part}{#1}%
    \fi
    {\parindent \z@ \raggedright
     \interlinepenalty \@M
     \normalfont
     \ifnum \c@secnumdepth >\m@ne
     \centering 
     \Large\bfseries \partname\nobreakspace\thepart     
       \nobreak. 
     \fi
     \Large \bfseries { #2}%
     \par}%
    \nobreak
    \vskip 3ex
    \@afterheading}
\def\@spart#1{%
    {\parindent \z@ \raggedright
     \interlinepenalty \@M
     \normalfont
     \huge \bfseries #1\par}%
     \nobreak
     \vskip 3ex
     \@afterheading}
\makeatother

\renewcommand{\thepart}{\Roman{part}}

\makeatletter 
\def\l@subsection{\@tocline{2}{0pt}{3pc}{6pc}{}} 
\makeatother

\begin{document}

\title{Inducing t-structures on semiorthogonal components} 

\author{Alexander Kuznetsov} 
\address{{\sloppy
\parbox{0.9\textwidth}{
Algebraic Geometry Section, Steklov Mathematical Institute of Russian Academy of Sciences,
8 Gubkin str., Moscow 119991 Russia
\\[5pt]
Laboratory of Algebraic Geometry, National Research University Higher School of Economics, Russian Federation
}\medskip}}
\email{akuznet@mi-ras.ru}

\author{Shengxuan Liu}
\address{Department of Mathematics, University of Michigan, Ann Arbor, MI 48109 \smallskip}
\email{liusx@umich.edu}

\author{Alexander Perry}
\address{Department of Mathematics, University of Michigan, Ann Arbor, MI 48109 \smallskip}
\email{arper@umich.edu}

\begin{abstract}
Given a triangulated category with a t-structure, we introduce a method for inducing t-structures on its semiorthogonal components, 
based on the construction of an associated perverse t-structure on the ambient category. 
As applications, we construct bounded t-structures in many new examples, including:   
almost all known phantom and quasiphantom categories;  
the semiorthogonal complement of the structure sheaf on a Fano variety; 
the residual component of an Enriques surface;
the categorical resolution of a nodal cubic curve appearing in an early counterexample to the Jordan--H\"{o}lder property for semiorthogonal decompositions; 
and Brill--Noether modifications of the derived category of a curve. 
\end{abstract} 

\maketitle

\setcounter{tocdepth}{1}
\tableofcontents

%%%%%%%%%%%%%%%%%%%%%%%%%%%%%%%%%%%%%%%%%%%%%%%%%%%%%%%

\section{Introduction}

The bounded derived category of coherent sheaves~$\Db(X)$ on a smooth proper variety~$X$
plays a central role in modern algebraic geometry. 
One of the most important methods for analyzing this category 
is to break it into smaller pieces, 
via the notion of a semiorthogonal decomposition. 
A triangulated category~$\cC$ appearing as a component in such a decomposition can fruitfully be regarded as a noncommutative algebraic variety.  
Indeed, even though $\cC$ does not possess an underlying space of points, 
it exhibits many of the structures of (the derived category of) an actual variety. For instance: 
\begin{itemize}
\item $\cC$ admits a Serre functor, which plays the role of the canonical bundle in Serre duality; 
\item $\cC$ carries intrinsic motivic invariants, such as K-theory and Hochschild homology, generalizing the corresponding invariants of varieties; and 
\item $\cC$ has an associated moduli stack which parameterizes its objects. 
\end{itemize} 

A fundamental feature of the category $\Db(X)$ is that it admits a bounded t-structure,
which formalizes the fact that each of its objects can be built up via extensions and shifts
from finitely many objects in the abelian category $\Coh(X)$ of coherent sheaves.
This paper is motivated by the question of whether semiorthogonal components also exhibit this structure. 

\begin{question}
\label{question-soc-t-structure}
    If~$\cC \subset \Db(X)$ is a semiorthogonal component of the derived category of a smooth proper variety~$X$,
    then does~$\cC$ admit a bounded t-structure?
\end{question}

Question~\ref{question-soc-t-structure} is wide open. 
To our knowledge, no negative results are known, while positive results were previously only known
for some very special examples arising as residual components of low-dimensional Fano varieties~\cite{BLMS, GM-stability, peize}. 

The purpose of this paper is to provide a new approach to the problem.
Given a triangulated category~$\cD$, such as~$\Db(X)$, 
our main result provides a criterion   
for inducing a t-structure on a semiorthogonal component of~$\cD$
from a t-structure on~$\cD$. 
This leads to the construction of bounded t-structures on many interesting semiorthogonal components,
including almost all known phantom and quasiphantom categories.

An important motivation for Question~\ref{question-soc-t-structure}
is Bridgeland's theory of stability conditions on triangulated categories~\cite{bridgeland-stability}.
The handful of residual components of Fano varieties where stability conditions are known to exist have led to many applications,
so it is natural to search for a construction that applies to semiorthogonal components more widely;
since any stability condition comes with an underlying bounded t-structure,
a positive answer to Question~\ref{question-soc-t-structure} would provide a first step in this direction.
In a sequel to this work, we will build on our methods to provide criteria for
inducing stability conditions on semiorthogonal components.

\subsection{Induced t-structures} 
\label{section-intro-main-result} 
Recall that a t-structure~$\tau = (\cD^{\leq 0},\cD^{\geq 0})$ on a triangulated category~$\cD$
consists of a pair of subcategories~$\cD^{\leq 0}$ and $\cD^{\ge 0}$,
called the \emphsf{connective} and \emphsf{coconnective parts} of~$\tau$, satisfying three axioms,
which can informally be described as \emphsf{compatibility with shifts}, \emphsf{$\Hom$-orthogonality},
and the \emphsf{generation property} (see Definition~\ref{def:t-str}). 

Let~$\cC \subset \cD$ be a triangulated subcategory, with embedding functor~$\gamma \colon \cC \to \cD$.
There are at least two naive ways to attempt to construct a t-structure on~$\cC$ from the t-structure~$\tau$ on~$\cD$.
For the first, consider the following pair of subcategories of~$\cC$:
\begin{equation}
\label{eq:t-restricted}
\gamma^{-1}(\tau) \coloneqq (\gamma^{-1}(\cD^{\le 0}), \gamma^{-1}(\cD^{\ge 0})), 
\end{equation}
where~$\gamma^{-1}(\cD^{\leq 0}) \subset \cC$ denotes the subcategory of objects~$C \in \cC$ with~$\gamma(C) \in \cD^{\leq 0}$,
and similarly for~$\gamma^{-1}(\cD^{\ge 0})$.
This pair always satisfies compatibility with shifts and $\Hom$-orthogonality, but the generation property may fail. 
If, however, the generation property holds, then we say that~\emphsf{$\tau$ restricts to~$\cC$}
and call~$\gamma^{-1}(\tau)$ (or~$\tau\vert_\cC$ when we wish to suppress the functor~$\gamma$) the \emphsf{restricted t-structure}.

For the second construction, assume that~$\cC \subset \cD$ is right admissible,
i.e., that~$\gamma$ admits a right adjoint~$\gamma^! \colon \cD \to \cC$;
there is a parallel story for left admissible subcategories.
Then we may also consider the following pair of subcategories of $\cC$: 
\begin{equation}
\label{eq:t-projected}
\gamma^!(\tau) \coloneqq (\gamma^!(\cD^{\le 0}), \gamma^!(\cD^{\ge 0})), 
\end{equation}
where $\gamma^!(\cD^{\leq 0}) \subset \cC$ denotes the full subcategory of objects of the form $\gamma^!(D)$ for $D \in \cD^{\leq 0}$, and similarly for $\gamma^!(\cD^{\ge 0})$. 
This pair always satisfies compatibility with shifts and the generation property,
while $\Hom$-orthogonality may fail. 
If, however, $\Hom$-orthogonality holds, we say that~\emphsf{$\tau$ projects to~$\cC$ along~$\gamma^!$}
or \emphsf{right projects to~$\cC$}, 
and call~$\gamma^!(\tau)$ the \emphsf{projected t-structure}.

Note that the isomorphism of functors~$\gamma^! \circ \gamma \cong \id_\cC$
implies an inclusion~$\gamma^{-1}(\cD^{\le 0}) \subset \gamma^!(\cD^{\le 0})$,
i.e. the connective part of~\eqref{eq:t-restricted} is always contained in the connective part of~\eqref{eq:t-projected}.
On the other hand, if~\eqref{eq:t-projected} is a t-structure,
it is easy to see that~$\gamma^{-1}(\cD^{\le 0}) = \gamma^!(\cD^{\le 0})$ (see Lemma~\ref{lemma-projection-tau-cB}\ref{beta*t-geq0}),
i.e. the connective parts of~\eqref{eq:t-restricted} and~\eqref{eq:t-projected} coincide.
Thus, \eqref{eq:t-restricted} and~\eqref{eq:t-projected} can be considered as two explicit attempts
to find a t-structure on~$\cC$ with connective part
\begin{equation}
\label{eq:ccm-intro}
\cC^{\le 0} = \gamma^{-1}(\cD^{\le 0}).
\end{equation}
From this point of view, it is natural to ask if there is any t-structure on~$\cC$
with connective part~\eqref{eq:ccm-intro}.
If one exists, then it is unique, with coconnective part given
by the shifted right $\Hom$-orthogonal complement of~\eqref{eq:ccm-intro}, i.e.
\begin{equation}
\label{eq:t-induced}
{\cC^{\ge 0} = {}}
\set{ C \in \cC \sth \Hom(C'{[1]},C) = 0 \text{ \textup{for all} } C' \in {\cC^{\le 0}} }.
\end{equation} 
When {the pair~$(\cC^{\le 0},\cC^{\ge 0})$ defined by~\eqref{eq:ccm-intro} and~\eqref{eq:t-induced}}
determines a t-structure, we say that~\emphsf{$\tau$ connectively induces a t-structure on~$\cC$};
the resulting t-structure on $\cC$ is denoted by $\cin{\cC}$ and called the \emphsf{connectively induced t-structure}. 

We similarly define what it means for~$\tau$ to \emphsf{coconnectively induce a t-structure on~$\cC$}.
When this holds, the \emphsf{coconnectively induced t-structure~$\ccin{\cC}$ on~$\cC$}
has coconnective part~${\cC^{\ge 0} = {}} \gamma^{-1}(\cD^{\ge 0})$
and connective part~{$\cC^{\le 0}$} given by its shifted left $\Hom$-orthogonal complement.

\begin{example}
\label{example-rational-singularities}
Let~$f \colon X \to Y$ be a proper surjective morphism of smooth varieties
such that~$f_* \cO_X \simeq \cO_Y$, where $f_*$ denotes the derived pushforward.
In this case, the derived pullback functor~$f^* \colon \Db(Y) \to \Db(X)$
between the bounded derived categories of coherent sheaves is fully faithful,
and there is a semiorthogonal decomposition
\begin{equation}
\label{intro-DmX-DmY}
\Db(X) = \langle \cB, f^*\Db(Y) \rangle ,
\end{equation}
where explicitly $\cB = \ker(f_* \colon \Db(X) \to \Db(Y))$.
In these terms, the standard t-structures on~$\Db(X)$ and $\Db(Y)$ are related by the formula
\begin{equation*}
\Db(Y)^{\leq 0} = \set{ F \in \Db(Y) \sth f^*F \in \Db(X)^{\leq 0} }.
\end{equation*}
Thus, the standard t-structure on~$\Db(Y)$ is connectively induced by the standard t-structure on~$\Db(X)$.
On the other hand, this t-structure is generally neither restricted (unless~$f^*$ is left t-exact, i.e. $f$ is flat),
nor projected (unless~$f_*$ is right t-exact).
\end{example}

In our main theorem below, we require that~$\tau = (\cD^{\leq 0},\cD^{\geq 0})$ is noetherian or artinian,
meaning that its heart --- the abelian category given by~$\cD^{\heartsuit} = \cD^{\leq 0} \cap \cD^{\geq 0}$ ---
is noetherian or artinian. 
Noetherianity is a finiteness condition which is satisfied by many examples,
like the standard t-structure on~$\Db(X)$ in Example~\ref{example-rational-singularities}
(whose heart is the category~$\Coh(X)$ of coherent sheaves); 
dually, artinianity is satisfied by the t-structure on the opposite category in such examples
(see Remark~\ref{remark-opposite-t-structure} and Example~\ref{example-dual-standard-t-structure}).
For some special examples, like the standard t-structure on the bounded derived category of finite-dimensional representations of a quiver,
or more general categories of finite length,
both conditions are simultaneously satisfied.

\begin{theorem}
\label{main-theorem}
Let $\cD = \langle \cB, \cC \rangle$ be a semiorthogonal decomposition of a triangulated category. 
Let~$\tau = (\cD^{\leq 0}, \cD^{\geq 0})$ be a t-structure on~$\cD$.
\begin{enumerate}[label={\textup{(\alph*)}}]
\item
\label{main-theorem-induce-C}
If~$\tau$ is noetherian and restricts to a t-structure on~$\cB$,
then~$\tau$ {connectively} induces a t-structure~{$\cin{\cC}$} on~$\cC$, which is bounded if~$\tau$ is bounded.
\item
\label{main-theorem-induce-B}
If~$\tau$ is artinian and restricts to a t-structure on~$\cC$,
then~$\tau$ {coconnectively} induces a t-structure~{$\ccin{\cB}$} on~$\cB$,
which is bounded if $\tau$ is bounded.
\end{enumerate} 
\end{theorem}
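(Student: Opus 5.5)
The plan is to prove part (a) and deduce (b) by passing to opposite categories. Passing to $\cD^{\op}$ turns $\langle \cB,\cC\rangle$ into $\langle \cC^{\op},\cB^{\op}\rangle$, swaps connective and coconnective parts, and exchanges noetherian with artinian hearts.

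For (a), the abstract says we go through a perverse t-structure on $\cD$. Let $\tau_\cB = (\cB^{\le 0},\cB^{\ge 0})$ be the t-structure restricted from $\tau$. Since $\cD=\langle\cB,\cC\rangle$, we have $\cB$ right admissible (with $\beta^!$) and $\cC$ left admissible (with $\gamma^*$). I would define a new pair on $\cD$ by
\[
{}^p\cD^{\le 0} = \set{ D \in \cD \sth \gamma^*D \in \cC^{\le 0} } \ \text{(or the extension-closure of } \cD^{\le 0} \text{ and } \cB^{\le -1}\text{)},\qquad {}^p\cD^{\ge 0}=({}^p\cD^{\le -1})^{\perp}.
\]
The guiding idea is that this is a "gluing" of the shifted t-structure $\tau_\cB[1]$ on $\cB$ with the desired t-structure on $\cC$. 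More concretely, I would take ${}^p\cD^{\le 0}$ to be the smallest subcategory closed under extensions and direct summands that contains $\cD^{\le 0}$ and $\cB^{\le 0}[1]$, i.e.\ a tilt of $\tau$ in the $\cB$-direction. The steps are then:

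1. Show that this pair is a t-structure. The key claim is that for each $D$ there is a truncation triangle. Start from $\tau_{\le 0}D$ and repeatedly kill the part of $\cB$ that obstructs orthogonality, using $\beta^!$ and the truncations of $\tau_\cB$.

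2. Show that ${}^p\tau$ restricts to $\cB$, giving $\tau_\cB[1]$, and that its connective part is right-orthogonal-compatible so that it descends to $\cC$. Concretely, $\gamma^*({}^p\cD^{\le 0})$ and the objects of $\cC = \cB^{\perp}$ lying in ${}^p\cD^{\ge 0}$ form a t-structure on $\cC$. This follows by the standard gluing/recollement argument: for a t-structure on $\cD$ that restricts to $\cB$ and for which $\cB^{\perp}$ is preserved by truncations, one gets induced t-structures on the quotient $\cC\simeq\cD/\cB$.

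3. Identify the connective part with $\gamma^{-1}(\cD^{\le 0})$, where $\gamma$ is the embedding of $\cC$. For $C\in\cC$ we have $\Hom(\cB,C)=0$. One direction is formal; for the other, an object $C$ whose image lies in the extension-closure of $\cD^{\le 0}$ and $\cB^{\le 0}[1]$ must already lie in $\cD^{\le 0}$. This is checked by applying $\tau$-cohomology and using that $\cB^{\le 0}[1]$ only contributes to $H^{\le -1}$.

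The main obstacle is step 1: the existence of the truncation triangles. This is exactly where noetherianity enters. The iterative procedure moves subobjects of heart objects into the $\cB$-part, and we need it to terminate. I would first try to consider, for $D$ in the heart, the maximal subobject of $H^0(D)$ lying in $\cB^{\heartsuit}$, or more precisely the sum of images of maps from $\cB^{\le 0}$-objects. Noetherianity ensures that ascending chains of such images stabilise, so a maximal one exists. Then dévissage over the finitely many cohomology objects handles bounded $D$ (and one uses the truncations of $\tau$ for the general case).

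Boundedness of $\cin{\cC}$ when $\tau$ is bounded comes from the sandwich $\cD^{\le a}\subset{}^p\cD^{\le a}\subset\cD^{\le a+1}$, together with the fact that $\gamma^*$ preserves this.
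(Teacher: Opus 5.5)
Your reduction of (b) to (a) via $\cD^{\op}=\langle\cC^{\op},\cB^{\op}\rangle$ is fine. The proof of (a), however, has a genuine gap: the perverse t-structure you propose is not the right one, and as written it is not a new t-structure at all.

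Since $\tau$ restricts to $\cB$, we have $\cB^{\le 0}[1]=\cB^{\le -1}\subset\cD^{\le -1}\subset\cD^{\le 0}$. So the extension- and summand-closure of $\cD^{\le 0}$ and $\cB^{\le 0}[1]$ is just $\cD^{\le 0}$, and you get $\tau$ back. This t-structure restricts to $\cB$ with aisle $\cB^{\le 0}$, not $\tau_\cB[1]$. It right projects to $\cC$ only if $\cB^\heartsuit$ is a Serre subcategory of $\cD^\heartsuit$ (Theorem~\ref{theorem-projected-t-structures}). That already fails for the blowup $f\colon X\to Y$ of a surface at a point: there $\cB=\langle\cO_E(-1)\rangle$, and $\cO_E(-1)\twoheadrightarrow\cO_x$ for a point $x\in E$.

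If you instead meant to enlarge the aisle by $\cB^{\le 1}$, as your sandwich $\cD^{\le a}\subset{}^p\cD^{\le a}\subset\cD^{\le a+1}$ suggests, then Step~3 fails. The object $\gamma\gamma^!D$ is an extension of $D$ by $\beta\beta^*D[-1]\in\cB^{\le 1}$. So the new aisle meets $\cC$ in a class containing $\gamma\gamma^!(\cD^{\le 0})$, whose objects in general lie only in $\cD^{\le 1}$. You would be aiming at the projected t-structure, which generally does not exist. The tilt must go the other way: $\cD^{\le -1}\subset{}^p\cD^{\le 0}\subset\cD^{\le 0}$.

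Your Step~1 recipe (maximal $\cB^\heartsuit$-subobject, or sum of images from $\cB^{\le 0}$) is the torsion part for the class generated by $\cB^\heartsuit$ under quotients and extensions. That keeps $\cB^\heartsuit$ in degree $0$, contradicting your claimed restriction $\tau_\cB[1]$. It is also too small: in the blowup example the tilted aisle excludes $\cO_X\in\cC\cap\cD^{\le 0}$, since everything in that class is supported on $E$.

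There are further slips. With $\cD=\langle\cB,\cC\rangle$, i.e.\ $\Hom(\cC,\cB)=0$, we have $\cC={}^\perp\cB$ and the available adjoints are $\gamma^!$ and $\beta^*$. So $\gamma^*$ need not exist, and your formula $\set{D\sth \gamma^*D\in\cC^{\le 0}}$ presupposes the t-structure being constructed. The criterion for descending to $\cC$ is right t-exactness of $\gamma\gamma^!$ (Lemma~\ref{lemma-project-B-iff-lte}). It is not preservation of $\cC$ by truncations, which would mean restricting to $\cC$ and is typically false here.

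The missing idea is Lemma~\ref{lemma-restrict-t-structure-vs-projection-amplitude}: restriction of $\tau$ to $\cB$ is equivalent to $\gamma\gamma^!$ having right t-amplitude $\le 1$. The paper tilts at $\cT=\set{T\in\cD^\heartsuit\sth \gamma\gamma^!T\in\cD^{\le 0}}$ and argues as follows:
\begin{enumerate}
\item Amplitude $\le 1$ shows $\cT$ is closed under quotients and extensions.
\item Noetherianity then gives a torsion pair via maximal torsion subobjects. The HRS tilt supplies the truncation triangles, so no d\'evissage is needed.
\item Amplitude $\le 1$ again shows the tilted aisle is exactly $\set{D\in\cD^{\le 0}\sth \gamma\gamma^!D\in\cD^{\le 0}}$, since only $\cH^1(\gamma\gamma^!D)$ can obstruct and it depends only on $\cH^0(D)$.
\item Idempotency of $\gamma\gamma^!$ makes it right t-exact for the tilt, so the tilt right projects to $\cC$.
\item The projected connective part is $\cD^{\le 0}\cap\cC$ because $\gamma\gamma^!\gamma\cong\gamma$.
\item Boundedness is preserved by both tilting and projection.
\end{enumerate}

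Your heuristic of ``gluing with $\tau_\cB[1]$'' can in fact be realized; it is the paper's $\tau^{\perv}_{-1}$. But $\cB^\heartsuit$ must sit in the torsion-\emph{free} class. Tilt at $\cT'=\set{T\in\cD^\heartsuit\sth \Hom(T,\cB^\heartsuit)=0}$, which is obviously closed under quotients and extensions. The tilted aisle is $\set{D\in\cD^{\le 0}\sth \beta^*D\in\cB^{\le -1}}$. The triangle $\beta\beta^*D[-1]\to\gamma\gamma^!D\to D$ shows $\gamma\gamma^!$ preserves this aisle. So this tilt also projects to the connectively induced t-structure.
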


To prove Theorem~\ref{main-theorem} we modify the t-structure~$\tau$ on~$\cD$
in such a way that it projects to~$\cC$ or~$\cB$ and gives the required induced t-structure. 
For instance, in the situation of Theorem~\ref{main-theorem}\ref{main-theorem-induce-C}
we observe that a t-structure on~$\cD$ right projects to $\cC$ if and only if
the right projection functor~$\gamma \circ \gamma^! \colon \cD \to \cD$ onto~$\cC$
is right t-exact (Theorem~\ref{theorem-projected-t-structures}).
This need not hold for~$\tau$, but we show that the assumption that~$\tau$ restricts to~$\cB$ implies that it almost does:
$\gamma \circ \gamma^!$ has right t-amplitude~$\leq 1$,
i.e., $\gamma\gamma^!(\cD^{\leq 0}) \subset \cD^{\leq 1}$ (Lemma~\ref{lemma-restrict-t-structure-vs-projection-amplitude}).
Then, using the noetherianity of~$\tau$, we construct by tilting
a new t-structure~$\cin{\gamma \gamma^!} = \Big({}^{\cin{\gamma\gamma^!}}\cD^{\leq 0}, {}^{\cin{\gamma\gamma^!}}\cD^{\geq 0} \Big)$ on~$\cD$
characterized by
\begin{equation}
\label{taugammagamma!}
{}^{\cin{\gamma\gamma^!}}\cD^{\leq 0} = \set{ D \in \cD^{\leq 0} \sth \gamma\gamma^!(D) \in \cD^{\leq 0} } .
\end{equation}
By construction~$\gamma \circ \gamma^!$ is right t-exact with respect to~$\cin{\gamma \gamma^!}$,
so~$\cin{\gamma\gamma^!}$ right projects to a t-structure on~$\cC$,
which we verify satisfies the characterizing property of the induced t-structure on~$\cC$.

\begin{remark}
\label{remark-computing-tauC}
A priori, even if the induced t-structure~{$\cin{\cC}$} exists, it may not be ``computable'';
for instance, it may be difficult to describe explicit objects in its heart,
because the coconnective part~$\cC^{\geq 0}$ is given by the indirect formula~\eqref{eq:t-induced}. 
Our description of $\cin{\cC}$ as the projection of~$\cin{\gamma \gamma^!}$ helps remedy this,
since it allows us to make computations with the given t-structure~$\tau$
and then project to~$\cC$ (see Lemma~\ref{lemma-object-in-heart-tauB}).
As an illustration, in Example~\ref{example-mattoo-in-heart} we describe interesting objects
in the heart of an induced t-structure on a phantom category.
\end{remark} 

\begin{example}[Perverse t-structures]
\label{example-perverse-t-structures} 
In the situation of Example~\ref{example-rational-singularities},
suppose that the fibers of~$f \colon X \to Y$ have dimension~$\leq 1$.
Then, as observed by Bridgeland~\cite{bridgeland-flops},
the standard t-structure on $\Db(X)$ restricts to the component $\cB$ in~\eqref{intro-DmX-DmY} 
(see Example~\ref{example-bridgeland}, which also treats the case where $X$ and $Y$ are not necessarily smooth).  
Thus, the assumptions of Theorem~\ref{main-theorem}\ref{main-theorem-induce-C} are satisfied.
By the discussion in Example~\ref{example-rational-singularities},
the resulting connectively induced t-structure on~$\Db(Y)$ is nothing but the standard t-structure.
However, the intermediate t-structure~$\cin{\gamma\gamma^!}$ on~$\cD = \Db(X)$ appearing in our construction turns out to be very interesting:
it coincides with one of Bridgeland's perverse t-structures.
In~\S\ref{section-perverse-t-structures} we generalize Bridgeland's theory of perverse coherent sheaves
to the abstract setting of Theorem~\ref{main-theorem},
with the induced t-structure~$\cin{\cC}$ playing the role of the standard t-structure on~$\Db(Y)$.
\end{example} 

\begin{remark}
In Appendix~\ref{sec:ind}, we present a second independent proof of Theorem~\ref{main-theorem}\ref{main-theorem-induce-C}
under a mild additional assumption (automatically satisfied for instance if~$\cD$ is enhanced),
based on t-structures generated by a collection of compact objects in a cocomplete category.
\end{remark}

Our proof of Theorem~\ref{main-theorem}\ref{main-theorem-induce-C}
fits into a broader method for inducing t-structures, developed in~\S\ref{subsection-t-exactability},
which gives a potential approach to Question~\ref{question-soc-t-structure} in general.   
Given any semiorthogonal decomposition~$\cD = \langle \cB, \cC \rangle$ and a t-structure~$\tau$ on $\cD$,
we say that the projection functor~$\gamma \circ \gamma^!$ is \emphsf{right t-exactable with respect to~$\tau$}
if there exists a (uniquely determined) t-structure~$\cin{\gamma \gamma^!}$
with connective part given by the formula~\eqref{taugammagamma!};
in this case, motivated by Example~\ref{example-perverse-t-structures},
we call~$\cin{\gamma \gamma^!}$ the~\emphsf{$\gamma\gamma^!$-perverse t-structure}.
Whenever this holds, it follows that~$\tau$ connectively induces a t-structure on~$\cC$,
given by the right projection of~$\cin{\gamma \gamma^!}$ (Theorem~\ref{theorem-t-exactable-induced-tauC}).

Moreover, we propose a general method for proving right t-exactability of the functor~\mbox{$\gamma \circ \gamma^!$},
provided that it has bounded right t-amplitude with respect to~$\tau$,
which for instance automatically holds when~$\cD = \Db(X)$
for a smooth proper variety~$X$ (Remark~\ref{remark-boundedness-functors-spc}).
Namely, we introduce a candidate torsion pair in~$\cD^{\heartsuit}$
such that~$\gamma \circ \gamma^!$ has right t-amplitude one less
with respect to the corresponding tilted t-structure~{$\tau^-$} (Proposition~\ref{proposition-tauPhib}).
An important caveat is that we only know how to prove our candidate torsion pair
is an actual torsion pair when~$\tau$ is noetherian;
moreover, even if~$\tau$ is noetherian, the tilt~{$\tau^-$} need not be {(Remark~\ref{remark-noetherian-phantom}), 
so we cannot blindly iterate this construction.
However, whenever one can show that our tilting construction can be iterated
to reduce the right t-amplitude of~$\gamma \circ \gamma^!$ to zero,
the existence of the perverse t-structure~$\cin{\gamma \gamma^!}$ follows (Remark~\ref{remark-t-exactable-tilting}).
Theorem~\ref{main-theorem}\ref{main-theorem-induce-C}
is simply the case where only one tilt is required, so no iteration is needed.

In order to effectively apply Theorem~\ref{main-theorem},
we develop some criteria for restricting t-structures to a triangulated subcategory $\cB \subset \cD$
in the presence of a semiorthogonal decomposition of~$\cB$ or a nice collection of generators.
This leads to the following convenient version of Theorem~\ref{main-theorem}\ref{main-theorem-induce-C}
{(see also Theorem~\ref{theorem-induce-via-simple-objects})}.

\begin{theorem}
\label{theorem-induce-via-exceptional-sequence}
Let~$\cD$ be a $\kk$-linear triangulated category for a field~$\kk$.
Let~$\tau = (\cD^{\leq 0}, \cD^{\geq 0})$ be a noetherian t-structure on~$\cD$.
Let~$E_1, \dots, E_n \in \cD^{\heartsuit}$ be an exceptional sequence of objects in the heart of~$\tau$
which satisfies the forward $\Hom$-vanishing condition:
\begin{equation}
\label{eq:hom-vanishing-intro}
\Hom(E_i, E_j) = 0
\quad
\text{for all $i < j$}.
\end{equation}
Let
\begin{equation*}
\cD = \langle E_1, E_2, \dots, E_n, \cC \rangle
\end{equation*}
be the corresponding semiorthogonal decomposition of~$\cD$.
Then~$\tau$ connectively induces a t-structure~$\cin{\cC}$ on~$\cC$, which is bounded if~$\tau$ is bounded.
\end{theorem}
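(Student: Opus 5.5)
The plan is to deduce this from Theorem~\ref{main-theorem}\ref{main-theorem-induce-C}, applied to the semiorthogonal decomposition $\cD = \langle \cB, \cC \rangle$ with $\cB = \langle E_1, \dots, E_n \rangle$. Since $\tau$ is noetherian by assumption, the whole task is to show that $\tau$ restricts to a t-structure on $\cB$. The conclusion about boundedness then comes directly from Theorem~\ref{main-theorem}\ref{main-theorem-induce-C}.

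To prove the restriction, I will show the following: for every $B \in \cB$, the cohomology objects $\rH^k_\tau(B)$ with respect to $\tau$ lie in the extension closure $\cA \subset \cD^\heartsuit$ of $E_1, \dots, E_n$. Granting this, every truncation $\tau^{\le k}B$ is an iterated extension of shifts of objects in $\cA \subset \cB$, hence lies in $\cB$. This gives the generation property for $(\gamma^{-1}(\cD^{\le 0}), \gamma^{-1}(\cD^{\ge 0}))$ on $\cB$, which is exactly what restriction means.

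The key structural claim is that $\cA$ is an abelian subcategory of $\cD^\heartsuit$ closed under kernels, cokernels and extensions. I will argue it as follows.
\begin{enumerate}
\item Each $E_i$ is exceptional in the heart, so $\operatorname{End}(E_i) = \kk$ and $\Ext^{<0}(E_i, E_i) = 0$.
\item By the forward vanishing~\eqref{eq:hom-vanishing-intro} together with exceptionality, $\Hom(E_i, E_j) = 0$ for all $i \neq j$. For $i > j$ this is semiorthogonality.
\item Each $E_i$ is therefore a brick with no morphisms to the others. I then show by induction on length that every object of $\cA$ has a filtration with subquotients among the $E_i$.
\item Any morphism $E_i \to A$ with $A \in \cA$ is either zero or a monomorphism with cokernel in $\cA$. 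The argument is a Schur-type induction: compose with the filtration of $A$ and use that a nonzero morphism $E_i \to E_j$ is an isomorphism.
\end{enumerate}
From step~4 one deduces that $\cA$ is a finite length abelian category with simple objects $E_1, \dots, E_n$, closed under kernels and cokernels in $\cD^\heartsuit$. This is the standard statement that the extension closure of a collection of pairwise Hom-orthogonal bricks is a wide, finite length subcategory. I expect this step to be the main technical point, and I would prove it carefully or cite it.

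The cohomology claim is then proved by induction along the semiorthogonal filtration. Every $B \in \cB$ is an iterated cone of objects $E_i \otimes V_i^\bullet$ for graded vector spaces $V_i^\bullet$. More precisely, $B$ has a filtration whose factors are finite direct sums of shifts of $E_i$, using that each $\langle E_i \rangle \simeq \Db(\kk)$. Each factor has cohomology in $\cA$. For a distinguished triangle $B' \to B \to B''$ whose outer terms have cohomology in $\cA$, the long exact cohomology sequence in $\cD^\heartsuit$ presents $\rH^k(B)$ as an extension of a kernel of a map in $\cA$ by a cokernel of a map in $\cA$. By the closure properties established above, $\rH^k(B) \in \cA$.

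This completes the verification that $\tau$ restricts to $\cB$. Theorem~\ref{main-theorem}\ref{main-theorem-induce-C} then yields the connectively induced t-structure $\cin{\cC}$, which is bounded when $\tau$ is bounded.
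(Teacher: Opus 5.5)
Your proposal is correct and follows essentially the paper's route. The paper deduces this theorem from Theorem~\ref{theorem-induce-via-simple-objects}, which uses Proposition~\ref{prop:simple-heart} to show that $\tau$ restricts to $\langle E_1,\dots,E_n\rangle$. That proposition rests on the same Schur-type induction you describe: the extension closure of pairwise $\Hom$-orthogonal $\Hom$-simple objects in the heart is closed under kernels and cokernels, so cohomology objects stay in it. The paper then applies Theorem~\ref{main-theorem}\ref{main-theorem-induce-C}, as you do; the only slip is that you write $\gamma^{-1}$ where the restriction to $\cB$ should use the inclusion of $\cB$.
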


Note that the condition~\eqref{eq:hom-vanishing-intro} is vacuous when the exceptional sequence has length~$n = 1$.
This makes it tempting to iterate Theorem~\ref{theorem-induce-via-exceptional-sequence} one exceptional object at a time
to handle general exceptional sequences. 
However, the t-structure obtained after the first step may not be noetherian (see Remark~\ref{remark-noetherian-phantom}),
in which case we cannot apply Theorem~\ref{main-theorem}. 

On the other hand, we prove that in the situation of Theorem~\ref{theorem-induce-via-exceptional-sequence}
with only one exceptional object~$E$,
if~$E$ is $\sigma$-stable for a stability condition~$\sigma$ on~$\cD$,
then there exists an induced (even projected) noetherian bounded t-structure on~$\cC$
(see Proposition~\ref{proposition-pre-stability-tau} and Corollary~\ref{corollary-stability-noetherian-t-structure}). 
This gives one indication that, when the given t-structure on $\cD$ underlies a stability condition,
the induced t-structures on semiorthogonal components may be better behaved. 

\begin{remark}
The earlier paper \cite{BLMS} developed a method for constructing bounded t-structures
(and stability conditions) on semiorthogonal complements of exceptional sequences,
which has been applied to the residual components of some low-dimensional Fano varieties.
As we explain in Remark~\ref{remark-BLMS} and Example~\ref{example-BLMS},
this construction can be thought of as a very special case of our results. 
\end{remark}

\subsection{Applications} 
\label{subsection-intro-applications}

The forward $\Hom$-vanishing condition~\eqref{eq:hom-vanishing-intro} in Theorem~\ref{theorem-induce-via-exceptional-sequence} is fairly restrictive.
However, there are still many interesting examples satisfying this condition, some of which we describe below. 

\subsubsection*{Exceptional objects} 

As we already mentioned, condition~\eqref{eq:hom-vanishing-intro} in Theorem~\ref{theorem-induce-via-exceptional-sequence}
is vacuous when the exceptional sequence has length~$1$. 
Therefore, the semiorthogonal component~$\cC$ defined by
\begin{equation*}
\Db(X) = \langle E, \cC \rangle
\end{equation*} 
admits a bounded t-structure, connectively induced by the standard one on~$\Db(X)$.
More generally, Theorem~\ref{theorem-induce-via-exceptional-sequence} directly applies to an exceptional object in the standard heart of the derived category of a noetherian abelian category. 

The simplest example is when~$X$ is the blowup in a point of a smooth surface~$Y$,
and we take~$E = \cO_C(-1)$ where~$C \subset X$ is the exceptional curve, so that~$\cC \simeq \Db(Y)$ by Orlov's blowup formula.
In this example, it is easy to see that the t-structure~$\cin{\Db(Y)}$ connectively induced on~$\Db(Y)$
coincides with the standard t-structure. 

For other simple examples of exceptional objects, however, we obtain bounded t-structures on highly nontrivial categories: 
\begin{itemize}
\item 
If~$X$ is a Fano variety, then the structure sheaf~$\cO_X$ is exceptional.
Taking~$X$ to be any stacky weighted Fano hypersurface of index~$1$,
we obtain bounded t-structures on infinitely many fractional Calabi--Yau categories (Theorem~\ref{thm:fcy}).

\item
We construct a bounded t-structure on a curious categorical resolution~$\cC$ of a nodal cubic curve
appearing in one of the early counterexamples to the Jordan--H\"{o}lder property
for semiorthogonal decompositions~\cite{kuznetsov-jordan-holder} (Theorem~\ref{theorem-bondal-soc});
this answers a question of Haiden and Wu~\cite{haiden-wu}, who recently proved that~$\cC$ does not admit a stability condition.

\item
We construct a bounded t-structure on the Brill--Noether modifications of the derived category of a curve
recently introduced in~\cite{kuznetsov-alexeev} (Theorem~\ref{theorem-BN-modification});
as a special case, this gives such a t-structure on the crepant categorical resolution
of the residual category of a nodal cubic threefold (Example~\ref{example-cubic-threefold}).
\end{itemize}

\subsubsection*{Enriques surfaces} 
For an Enriques surface $X$, there is an exceptional sequence $L_1, \dots, L_{10}$ of $10$ line bundles, defining an interesting semiorthogonal decomposition 
\begin{equation*}
\Db(X) = \langle L_1, \dots, L_{10}, \cR_X \rangle 
\end{equation*} 
that has been studied in various works \cite{zube, ingalls-kuznetsov, LNSZ, LSZ}. 
We prove that $\cR_X$ admits a bounded t-structure (Theorem~\ref{thm:enriques}). 
When~$X$ is generic, the exceptional sequence is completely orthogonal in the sense that~$\RHom(L_i, L_j) = 0$ for~$i \neq j$,
so Theorem~\ref{theorem-induce-via-exceptional-sequence} directly applies;
in general, we instead appeal to a mild strengthening of this result, Theorem~\ref{theorem-induce-via-simple-objects}. 

\subsubsection*{Phantom categories} 

Perhaps the most interesting application of our results is to phantom categories.
Recall that a semiorthogonal component~$\cP \subset \Db(X)$ of the derived category
of a smooth projective variety is called a \emphsf{phantom}
if both its Grothendieck group~$\rK_0(\cP)$ and Hochschild homology~$\HH_\bullet(\cP)$ vanish. 
The existence of such strange categories came as a surprise when Gorchinskiy and Orlov~\cite{phantoms-orlov}
and B\"{o}hning, Graf von Bothmer, Katzarkov, and Sosna~\cite{phantoms-bohning}
constructed the first examples as semiorthogonal components on fairly exotic varieties of general type.
A striking recent example due to Krah~\cite{krah} shows that a phantom can also exist on a rational surface, 
and further examples of this type were produced in~\cite{KKLLMMPRV} (see also~\cite{MXY}). 

A phantom category cannot admit a stability condition, due to the vanishing of $\rK_0(\cP)$.
This suggests that phantoms form a critical test case for Question~\ref{question-soc-t-structure}.
Indeed, since the first examples of phantoms were constructed
it has been a well-known open question --- to our knowledge first raised in print in~2015 by Sosna~\cite{sosna-phantoms} ---
whether a phantom can admit a bounded t-structure.
As we show in Theorem~\ref{theorem-phantoms}, our Theorem~\ref{theorem-induce-via-exceptional-sequence} applies
to give an affirmative answer for almost all of the known examples of phantom categories,
as well as quasiphantom categories (defined by relaxing the vanishing of~$\rK_0(\cP)$ to finiteness).
This shows that these categories are not as ill-behaved as may have previously been expected.

\begin{remark}
\label{remark-noetherian-phantom}
Yeqin Liu \cite{liu-phantoms} proved that phantom categories do not admit \emph{noetherian} bounded t-structures.
In particular, the bounded t-structures produced in Theorem~\ref{theorem-phantoms} are not noetherian. 
This shows that in Theorem~\ref{main-theorem}\ref{main-theorem-induce-C} the induced t-structures need not inherit noetherianity.
It follows that the perverse t-structures on $\cD$ constructed in the proof of Theorem~\ref{main-theorem}\ref{main-theorem-induce-C},
which project to the induced t-structures, also need not inherit noetherianity
(see Lemma~\ref{lemma-projection-tau-cB}\ref{Phi!tau-noetherian}).
\end{remark}

From the existence of a bounded t-structure, we also deduce that the above phantoms and quasiphantoms
have a classical generator~$G \in \cP$ 
with the property that~\mbox{$\Ext^{i}(G,G) = 0$} for~\mbox{$i < 0$};
in particular, $A = \RHom(G,G)$ is a coconnective DG algebra such that~$\Dperf(A)$ is a (quasi)phantom category (Corollary~\ref{corollary-coconnective-DG-algebra}).
This answers a question of Ben Antieau.

In the case of Krah's phantom $\cP$, another solution to Antieau's question was recently found by Amal Mattoo~\cite{Mattoo},
who constructed several interesting explicit classes of objects in $\cP$ with vanishing negative self-Ext groups. 
In Example~\ref{example-mattoo-in-heart}, we give a conceptual explanation of this vanishing for some of these objects, 
by observing that they are contained in the heart of an induced t-structure. 

\subsubsection*{Smoothings of cyclic quotient singularities} 

For a large class of smooth projective surfaces~$X$ admitting a degeneration
to a surface with cyclic quotient singularities,
the paper~\cite[Theorem~1.12(2)]{jenia} produces an interesting exceptional collection~$E_1, \dots, E_n$ of vector bundles
which satisfies the forward $\Hom$-vanishing condition~\eqref{eq:hom-vanishing-intro}
and generates the derived category of an acyclic quiver without relations.
Applying Theorem~\ref{theorem-induce-via-exceptional-sequence} to the semiorthogonal decomposition
\begin{equation*}
\Db(X) = \langle E_1, \dots, E_n, \cC \rangle, 
\end{equation*}  
we find that $\cC$ admits a bounded t-structure, connectively induced by the standard one on~$\Db(X)$.

\subsection{Further directions} 

It would be very interesting to carry out our general approach to Question~\ref{question-soc-t-structure} via perverse t-structures (detailed in \S\ref{subsection-t-exactability}) in new situations.  
Even in the situation of Example~\ref{example-rational-singularities}, 
although we know that the t-structure on $\Db(X)$ connectively induces the standard t-structure on $\Db(Y)$, 
we only know that the corresponding perverse t-structure on $\Db(X)$ exists when the fibers of $f \colon X \to Y$ have dimension $\leq 1$ as in Example~\ref{example-perverse-t-structures}. 
If the perverse t-structure could be constructed without this restriction, it may give a route to generalizing Bridgeland's work on derived equivalences of threefold flops \cite{bridgeland-flops} to higher dimensions. 

As mentioned above, our interest in Question~\ref{question-soc-t-structure} was largely motivated by the theory of stability conditions. 
We will return to this topic in a sequel to this paper, 
where we will provide criteria for inducing stability conditions on semiorthogonal components. 
This is subtle since, as mentioned above, there exist semiorthogonal components (such as phantoms) that do not admit a stability condition. 

Even in cases where stability conditions do not exist, it would be interesting to study moduli spaces of objects
in the heart of the induced t-structure on a semiorthogonal component.
Example~\ref{example-mattoo-in-heart} suggests that such moduli spaces are rich even for phantoms. 
We highlight two general questions:
\begin{enumerate}
\item
\label{heart-open}
Under what conditions does the property of lying in the heart of an induced t-structure define an open condition,
and hence cut out an open subspace of the moduli space of all objects?
\item
\label{phantom-stability}
Is there an extension of the usual theory of stability conditions (which applies, for example, to phantoms)
that gives rise to proper moduli spaces of objects in the hearts of induced t-structures?
\end{enumerate} 
In relation to \eqref{phantom-stability}, it is natural to ask whether it is possible to induce slicings on phantom categories,
as a substitute for a stability condition. 

Relatedly, we believe it is worth studying moduli spaces of objects in the hearts of the perverse t-structures that we construct in the setting of Theorem~\ref{main-theorem} (see Example~\ref{example-perverse-t-structures}). 
As Bridgeland showed \cite{bridgeland-flops}, already in the geometric situation of a threefold flopping contraction, such moduli spaces have striking applications.

\subsection{Organization of the paper} 
In~\S\ref{section-preliminaries} we review preliminaries on semiorthogonal decompositions and t-structures. 
In~\S\ref{section-restricting-t-structures} we discuss restricted t-structures and criteria for their existence. 
In~\S\ref{section-projected-t-structures} we discuss projected t-structures and criteria for their existence, including as an application the existence of noetherian bounded t-structures on the complement of a stable exceptional object.  
In~\S\ref{section-induced-t-structures} we discuss induced t-structures and criteria for their existence; in particular, we prove our main result, Theorem~\ref{main-theorem}, as well as Theorem~\ref{theorem-induce-via-exceptional-sequence}. 
In~\S\ref{section-applications} we provide the details for the applications described in~\S\ref{subsection-intro-applications}.  
Finally, in Appendix~\ref{sec:ind} we discuss another approach to the proof of Theorem~\ref{main-theorem}. 

\subsection{Conventions}
A variety over a field~$\kk$ is an integral scheme which is separated and of finite type over~$\kk$.
For a variety, or more generally an algebraic stack~$X$, 
we denote by~$\Dqc(X)$ the unbounded derived category of quasi-coherent sheaves.
If~$X$ is noetherian, 
then  we denote by $\rD(X)$, $\rD^{+}(X)$, $\Dm(X)$, and~$\Db(X)$ the subcategories of~$\Dqc(X)$
consisting of objects with coherent cohomology sheaves that are unbounded, bounded below, bounded above, or bounded. 
All functors are derived. 
Given objects~$D,D'$ in a triangulated category~$\cD$,
we write~$\RHom(D,D') \coloneqq \bigoplus_{n \in \bZ} \Hom(D,D'[n])[-n]$.
\subsection{Acknowledgements}

We are grateful to the following people for useful discussions related to this work:
Arend Bayer, Sasha Efimov, Yeqin Liu, Zhiyu Liu, Emanuele Macr\`{i}, Amal Mattoo, Dmitrii Pirozhkov,
Jørgen Rennemo, Germ\'an Stefanich, Paolo Stellari, Jenia Tevelev, and Yukinobu Toda.

Parts of this work were completed while S.L. and A.P. were in residence at the Simons Laufer Mathematical Sciences Institute in Spring 2024, 
while S.L. was visiting the Max Planck Institute for Mathematics, Kavli Institute for the Physics and Mathematics of the Universe, Universi\'e Paris--Saclay, and Universit\`a degli Studi di Milano,
and while A.K. and A.P. were at the University of Milan for Paolo Stellari's birthday conference. 
We thank these institutions for providing excellent working conditions. 

During the preparation of this paper,
A.K.~was partially supported by the HSE University Basic Research Program,
S.L.~was partially supported by an AMS-Simons Travel Grant,
and A.P. was partially supported by NSF grants DMS-2112747, DMS-2052750, and DMS-2143271, and a Sloan Research Fellowship.

%%%%%%%%%%%%%%%%%%%%%%%%%%%%%%%%%%%%%%%%%%%%%%%%%%%%%%%

\section{Preliminaries}
\label{section-preliminaries} 

The purpose of this section is to review some basic facts about semiorthogonal decompositions, t-structures, and torsion pairs.

\subsection{Semiorthogonal decompositions}
\label{section-sod}

\begin{definition}
\label{definition-sod}
Let~$\cD$ be a triangulated category.
A \emphsf{semiorthogonal decomposition} of~$\cD$, denoted~$\cD = \langle \cD_1, \cD_2, \dots, \cD_n \rangle$,
is a sequence~$\cD_1, \cD_2, \dots, \cD_n$ of full triangulated subcategories of~$\cD$ such that:
\begin{enumerate}[label={\textup{(\arabic*)}}]
\item
(\emphsf{$\RHom$-vanishing})
For all $D_i \in \cD_i$, $D_j \in \cD_j$ with $i > j$, we have $\RHom(D_i,D_j) = 0$. 
\item
\label{sod-filtration} 
(\emphsf{Generation property})
For any $D \in \cD$, there exists a sequence of morphisms
\begin{equation*}
0 = D_n \to D_{n-1} \to \cdots \to D_1 \to D_0 = D
\end{equation*}
such that for all $1 \leq i \leq n$ the cone of the morphism $D_{i} \to D_{i-1}$ lies in $\cD_i$.
\end{enumerate}
The subcategories $\cD_i \subset \cD$ are called the \emphsf{components} of the semiorthogonal decomposition. 
\end{definition}

It follows from Definition~\ref{definition-sod} that the filtration of~$D \in \cD$
appearing in condition~\ref{sod-filtration}, as well as its graded pieces $\cone(D_{i} \to D_{i-1})$, are functorial.
The functor~$\pr_i \colon \cD \to \cD$ given by the $i$th graded piece
is called the \emphsf{projection functor} onto~$\cD_i$.

Given a subcategory $\cC \subset \cD$ of a triangulated category $\cD$, its right and left orthogonal subcategories are defined by
\begin{align*}
    \cC^{\perp} & = \set{D \in \cD \sth {\RHom(C, D)} = 0 \text{ for all } C \in \cC \text{ and } n \in \bZ}, \\
    {^\perp}\cC & = \set{D \in \cD \sth {\RHom(D, C)} = 0 \text{ for all } C \in \cC \text{ and } n \in \bZ}.
\end{align*}
In these terms, the following well-known (and straightforward) lemma gives a useful method for constructing semiorthogonal decompositions.

\begin{lemma}
\label{lemma-sod}
    Let~$\cD$ be a triangulated category.
    Let~$\cB, \cC \subset \cD$ be triangulated subcategories.
    Then the following are equivalent:
    \begin{enumerate}[label={\textup{(\arabic*)}}]
        \item $\cD = \langle \cB, \cC \rangle$ is a semiorthogonal decomposition.
        \item \label{left-admissible} 
        $\cB \subset \cD$ is \emphsf{left admissible}, i.e. the inclusion admits a left adjoint, and~$\cC = {^\perp}\cB$.
        \item \label{right-admissible} 
        $\cC \subset \cD$ is \emphsf{right admissible}, i.e. the inclusion admits a right adjoint, and~$\cB = \cC^{\perp}$.
    \end{enumerate}
Explicitly, if~$\cD = \langle \cB, \cC \rangle$ is a semiorthogonal decomposition,
then the projection functor onto~$\cB$ is the left adjoint to the inclusion~$\cB \to \cD$,
and the projection functor onto~$\cC$ is the right adjoint to the inclusion~$\cC \to \cD$.
\end{lemma}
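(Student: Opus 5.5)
We prove the three conditions are equivalent by establishing (1)$\Rightarrow$(2), (1)$\Rightarrow$(3), (2)$\Rightarrow$(1) and (3)$\Rightarrow$(1). The last two are mirror images of each other, as are the first two, so we write out only one of each and obtain the other by passing to opposite categories. The explicit description of the projection functors comes out of the same argument.

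\emph{(1)$\Rightarrow$(3).} Assume $\cD = \langle \cB, \cC \rangle$. By the generation property, every $D \in \cD$ sits in a triangle
\begin{equation*}
D_1 \to D \to B \to D_1[1], \qquad D_1 \in \cC, \ B \in \cB.
\end{equation*}
\begin{enumerate}[label={\textup{(\roman*)}}]
\item \emph{Adjunction.} Take $C \in \cC$ and apply $\Hom(C,-)$ to the triangle. Semiorthogonality gives $\RHom(C,B)=0$, so the long exact sequence yields $\Hom(C,D_1)\cong\Hom(C,D)$. Thus $D \mapsto D_1$ is right adjoint to the inclusion $\cC \to \cD$.
\item \emph{Functoriality.} Functoriality of $D \mapsto D_1$ follows by the standard Yoneda argument from the uniqueness of such a triangle. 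This uniqueness holds because $\Hom(\cC,\cB)=0$ in all degrees, so any morphism $D \to D'$ lifts uniquely to a morphism of triangles.
\item \emph{Orthogonal.} $\cB \subset \cC^\perp$ by the $\RHom$-vanishing axiom. Conversely, let $D \in \cC^\perp$. Then $\Hom(D_1,D)=0$, so the map $D_1\to D$ vanishes. Hence the triangle splits and $B \cong D \oplus D_1[1]$. But $\Hom(D_1[1],B)=0$ forces the summand $D_1[1]$ to be zero, so $D \cong B \in \cB$.
\end{enumerate}
This gives (3), and identifies the right adjoint of $\cC\to\cD$ with the projection onto $\cC$.

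\emph{(1)$\Rightarrow$(2).} The dual argument applies $\Hom(-,B')$ for $B' \in \cB$. It shows that $D\mapsto B$ is left adjoint to $\cB\to\cD$ and that $\cC={}^\perp\cB$, so $D \mapsto B$ is the projection onto $\cB$.

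\emph{(3)$\Rightarrow$(1).} Let $\gamma^!$ be the right adjoint to the inclusion $\gamma\colon\cC\to\cD$, and complete the counit to a triangle
\begin{equation*}
\gamma\gamma^!D \to D \to B \to \gamma\gamma^!D[1].
\end{equation*}
We must show $B \in \cC^\perp=\cB$. For $C\in\cC$, apply $\Hom(C,-)$ to the triangle. By adjunction and full faithfulness of $\gamma$, the map $\Hom(C,\gamma\gamma^!D[n])\to\Hom(C,D[n])$ is an isomorphism for every $n$. The five lemma in the long exact sequence then gives $\Hom(C,B[n])=0$ for all $n$. Semiorthogonality holds since $\cB=\cC^\perp$, so (1) follows. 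The implication (2)$\Rightarrow$(1) is dual, using the unit $D \to \beta\beta^* D$ of the left adjoint $\beta^*$ to $\cB\to\cD$.

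The only step needing care is the identification $\cB=\cC^\perp$ in (i)–(iii), specifically the splitting argument; everything else is formal. Since the paper calls this lemma well known, citing Bondal's original treatment is also acceptable.
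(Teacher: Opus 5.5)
Your argument is correct: it is the standard one (going back to Bondal), and all the implications, including the identification of the projection functors with the adjoints, go through as you wrote them. The paper itself gives no proof of this lemma, calling it well known and straightforward; the only simplification available is in your step (iii), where the adjunction from (i) with $C = D_1$ already gives $\Hom(D_1,D_1)\cong\Hom(D_1,D)=0$, so $D_1=0$ and $D\cong B$ without the splitting argument.
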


Now we specialize to the case where~$\cD$ is a $\kk$-linear triangulated category for a field~$\kk$.
We say that an object~$D \in \cD$ is \emphsf{homologically finite-dimensional}
if~$\RHom(D,D') \in \Db(\kk)$ and~$\RHom(D',D) \in \Db(\kk)$
for all~$D' \in \cD$ (cf.~\cite[Definition~3.1]{KS25}).
Note that if~$\cD \subset  \Db(X)$ where~$X$ is smooth and proper,
then every object in~$\cD$ is homologically finite-dimensional.

\begin{definition}
\label{def:exceptional}
Let~$\cD$ be a $\kk$-linear triangulated category for a field~$\kk$.
An object~$E \in \cD$ is \emphsf{exceptional} if it is homologically finite-dimensional and the canonical map~$\kk \to \RHom(E,E)$
(classifying the identity morphism~$\id_E \colon E \to E$) is an isomorphism.
A sequence of objects~$E_1, \dots, E_n \in \cD$ is called an \emphsf{exceptional sequence}
if the object $E_i$ is exceptional for all $i$ and~$\RHom(E_i, E_j) = 0$ for~$i > j$.
\end{definition}

The following result is well-known.
\begin{lemma}
\label{lemma-exceptional-sequence}
Let~$\cD$ be a $\kk$-linear triangulated category for a field $\kk$.
\begin{enumerate}[label={\textup{(\arabic*)}}]
\item
\label{exceptional-object-ff-adjoints}
Let $E \in \cD$ be an exceptional object.
Then the functor $\phi_E \colon \Db(\kk) \to \cD$, $V \mapsto V \otimes E$ is fully faithful
and admits left and right adjoints.
\item
\label{exceptional-sequence-sod}
Let~$E_1, \dots, E_n$ be an exceptional sequence in~$\cD$.
Then there are semiorthogonal decompositions
\begin{align*}
\cD &= \langle E_1, \dots, E_n, \cC \rangle , \\
\cD &= \langle \cB, E_1, \dots, E_n \rangle,
\end{align*}
where~$\cB = \langle E_1, \dots, E_n \rangle^\perp$ and~$\cC = {}^\perp\langle E_1, \dots, E_n \rangle$,
and as is customary we write simply~$E_i$ for the triangulated subcategory~$\langle E_i \rangle = \phi_{E_i}(\Db(\kk)) \subset {\cD}$
generated by~$E_i$ in the above decompositions.
\end{enumerate}
\end{lemma}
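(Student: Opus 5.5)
The proof splits along the two parts of the statement. For \ref{exceptional-object-ff-adjoints}, I would first define $\phi_E$ on $\Db(\kk)$. Every object of $\Db(\kk)$ is isomorphic to a finite direct sum of shifts of $\kk$, so one can set $\phi_E(V) = \bigoplus_n \rH^n(V)\otimes E[-n]$. Here $W\otimes E$, for a finite-dimensional vector space $W$, means $E^{\oplus \dim W}$. Morphisms in $\Db(\kk)$ are graded linear maps, and the functor is defined on them accordingly.

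Next I would check full faithfulness. By additivity it is enough to verify it on the objects $\kk[a]$ and $\kk[b]$. There it reduces to the map $\Hom(\kk,\kk[b-a]) \to \Hom(E,E[b-a])$, which is an isomorphism precisely because $\kk \to \RHom(E,E)$ is an isomorphism.

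For the adjoints I would write down explicit candidates: the right adjoint $D \mapsto \RHom(E,D)$ and the left adjoint $D \mapsto \RHom(D,E)^\vee$. These land in $\Db(\kk)$ because $E$ is homologically finite-dimensional. The adjunction isomorphisms
\begin{equation*}
\Hom(V\otimes E, D) \cong \Hom_{\Db(\kk)}(V, \RHom(E,D)), \qquad \Hom(D, V\otimes E)\cong \Hom_{\Db(\kk)}(\RHom(D,E)^\vee, V)
\end{equation*}
again reduce, by additivity, to the case $V=\kk[a]$, where they hold by definition of $\RHom$. Naturality in $D$ is clear. Consequently the image $\langle E\rangle = \phi_E(\Db(\kk))$ is an admissible triangulated subcategory; it is triangulated because it is the essential image of a fully faithful exact functor, and it is closed under cones by full faithfulness.

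For \ref{exceptional-sequence-sod}, I would argue by induction on $n$ using Lemma~\ref{lemma-sod}. The aim is to show that $\cA = \langle E_1,\dots,E_n\rangle$, the triangulated subcategory generated by the $E_i$, is both left and right admissible and has a semiorthogonal decomposition $\cA = \langle \langle E_1\rangle, \dots, \langle E_n\rangle\rangle$. The key gluing fact is the following: if $\cA_1,\cA_2\subset\cD$ are admissible and $\RHom(\cA_2,\cA_1)=0$, then the subcategory $\cA=\langle\cA_1,\cA_2\rangle$ is admissible. To see this for the right adjoint, start from $D$ and take the triangle $\gamma_2\gamma_2^!D \to D \to D'$, whose cone $D'$ lies in $\cA_2^\perp$. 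Then project $D'$ to $\cA_1$ by $\gamma_1\gamma_1^!$, and combine the two pieces via the octahedral axiom to produce an object of $\cA$ mapping to $D$ with cone in $\cA^\perp = \cA_1^\perp\cap\cA_2^\perp$. The left adjoint is obtained dually.

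Once $\cA$ is known to be admissible, Lemma~\ref{lemma-sod} gives both $\cD=\langle \cA^\perp, \cA\rangle$ and $\cD=\langle\cA,{}^\perp\cA\rangle$. Concatenating each of these with the decomposition of $\cA$ yields the two stated decompositions.

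The main obstacle is the octahedral bookkeeping in the gluing step: one has to check carefully that the constructed cone really lies in $\cA_1^\perp\cap\cA_2^\perp$. This uses the semiorthogonality $\RHom(\cA_2,\cA_1)=0$ to see that the projected object is still orthogonal to $\cA_2$. Everything else is formal.
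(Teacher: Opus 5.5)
Your proposal is correct and matches the paper's proof. Part (1) uses exactly the same adjoints $\RHom(E,-)$ and $\RHom(-,E)^\vee$, which land in $\Db(\kk)$ by homological finite-dimensionality, and part (2) is an induction via Lemma~\ref{lemma-sod}. The only difference is organizational: instead of gluing admissibility of $\langle E_1,\dots,E_n\rangle$ with an octahedron, one can split off $\cD=\langle \langle E_1\rangle, {}^\perp\langle E_1\rangle\rangle$ (resp.\ $\cD=\langle \langle E_n\rangle^\perp, \langle E_n\rangle\rangle$), note that the remaining objects form an exceptional sequence in that orthogonal, and apply part (1) and the induction hypothesis there, which avoids the octahedral bookkeeping you flagged.
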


\begin{proof}
\ref{exceptional-object-ff-adjoints}
The fully faithfulness of~$\phi_E$ follows directly from the exceptionality of~$E$.
Moreover, it is straightforward to check that the functors~$\phi_E^*, \phi_E^! \colon \cD \to \Db(\kk)$ given by
\begin{align*}
\phi_E^*(D) = \RHom(D,E)^{\vee}
\quad \text{and} \quad
\phi_E^!(D) = \RHom(E,D)
\end{align*}
are the left and right adjoints of~$\phi_E$;
note that they take values in $\Db(\kk)$ by the assumption that $E$ is homologically finite-dimensional.

In view of~\ref{exceptional-object-ff-adjoints}, the claims in~\ref{exceptional-sequence-sod}
follow from Lemma~\ref{lemma-sod} by induction. 
\end{proof}

\subsection{t-structures}
\label{section-t-structure}

The general reference for this subsection is~\cite{BBDG}.

\begin{definition}
\label{def:t-str}
Let~$\cD$ be a triangulated category.
A \emphsf{t-structure} on~$\cD$ is a pair~$\tau = (\cD^{\leq 0}, \cD^{\geq 0})$ of strictly full subcategories of~$\cD$ such that:
    \begin{enumerate}[label={\textup{(\arabic*)}}]
        \item
        \label{it:t-str-shift}
        (\emphsf{Compatibility with shifts})
        There are inclusions~$\cD^{\leq 0}[1] \subset \cD^{\leq 0}$ and~$\cD^{\geq 0}[-1] \subset \cD^{\geq 0}$.
        \item
        \label{it:t-str-perp}
        (\emphsf{$\Hom$-orthogonality})
        For all~$D  \in \cD^{\leq 0}$ and~$D'  \in \cD^{\geq 0}[-1]$, we have $\Hom(D, D') = 0$. 
        \item
        \label{it:t-str-triangle}
        (\emphsf{Generation property})
        For any object~$D \in \cD$ there exists a distinguished triangle
        \begin{equation*}
            D^{\le 0} \to D \to D^{\ge 1}
        \end{equation*}
        where~$D^{\le 0} \in \cD^{\leq 0}$ and~$D^{\ge 1} \in \cD^{\geq 0}[-1]$.
    \end{enumerate}
The subcategories~$\cD^{\leq 0}$ and~$\cD^{\geq 0}$ of~$\cD$
are called the \emphsf{connective} and \emphsf{coconnective} parts of the t-structure. 
\end{definition}

The $\Hom$-vanishing in~\ref{it:t-str-perp} implies that the triangle in~\ref{it:t-str-triangle} is unique and functorial. 
That is, there are functors~$\tau^{\le 0}, \tau^{\ge 1} \colon \cD \to \cD$ given on objects by 
\begin{equation*}
\tau^{\le 0}(D) \coloneqq D^{\le 0}
\qquad\text{and}\qquad
\tau^{\ge 1}(D) \coloneqq D^{\ge 1}. 
\end{equation*}
Moreover, $\tau^{\le 0}$ is (the composition with the inclusion of) the right adjoint of~$\cD^{\le 0} \hookrightarrow \cD$,
and~$\tau^{\geq 1}$ is (the composition with the inclusion of) the left adjoint of~$\cD^{\geq 0}[-1] \hookrightarrow \cD$.
The above axioms also imply that~$\cD^{\le 0}$ and~$\cD^{\ge 0}$ determine each other:
\begin{equation}
\label{eq:cdm-cdp}
\begin{aligned}
\cD^{\le 0} &= \set{ D \in \cD \sth \Hom(D, D') = 0  \text{ for all } D' \in {\cD}^{\ge 0}[-1] },
\\
\cD^{\ge 0} &= \set{ D \in \cD \sth \Hom(D', D) = 0  \text{ for all } D' \in {\cD}^{\le 0}[1] }.
\end{aligned}
\end{equation}
These formulas also show that the subcategories~$\cD^{\le 0}$ and~$\cD^{\ge 0}$ are extension closed.

Given a t-structure~$\tau$, for any~$n \in \bZ$ we define full subcategories of~$\cD$ by
\begin{equation*}
\cD^{\leq n} = \cD^{\leq 0}[-n] \quad \text{and} \quad
\cD^{\geq n} = \cD^{\geq 0}[-n],
\end{equation*}
and functors $\tau^{\leq n}, \tau^{\geq n+1} \colon \cD \to \cD$ by 
\begin{equation*}
\tau^{\le n}(D) \coloneqq \tau^{\le 0}(D[n])[-n]
\qquad\text{and}\qquad
\tau^{\ge n+1}(D) \coloneqq \tau^{\ge 1}(D[n])[-n], 
\end{equation*}
so that 
for any object $D \in \cD$ we have a distinguished triangle
\begin{equation*}
\tau^{\leq n} D \to D \to \tau^{\geq n+1} D,
\end{equation*}
where the first map is the counit of the adjunction and the second is the unit.
The triangle in Definition~\ref{def:t-str} is a particular case of this for~$n = 0$.
These triangles are called \emphsf{truncation triangles},
and the functors~$\tau^{\le n}$, $\tau^{\ge n+1}$ are called \emphsf{truncation functors}.

The \emphsf{heart} of $\tau$ is defined as the full subcategory of $\cD$ given by the intersection
\begin{equation*}
\cD^{\heartsuit} = \cD^{\leq 0} \cap \cD^{\geq 0}.
\end{equation*}
It is a classical result \cite[{Th\'eor\`eme~1.3.6}]{BBDG} that $\cD^{\heartsuit}$ is an abelian category.
Moreover, for any $n \in \bZ$ we have the \emphsf{cohomology functor}
\begin{equation*}
\mathcal{H}^n \coloneqq [n] \circ  \tau^{\leq n} \circ \tau^{\geq n}
\cong \tau^{\leq 0} \circ \tau^{\geq 0} \circ [n]
\colon  \cD \to \cD^{\heartsuit}.
\end{equation*}
In particular, $\cH^0 = \tau^{\leq 0} \circ \tau^{\geq 0}$ and moreover, $\cH^n(D) = \cH^0(D[n])$.
The cohomology functors are often used to translate a distinguished triangle in~$\cD$
to a long exact sequence in its heart~$\cD^\heartsuit$ --- if~$D_1 \to D_2 \to D_3$ is a distinguished triangle, then
\begin{equation}
\label{eq:les}
\dots \to \cH^{i-1}(D_3) \to \cH^i(D_1) \to \cH^i(D_2) \to \cH^i(D_3) \to \cH^{i+1}(D_1) \to \dots
\end{equation}
is a long exact sequence.

\begin{notation}
For clarity, given a t-structure~$\tau$ on a triangulated category~$\cD$,
we sometimes decorate its attendant structures with the symbol~$\tau$.
In particular, we write~${^\tau}\cD^{\leq n}$, ${^\tau}\cD^{\geq n}$, ${^\tau}\cD^{\heartsuit}$, and~${^\tau}\cH^n$
for the subcategories and cohomology functors which were denoted above without the symbol~$\tau$.
\end{notation}

For functors between categories equipped with t-structures, there are natural notions of t-exactness.

\begin{definition}
\label{def:amplitude}
    Let $\cC$ and $\cD$ be triangulated categories equipped with t-structures~$\tau_{\cC}$ and~$\tau_{\cD}$.
    Let $\Phi \colon \cC \to \cD$ be a triangulated functor.
    \begin{enumerate}[label={\textup{(\arabic*)}}]
        \item $\Phi$ is \emphsf{left t-exact} with respect to $\tau_{\cC}$ and $\tau_{\cD}$ if
        $\Phi({^{\tau_{\cC}}}\cC^{\geq 0}) \subset {^{\tau_{\cD}}}\cD^{\geq 0}$.
        \item $\Phi$ is \emphsf{right t-exact} with respect to $\tau_{\cC}$ and $\tau_{\cD}$ if $\Phi({^{\tau_{\cC}}}\cC^{\leq 0}) \subset {^{\tau_{\cD}}}\cD^{\leq 0}$.
        \item $\Phi$ is \emphsf{t-exact} with respect to $\tau_{\cC}$ and $\tau_{\cD}$ if it is both left and right t-exact.
    \end{enumerate}
More generally, for integers $a, b \in \bZ$, we say: 
\begin{enumerate}[label={\textup{(\arabic*)}}, resume]
\item
$\Phi$ has \emphsf{left t-amplitude~$\geq a$} with respect to $\tau_{\cC}$ and $\tau_{\cD}$
if~$\Phi({^{\tau_{\cC}}}\cC^{\geq 0}) \subset {^{\tau_{\cD}}}\cD^{\geq a}$.
\item
$\Phi$ has \emphsf{right t-amplitude~$\leq b$} with respect to $\tau_{\cC}$ and $\tau_{\cD}$
if~$\Phi({^{\tau_{\cC}}}\cC^{\leq 0}) \subset {^{\tau_{\cD}}}\cD^{\leq b}$.
\item
$\Phi$ has \emphsf{t-amplitude in $[a,b]$} with respect to $\tau_{\cC}$ and $\tau_{\cD}$
if it has left t-amplitude $\geq a$ and right t-amplitude~$\leq b$.
\end{enumerate}
\end{definition}

The following lemma is a well-known formal consequence of the definitions.
\begin{lemma}
\label{lemma-t-exactness-adjoints}
Let~$\Phi \colon \cC \to \cD$ be a triangulated functor which admits a right adjoint $\Phi^! \colon \cD \to \cC$. 
Assume~$\cC$ and~$\cD$ are equipped with t-structures~$\tau_{\cC}$ and~$\tau_{\cD}$.
Then~$\Phi$ is right t-exact with respect to~$\tau_{\cC}$ and~$\tau_{\cD}$
if and only if~$\Phi^!$ is left t-exact with respect to~$\tau_{\cD}$ and~$\tau_{\cC}$. 
Moreover, in this case the functor~${^{\tau_{\cD}}}\cH^0 \circ \Phi \colon {^{\tau_{\cC}}}\cC^{\heartsuit} \to {^{\tau_{\cD}}}\cD^{\heartsuit}$
is left adjoint to the functor~${^{\tau_{\cC}}}\cH^{0} \circ \Phi^! \colon {^{\tau_{\cD}}}\cD^{\heartsuit} \to {^{\tau_{\cC}}}\cC^{\heartsuit}$
between the corresponding abelian categories. 
\end{lemma}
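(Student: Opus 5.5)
The plan is to deduce both statements formally from the adjunction $\Hom_\cD(\Phi C, D) \cong \Hom_\cC(C, \Phi^! D)$ together with the orthogonality descriptions~\eqref{eq:cdm-cdp} of the connective and coconnective parts.

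For the equivalence, first assume $\Phi$ is right t-exact and take $D \in {^{\tau_\cD}}\cD^{\geq 0}$. To show $\Phi^! D \in {^{\tau_\cC}}\cC^{\geq 0}$, by~\eqref{eq:cdm-cdp} it suffices to check that $\Hom(C[1], \Phi^! D) = 0$ for all $C \in {^{\tau_\cC}}\cC^{\leq 0}$. By adjunction this group is $\Hom(\Phi(C)[1], D)$. Since $\Phi(C)[1] \in {^{\tau_\cD}}\cD^{\leq -1}$, Hom-orthogonality makes it vanish. The converse is symmetric: for $C \in {^{\tau_\cC}}\cC^{\leq 0}$ and $D \in {^{\tau_\cD}}\cD^{\geq 1}$, we have $\Hom(\Phi C, D) = \Hom(C, \Phi^! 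D)$, and this vanishes because $\Phi^! D \in {^{\tau_\cC}}\cC^{\geq 1}$. Then~\eqref{eq:cdm-cdp} gives $\Phi C \in {^{\tau_\cD}}\cD^{\leq 0}$.

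For the adjunction on hearts, take $A \in {^{\tau_\cC}}\cC^{\heartsuit}$ and $B \in {^{\tau_\cD}}\cD^{\heartsuit}$. By right t-exactness, $\Phi A \in \cD^{\leq 0}$, so $\cH^0(\Phi A) = \tau^{\geq 0}\Phi A$. Since $B \in \cD^{\geq 0}$, the adjunction property of $\tau^{\geq 0}$ gives $\Hom(\cH^0\Phi A, B) = \Hom(\Phi A, B)$. Dually, $\Phi^! B \in \cC^{\geq 0}$, so $\cH^0 \Phi^! B = \tau^{\leq 0}\Phi^! B$, and since $A \in \cC^{\leq 0}$ we get $\Hom(A, \cH^0\Phi^! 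B) = \Hom(A, \Phi^! B)$. Chaining these two identifications with the original adjunction yields a bijection $\Hom(\cH^0\Phi A, B) \cong \Hom(A, \cH^0 \Phi^! B)$. It is natural in $A$ and $B$, since every step is natural.

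The only point requiring care is these truncation identifications: the maps $\Phi A \to \tau^{\geq 0}\Phi A$ and $\tau^{\leq 0}\Phi^! B \to \Phi^! B$ must induce the stated bijections. This follows from the fact, recalled in the preliminaries, that $\tau^{\geq 0}$ and $\tau^{\leq 0}$ are respectively left and right adjoint to the inclusions of the coconnective and connective parts. Nothing here is a genuine obstacle.
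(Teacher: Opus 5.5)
Your proof is correct: both directions of the equivalence follow from the adjunction together with the orthogonality descriptions \eqref{eq:cdm-cdp}, and the heart adjunction follows by composing the adjunction with the truncation adjunctions, as you describe. The only implicit step is that $\Phi^!$ commutes with shifts, which follows from the adjunction by Yoneda. The paper gives no proof at all, calling the lemma a well-known formal consequence of the definitions, and your argument is exactly that standard verification.
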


One often considers the following boundedness conditions. 

\begin{definition}
\label{definition-t-structure-conditions}
Let $\tau  = (\cD^{\leq 0}, \cD^{\geq 0})$ be a t-structure on a triangulated category~$\cD$.
We define the triangulated subcategories
\begin{equation*}
{}^\tau\cD^+ \coloneqq \bigcup_{n \in \bZ} \cD^{\geq n},
\qquad
{}^\tau\cD^- \coloneqq \bigcup_{n \in \bZ} \cD^{\leq n},
\quad\text{and}\quad
{}^\tau\cD^{\mathrm{b}} \coloneqq {}^\tau\cD^+ \cap {}^\tau\cD^-
\end{equation*}
of \emphsf{bounded below}, \emphsf{bounded above}, and \emphsf{bounded} objects in~$\cD$, respectively.

We say~$\tau$ is \emphsf{bounded below} if~$\cD = {}^\tau\cD^+$, \emphsf{bounded above} if $\cD = {}^\tau\cD^-$, and \emphsf{bounded} if $\cD = {}^\tau\cD^{\mathrm{b}}$. 
\end{definition}

\begin{remark}
If~$\tau$ is a bounded below t-structure, then~${^\tau}\cD^{\leq 0}$ can be described
as the smallest extension closed subcategory of~$\cD$ containing~${^\tau}\cD^{\heartsuit}[n]$ for all~$n \geq 0$.
Similarly, if~$\tau$ is a bounded above t-structure, then~${^\tau}\cD^{\geq 0}$
is the smallest extension closed subcategory of~$\cD$ containing~${^\tau}\cD^{\heartsuit}[n]$ for all~$n \leq 0$.
Taking~\eqref{eq:cdm-cdp} into account, we conclude that
in either case~$\tau$ is completely determined by its heart~${^\tau}\cD^{\heartsuit} \subset \cD$.
\end{remark}

The following finiteness conditions on the heart of a t-structure play an important role in this paper.

\begin{definition}
An abelian category $\cA$ is \emphsf{noetherian}
if any ascending chain of subobjects~$A_0 \subset A_1 \subset \cdots \subset A$ of an object~$A \in \cA$ stabilizes, or equivalently, if any chain of epimorphisms~$A_0 \twoheadrightarrow A_1 \twoheadrightarrow A_2 \twoheadrightarrow \cdots$ in $\cA$ stabilizes.

Similarly, $\cA$ is \emphsf{artinian}
if any descending chain of subobjects~$A_0 \supset A_1 \supset A_2 \supset \dots$ stabilizes,
or equivalently, if any ascending chain of epimorphisms~$A_0 \twoheadleftarrow A_1 \twoheadleftarrow \dots \twoheadleftarrow A$
of an object~\mbox{$A \in \cA$} stabilizes.

A t-structure~$\tau$ on a triangulated category~$\cD$ is \emphsf{noetherian} or \emphsf{artinian}
if its heart~${}^{{\tau}}\cD^{\heartsuit}$ is a noetherian or artinian abelian category. 
\end{definition}

\begin{example}
\label{ex:tau-x}
Let~$X$ be a scheme, or more generally an algebraic stack.
The unbounded derived category of quasi-coherent sheaves~$\Dqc(X)$
carries the \emphsf{standard t-structure}~$\tau_X$, defined by
\begin{align*}
{^{\tau_X}}\Dqc(X)^{\leq 0} & = \set{ F \in \Dqc(X) \sth \cH^n(F) = 0 \text{ for } n > 0},  \\
{^{\tau_X}}\Dqc(X)^{\geq 0} & = \set{ F \in \Dqc(X) \sth \cH^n(F) = 0 \text{ for } n < 0},  
\end{align*}
where~$\cH^n(F)$ denotes the degree~$n$ cohomology sheaf of~$F$
(which coincides with the cohomology object~${^{\tau_X}}\cH^n(F)$ defined by $\tau_X)$.
If~$X$ is noetherian, then there are similarly defined t-structures on the subcategories~$\rD(X)$,
$\rD^{+}(X)$, $\Dm(X)$, $\Db(X)$ of objects with coherent cohomology sheaves
that are unbounded, bounded below, bounded above, or bounded.
For simplicity, we still denote the t-structure by~$\tau_X$ in all of these cases.
The t-structure~$\tau_X$ on~$\rD(X)$, $\rD^{+}(X)$, $\Dm(X)$, and~$\Db(X)$ is noetherian,
and it is moreover bounded on~$\Db(X)$.
\end{example} 

\begin{remark}
\label{remark-opposite-t-structure}
While many naturally occurring t-structures (such as $\tau_X$ on $\Db(X)$ above) are noetherian,
artinian t-structures are less common. 
However, there is a formal way to turn any noetherian t-structure into a artinian one, by passing to opposite categories.
More precisely, let $\cD$ be a triangulated category, equipped with a t-structure $\tau$. 
Let $\cD^{\op}$ be the opposite category of $\cD$, 
with the natural triangulated structure (in which the shift~$[1]$ is equal to~$[-1]^\op$). 
Then there is an \emphsf{opposite t-structure} on $\cD^{\op}$, given by 
\begin{equation*}
\tau^\op \coloneqq \big( ({}^\tau\cD^{\ge 0})^\op,({}^\tau\cD^{\le 0})^\op \big). 
\end{equation*} 
As is clear from the definition, passing to opposites preserves the property that a t-structure is bounded, and swaps the properties of being bounded below and bounded above. 
Further, the heart of the opposite t-structure is the opposite of the heart of $\tau$, i.e. 
\begin{equation*}
{^{\tau^{\op}}}(\cD^{\op})^{\heartsuit} = (^{\tau}\cD^{\heartsuit})^{\op}. 
\end{equation*}
In particular, we see that passing to opposites swaps the noetherian and artinian properties of t-structures.
\end{remark}

\begin{example}
\label{example-dual-standard-t-structure}
Let $X$ be a noetherian scheme which admits a dualizing complex $\omega_{X}^{\bullet}$; for instance, $X$ could be a scheme of finite type over a field. 
Then by \cite[\href{https://stacks.math.columbia.edu/tag/0A89}{Tag 0A89}]{stacks-project} 
 the functor $\cRHom(- , \omega^\bullet_X)$ 
gives equivalences 
\begin{equation*}
\rD(X) \simeq \rD(X)^{\op}, \quad 
\rD^{+}(X) \simeq \Dm(X)^{\op}, \quad \Dm(X) \simeq \rD^{+}(X)^{\op}, \quad \text{and} \quad 
\Db(X) \simeq \Db(X)^{\op}. 
\end{equation*} 
By Remark~\ref{remark-opposite-t-structure}, the opposite categories appearing on the right-hand side of these equivalences are equipped with the opposite standard t-structure $\tau_{X}^{\op}$, which we can transport along the equivalences to obtain the \emph{dual standard t-structure} $\tau_X^{\vee}$ on $\rD(X)$, $\rD^{+}(X)$, $\Dm(X)$, and~$\Db(X)$. 
For instance, on $\Db(X)$ the dual standard t-structure is given explicitly by 
\begin{align*}
{}^{\tau_X^\vee}\Db(X)^{\leq 0} & = \set{ F \in \Db(X) \sth \cRHom(F,\omega^\bullet_X) \in {}^{\tau_X}\Db(X)^{\ge 0} },
\\
{}^{\tau_X^\vee}\Db(X)^{\geq 0} & = \set{ F \in \Db(X) \sth \cRHom(F,\omega^\bullet_X) \in {}^{\tau_X}\Db(X)^{\le 0} }. 
\end{align*}
By Remark~\ref{remark-opposite-t-structure},
this t-structure on $\Db(X)$ is bounded with heart~${}^{\tau_X^\vee}\Db(X)^\heartsuit \simeq \Coh(X)^{\op}$;
in particular, this t-structure is artinian.
\end{example}

\subsection{Torsion pairs}
\label{section-torsion-pairs}

Finally, we recall a useful procedure, known as tilting, for constructing new t-structures from a given one.

\begin{definition}
\label{def:torsion-pair}
    Let $\cA$ be an abelian category. 
    A \emphsf{torsion pair} in $\cA$ is a pair of full subcategories $(\cT, \cF)$ of $\cA$ such that: 
    \begin{enumerate}[label={\textup{(\arabic*)}}]
        \item
        \label{torsion:orthogonality} 
        (\emphsf{$\Hom$-orthogonality})
        For all $T \in \cT$ and $F \in \cF$ we have $\Hom(T,F) = 0$.
        \item
        \label{torsion:decomposition}
        (\emphsf{Generation property})
        For any $A \in \cA$ there exists a short exact sequence
        \begin{equation*}
            0 \to T_A \to A \to F_A \to 0
        \end{equation*}
        where $T_A \in \cT$ and $F_A \in \cF$. 
    \end{enumerate}
The subcategories~$\cT$ and~$\cF$ in~$\cA$ are called the \emphsf{torsion} and \emphsf{torsion free} parts of the torsion pair.
\end{definition}

If $(\cT, \cF)$ is a torsion pair in $\cA$, then~$\cT$ and~$\cF$ determine each other:
\begin{equation}
\label{eq:ct-cf}
\begin{aligned}
\cT  &= \set{ A \in \cA \sth \Hom(A, F) = 0  \text{ for all }F \in \cF },
\\
\cF &= \set{ A  \in \cA \sth \Hom(T, A) = 0  \text{ for all } T \in \cT }.
\end{aligned}
\end{equation}
It follows that~$\cT$ is closed under extensions and quotients,
while~$\cF$ is closed under extensions and subobjects.
Moreover, by the following well-known lemma, 
it is easy to construct torsion pairs in a noetherian or artinian abelian category:
it is enough to specify a subcategory~$\cT \subset \cA$ or~$\cF \subset \cA$ with appropriate properties. 

\begin{lemma}
\label{lemma-torsion-pair} 
Let~$\cA$ be a noetherian abelian category.
Let~$\cT \subset \cA$ be a full subcategory which is closed under extensions and quotients.
Let~$\cF = \set{ A  \in {\cA} \sth \Hom(T, A) = 0 ~ \text{for all~$T \in \cT$} }$.
Then~$(\cT, \cF)$ is a torsion pair in $\cA$.

Similarly, let~$\cA$ be an artinian abelian category.
Let~$\cF \subset \cA$ be a full subcategory which is closed under extensions and subobjects.
Let~$\cT = \set{ A \in \cA \sth \Hom(A, F) = 0  \text{ for all }F \in \cF }$.
Then~$(\cT, \cF)$ is a torsion pair in $\cA$.
\end{lemma}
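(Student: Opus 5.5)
We treat the noetherian case; the artinian case follows by applying it to the opposite category $\cA^{\op}$, since opposites exchange noetherian and artinian, torsion and torsion-free parts, and quotients and subobjects.

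\textbf{Hom-orthogonality.} By definition of $\cF$, we have $\Hom(T,F)=0$ for all $T\in\cT$ and $F\in\cF$. It therefore remains to prove the generation property.

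\textbf{Choosing a maximal torsion subobject.} Fix $A\in\cA$ and consider the set $S$ of subobjects $A'\subset A$ with $A'\in\cT$. This set is nonempty, since $0\in\cT$: indeed $0$ is a quotient of any object of $\cT$. If $\cT$ happens to contain no objects at all, then $\cF=\cA$ and the sequence $0\to 0\to A\to A\to 0$ already works; we assume $0\in\cT$ from now on. Because $\cA$ is noetherian, every nonempty set of subobjects of $A$ has a maximal element. If not, we could build a strictly ascending chain, contradicting noetherianity. Let $T_A\in S$ be maximal.

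The set $S$ is also closed under sums. For $A_1,A_2\in S$, the sum $A_1+A_2$ is the image of $A_1\oplus A_2\to A$. The direct sum $A_1\oplus A_2$ lies in $\cT$ as an extension of $A_2$ by $A_1$, and its image is a quotient of it, so $A_1+A_2\in\cT$. Combined with maximality, this shows that $T_A$ contains every subobject of $A$ lying in $\cT$.

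\textbf{Key step: the quotient is torsion-free.} Set $F_A=A/T_A$; we must show $F_A\in\cF$. Suppose $f\colon T\to F_A$ is a morphism with $T\in\cT$. Its image $I=\im f$ is a quotient of $T$, so $I\in\cT$. Let $A''\subset A$ be the preimage of $I$ under $A\to F_A$. Then there is a short exact sequence $0\to T_A\to A''\to I\to 0$, so $A''\in\cT$ by closure under extensions. Maximality of $T_A$ forces $A''=T_A$, hence $I=0$ and $f=0$. Thus $F_A\in\cF$, and $0\to T_A\to A\to F_A\to 0$ is the required sequence.

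We expect no serious obstacle: the only point needing care is the existence of a maximal element in $S$, which is exactly where noetherianity is used.
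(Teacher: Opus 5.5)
Your proof is correct and follows the paper's argument: by noetherianity you pick a maximal subobject $T_A \subset A$ lying in $\cT$, and you show $A/T_A \in \cF$ by pulling back the image of any $T \to A/T_A$ to an extension of it by $T_A$. For the artinian case you dualize, whereas the paper writes out the mirror argument with a maximal quotient in $\cF$; the two amount to the same thing. Two small remarks: the paragraph on closure under sums is never used, and your aside on empty $\cT$ does not work as written, since then $0 \notin \cT$ and $0 \to 0 \to A \to A \to 0$ does not witness generation. One should instead adopt the standard convention that $\cT$ contains $0$, as the paper tacitly does.
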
 

\begin{proof}
For any object~$A \in \cA$, by noetherianity we can choose a subobject~$T_A \subset A$
which is maximal with the property that~${T_A} \in \cT$.
Then using that~$\cT$ is closed under extensions and quotients, it is straightforward to check
that the quotient~$F_A = A/T_A$ is contained in~$\cF$.

Similarly, for any object~$A \in \cA$, by artinianity 
we can choose a quotient~\mbox{$A \twoheadrightarrow F_A$}
which is maximal with the property that~${F_A} \in \cF$
and then it is straightforward to check that the subobject~$T_A = \ker(A \to F_A)$ is contained in~$\cT$.
\end{proof}

Any torsion pair in the heart of a t-structure gives rise to a new t-structure as follows. 

\begin{proposition}[{\cite[{Proposition~I.2.1}]{HRS}}]
\label{proposition-tilting}
Let~$\cD$ be a triangulated category equipped  with a t-structure~$\tau$.
Let~$(\cT, \cF)$ be a torsion pair in~${^\tau}\cD^{\heartsuit}$.
Then there is a t-structure~$\tau^{\sharp}$, called the \emphsf{tilt of~$\tau$ with respect to~$(\cT, \cF)$}, given by
\begin{align*}
    {^{\tau^{\sharp}}}\cD^{\leq 0} & = \set{D \in {^\tau}\cD^{\leq 0\hphantom{-}} \sth {^\tau}\cH^0(D) \in \cT} , \\
    {^{\tau^{\sharp}}}\cD^{\geq 0} & = \set{D \in {^\tau}\cD^{\geq -1} \sth {^\tau}\cH^{-1}(D) \in \cF}. 
\end{align*}
\end{proposition}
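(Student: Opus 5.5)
The plan is to verify the three axioms of Definition~\ref{def:t-str} directly for the pair $(\cD'^{\le 0},\cD'^{\ge 0})$ defined in the statement, using the truncation functors of $\tau$ together with the torsion pair $(\cT,\cF)$ in $\cA={}^\tau\cD^\heartsuit$. Compatibility with shifts is immediate. If $D\in{}^\tau\cD^{\le 0}$ with ${}^\tau\cH^0(D)\in\cT$, then $D[1]\in{}^\tau\cD^{\le -1}$, so ${}^\tau\cH^0(D[1])=0\in\cT$. Dually, $D\in{}^\tau\cD^{\ge -1}$ with ${}^\tau\cH^{-1}(D)\in\cF$ gives $D[-1]\in{}^\tau\cD^{\ge 0}$, so ${}^\tau\cH^{-1}(D[-1])=0\in\cF$. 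Here we use that $0$ lies in both $\cT$ and $\cF$, which follows from~\eqref{eq:ct-cf}.

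For $\Hom$-orthogonality, take $D$ in the new connective part and $D'$ with $D'[1]$ in the new coconnective part. Then $D'\in{}^\tau\cD^{\ge 0}$ and ${}^\tau\cH^0(D')\in\cF$. Apply the truncation triangle $\tau^{\le -1}D\to D\to \cH^0(D)$. First, $\Hom(\tau^{\le -1}D,D')=0$ by $\tau$-orthogonality. Second, $\Hom(\cH^0(D),D')=\Hom(\cH^0(D),\tau^{\le 0}D')=\Hom(\cH^0(D),\cH^0(D'))$, because $\cH^0(D)\in{}^\tau\cD^{\le 0}$ and $D'\in{}^\tau\cD^{\ge0}$ gives $\tau^{\le0}D'=\cH^0(D')$. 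This last group vanishes since $\cH^0(D)\in\cT$ and $\cH^0(D')\in\cF$. The long exact $\Hom$ sequence then gives $\Hom(D,D')=0$.

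The generation property is the main step. Given $D$, let $T\subset\cH^0(D)$ be the torsion part and $F$ the torsion-free quotient. Consider the composite $\tau^{\le 0}D\to\cH^0(D)\to F$, and let $D_1$ be the cocone of this map, so there is a triangle $D_1\to\tau^{\le0}D\to F$. The long exact sequence~\eqref{eq:les} shows $D_1\in{}^\tau\cD^{\le 0}$, with $\cH^0(D_1)=T$ and $\cH^i(D_1)=\cH^i(D)$ for $i<0$; here surjectivity of $\cH^0(D)\to F$ is what kills $\cH^1(D_1)$. So $D_1$ lies in the new connective part. Completing $D_1\to\tau^{\le0}D\to D$ via the octahedral axiom, let $D_2$ be the cone of $D_1\to D$. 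The octahedron produces a triangle $F\to D_2\to\tau^{\ge1}D$. Hence $D_2\in{}^\tau\cD^{\ge 0}$ with $\cH^0(D_2)=F\in\cF$, so $D_2[1]$ lies in the new coconnective part, i.e.\ $D_2\in{}^{\tau^\sharp}\cD^{\ge0}[-1]$. This gives the required triangle $D_1\to D\to D_2$.

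The only delicate point is the octahedral bookkeeping and the identification of cohomology objects via~\eqref{eq:les}. Both are routine, so I expect no serious obstacle.
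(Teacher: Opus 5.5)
Your proof is correct and is the standard direct verification of the three axioms, which is the argument behind the HRS result that the paper cites without reproving; it also works for an arbitrary t-structure, which is the generality in which the paper uses the statement. The one step to make explicit is that $D_1\in{}^\tau\cD^{\le 1}$, since it is an extension of $\tau^{\le 0}D$ by $F[-1]$. Only then does the vanishing ${}^\tau\cH^1(D_1)=0$ force $D_1\in{}^\tau\cD^{\le 0}$: vanishing of cohomology objects alone does not suffice when $\tau$ may be degenerate.
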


\begin{remark}
\label{remark-characterizing-tilt}
If $\tau^{\sharp}$ is a t-structure obtained from $\tau$ by tilting, then it follows from the definition that there are inclusions 
\begin{equation}
\label{eq:tilt-inclusions}
{^{\tau}}\cD^{\leq -1} \subset
{^{\tau^{\sharp}}}\cD^{\leq 0} \subset
{^{\tau}}\cD^{\leq 0}
\quad \text{and} \quad
{^{\tau}}\cD^{\ge 0} \subset {^{\tau^{\sharp}}}\cD^{\ge 0} \subset {^{\tau}}\cD^{\ge -1}.
\end{equation}
Therefore, the subcategories~${}^{\tau^\sharp}\cD^? = {}^\tau\cD^?$ with~$? \in \{+,-,\mathrm{b}\}$
of bounded below, bounded above, and bounded objects in~$\cD$ are preserved by tilting.
In particular, the property that a t-structure is bounded below, or bounded above, or bounded is preserved.

We also note that tilted t-structures are characterized by relation~\eqref{eq:tilt-inclusions}. 
Namely, if~$\tau^{\sharp}$ is a t-structure for which the  inclusions~\eqref{eq:tilt-inclusions}
(or equivalently for which one of the string of inclusions hold),
then~$\tau^{\sharp}$ is obtained from~$\tau$ by tilting~\cite[Lemma~1.1.2]{polishchuk2007aisle}.
\end{remark} 

%%%%%%%%%%%%%%%%%%%%%%%%%%%%%%%%%%%%%%%%%%%%%%%%%%%%%%%

\section{Restricted t-structures} 
\label{section-restricting-t-structures} 

In this section, we discuss restriction of t-structures,
which is the most naive method for constructing a t-structure on a triangulated subcategory from one on the ambient category. 

In \S\ref{ss:restricted-def} we define restricted t-structures and discuss their basic properties. 
In~\S\ref{section-via-sod} and~\S\ref{section-via-simple-generators}
we discuss criteria for the existence of a restricted t-structure
when the subcategory is equipped with extra structure:
either a semiorthogonal decomposition or a suitable collection of simple generators.

\subsection{Definition and basic properties} 
\label{ss:restricted-def}

\begin{definition}
\label{definition-restriction-t-structure}
Let~$\cD$ be a triangulated category with a t-structure $\tau = ({}^\tau\cD^{\leq 0}, {}^\tau\cD^{\geq 0})$.
Let~$\cB \subset \cD$ be a full triangulated subcategory.
We say that $\tau$ \emphsf{restricts to a t-structure} on~$\cB$ if the pair 
\begin{equation}
\label{eq:t-restricted-body}
\tau\vert_{\cB} \coloneqq ({}^\tau\cD^{\leq 0} \cap \cB, {}^\tau\cD^{\geq 0} \cap \cB)
\end{equation}
defines a t-structure on~$\cB$, in which case $\tau\vert_{\cB}$ is called the \emphsf{restricted t-structure}. 
\end{definition}

\begin{remark}
The definition of the restricted t-structure does not require the existence of an adjoint to the embedding functor $\cB \hookrightarrow \cD$ (unlike projected t-structures discussed in \S\ref{section-projected-t-structures}), so~$\cB \subset \cD$ need not be admissible.
\end{remark} 

\begin{lemma}
\label{lemma-beta-exact}
Let~$\cD$ be a triangulated category with a t-structure~$\tau = ({}^\tau\cD^{\leq 0}, {}^\tau\cD^{\geq 0})$.
Let~$\cB \subset \cD$ be a full triangulated subcategory with embedding functor $\beta \colon \cB \to \cD$.
If $\tau$ restricts to a t-structure on $\cB$, then the restricted t-structure $\tau \vert_{\cB}$ has the following properties: 
\begin{enumerate}[label={\textup{(\arabic*)}}]
\item
\label{beta-t-exact}
$\beta \colon \cB \to \cD$ is t-exact with respect to $\tau \vert_{\cB}$ and $\tau$.
\item
\label{tau-cB-bounded}
If $\tau$ is bounded below, or bounded above, or bounded, then so is $\tau \vert_{\cB}$.
\item
\label{tau-cB-noetherian}
If $\tau$ is noetherian or artinian, then so is $\tau \vert_{\cB}$.
\end{enumerate} 
As a converse to~\ref{beta-t-exact}, if $\tau_{\cB}$ is a t-structure on $\cB$ such that $\beta \colon \cB \to \cD$ is t-exact with respect to $\tau_{\cB}$ and $\tau$, then $\tau$ restricts to a t-structure on $\cB$ and the restriction $\tau \vert_{\cB}$ coincides with $\tau_{\cB}$. 
\end{lemma}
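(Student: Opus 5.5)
The proof is essentially formal, so the plan is to check each claim directly from Definition~\ref{def:t-str} and Definition~\ref{definition-restriction-t-structure}. Write $\cB^{\le 0} = {}^\tau\cD^{\le 0}\cap\cB$ and $\cB^{\ge 0} = {}^\tau\cD^{\ge 0}\cap\cB$.

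For~\ref{beta-t-exact}, t-exactness of $\beta$ is immediate: $\beta(\cB^{\le 0}) \subset {}^\tau\cD^{\le 0}$ and $\beta(\cB^{\ge 0}) \subset {}^\tau\cD^{\ge 0}$ by construction.

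A useful preliminary observation is that the truncation functors of $\tau\vert_\cB$ agree with those of $\tau$ on objects of $\cB$. Indeed, for $B \in \cB$ the restricted truncation triangle $B^{\le 0}\to B\to B^{\ge 1}$ has $B^{\le 0}\in{}^\tau\cD^{\le 0}$ and $B^{\ge 1}\in{}^\tau\cD^{\ge 1}$. By uniqueness of truncation triangles for $\tau$, it is the $\tau$-truncation triangle of $B$. Hence $\tau^{\le n}$, $\tau^{\ge n}$ and ${}^\tau\cH^n$ preserve $\cB$, and the heart of $\tau\vert_\cB$ is ${}^\tau\cD^\heartsuit\cap\cB$.

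For~\ref{tau-cB-bounded}: if $\tau$ is bounded below, then any $B\in\cB$ lies in ${}^\tau\cD^{\ge n}$ for some $n$, hence in $\cB^{\ge n}$. The bounded above and bounded cases are the same argument.

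For~\ref{tau-cB-noetherian}, I need that $\cB^\heartsuit = {}^\tau\cD^\heartsuit\cap\cB$ is a full abelian subcategory of ${}^\tau\cD^\heartsuit$ with the same kernels and cokernels. Kernels and cokernels in a heart are computed from the cone of a morphism $f$ via ${}^\tau\cH^{-1}(\cone f)$ and ${}^\tau\cH^0(\cone f)$. Since $\cB$ is triangulated, the cone lies in $\cB$, and by the observation above so do its cohomology objects. Thus subobjects in $\cB^\heartsuit$ of an object $A\in\cB^\heartsuit$ are exactly subobjects in ${}^\tau\cD^\heartsuit$ that happen to lie in $\cB$, and monomorphisms and epimorphisms agree. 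A chain of subobjects (or of epimorphisms) in $\cB^\heartsuit$ is therefore such a chain in ${}^\tau\cD^\heartsuit$, and so it stabilizes when $\tau$ is noetherian (resp.\ artinian).

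For the converse, suppose $\tau_\cB$ is a t-structure on $\cB$ and $\beta$ is t-exact. Then ${}^{\tau_\cB}\cB^{\le 0}\subset\cB^{\le 0}$ and ${}^{\tau_\cB}\cB^{\ge 0}\subset\cB^{\ge 0}$. For $B\in\cB^{\le 0}$, take its $\tau_\cB$-truncation triangle $B'\to B\to B''$ with $B''\in{}^{\tau_\cB}\cB^{\ge 1}\subset{}^\tau\cD^{\ge 1}$. The map $B\to B''$ vanishes by $\Hom$-orthogonality for $\tau$, so $B''$ is a summand of $B'[1]$. Since $B''\in{}^{\tau_\cB}\cB^{\ge 1}$ and $B'[1]\in{}^{\tau_\cB}\cB^{\le -1}$, orthogonality for $\tau_\cB$ gives $B''=0$, so $B\in{}^{\tau_\cB}\cB^{\le 0}$. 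The coconnective parts are handled symmetrically. Hence the pair~\eqref{eq:t-restricted-body} equals $\tau_\cB$ and is a t-structure.

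The only step needing any care is the compatibility of abelian structures in~\ref{tau-cB-noetherian}, which rests on the agreement of truncations established above.
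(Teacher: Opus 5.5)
Your proof is correct and takes essentially the paper's approach: the paper just says all claims follow directly from the definitions. You carry out those routine checks explicitly, namely that truncations of $\tau\vert_\cB$ agree with those of $\tau$ on $\cB$, that the heart inclusion is exact and fully faithful for~(3), and the truncation-triangle argument for the converse.
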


\begin{proof}
All of the claims follow directly from the definitions. 
\end{proof} 

\begin{lemma}
\label{lemma-restriction-t-structure}
Let~$\cD$ be a triangulated category with a t-structure $\tau = ({}^\tau\cD^{\leq 0}, {}^\tau\cD^{\geq 0})$. 
Let~$\cB \subset \cD$ be a full triangulated subcategory.
Then the following conditions are equivalent: 
\begin{enumerate}[label={\textup{(\roman*)}}]
\item $\tau$ restricts to a t-structure on $\cB$. 
\item The truncation functor $\tau^{\leq 0}$ preserves the subcategory $\cB \subset \cD$. 
\item The truncation functor $\tau^{\geq 0}$ preserves the subcategory $\cB \subset \cD$. 
\item For all $n \in \bZ$ the truncation functors $\tau^{\leq n}$ and $\tau^{\geq n}$ preserve the subcategory $\cB \subset \cD$. 
\end{enumerate} 
\end{lemma}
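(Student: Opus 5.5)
The plan is to prove the cycle of implications (i) $\Rightarrow$ (iv) $\Rightarrow$ (ii), (iv) $\Rightarrow$ (iii), and then (ii) $\Rightarrow$ (i), (iii) $\Rightarrow$ (i).

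For (i) $\Rightarrow$ (iv), take $B \in \cB$ and use the generation property of $\tau\vert_\cB$. This gives a triangle $B' \to B \to B''$ with $B' \in {}^\tau\cD^{\leq 0} \cap \cB$ and $B'' \in {}^\tau\cD^{\geq 1} \cap \cB$. This triangle is then a truncation triangle for $\tau$ in $\cD$. By uniqueness of truncation triangles, $\tau^{\leq 0}B \cong B'$ and $\tau^{\geq 1}B \cong B''$ both lie in $\cB$; here we use that $\cB$ is strictly full, or pass to the isomorphism closure. Applying this to $B[n] \in \cB$ and shifting back, we get $\tau^{\leq n}B, \tau^{\geq n+1}B \in \cB$ for all $n$, which is (iv). The implications (iv) $\Rightarrow$ (ii) and (iv) $\Rightarrow$ (iii) are trivial.

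For (ii) $\Rightarrow$ (i), take $B \in \cB$. Then $\tau^{\leq 0}B \in \cB$ by assumption, and $\tau^{\geq 1}B \cong \cone(\tau^{\leq 0}B \to B)$ lies in $\cB$ because $\cB$ is a triangulated subcategory. So the truncation triangle of $B$ lies in $\cB$. Its outer terms lie in ${}^\tau\cD^{\leq 0}\cap\cB$ and ${}^\tau\cD^{\geq 0}[-1]\cap\cB$, which gives the generation property for the pair \eqref{eq:t-restricted-body}. Compatibility with shifts and $\Hom$-orthogonality are inherited from $\tau$, since $\cB$ is full and closed under shifts.

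The implication (iii) $\Rightarrow$ (i) is the same argument with the roles swapped. The hypothesis only concerns $\tau^{\geq 0}$, not $\tau^{\geq 1}$. But $\tau^{\geq 1}B = \tau^{\geq 0}(B[1])[-1]$ and $\cB$ is shift-closed, so $\tau^{\geq 1}B \in \cB$. Then $\tau^{\leq 0}B$ is, up to a shift, the cone of $B \to \tau^{\geq 1}B$, and hence also lies in $\cB$.

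There is no real obstacle here; the only point needing care is the bookkeeping between $\tau^{\geq 0}$ and $\tau^{\geq 1}$ via shifts, and the use of uniqueness of truncation triangles in the first step.
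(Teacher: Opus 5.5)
Your proposal is correct and takes essentially the same approach as the paper, which simply states that the lemma is an immediate consequence of the definitions. You spell out that routine check: uniqueness of truncation triangles for (i)~$\Rightarrow$~(iv), closure of $\cB$ under cones and shifts for (ii), (iii)~$\Rightarrow$~(i), and the correct shift bookkeeping $\tau^{\geq 1}(B) \cong \tau^{\geq 0}(B[1])[-1]$.
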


\begin{proof}
This is an immediate consequence of the definitions. 
\end{proof}

\begin{example}
\label{example-E-in-Dheart}
Let $\cD$ be a $\kk$-linear triangulated category for a field $\kk$. 
Let $\tau$ be a t-structure on $\cD$. 
Let $E \in {^\tau}\cD^{\heartsuit}$ be an exceptional object contained in the heart of $\tau$. 
Then $\tau$ restricts to a t-structure on the triangulated subcategory $\cB = \langle E \rangle \subset \cD$ 
generated by $E$, 
with heart
\begin{equation*}
{^{\tau\vert_{\cB}}}\cB^{\heartsuit} = \set{E^{\oplus n} \sth n \geq 0}. 
\end{equation*}
Under the equivalence $\Db(\kk) \to \cB$, $V \mapsto V \otimes E$,
the restricted t-structure $\tau_{\cB}$ simply corresponds to the standard t-structure on~$\Db(\kk)$.
\end{example} 

\begin{example}
Let~$\cD$ be a triangulated category equipped with a t-structure~$\tau$.
Then~$\tau$ restricts to:
\begin{itemize}
\item a bounded below t-structure on the subcategory~${}^\tau\cD^+$;
\item a bounded above t-structure on the subcategory~${}^\tau\cD^-$; and
\item a bounded t-structure on the subcategory~${}^\tau\cD^{\mathrm{b}}$.
\end{itemize} 
Moreover, the heart of each of these restricted t-structures coincides with that of~$\tau$.
\end{example}

\subsection{Existence via semiorthogonal decompositions}
\label{section-via-sod} 

The following result is a slight elaboration of a lemma of Collins and Polishchuk~\cite{collins2010gluing}. 

\begin{lemma}
\label{lemma-t-structure-gluing}
Let~$\cB$ be a triangulated category and let~$\cB = \langle \cB_1, \cB_2, \dots, \cB_n \rangle$ be a semiorthogonal decomposition
with projection functors~$\pr_i \colon \cB \to \cB_i$.
Assume that~$\tau_i$ is a t-structure on~$\cB_i$ for~\mbox{$1 \leq i \leq n$} such that
\begin{equation*}
\Hom(B_i,B_j[m]) = 0
\quad\text{for all $B_i \in {^{\tau_i}}\cB_i^{\heartsuit}$, $B_j \in {^{\tau_j}}\cB_j^{\heartsuit}$, $i < j$, and~$m \leq 0$.}
\end{equation*}
Then there is a t-structure~${\tau_\cB}$ on~$\cB$, called the \emphsf{gluing of the~$\tau_i$}, defined by
\begin{equation}
\label{eq:t-gluing}
\begin{aligned}
{^{\tau_\cB}}\cB^{\leq 0} & = \set{ B \in \cB \sth \pr_i(B) \in {^{\tau_i}}\cB_i^{\leq 0} \textup{ for all } i} , \\
{^{\tau_\cB}}\cB^{\geq 0} & = \set{ B \in \cB \sth \pr_i(B) \in {^{\tau_i}}\cB_i^{\geq 0} \textup{ for all } i} .
\end{aligned}
\end{equation}
Moreover, we have the following t-exactness properties:
\begin{enumerate}[label={\textup{(\arabic*)}}]
\item
\label{it:inclusions-exact}
The inclusions $\cB_i \to \cB$ are t-exact with respect to $\tau_i$ and $\tau_\cB$.
\item
\label{phi-glued-t-structure-t-exact}
If~$\cD$ is a triangulated category with a t-structure~${\tau}$ and~${\beta} \colon \cB \to \cD$ is a triangulated functor,
then~$\beta$ is \textup(right or left\textup) t-exact with respect to~$\tau_\cB$ and~$\tau$
if and only if the compositions~$\cB_i \to \cB \xrightarrow{\beta} \cD$ are \textup(right or left\textup) t-exact
with respect to~$\tau_i$ and~$\tau$ for all~$i$.
\end{enumerate}
\end{lemma}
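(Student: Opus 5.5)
We argue by induction on $n$, reducing everything to the case $n = 2$. For $n \ge 3$, set $\cB' = \langle \cB_2, \dots, \cB_n \rangle$, so that $\cB = \langle \cB_1, \cB' \rangle$. The projection functors of $\cB'$ onto $\cB_i$ ($i \ge 2$) agree with the restrictions of $\pr_i$, and for $B \in \cB$ we have $\pr_i(B) = \pr'_i(\pr_{\cB'}(B))$. By induction, the glued t-structure $\tau'$ exists on $\cB'$. Its heart consists of iterated extensions of objects of the form $B_j[k]$ with $B_j \in {^{\tau_j}}\cB_j^\heartsuit$, $j \ge 2$, $k \le 0$: such objects lie in $\tau'$-coconnective and connective parts, and the gluing formulas identify the heart via the filtration triangles. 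It follows that the Hom-vanishing hypothesis between $\tau_1$ and $\tau'$ holds, since $\Hom(B_1, B'[m]) = 0$ for $m \le 0$ reduces by dévissage to $\Hom(B_1, B_j[m+k]) = 0$ with $m + k \le 0$. The gluing of $\tau_1$ with $\tau'$ then gives exactly~\eqref{eq:t-gluing}.

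For $n = 2$, write $\cB = \langle \cB_1, \cB_2 \rangle$. Here $\pr_2 = \beta_2\beta_2^!$ is the right adjoint projection, $\pr_1 = \beta_1\beta_1^*$ is the left adjoint projection, and every $B$ sits in a triangle $\pr_2 B \to B \to \pr_1 B$. Shift compatibility is clear.

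For Hom-orthogonality, take $B \in \cB^{\le 0}$ and $B' \in \cB^{\ge 1}$, and use the triangles for both objects. The groups $\Hom(\pr_2B, \pr_1B')$ vanish by semiorthogonality. The groups $\Hom(\pr_iB, \pr_iB')$ vanish by orthogonality for $\tau_i$. This leaves $\Hom(\pr_1B, \pr_2B')$ with $\pr_1 B \in \cB_1^{\le 0}$ and $\pr_2 B' \in \cB_2^{\ge 1}$. Here we use the hypothesis: $\Hom(\cB_1^{\le 0}, \cB_2^{\ge 1}) = 0$ follows by dévissage from $\Hom(B_1[a], B_2[-b]) = 0$ for $a \ge 0$, $b \ge 1$. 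This dévissage requires the objects to be bounded. To avoid that assumption, I would instead prove the vanishing via truncations, or more simply replace the hypothesis by the equivalent statement that $\Hom(\cB_1^{\le 0}, \cB_2^{\ge 1}) = 0$. I expect that this equivalence is what Collins--Polishchuk prove, and I would cite \cite{collins2010gluing} for it. This is the main obstacle.

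For generation, the classical gluing construction of BBD, adapted to semiorthogonal decompositions as in Collins--Polishchuk, applies. Given $B$, first truncate $\pr_1 B$ and form $B \to \pr_1 B \to \tau_1^{\ge 1}\pr_1B$. Let $B''$ be the fiber and truncate its $\cB_2$-component, then assemble the two pieces via the octahedral axiom. This produces $B^{\le 0} \to B \to B^{\ge 1}$ with $\pr_2$ and $\pr_1$ of the ends lying in the correct aisles. Again the Hom-vanishing is used to see that $\pr_1$ of the middle term is unchanged.

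For \ref{it:inclusions-exact}: for $B_i \in \cB_i$ we have $\pr_i B_i = B_i$ and $\pr_j B_i = 0$ for $j \ne i$. For \ref{phi-glued-t-structure-t-exact}: the "only if" direction follows from \ref{it:inclusions-exact}. For "if", the aisle $\cB^{\le 0}$ is built from the objects $\pr_i(B) \in \cB_i^{\le 0}$ by the filtration triangles, all of which lie in the aisle by \ref{it:inclusions-exact}. Hence $\cB^{\le 0}$ is the extension closure of the images of the $\cB_i^{\le 0}$, and similarly for $\cB^{\ge 0}$. Since $\beta$ preserves triangles and $\cD^{\le 0}$, $\cD^{\ge 0}$ are extension closed, the claim follows.
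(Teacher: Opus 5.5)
\textbf{Verdict: there is a genuine gap in your treatment of the case $n=2$.} Your overall plan matches the paper's: reduce to $n=2$ by induction, quote Collins--Polishchuk for $n=2$, observe that (1) is immediate, and get (2) from the filtration of $\beta(B)$ with graded pieces $\beta(\pr_i(B))$.

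Write $\cB_i^{\le 0},\cB_i^{\ge 0}$ for the two parts of $\tau_i$. The condition you single out, $\Hom(\cB_1^{\le 0},\cB_2^{\ge 1})=0$, is what Hom-orthogonality needs. It only encodes the cases $m\le -1$ of the hypothesis. It is \emph{not} equivalent to the hypothesis, and it does not give the generation axiom.

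In your construction, replace $B$ by $B''$ with $\pr_1(B'')=\tau_1^{\le 0}\pr_1(B)$ and $\pr_2(B'')=\pr_2(B)$. Cutting off $\tau_2^{\ge 1}\pr_2(B)$ then requires the connecting morphism $\tau_1^{\le 0}\pr_1(B)\to\pr_2(B)[1]$ to factor through $(\tau_2^{\le 0}\pr_2(B))[1]$. Equivalently, its composite to $(\tau_2^{\ge 1}\pr_2(B))[1]\in\cB_2^{\ge 0}$ must vanish. So the real input is $\Hom(\cB_1^{\le 0},\cB_2^{\ge 0})=0$. This is where the case $m=0$ of the hypothesis enters, and nothing in your sketch uses it.

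This vanishing is also necessary. Suppose the glued pair is a t-structure and $f\colon X\to Y$ with $X\in\cB_1^{\le 0}$, $Y\in\cB_2^{\ge 0}$. Set $B=\cone(f)[-1]$ and apply the functors $\pr_i$ (t-exact for the glued pair by its definition) to the truncation triangle of $B$. This shows the truncation triangle is $X\to B\to Y[-1]$, which splits because $\Hom(\cB_2,\cB_1)=0$. Comparing with the decomposition triangle $Y[-1]\to B\to X\xrightarrow{f}Y$ gives $f=0$.

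Concrete failure: take $\Db(\bP^1)=\langle\cO,\cO(1)\rangle$ with the standard t-structures on both components. Your condition holds, but $\Hom(\cO,\cO(1))\ne 0$, so the glued pair is not a t-structure. Thus, with your proposed replacement of the hypothesis, the statement you would be proving is false.

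\textbf{On boundedness.} Your worry is legitimate, but it cannot be removed ``via truncations''. Passing from the vanishing on hearts to $\Hom(\cB_1^{\le 0},\cB_2^{\ge 0})=0$ is a d\'evissage that needs boundedness, and without it the gluing statement itself fails. For instance, take $\tau_1=(\cB_1,\{0\})$ in the $\bP^1$ example. Its heart is zero, so the hypothesis is vacuous. Yet $\cO[-1]$ lies in the glued connective part, $\cO(1)[-1]$ lies in ${}^{\tau_\cB}\cB^{\ge 1}$, and $\Hom(\cO[-1],\cO(1)[-1])\ne 0$.

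The paper's proof does not address this point; it simply cites Collins--Polishchuk for $n=2$. A sound version of your argument should work with the hypothesis $\Hom(\cB_i^{\le 0},\cB_j^{\ge 0})=0$ for $i<j$. This is equivalent to the stated hypothesis when the $\tau_i$ are bounded. With it, your induction over $\langle\cB_2,\dots,\cB_n\rangle$ goes through as you describe, since the glued coconnective part there is the extension closure of the $\cB_j^{\ge 0}$.

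In the paper's actual application, Lemma~\ref{lemma-t-structure-restrict-component-wise}, all $\tau_i$ are restrictions of a single $\tau$. There the needed vanishing does follow by truncation, because $\Hom(X,Y)\cong\Hom({}^\tau\cH^0(X),{}^\tau\cH^0(Y))$ for $X\in{}^\tau\cD^{\le 0}$ and $Y\in{}^\tau\cD^{\ge 0}$.
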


\begin{proof}
The description of the t-structure for~$n = 2$ is~\cite[Lemma 2.1]{collins2010gluing},
and the general case follows easily from this by induction.
It remains to prove properties~\ref{it:inclusions-exact} and~\ref{phi-glued-t-structure-t-exact}.

\ref{it:inclusions-exact}
The t-exactness of the inclusion~$\cB_i \to \cB$ is immediate from~\eqref{eq:t-gluing}.

\ref{phi-glued-t-structure-t-exact}
If~$\beta$ is (right or left) t-exact then~\ref{it:inclusions-exact} implies that so is the composition~$\cB_i \to \cD$.
Conversely, suppose for instance that the compositions~$\cB_i \to \cD$ are right t-exact for all~$i$.
For any~$B \in \cB$, its image~$\beta(B)$ admits a filtration with graded pieces the objects~$\beta(\pr_i(B))$.
If~$B \in {^{\tau_\cB}}\cB^{\leq 0}$ then~{$\pr_i(B) \in {}^{\tau_i}\cB_i^{\le 0}$ by the definition of~$\tau_\cB$. 
Then by the right t-exactness of~$\beta$ on each~$\cB_i$ we have~$\beta(\pr_i(B)) \in {^{\tau}}\cD^{\leq 0}$,
and hence~$\beta(B) \in {^{\tau}}\cD^{\leq 0}$}.
Therefore, $\beta$ is right t-exact.
The case where the compositions $\cB_i \to \cD$ are left t-exact is analogous.
\end{proof}

Given a triangulated subcategory $\cB \subset \cD$ which admits a semiorthogonal decomposition,
we obtain conditions under which a t-structure restricts to $\cB$ if it does so component-by-component.

\begin{lemma}
\label{lemma-t-structure-restrict-component-wise}
Let~$\cD$ be a triangulated category equipped with a t-structure~$\tau$.
Let~$\cB \subset \cD$ be a triangulated subcategory with a semiorthogonal decomposition~$\cB = \langle \cB_1, \dots, \cB_n \rangle$.
Assume that~$\tau$ 
restricts to a t-structure~$\tau_{i}  = \tau\vert_{\cB_i}$ on~$\cB_i$ for all~$1 \leq i \leq n$, and
\begin{equation*}
\Hom(B_i, B_j) = 0
\quad
\text{for all~$B_i \in {^{\tau_{i}}}\cB_{i}^{\heartsuit}$, $B_j \in {^{\tau_{j}}}\cB_{j}^{\heartsuit}$, and~$i < j$.}
\end{equation*}
Then~$\tau$ restricts to a t-structure on~$\cB$, which coincides with the gluing of the~$\tau_{i}$.
\end{lemma}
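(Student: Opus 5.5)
The plan is to glue the restricted t-structures $\tau_i$ using Lemma~\ref{lemma-t-structure-gluing}, and then compare the result with $\tau$ through the converse statement in Lemma~\ref{lemma-beta-exact}.

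First we verify the gluing hypothesis. We need $\Hom(B_i,B_j[m])=0$ for $B_i\in{}^{\tau_i}\cB_i^\heartsuit$, $B_j\in{}^{\tau_j}\cB_j^\heartsuit$, $i<j$ and $m\le 0$. For $m=0$ this is exactly the assumption. For $m<0$ it follows from $\Hom$-orthogonality for $\tau$ on $\cD$. Indeed, both $\tau_i$ and $\tau_j$ are restrictions of $\tau$, so $B_i$ and $B_j$ lie in ${}^\tau\cD^\heartsuit$. Then $B_i\in{}^\tau\cD^{\le 0}$ and $B_j[m]\in{}^\tau\cD^{\ge -m}\subset{}^\tau\cD^{\ge 1}$. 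Lemma~\ref{lemma-t-structure-gluing} therefore gives a glued t-structure $\tau_\cB$ on $\cB$.

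Next we show that the embedding $\beta\colon\cB\to\cD$ is t-exact with respect to $\tau_\cB$ and $\tau$. By Lemma~\ref{lemma-t-structure-gluing}\ref{phi-glued-t-structure-t-exact}, it suffices to check that each composition $\cB_i\to\cB\to\cD$ is right and left t-exact with respect to $\tau_i$ and $\tau$. This composition is just the inclusion of $\cB_i$, which is t-exact by Lemma~\ref{lemma-beta-exact}\ref{beta-t-exact} because $\tau_i=\tau\vert_{\cB_i}$.

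Finally, the converse part of Lemma~\ref{lemma-beta-exact} applies to $\tau_\cB$. It shows that $\tau$ restricts to a t-structure on $\cB$ and that $\tau\vert_\cB=\tau_\cB$, which is the glued t-structure.

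The only point needing care is the case $m<0$ in the gluing hypothesis. This is the one place where the fact that all the $\tau_i$ come from the single ambient $\tau$ is used. The remaining steps are formal consequences of the two earlier lemmas.
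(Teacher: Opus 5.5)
Your proposal is correct and follows the paper's proof essentially step for step. The paper also checks the gluing hypothesis for $m<0$ using the fact that the hearts of the $\tau_i$ lie in ${}^\tau\cD^\heartsuit$, glues, obtains t-exactness of the inclusion $\cB \to \cD$ from Lemma~\ref{lemma-t-structure-gluing}\ref{phi-glued-t-structure-t-exact}, and concludes with the converse in Lemma~\ref{lemma-beta-exact}.
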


\begin{proof} 
It follows from~\eqref{eq:t-restricted-body} that~${}^{\tau_i}\cB_i^\heartsuit = \cB_i \cap {}^\tau\cD^\heartsuit$;
therefore, $\Hom(B_i,B_j[m]) = 0$ for all~\mbox{$B_i \in {^{\tau_{i}}}\cB_{i}^{\heartsuit}$},
\mbox{$B_j \in {^{\tau_{j}}}\cB_{j}^{\heartsuit}$}, $i < j$, and~$m < 0$,
because such~$B_i$ and $B_j$ are in particular contained in the heart~${^\tau}\cD^{\heartsuit}$.
Hence the assumptions of Lemma~\ref{lemma-t-structure-gluing} are indeed satisfied,
so that we can glue the t-structures~$\tau_i$ to get a t-structure~$\tau_{\cB}$ on~$\cB$.
Since each inclusion~$\cB_i \to \cD$ is t-exact with respect to~$\tau_i$ and~$\tau$,
by Lemma~\ref{lemma-t-structure-gluing}\ref{phi-glued-t-structure-t-exact}
the inclusion~$\cB \to \cD$ is also t-exact with respect to~$\tau_{\cB}$ and~$\tau$.
Finally, by Lemma~\ref{lemma-beta-exact}, $\tau$ restricts to the t-structure~$\tau_{\cB}  = \tau\vert_\cB$ on~$\cB$.
\end{proof} 

\begin{corollary}
Let~$\cD$ be a $\kk$-linear triangulated category for a field~$\kk$,
equipped with a t-structure~$\tau$.
Let~$E_1, \dots, E_n \in \cD^{\heartsuit}$ be an exceptional sequence of objects in the heart of~$\tau$ 
which satisfies the forward $\Hom$-vanishing condition:
\begin{equation}
\label{eq:hom-vanishing}
\Hom(E_i, E_j) = 0
\quad
\text{for all $i < j$}.
\end{equation}
Then $\tau$ restricts to a t-structure on the triangulated subcategory 
$\langle E_1, \dots, E_n \rangle \subset \cD$ generated by the exceptional sequence. 
\end{corollary}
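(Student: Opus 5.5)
The plan is to apply Lemma~\ref{lemma-t-structure-restrict-component-wise} to the subcategory $\cB = \langle E_1, \dots, E_n \rangle \subset \cD$, equipped with its semiorthogonal decomposition $\cB = \langle \cB_1, \dots, \cB_n \rangle$ with $\cB_i = \langle E_i \rangle$.

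First we check that this decomposition exists. By Lemma~\ref{lemma-exceptional-sequence}, each $\cB_i$ is admissible and equivalent to $\Db(\kk)$ via $\phi_{E_i}$. Moreover $\RHom(E_i,E_j)=0$ for $i>j$, so the inductive argument via Lemma~\ref{lemma-sod} applies inside $\cB$ itself. Here $\cB$ means the triangulated subcategory generated by the $E_i$; we take it to be the smallest full triangulated subcategory containing them. Its components are exactly the $\cB_i$, since any object built from the $E_i$ admits the required filtration by the standard induction.

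Second, by Example~\ref{example-E-in-Dheart}, since each $E_i$ is exceptional and lies in ${}^\tau\cD^\heartsuit$, the t-structure $\tau$ restricts to a t-structure $\tau_i$ on $\cB_i$. Its heart is $\set{E_i^{\oplus m} \sth m\ge 0}$.

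Third, we verify the Hom-vanishing hypothesis of Lemma~\ref{lemma-t-structure-restrict-component-wise}. For $i<j$, objects $B_i \in {}^{\tau_i}\cB_i^\heartsuit$ and $B_j\in{}^{\tau_j}\cB_j^\heartsuit$ are finite direct sums $E_i^{\oplus a}$ and $E_j^{\oplus b}$. Hence $\Hom(B_i,B_j)$ is a direct sum of copies of $\Hom(E_i,E_j)$, which vanishes by the forward Hom-vanishing condition \eqref{eq:hom-vanishing}.

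The lemma then gives that $\tau$ restricts to $\cB$, with the restriction equal to the gluing of the $\tau_i$. There is no serious obstacle. The only point needing a little care is the first step, namely identifying $\langle E_1,\dots,E_n\rangle$ with a category carrying this semiorthogonal decomposition.
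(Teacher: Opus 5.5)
Your proposal is correct and follows the paper's proof: you restrict $\tau$ to each $\langle E_i \rangle$ via Example~\ref{example-E-in-Dheart}, then use the forward $\Hom$-vanishing to check the hypothesis of Lemma~\ref{lemma-t-structure-restrict-component-wise}. Your extra check that $\langle E_1,\dots,E_n\rangle = \langle \langle E_1\rangle,\dots,\langle E_n\rangle\rangle$ is a semiorthogonal decomposition is standard, and the paper leaves it implicit.
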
 

\begin{proof}
By Example~\ref{example-E-in-Dheart}, for each $1 \leq i \leq n$ the t-structure $\tau$ restricts to a t-structure $\tau_i$ 
on the triangulated subcategory~$\cB_i \coloneqq \langle E_i \rangle \subset \cD$ generated by $E_i$.
In view of the condition~\eqref{eq:hom-vanishing}, 
we may apply Lemma~\ref{lemma-t-structure-restrict-component-wise} to deduce the result. 
\end{proof} 

 \subsection{Existence via simple generators}
 \label{section-via-simple-generators}

Now we discuss the case where our subcategory is generated by a $\Hom$-orthogonal collection of $\Hom$-simple objects. 

\begin{proposition}
\label{prop:simple-heart}
Let~$\cD$ be a $\kk$-linear triangulated category for a field~$\kk$, equipped with a t-structure~$\tau$.
Let~$B_1,\dots,B_n\in {}^\tau\cD^\heartsuit$ be a finite set of objects in the heart of~$\tau$ such that
\begin{enumerate}[label={\textup{(\arabic*)}}]
\item
$\Hom(B_i,B_i) = \kk$ for all~$i$, and
\item
$\Hom(B_i,B_j) = 0$ for all~$i \ne j$.
\end{enumerate}
Then the full subcategory~$\cB \subset \cD$ consisting of objects~$B \in \cD$
such that every cohomology object~${}^\tau\cH^n(B)$ belongs to the subcategory
\begin{equation*}
\langle B_1, \dots, B_n \rangle_{\mathrm{ext}} \subset {}^\tau\cD^\heartsuit
\end{equation*}
formed by finite iterated extensions of the objects~$B_i$ is a thick triangulated subcategory of~$\cD$. 

Moreover, $\tau$ restricts to a t-structure on~$\cB$ with noetherian and artinian heart,
and to the subcategory
\begin{equation*}
{}^{\tau\vert_\cB}\cB^{\mathrm{b}} = {}^\tau\cD^{\mathrm{b}} \cap \cB
\end{equation*}
which is the triangulated subcategory of~${}^\tau\cD^{\mathrm{b}}$ generated by~$B_1,\dots,B_n$.
\end{proposition}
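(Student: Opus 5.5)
The plan is to first establish that $\cA_0 \coloneqq \langle B_1,\dots,B_n\rangle_{\mathrm{ext}}$ is a Serre subcategory of the abelian category ${}^\tau\cD^\heartsuit$, i.e.\ closed under subobjects, quotients and extensions. Closure under extensions holds by construction. For subobjects and quotients, I would show that every object $A \in \cA_0$ has finite length inside $\cA_0$, with the $B_i$ as its simple objects. The key observation is that each $B_i$ is simple in $\cA_0$: if $A' \hookrightarrow B_i$ is a nonzero morphism from an object of $\cA_0$, pick a filtration of $A'$ with subquotients among the $B_j$. Its bottom piece $B_j \hookrightarrow A' \hookrightarrow B_i$ is a nonzero map, so $j = i$ by the orthogonality $\Hom(B_i,B_j)=0$ for $i\ne j$. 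Since $\Hom(B_i,B_i)=\kk$, this map is an isomorphism. It follows that $A' \to B_i$ is split surjective, hence an isomorphism, because it is mono.

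However, simplicity within $\cA_0$ does not directly control subobjects in ${}^\tau\cD^\heartsuit$ that lie outside $\cA_0$. This is the main obstacle, and I suspect the intended statement should be read with $\cA_0$ closed under kernels and cokernels of morphisms between its objects, rather than under arbitrary subobjects. That weaker property suffices below. I would prove it by induction on filtration length. Given $f\colon A\to A'$ in $\cA_0$, take the first step $B_i \subset A$ of a filtration of $A$. The composite $B_i \to A'$ is either zero, in which case $f$ factors through $A/B_i$, or, running the same argument on a filtration of $A'$ and using that every nonzero map $B_i \to B_j$ is an isomorphism, it is a monomorphism whose image is again a piece in some filtration of $A'$. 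This is a Jordan--Hölder-type argument; the exceptional-type orthogonality makes every nonzero map between the generators invertible. Then kernel and cokernel are computed by the snake lemma from shorter data, so $\cA_0$ is an abelian subcategory closed under extensions, i.e.\ a wide subcategory whose objects have finite length.

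With this in hand, thickness of $\cB$ follows from the long exact sequence~\eqref{eq:les}. For a triangle $D_1\to D_2\to D_3$ with $D_1,D_2\in\cB$, each $\cH^i(D_3)$ sits in an exact sequence $\cH^i(D_2)\to\cH^i(D_3)\to\cH^{i+1}(D_1)\to \cH^{i+1}(D_2)$. So $\cH^i(D_3)$ is an extension of a kernel by a cokernel of maps in $\cA_0$, hence lies in $\cA_0$. Shifts are clear, and summands are handled via $\cH^i(D\oplus D')=\cH^i(D)\oplus\cH^i(D')$ together with the fact that summands are cokernels of idempotents in $\cA_0$. Next, $\tau^{\le 0}$ preserves $\cB$ because it only discards cohomology, so Lemma~\ref{lemma-restriction-t-structure} gives the restricted t-structure. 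Its heart is exactly $\cA_0$, and it is noetherian and artinian because every object of $\cA_0$ has finite length: its $\cB$-subobjects are subobjects in the heart lying in $\cA_0$, and the length, which counts the $B_i$ in a filtration and is well defined by Jordan--Hölder, is additive.

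Finally, for an object $B \in {}^\tau\cD^{\mathrm b}\cap\cB$, induction on the number of nonzero cohomologies via truncation triangles places it in the triangulated hull of $\cA_0$, and hence in that of $B_1,\dots,B_n$. Conversely, each $B_i$ lies in the thick subcategory ${}^\tau\cD^{\mathrm b}\cap\cB$, so this subcategory contains the triangulated subcategory generated by $B_1,\dots,B_n$.
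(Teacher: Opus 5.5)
Your proof is correct and is essentially the paper's argument. You show that $\langle B_1,\dots,B_n\rangle_{\mathrm{ext}}$ is closed under kernels, images and cokernels by induction on filtration length: for a map $B_i \to A'$ you compose with the \emph{top} factor $A' \twoheadrightarrow B_j$ of a filtration, which gives either a split monomorphism or a factorization through a shorter object. Thickness then follows from the long exact sequence, restriction from the truncations, and finite length from simplicity of the $B_i$. Note that the proposition never asserts the Serre property, which genuinely fails: take $B_1 = \cO(1)$ on $\bP^1$, where $\cO \subset \cO(1)$ lies outside $\langle \cO(1)\rangle_{\mathrm{ext}}$. So closure under kernels and cokernels is not a reinterpretation of the statement; it is exactly what the statement requires.
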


\begin{proof}
The first claim follows from the argument of~\cite[Lemma~3.6]{Pir25}.
Since the context of~\cite[Lemma~3.6]{Pir25} is somewhat different, we sketch the proof.

Let~$B',B'' \in \langle B_1, \dots, B_n \rangle_{\mathrm{ext}}$.
We check that the kernel, image, and cokernel of any morphism~$\phi \colon B' \to B''$
all belong to~$\langle B_1, \dots, B_n \rangle_{\mathrm{ext}}$.

First, assume~\mbox{$B' = B_i$}.
The object~$B''$ by assumption has a filtration with all factors isomorphic to~$B_j$ for some~$1 \le j \le n$.
If the composition~$B_i \xrightarrow{\ \phi\ } B'' \xrightarrow{\quad} B_j$ with the projection to the last factor of the filtration is nonzero,
then~$j = i$ and~$\phi$ is a split monomorphism, hence~$\ker(\phi) = 0$, $\im(\phi) = B_i$,
and~$\coker(\phi) \in \langle B_1, \dots, B_n \rangle_{\mathrm{ext}}$.
Otherwise~$\phi$ factors through~$B''' \coloneqq \ker(B'' \to B_j)$, which belongs to~$\langle B_1, \dots, B_n \rangle_{\mathrm{ext}}$.
By induction on the length of the filtration,
the kernel, image and cokernel of~$B_i \to B'''$ are in~$\langle B_1, \dots, B_n \rangle_{\mathrm{ext}}$,
and it follows easily that the same is true for~$\ker(\phi)$, $\im(\phi)$, and~$\coker(\phi)$.

Next, if~$B' \in \langle B_1, \dots, B_n \rangle_{\mathrm{ext}}$ is arbitrary, a simple induction
on the length of the filtration of~$B'$ with factors isomorphic to~$B_i$ proves the claim.

Now, it follows easily that the subcategory~$\cB \subset \cD$ consisting of objects~$B$
such that every cohomology~${}^\tau\cH^n(B)$ belongs to~$\langle B_1, \dots, B_n \rangle_{\mathrm{ext}}$
is a thick triangulated subcategory of~$\cD$.

The above description implies that~$\cB$ is stable under the truncation functors of~$\tau$,
and hence~$\tau$ restricts to a t-structure~$\tau_\cB$ on~$\cB$ (Lemma~\ref{lemma-restriction-t-structure}).
The above argument also shows that any nonzero morphism~$B_i \to {B}$ in~${}^{\tau_\cB}\cB^\heartsuit$ is a monomorphism,
hence~$B_i$ is a simple object in~${}^{\tau_\cB}\cB^\heartsuit$,
and therefore~${}^{\tau_\cB}\cB^\heartsuit$ is a category of finite length;
in particular, it is noetherian and artinian.

It remains to note that the category~${^{\tau \vert_{\cB}}}\cB^{\mathrm{b}}$ is generated by the objects~$B_1,\dots,B_n$. 
Indeed, every object~$B \in {^{\tau \vert_{\cB}}}\cB^{\mathrm{b}}$ is a finite iterated extension
of its (appropriately shifted) cohomology objects~${}^{\tau}\cH^n(B) \in \langle B_1, \dots, B_n \rangle_{\mathrm{ext}}$, and 
every object of~$\langle B_1, \dots, B_n \rangle_{\mathrm{ext}}$
is by definition a finite iterated extension of the objects $B_i$. 
\end{proof}

%%%%%%%%%%%%%%%%%%%%%%%%%%%%%%%%%%%%%%%

\section{Projected t-structures} 
\label{section-projected-t-structures}

In this section, we discuss projection of t-structures, which is a method for constructing a t-structure
on a right or left admissible subcategory from one on the ambient category.

In  \S\ref{ss:projected-basic} we define projected t-structures and discuss their basic properties. 
In~\S\ref{ss:projected-t-amplitude} we study t-amplitude properties of the projection functors for a semiorthogonal decomposition, and deduce a criterion for the existence of projected t-structures (Theorem~\ref{theorem-projected-t-structures}). 
In the auxiliary subsections~\S\ref{ss:projected-stability} and~\S\ref{section-complements-es}, 
we explain some special features of the projection of a t-structure underlying a stability condition, as well as the relation between our results and those in~\cite{BLMS}. 

\subsection{Definition and basic properties} 
\label{ss:projected-basic}

\begin{definition}
\label{definition-projection-t-structure}
Let~$\cD$ be a triangulated category with a t-structure~$\tau = ({}^\tau\cD^{\leq 0}, {}^\tau\cD^{\geq 0})$.
Let~$\cC$ be a triangulated category with a fully faithful triangulated functor~$\gamma \colon \cC \to \cD$.
We say that:
\begin{enumerate}[label={\textup{(\alph*)}}]
\item
$\tau$ \emphsf{right projects to a t-structure on~$\cC$} if the functor~$\gamma$ has a right adjoint~$\gamma^!$ and the pair
\begin{equation*}
\gamma^!\tau \coloneqq  (\gamma^!({}^\tau\cD^{\leq 0}), \gamma^!({}^\tau\cD^{\geq 0}))
\end{equation*}
defines a t-structure on $\cC$, in which case $\gamma^!\tau$ is called the \emphsf{right projected t-structure}.

\item
$\tau$ \emphsf{left projects to a t-structure on~$\cC$} if the functor~$\gamma$ has a left adjoint~$\gamma^*$ and the pair
\begin{equation*}
\gamma^*\tau \coloneqq  (\gamma^*({}^\tau\cD^{\leq 0}), \gamma^*({}^\tau\cD^{\geq 0}))
\end{equation*} 
defines a t-structure on $\cC$, in which case $\gamma^*\tau$ is called the \emphsf{left projected t-structure}.
\end{enumerate}
\end{definition}

The following lemma summarizes the basic properties of projected t-structures.

\begin{lemma}
\label{lemma-projection-tau-cB}
Let~$\cD$ be a triangulated category equipped with a t-structure~$\tau = ({}^\tau\cD^{\leq 0}, {}^\tau\cD^{\geq 0})$.
Let $\cC$ be a triangulated category with a fully faithful triangulated functor~$\gamma \colon \cC \to \cD$. 
Assume either: 
\begin{enumerate}[label={\textup{(\alph*)}}]
\item
\label{it:projection-right}
$\tau$ right projects to a t-structure on $\cC$; or 
\item 
\label{it:projection-left} 
$\tau$ left projects to a t-structure on $\cC$. 
\end{enumerate}
For uniformity of notation below, we let~$\gamma^{\bullet} \colon \cD \to \cC$
denote the right adjoint~$\gamma^!$ in case~\ref{it:projection-right}
or the left adjoint~$\gamma^*$ in case~\ref{it:projection-left}.

The projected t-structure $\gamma^{\bullet}\tau$ on $\cC$ has the following properties:
\begin{enumerate}[label={\textup{(\arabic*)}}]
\item
\label{beta*-t-exact}
$\gamma^\bullet \colon \cD \to \cC$ is t-exact with respect to~$\tau$ and~$\gamma^\bullet\tau$.

\item
\label{beta-left-t-exact}
$\gamma \colon \cC \to \cD$ is right t-exact with respect to $\gamma^{\bullet}\tau$ and $\tau$ in case~\ref{it:projection-right}, and left t-exact with respect to $\gamma^{\bullet}\tau$ and $\tau$  in case~\ref{it:projection-left}. 

\item
\label{beta*t-geq0}
 ${^{\gamma^\bullet\tau}}\cC^{\leq 0} =  {^{\tau}}\cD^{\leq 0} \cap \cC$ in case~\ref{it:projection-right}  
and~${^{\gamma^{\bullet}\tau}}\cC^{\geq 0} =  {^{\tau}}\cD^{\geq 0} \cap \cC$ in case~\ref{it:projection-left}. 

\item
\label{t-induction-cB-truncations}
For any $n \in \bZ$ the truncation and cohomology functors of $\gamma^\bullet\tau$ are given by
\begin{equation*}
(\gamma^\bullet\tau)^{\le n} \cong \gamma^\bullet \circ \tau^{\le n} \circ \gamma,
\qquad
(\gamma^\bullet\tau)^{\ge n} \cong \gamma^\bullet \circ \tau^{\ge n} \circ \gamma,
\qquad
{}^{\gamma^\bullet\tau}\cH^{n} \cong \gamma^\bullet \circ {}^{\tau}\cH^{n} \circ \gamma.
\end{equation*}

\item
\label{t-induction-cB-heart}
The heart of~$\gamma^{\bullet}\tau$ and the subcategories of bounded objects are given by
\begin{equation*}
{}^{\gamma^\bullet\tau}\cC^{\heartsuit} = \gamma^\bullet({}^\tau\cD^{\heartsuit})
\qquad\text{and}\qquad
{}^{\gamma^\bullet\tau}\cC^{?} = \gamma^\bullet({}^\tau\cD^{?})
\quad\text{with}\quad
? \in \set{+,-,\mathrm{b}}.
\end{equation*}

\item \label{t-induced-bounded-cB}
 If~$\tau$ is bounded below, or bounded above, or bounded,
then so is $\gamma^\bullet\tau$.
\item
\label{Phi!tau-noetherian}
If~$\tau$ is noetherian or artinian, then so is~$\gamma^\bullet\tau$.
\end{enumerate}
\end{lemma}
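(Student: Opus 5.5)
By the symmetry of the statement (passing to opposite categories as in Remark~\ref{remark-opposite-t-structure} swaps $\gamma^!$ with $\gamma^*$, right with left projection, connective with coconnective parts, and noetherian with artinian), I will treat only case~\ref{it:projection-right}, writing $\gamma^\bullet = \gamma^!$. The argument rests on two facts: $\gamma^!\gamma \cong \id_\cC$ because $\gamma$ is fully faithful, and the adjunction $\Hom(\gamma C, D) = \Hom(C, \gamma^! D)$.

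\textbf{Properties \ref{beta*-t-exact}--\ref{beta*t-geq0}.} Property~\ref{beta*-t-exact} holds by the definition of $\gamma^!\tau$, since both of its parts are images under $\gamma^!$ of the parts of $\tau$. Lemma~\ref{lemma-t-exactness-adjoints} then shows that the left adjoint $\gamma$ is right t-exact, which is~\ref{beta-left-t-exact}. For~\ref{beta*t-geq0}, right t-exactness of $\gamma$ gives ${}^{\gamma^!\tau}\cC^{\le 0} \subset {}^\tau\cD^{\le 0}\cap\cC$. The reverse inclusion follows from $C = \gamma^!\gamma C$.

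\textbf{Property \ref{t-induction-cB-truncations}.} For $C \in \cC$, apply $\gamma^!$ to the truncation triangle $\tau^{\le n}\gamma C \to \gamma C \to \tau^{\ge n+1}\gamma C$. This gives a triangle
\[
\gamma^!\tau^{\le n}\gamma C \to C \to \gamma^!\tau^{\ge n+1}\gamma C ,
\]
whose outer terms lie in ${}^{\gamma^!\tau}\cC^{\le n}$ and ${}^{\gamma^!\tau}\cC^{\ge n+1}$ by~\ref{beta*-t-exact}. By uniqueness of truncation triangles, these outer terms are the truncations of $C$. Composing the two truncation formulas gives the formula for $\cH^n$.

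\textbf{Properties \ref{t-induction-cB-heart}--\ref{Phi!tau-noetherian}.} Property~\ref{beta*-t-exact} gives $\gamma^!({}^\tau\cD^\heartsuit)\subset{}^{\gamma^!\tau}\cC^\heartsuit$, and likewise for the bounded subcategories. Conversely, an object $C$ of the heart (or of a bounded subcategory) equals $\gamma^!\cH^0(\gamma C)$ by~\ref{t-induction-cB-truncations}, or equals $\gamma^!(\tau^{\ge n}\gamma C)$, $\gamma^!(\tau^{\le n}\gamma C)$ for suitable $n$, so it lies in the image. Property~\ref{t-induced-bounded-cB} follows immediately from~\ref{t-induction-cB-heart}.

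\textbf{Main obstacle: noetherianity \ref{Phi!tau-noetherian}.} The heart functor $\cH^0\circ\gamma^! = \gamma^!|_{\heartsuit}$ is exact, because $\gamma^!$ is t-exact. It is left adjoint to $\cH^0\circ\gamma$ by Lemma~\ref{lemma-t-exactness-adjoints}, since $\gamma^!$ is right t-exact. Its composite with $\cH^0\gamma$ is the identity, because $\gamma^!\cH^0(\gamma C)\cong\cH^0(C)=C$ for $C$ in the heart. So $\cC^\heartsuit$ is an exact localization of $\cD^\heartsuit$, and chain conditions descend through such a localization.

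To make this concrete, take an ascending chain $A_i \subset A$ of subobjects in $\cC^\heartsuit$ and set $\widetilde A = \cH^0\gamma(A)$. Pull back along the counit $\widetilde A \to \gamma A$ on $\cH^0$, or more simply take the preimages $\widetilde A_i \subset \widetilde A$ under $\cH^0\gamma$, which is left exact as a right adjoint and so preserves monomorphisms. This gives an ascending chain in $\cD^\heartsuit$, which stabilizes by noetherianity of $\tau$. Applying the exact functor $\gamma^!$, which satisfies $\gamma^!\cH^0\gamma \cong \id$, recovers the $A_i$, so the original chain stabilizes. The artinian case is the same argument with descending chains.
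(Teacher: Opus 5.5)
Your treatment of (1)--(6) is fine and matches the paper's argument: everything follows from $\gamma^!\gamma\cong\id_\cC$, uniqueness of truncation triangles, and Lemma~\ref{lemma-t-exactness-adjoints}. The gap is in (7), and it occurs in case (a), which is the case you chose to treat.

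In case (a), $\gamma$ is the \emph{left} adjoint of $\gamma^!$. So Lemma~\ref{lemma-t-exactness-adjoints} applies to the right t-exact functor $\gamma$ and gives ${}^\tau\cH^0\circ\gamma \dashv \gamma^!|_{{}^\tau\cD^\heartsuit}$. It cannot be applied to $\gamma^!$, which need not have a right adjoint. Hence ${}^\tau\cH^0\circ\gamma$ is a left adjoint, so it is only right exact, and in general it does not preserve monomorphisms.

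Here is a concrete failure. Take the quiver $1\to 2\to 3$ with the composite of the two arrows set to zero, its standard t-structure, and $\cB=\langle S_3\rangle$. The heart of $\cB$ is a Serre subcategory, so $\tau$ right projects to $\cC={}^\perp S_3$. One computes $\gamma\gamma^!(S_1)\cong\cone(P_2\to P_1)$. So the nonzero map $P_2\to P_1$ is a monomorphism in the projected heart, with cokernel $\gamma^!(S_1)$. But in ${}^\tau\cD^\heartsuit$ this map has kernel $S_3$.

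Consequently your $\widetilde A_i$ are not subobjects of $\widetilde A$, and the argument breaks. The same defect affects your artinian version. What you wrote, namely $\gamma^\bullet|_{\heartsuit}\dashv{}^\tau\cH^0\gamma$ with ${}^\tau\cH^0\gamma$ left exact, is correct in case (b), where $\gamma^*\dashv\gamma$.

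The repair is easy; here are three options.
\begin{enumerate}
\item Replace ${}^\tau\cH^0\gamma(A_i)$ by its image. Set $\widetilde A_i\coloneqq\im\big({}^\tau\cH^0\gamma(A_i)\to{}^\tau\cH^0\gamma(A)\big)$. These form a chain of subobjects of $\widetilde A$. Since $\gamma^!$ is exact on hearts and $\gamma^!\circ{}^\tau\cH^0\circ\gamma\cong\id$ naturally by property~(4), you get $\gamma^!(\widetilde A_i)=A_i$ as subobjects of $A$. So stabilization in ${}^\tau\cD^\heartsuit$ forces stabilization of the $A_i$. This needs no adjunction and works for ascending and descending chains in both cases.
\item Follow the paper. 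By property~(4), the left adjoint ${}^\tau\cH^0\circ\gamma$ is fully faithful. In case (a) it is right exact, so it sends a chain of epimorphisms $C_0\twoheadrightarrow C_1\twoheadrightarrow\cdots$ to a chain of epimorphisms in ${}^\tau\cD^\heartsuit$. That chain stabilizes, and full faithfulness reflects the resulting isomorphisms back to $\cC$. Chains of subobjects are used only in case (b), where ${}^\tau\cH^0\circ\gamma$ is left exact.
\item Run your duality reduction the other way: prove (7) in case (b) for both chain conditions instead of in case (a).
\end{enumerate}
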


\begin{proof}
Property~\ref{beta*-t-exact} follows immediately from the definition of the projected t-structure.
Then property~\ref{beta-left-t-exact} follows from Lemma~\ref{lemma-t-exactness-adjoints}. 
Then the claims in~\ref{beta*t-geq0}--\ref{t-induced-bounded-cB}
follow immediately from the fact that~$\gamma^\bullet \circ \gamma \simeq \id_{\cC}$.

Finally, assume that~$\tau$ is noetherian.
By properties~\ref{beta*-t-exact} and~\ref{beta-left-t-exact} and Lemma~\ref{lemma-t-exactness-adjoints}
the functors
\begin{equation*}
{}^\tau\cH^0 \circ \gamma \colon {}^{\gamma^\bullet\tau}\cC^\heartsuit \to {}^\tau\cD^\heartsuit
\qquad\text{and}\qquad
{}^{\gamma^\bullet\tau}\cH^0 \circ \gamma^\bullet \cong \gamma^\bullet \colon {}^\tau\cD^\heartsuit \to {}^{\gamma^\bullet\tau}\cC^\heartsuit
\end{equation*}
form an adjoint pair, where the second functor is the right adjoint in case~\ref{it:projection-right}
and left adjoint in case~\ref{it:projection-left}.
Using also the last isomorphism in~\ref{t-induction-cB-truncations} 
we conclude that~${}^\tau\cH^0 \circ \gamma$ is fully faithful.

Now, assume we are in case~\ref{it:projection-right};
then the functor~${}^\tau\cH^0 \circ \gamma$ is right exact by property~\ref{beta-left-t-exact}.
Thus, if~$C_0 \twoheadrightarrow C_1 \twoheadrightarrow C_2 \twoheadrightarrow \cdots$
is a chain of epimorphisms in ${^{\gamma^!\tau}}\cC^{\heartsuit}$, then
\begin{equation*}
{}^\tau\cH^0(\gamma(C_0)) \twoheadrightarrow
{}^\tau\cH^0(\gamma(C_1)) \twoheadrightarrow
{}^\tau\cH^0(\gamma(C_2)) \twoheadrightarrow \cdots
\end{equation*}
is a chain of epimorphisms in~${^{\tau}}\cD^{\heartsuit}$.
The latter sequence must stabilize to isomorphisms because~$\tau$ is noetherian,
and then the former must also stabilize to isomorphisms
because the functor~${}^\tau\cH^0 \circ \gamma$ is fully faithful.

Similarly, assume we are in case~\ref{it:projection-left};
then the functor~${}^\tau\cH^0 \circ \gamma$ is left exact by property~\ref{beta-left-t-exact}.
Thus, if $C_0 \hookrightarrow C_1 \hookrightarrow \cdots \hookrightarrow C$ is a chain of subobjects of an object $C \in {^{\gamma^*\tau}}\cC^{\heartsuit}$, then
\begin{equation*}
{}^\tau\cH^0(\gamma(C_0)) \hookrightarrow
{}^\tau\cH^0(\gamma(C_1)) \hookrightarrow
{}^\tau\cH^0(\gamma(C_2)) \hookrightarrow
\cdots \hookrightarrow
{}^\tau\cH^0(\gamma(C))
\end{equation*}
is a chain of subobjects of ${}^\tau\cH^0(\gamma(C)) \in {^{\tau}}\cD^{\heartsuit}$.
The latter sequence must stabilize to isomorphisms because~$\tau$ is noetherian,
and then the former must also stabilize to isomorphisms
because the functor~${}^\tau\cH^0 \circ \gamma$ is fully faithful.

If~$\tau$ is artinian, the argument is analogous.
\end{proof}

\begin{remark}
\label{remark-opposite-category}
Any result for the right projection of~$\tau$ formally implies a ``dual'' result
for the left projection, by passing to the opposite categories.
Indeed, by Remark~\ref{remark-opposite-t-structure}, the opposite category $\cD^{\op}$ is equipped with the opposite t-structure $\tau^{\op}$.
Moreover, if~$\cC^\op$ is the opposite category of~$\cC$
and~$\gamma^\op \colon \cC^\op \to \cD^\op$ is the opposite functor of~$\gamma \colon \cC \to \cD$, 
then~$(\gamma^{\op})^* \simeq (\gamma^!)^{\op}$ and~$(\gamma^{\op})^! \simeq (\gamma^*)^{\op}$, hence
\begin{equation*}
(\gamma^{\op})^*(\tau^\op) = (\gamma^!\tau)^\op
\qquad\text{and}\qquad
(\gamma^{\op})^!(\tau^\op) = (\gamma^*\tau)^\op.
\end{equation*}

For instance, by this duality, the results in Lemma~\ref{lemma-projection-tau-cB} in case~\ref{it:projection-right}
are equivalent to those in case~\ref{it:projection-left}, except for the noetherian
or artinian properties in~\ref{Phi!tau-noetherian}, which are dual to one another.

Throughout the paper, there will be many results with right and left versions.
By the same principle as above, it suffices to prove only one of these versions.
\end{remark}

Now we can give a simple characterization of when $\tau$ projects to a t-structure. 

\begin{lemma}
\label{lemma-project-B-iff-lte}
Let~$\cD$ be a triangulated category equipped with a t-structure~$\tau = ({}^\tau\cD^{\leq 0}, {}^\tau\cD^{\geq 0})$. 
Let $\cC$ be a triangulated category with a fully faithful triangulated functor~$\gamma \colon \cC \to \cD$. 
\begin{enumerate}[label={\textup{(\alph*)}}]
\item
\label{it:projection-right-criterion}
The t-structure~$\tau$ right projects to a t-structure on $\cC$
if and only if the right adjoint~$\gamma^!$ exists and the composition~$\gamma \circ \gamma^!$ is right t-exact with respect to~$\tau$.

\item
\label{it:projection-left-criterion}
The t-structure~$\tau$ left projects to a t-structure on $\cC$
if and only if the left adjoint~$\gamma^*$ exists and the composition~$\gamma \circ \gamma^*$ is left t-exact with respect to~$\tau$.
\end{enumerate}
\end{lemma}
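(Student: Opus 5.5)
By Remark~\ref{remark-opposite-category}, part~\ref{it:projection-left-criterion} follows from part~\ref{it:projection-right-criterion} applied to $\gamma^\op\colon \cC^\op\to\cD^\op$ with the opposite t-structure $\tau^\op$. Indeed, $(\gamma^\op)^!=(\gamma^*)^\op$, and right t-exactness for $\tau^\op$ is left t-exactness for $\tau$. So we only treat~\ref{it:projection-right-criterion}.

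\emph{Necessity.} Suppose $\gamma^!\tau$ is a t-structure. By Lemma~\ref{lemma-projection-tau-cB}\ref{beta*-t-exact}, $\gamma^!$ is t-exact, in particular right t-exact. By Lemma~\ref{lemma-projection-tau-cB}\ref{beta-left-t-exact}, $\gamma$ is right t-exact. Hence the composition $\gamma\circ\gamma^!$ is right t-exact with respect to $\tau$.

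\emph{Sufficiency.} Assume $\gamma^!$ exists and $\gamma\gamma^!({}^\tau\cD^{\le0})\subset{}^\tau\cD^{\le0}$. We check the three axioms of Definition~\ref{def:t-str} for
\[
\cC^{\le0}=\gamma^!({}^\tau\cD^{\le0}), \qquad \cC^{\ge0}=\gamma^!({}^\tau\cD^{\ge0}).
\]

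\emph{Shift compatibility.} This is immediate, since $\gamma^!$ commutes with shifts.

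\emph{Generation.} Given $C\in\cC$, apply $\gamma^!$ to the truncation triangle of $\gamma C$. Since $\gamma^!\gamma\cong\id$, this gives a triangle $\gamma^!\tau^{\le0}\gamma C\to C\to\gamma^!\tau^{\ge1}\gamma C$ in $\cC$ with outer terms in $\cC^{\le0}$ and $\cC^{\ge0}[-1]$.

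\emph{$\Hom$-orthogonality.} This is where the hypothesis enters, and it is the only non-formal step. Take $D\in{}^\tau\cD^{\le0}$ and $D'\in{}^\tau\cD^{\ge1}$. By adjunction and full faithfulness of $\gamma$,
\[
\Hom(\gamma^!D,\gamma^!D')\cong\Hom(\gamma\gamma^!D,\gamma\gamma^!D')\cong\Hom(\gamma\gamma^!D,D').
\]
By hypothesis $\gamma\gamma^!D\in{}^\tau\cD^{\le0}$, so this group vanishes by $\Hom$-orthogonality for $\tau$.

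It remains to confirm that both subcategories are strictly full. They are closed under isomorphism by construction, as essential images of $\gamma^!$.
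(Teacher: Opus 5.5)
Your proposal is correct and follows essentially the same route as the paper: necessity comes from parts~\ref{beta*-t-exact} and~\ref{beta-left-t-exact} of Lemma~\ref{lemma-projection-tau-cB}, and sufficiency from checking the three axioms directly, with $\Hom$-orthogonality given by the adjunction isomorphism $\Hom(\gamma^!D,\gamma^!D')\cong\Hom(\gamma\gamma^!D,D')$ and generation given by applying $\gamma^!$ to the truncation triangle of $\gamma(C)$. The only difference is that you deduce part (b) from part (a) by passing to opposite categories as in Remark~\ref{remark-opposite-category}, whereas the paper treats both cases at once using the uniform notation $\gamma^\bullet$.
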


\begin{proof}
As in Lemma~\ref{lemma-projection-tau-cB}, we use notation~$\gamma^\bullet$ for the right or left adjoint functor of~$\gamma$, according to whether we are considering statement~\ref{it:projection-right-criterion} or~\ref{it:projection-left-criterion}.

If~$\tau$ right or left projects to a t-structure on~$\cC$,
then by parts~\ref{beta*-t-exact} and~\ref{beta-left-t-exact} of Lemma~\ref{lemma-projection-tau-cB}
we see that~$\gamma \circ \gamma^\bullet$ is right or left t-exact with respect to~$\tau$.

Conversely, assume that~$\gamma \circ \gamma^\bullet$ is right or left t-exact with respect to~$\tau$.
To show that~$\gamma^\bullet\tau$ is a t-structure, we directly verify the conditions~\ref{it:t-str-shift}--\ref{it:t-str-triangle}
in Definition~\ref{def:t-str} for the pair of subcategories~$(\gamma^\bullet({}^\tau\cD^{\le 0}), \gamma^\bullet({}^\tau\cD^{\ge 0}))$.
The condition~\ref{it:t-str-shift} on compatibility with shifts is clear. 
For the $\Hom$-orthogonality condition~\ref{it:t-str-perp}, 
let~\mbox{$D \in {}^\tau\cD^{\le 0}$} and~\mbox{$D' \in {}^\tau\cD^{\ge 1}$}.
Then by adjunction 
\begin{align*}
\Hom(\gamma^!(D), \gamma^!(D')) &= \Hom(\gamma\gamma^!(D), D'),
\\
\Hom(\gamma^*(D), \gamma^*(D')) &= \Hom(D, \gamma\gamma^*(D')),
\end{align*}
which vanishes
because~$\gamma\gamma^!(D) \in {}^\tau\cD^{\leq 0}$ by the right t-exactness of~$\gamma \circ \gamma^!$
and~$\gamma\gamma^*(D') \in {}^\tau\cD^{\geq 1}$ by the left t-exactness of~$\gamma \circ \gamma^*$, respectively.
For the generation condition~\ref{it:t-str-triangle},
let~$C \in \cC$ be an object and set~$D = \gamma(C)$, so that~\mbox{$C \cong \gamma^\bullet(D)$}.
Then applying the functor~$\gamma^\bullet$ to the truncation triangle~$\tau^{\leq 0}D \to D \to \tau^{\geq 1}D$ we obtain a distinguished triangle
\begin{equation*}
\gamma^\bullet\tau^{\leq 0}(D) \to C \to \gamma^\bullet\tau^{\geq 1}(D),
\end{equation*}
where the first term is in~$\gamma^\bullet({}^\tau\cD^{\le 0})$ and the last is in~$\gamma^\bullet({}^\tau\cD^{\ge {1}})$.
This verifies condition~\ref{it:t-str-triangle}. 
\end{proof}

\subsection{t-amplitude of projection functors and existence of projected t-structures}
\label{ss:projected-t-amplitude}

For this subsection, we work in the following setting.

\begin{setup}
\label{setup-sod}
Let~$\cD$ be a triangulated category equipped with a t-structure~$\tau = ({}^\tau\cD^{\leq 0}, {}^\tau\cD^{\geq 0})$
and a semiorthogonal decomposition~$\cD = \langle \cB, \cC \rangle$.
Let~$\beta \colon \cB \to \cD$ and~$\gamma \colon \cC \to \cD$ be the inclusion functors,
and~$\beta^* \colon \cD \to \cB$ and~$\gamma^! \colon \cD \to \cC$ their left and right adjoints. 
\end{setup}

Note that in this setup, the projection functors of the semiorthogonal decomposition of~$\cD$
are given by~$\pr_{\cB} = \beta \circ \beta^*$ and~$\pr_{\cC} = \gamma \circ \gamma^!$.

In Setup~\ref{setup-sod}, Lemma~\ref{lemma-project-B-iff-lte} shows 
that we can construct a left projected t-structure
on~$\cB$ when its projection functor~$\beta \circ \beta^*$ is left t-exact,
and a right projected t-structure on~$\cC$ when its projection functor~$\gamma \circ \gamma^!$ is right t-exact.}
Our next goal is to explain some useful criteria for verifying such t-exactness properties, as well as ``one off'' t-exactness
(i.e. left t-amplitude~\mbox{$\geq -1$} or right t-amplitude~\mbox{$\leq 1$} in the sense of  Definition~\ref{def:amplitude}).

First, we observe that the t-amplitudes of the projection functors $\beta \circ \beta^*$ and $\gamma \circ \gamma^!$ agree up to a shift by $1$: 

\begin{lemma}
\label{lemma-projection-functor-amplitudes} 
In Setup~\textup{\ref{setup-sod}},
consider the following conditions for $a \in \bZ$: 
\begin{enumerate}[label={\textup{(\roman*)}}]
\item \label{beta-lta}
The projection functor $\beta \circ \beta^*$ has left t-amplitude $\geq a$ with respect to $\tau$.  
\item \label{gamma-lta} 
The projection functor $\gamma \circ \gamma^!$ has left t-amplitude $\geq a +1$ with respect to $\tau$. 
\end{enumerate}
If $a \leq 0$ then we have the implication~\ref{gamma-lta} $\implies$ \ref{beta-lta}, 
and if $a \leq -1$ then we have the converse implication~\ref{beta-lta} $\implies$ \ref{gamma-lta}.

Similarly, consider the following conditions for $b \in \bZ$:  
\begin{enumerate}[label={\textup{(\roman*$'$)}}]
\item \label{gamma-rta} 
The projection functor $\gamma \circ \gamma^!$ has right t-amplitude $\leq b$ with respect to $\tau$. 
\item \label{beta-rta}
The projection functor $\beta \circ \beta^*$ has right t-amplitude $\leq b-1$ with respect to $\tau$. 
\end{enumerate} 
If $b \geq 0$ then we have the implication~\ref{beta-rta} $\implies$ \ref{gamma-rta}, 
and if $b \geq 1$ then we have the converse implication~\ref{gamma-rta} $\implies$ \ref{beta-rta}. 
\end{lemma}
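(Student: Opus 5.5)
The whole argument rests on the functorial distinguished triangle of the semiorthogonal decomposition,
\begin{equation*}
\gamma\gamma^!(D) \to D \to \beta\beta^*(D) \to \gamma\gamma^!(D)[1],
\end{equation*}
valid for every $D \in \cD$. I will treat only the left t-amplitude statements in detail. The right t-amplitude statements are the exact dual, obtained by passing to opposite categories as in Remark~\ref{remark-opposite-category}; this swaps connective and coconnective parts and exchanges the roles of $\beta\beta^*$ and $\gamma\gamma^!$ up to the rotation of the triangle. One could also simply repeat the argument with $\cD^{\le}$ in place of $\cD^{\ge}$. The basic tool is that ${}^\tau\cD^{\ge n}$ is closed under extensions and under $[-1]$.

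For \ref{gamma-lta} $\implies$ \ref{beta-lta} with $a \le 0$, take $D \in {}^\tau\cD^{\ge 0}$. Rotating the triangle gives
\begin{equation*}
D \to \beta\beta^*(D) \to \gamma\gamma^!(D)[1],
\end{equation*}
so $\beta\beta^*(D)$ is an extension of $\gamma\gamma^!(D)[1]$ by $D$. By assumption $\gamma\gamma^!(D) \in {}^\tau\cD^{\ge a+1}$, hence $\gamma\gamma^!(D)[1] \in {}^\tau\cD^{\ge a}$. Also $D \in {}^\tau\cD^{\ge 0} \subset {}^\tau\cD^{\ge a}$ because $a \le 0$. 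Extension-closedness then gives $\beta\beta^*(D) \in {}^\tau\cD^{\ge a}$.

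For \ref{beta-lta} $\implies$ \ref{gamma-lta} with $a \le -1$, use instead the rotation
\begin{equation*}
\beta\beta^*(D)[-1] \to \gamma\gamma^!(D) \to D,
\end{equation*}
so $\gamma\gamma^!(D)$ is an extension of $D$ by $\beta\beta^*(D)[-1]$. Here $\beta\beta^*(D)[-1] \in {}^\tau\cD^{\ge a+1}$, and $D \in {}^\tau\cD^{\ge 0} \subset {}^\tau\cD^{\ge a+1}$ precisely because $a+1 \le 0$, which explains the stronger hypothesis. Extension-closedness again concludes.

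The right t-amplitude claims work the same way with $D \in {}^\tau\cD^{\le 0}$. For \ref{beta-rta} $\implies$ \ref{gamma-rta}, $\gamma\gamma^!(D)$ is an extension of $D \in {}^\tau\cD^{\le 0} \subset {}^\tau\cD^{\le b}$ (needing $b \ge 0$) by $\beta\beta^*(D)[-1] \in {}^\tau\cD^{\le b}$. For \ref{gamma-rta} $\implies$ \ref{beta-rta}, $\beta\beta^*(D)$ is an extension of $\gamma\gamma^!(D)[1] \in {}^\tau\cD^{\le b-1}$ by $D \in {}^\tau\cD^{\le b-1}$ (needing $b \ge 1$).

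There is no real obstacle in this argument. The only care needed is bookkeeping the shift conventions, namely ${}^\tau\cD^{\ge n}[1] = {}^\tau\cD^{\ge n-1}$ and ${}^\tau\cD^{\le n}[-1] = {}^\tau\cD^{\le n+1}$, and checking which inequality on $a$ or $b$ each implication uses to place $D$ itself in the right range.
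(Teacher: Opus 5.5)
Your proof is correct and follows essentially the same approach as the paper: both use the decomposition triangle $\gamma\gamma^!(D) \to D \to \beta\beta^*(D)$, rotate it, and conclude by extension-closedness of ${}^\tau\cD^{\ge n}$ and ${}^\tau\cD^{\le n}$. The paper writes out only the case $a \le -1$, (i) $\implies$ (ii), and you have correctly checked the remaining cases, with the right shift conventions.
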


\begin{proof}
For any object $D \in \cD$, we have the distinguished triangle 
\begin{equation*}
\gamma\gamma^!(D) \to D \to \beta\beta^*(D)
\end{equation*} 
where the first morphism is the counit of the adjunction and the second is the unit of the adjunction. 
All of the claims follow easily from this triangle. 
For instance, assume that $a \leq -1$ and~\ref{beta-lta} holds. 
If $D \in {^\tau}\cD^{\geq 0}$, then rotating the above distinguished triangle gives a distinguished triangle 
\begin{equation*}
\beta\beta^*(D)[-1] \to \gamma\gamma^!(D) \to D 
\end{equation*}
where the first term is in ${^\tau}\cD^{\geq a+1}$ and the third term is in ${^\tau}\cD^{\geq 0}$, which is contained in ${^\tau}\cD^{\geq a+1}$ because $0 \geq a+1$. 
It follows that $\gamma\gamma^!(D)$ is also contained in ${^\tau}\cD^{\geq a+1}$, and hence that~\ref{gamma-lta} holds. 
\end{proof} 

Now we can give a criterion for when the conditions in Lemma~\ref{lemma-projection-functor-amplitudes} hold for $a=-1$ or $b = 1$, in terms of the notion of restricted t-structures from Definition~\ref{definition-restriction-t-structure}. 

\begin{lemma}
\label{lemma-restrict-t-structure-vs-projection-amplitude}
In Setup~\textup{\ref{setup-sod}}, the following conditions are equivalent:
\begin{enumerate}[label={\textup{(\roman*)}}]
\item
\label{tau-restricts-cB}
The t-structure~$\tau$ restricts to a t-structure on~$\cB$.

\item
\label{projection-B-right-t-exact}
The projection functor~$\beta \circ \beta^*$ is right t-exact with respect to~$\tau$.

\item
\label{projection-C-t-amplitude-1}
The projection functor~$\gamma \circ \gamma^!$ has right t-amplitude~$\leq 1$ with respect to~$\tau$.
\end{enumerate}
Similarly, the following conditions are equivalent: 
\begin{enumerate}[label={\textup{(\roman*$'$)}}]
\item
\label{tau-restricts-cC}
The t-structure~$\tau$ restricts to a t-structure on~$\cC$.

\item
\label{projection-C-left-t-exact}
The projection functor~$\gamma \circ \gamma^!$ is left t-exact with respect to~$\tau$.

\item
\label{projection-C-t-amplitude--1}
The projection functor~$\beta \circ \beta^*$ has left t-amplitude~$\geq -1$ with respect to~$\tau$.
\end{enumerate} 
\end{lemma}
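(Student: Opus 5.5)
The plan is to prove the first triple of equivalences directly and obtain the primed version by passing to opposite categories (Remark~\ref{remark-opposite-category}). Under $\cD \mapsto \cD^\op$ the decomposition $\cD = \langle \cB, \cC \rangle$ becomes $\cD^\op = \langle \cC^\op, \cB^\op \rangle$, and left/right adjoints and left/right t-exactness are interchanged. So the primed statements are exactly the unprimed ones for $(\cD^\op, \tau^\op)$ with the roles of $\cB$ and $\cC$ swapped.

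\textbf{Step 1: \ref{projection-B-right-t-exact}$\iff$\ref{projection-C-t-amplitude-1}.} This is the case $b = 1$ of the second half of Lemma~\ref{lemma-projection-functor-amplitudes}. There condition (i$'$) is that $\gamma\gamma^!$ has right t-amplitude $\le 1$, and condition (ii$'$) is that $\beta\beta^*$ has right t-amplitude $\le 0$, i.e.\ is right t-exact. Since $b = 1 \ge 1$, both implications hold.

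\textbf{Step 2: \ref{tau-restricts-cB}$\implies$\ref{projection-B-right-t-exact}.} If $\tau$ restricts to $\cB$, then by Lemma~\ref{lemma-beta-exact} the inclusion $\beta$ is t-exact with respect to $\tau\vert_\cB$ and $\tau$; in particular it is left t-exact. Since $\beta$ is right adjoint to $\beta^*$, Lemma~\ref{lemma-t-exactness-adjoints} shows that $\beta^*$ is right t-exact with respect to $\tau$ and $\tau\vert_\cB$. Composing with the right t-exact functor $\beta$ shows that $\beta\beta^*$ is right t-exact.

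\textbf{Step 3: \ref{projection-B-right-t-exact}$\implies$\ref{tau-restricts-cB}.} This is the step needing an actual argument. By Lemma~\ref{lemma-restriction-t-structure} it suffices to show that $\tau^{\le 0}$ preserves $\cB$. Let $B \in \cB$ and set $X = \tau^{\le 0}B$, with counit $X \to B$.
\begin{enumerate}[label={\textup{(\alph*)}}]
\item Since $B \in \cB$, the map $X \to B$ factors through the unit $\eta\colon X \to \beta\beta^*X$, giving a map $g\colon \beta\beta^*X \to B$.
\item By \ref{projection-B-right-t-exact} we have $\beta\beta^*X \in {}^\tau\cD^{\le 0}$, so $\Hom(\beta\beta^*X, \tau^{\ge 1}B) = 0$. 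Hence $g$ lifts along the counit to a map $s\colon \beta\beta^*X \to X$.
\item The composite $s\circ\eta\colon X \to X$ becomes the counit $X \to B$ after composing with it. Because $X \in {}^\tau\cD^{\le 0}$, composition with the counit induces a bijection $\Hom(X, X) \to \Hom(X, B)$. Therefore $s \circ \eta = \id_X$.
\end{enumerate}
Thus $X$ is a direct summand of $\beta\beta^*X \in \cB$. Since $\cB = \cC^\perp$ is closed under direct summands, $X \in \cB$, as required.

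The only nonformal point, and the one I expect to be the main obstacle, is this retract argument in Step~3: it must produce the splitting $s$ of $\eta$ and then use thickness of $\cB = \cC^\perp$. Everything else is a citation of earlier lemmas, followed by duality for the primed equivalences.
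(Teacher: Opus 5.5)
Your proof is correct, but its key step differs from the paper's. The paper runs the cycle (i)$\Rightarrow$(ii)$\Rightarrow$(iii)$\Rightarrow$(i), and uses Lemma~\ref{lemma-projection-functor-amplitudes} only for (ii)$\Rightarrow$(iii). Its nonformal step is (iii)$\Rightarrow$(i). There, applying $\gamma^!$ to the truncation triangle of $\beta(B)$ gives an isomorphism $\gamma^!(\tau^{\le 0}\beta(B)[1]) \cong \gamma^!(\tau^{\ge 1}\beta(B))$. Its adjoint morphism $\gamma\gamma^!(\tau^{\le 0}\beta(B)[1]) \to \tau^{\ge 1}\beta(B)$ goes from ${}^\tau\cD^{\le 0}$ to ${}^\tau\cD^{\ge 1}$, so it vanishes, which forces both truncations to be killed by $\gamma^!$.

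You instead use both directions of that lemma at $b = 1$, which is legitimate since $b=1$ satisfies both $b \ge 0$ and $b \ge 1$. You then prove (ii)$\Rightarrow$(i) by splitting $X = \tau^{\le 0}B$ off $\beta\beta^*X$. Each of your three steps is correct, as is the closure of $\cB = \cC^\perp$ under retracts.

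Your route never uses $\gamma^!$; it is really a fact about reflective subcategories whose reflector is right t-exact. In fact, from $s \circ \eta = \id_X$ and uniqueness of factorization through the unit you also get $\eta \circ s = \id$, so $\eta$ is an isomorphism and thickness need not even be invoked.

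The paper's route starts directly from the amplitude bound on $\gamma\gamma^!$, which is the condition checked in later applications (Example~\ref{example-bridgeland}, Lemma~\ref{lemma:restriction-criterion}). Its ``isomorphism that must be zero'' device is also reused in Lemma~\ref{lemma-restrict-t-structure-ssc}. Your duality reduction for the primed equivalences is the same as the paper's.
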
 

\begin{proof}
\ref{tau-restricts-cB} $\implies$ \ref{projection-B-right-t-exact}: 
The functor~$\beta \colon \cB \to \cD$ is t-exact with respect to the restricted t-structure $\tau\vert_\cB$ and~$\tau$,
and hence the functor~$\beta^* \colon \cD \to \cB$ is right t-exact with respect to~$\tau$ and~$\tau\vert_\cB$ (Lemma~\ref{lemma-t-exactness-adjoints}). 
Therefore~$\beta \circ \beta^*$ is right t-exact with respect to~$\tau$.

\ref{projection-B-right-t-exact} $\implies$ \ref{projection-C-t-amplitude-1}: 
This holds by Lemma~\ref{lemma-projection-functor-amplitudes}. 

\ref{projection-C-t-amplitude-1} $\implies$ \ref{tau-restricts-cB}: 
For~$B \in \cB$ consider the truncation triangle
\begin{equation*}
\tau^{\le 0}{\beta(B)} \to \beta(B) \to \tau^{\ge 1}\beta(B).
\end{equation*}
To show that~$\tau$ restricts to $\cB$, it suffices by Lemma~\ref{lemma-restriction-t-structure}
to show that~$\tau^{\leq 0}\beta(B)$ and~$\tau^{\geq 1}\beta(B)$ are contained in $\cB$,
or equivalently that they are killed by~$\gamma^!$.
The functor~$\gamma^!$ kills~$\cB$, so applying it to the above truncation triangle gives an isomorphism
\begin{equation}
\label{gamma!tau0tau1}
\gamma^!(\tau^{\le 0}\beta(B)[1]) \cong \gamma^!(\tau^{\ge 1}\beta(B)).
\end{equation}
By adjunction, this isomorphism corresponds
to a morphism ${\varphi} \colon \gamma\gamma^!(\tau^{\le 0}\beta(B)[1]) \to \tau^{\ge 1}\beta(B)$.
The source of~$\varphi$ is contained in~${}^\tau\cD^{\le 0}$ because~$\gamma \circ \gamma^!$ has right t-amplitude~$\leq 1$,
while the target of~$\varphi$ is contained in~${}^\tau\cD^{\ge 1}$.
It follows that~$\varphi = 0$.
Thus the isomorphism~\eqref{gamma!tau0tau1} must also be the zero map,
which means that its source and target vanish, as required. 

The implications \ref{tau-restricts-cC} $\implies$ \ref{projection-C-left-t-exact} $\implies$ \ref{projection-C-t-amplitude--1} $\implies$ \ref{tau-restricts-cC}  follow similarly (or can be deduced formally from the first part, as in Remark~\ref{remark-opposite-category}). 
\end{proof}

The following motivating geometric example is useful to keep in mind. 
It is an elaboration on the first part of Example~\ref{example-perverse-t-structures} from the introduction,
except that here we do not assume~$X$ and~$Y$ are smooth
and, consequently, must replace the bounded categories~$\Db(X)$ and~$\Db(Y)$
by the bounded above categories~$\Dm(X)$ and~$\Dm(Y)$.
Recall that~$\tau_X$ and~$\tau_Y$ denote the standard t-structures of~$\Dm(X)$ and~$\Dm(Y)$  (see Example~\ref{ex:tau-x}). 

\begin{example}
\label{example-bridgeland}
Let~$f \colon X \to Y$ be a proper surjective morphism with fibers of dimension~$\leq 1$
between noetherian schemes such that~$f_* \cO_X \simeq \cO_Y$, where we recall that~$f_*$ denotes the derived pushforward.
By the projection formula we have~$f_*f^*F \simeq F$ for all~$F \in \Dm(Y)$. 
Hence the pullback~$f^* \colon \Dm(Y) \to \Dm(X)$ is fully faithful,
and there is a semiorthogonal decomposition
\begin{equation*}
\Dm(X) = \langle \cB, \cC \rangle,
\end{equation*}
where
\begin{equation*}
\cC = f^*\Dm(Y)
\qquad\text{and}\qquad
\cB = (f^*\Dm(Y))^{\perp} = \set{ B \in \Dm(X) \sth f_* B \simeq 0 }.
\end{equation*}
{Moreover, if~$F \in {}^{\tau_X}\Dm(X)^{\le 0}$ then~$f_*F \in {}^{\tau_Y}\Dm(Y)^{\le 1}$
(because the fibers of~$f$ have dimension~\mbox{$\leq 1$})
and therefore~$f^*f_*F \in {}^{\tau_X}\Dm(X)^{\le 1}$ (because~$f^*$ is right exact), so}
it follows that the composition~$f^* \circ f_* \colon \Dm(X) \to \Dm(X)$
has right t-amplitude $\leq 1$ with respect to the standard t-structure~$\tau_X$.
By Lemma~\ref{lemma-restrict-t-structure-vs-projection-amplitude}, the t-structure~$\tau_X$ restricts to a t-structure on~$\cB$; 
this was also observed much earlier by Bridgeland~\cite[Lemma 3.1]{bridgeland-flops}. 
\end{example}

There is a refinement of Lemma~\ref{lemma-restrict-t-structure-vs-projection-amplitude} 
in which the heart of the restricted t-structure is required to be a Serre subcategory of the ambient heart. 
Recall that a \emphsf{Serre subcategory} of an abelian category $\cA$
is a subcategory which is closed under taking subobjects, quotient objects, and extensions in $\cA$.

\begin{lemma}
\label{lemma-restrict-t-structure-ssc}
In Setup~\textup{\ref{setup-sod}}, the following conditions are equivalent:
\begin{enumerate}[label={\textup{(\roman*)}}]
\item
\label{tau-restricts-cB-ssc}
$\tau$ restricts to a t-structure~$\tau\vert_{\cB}$ on~$\cB$
whose heart~${^{\tau\vert_{\cB}}}\cB^{\heartsuit}$ is a Serre subcategory of~${^{\tau}}\cD^{\heartsuit}$.
\item
\label{tau-restricts-cB-cus}
$\tau$ restricts to a t-structure~$\tau\vert_{\cB}$ on~$\cB$
whose heart is closed under subobjects in~${^{\tau}}\cD^{\heartsuit}$.
\item
\label{tau-restricts-cB-cuq}
$\tau$ restricts to a t-structure~$\tau\vert_{\cB}$ on~$\cB$
whose heart is closed under quotients in~${^{\tau}}\cD^{\heartsuit}$.
\item
\label{projection-C-t-amplitude-0}
The projection functor~$\gamma \circ \gamma^!$ is right t-exact with respect to~$\tau$.
\end{enumerate}
Similarly, the following conditions are equivalent: 
\begin{enumerate}[label={\textup{(\roman*$'$)}}]
\item
\label{tau-restricts-cC-ssc}
$\tau$ restricts to a t-structure~$\tau\vert_{\cC}$ on~$\cC$
whose heart~${^{\tau\vert_{\cC}}}\cC^{\heartsuit}$ is a Serre subcategory of~${^{\tau}}\cD^{\heartsuit}$.
\item
\label{tau-restricts-cC-cus}
$\tau$ restricts to a t-structure~$\tau\vert_{\cC}$ on~$\cC$
whose heart is closed under subobjects in~${^{\tau}}\cD^{\heartsuit}$.
\item
\label{tau-restricts-cC-cuq}
$\tau$ restricts to a t-structure~$\tau\vert_{\cC}$ on~$\cC$
whose heart is closed under quotients in~${^{\tau}}\cD^{\heartsuit}$.
\item
\label{projection-B-t-amplitude-0}
The projection functor~$\beta \circ \beta^*$ is left t-exact with respect to~$\tau$.
\end{enumerate} 
\end{lemma}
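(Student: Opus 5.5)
The plan is to prove the implications (i)$\Rightarrow$(ii), (i)$\Rightarrow$(iii), (ii)$\Rightarrow$(iv), (iii)$\Rightarrow$(iv) and (iv)$\Rightarrow$(i). The primed statements then follow formally by passing to opposite categories, as in Remark~\ref{remark-opposite-category}. The implications out of (i) are trivial. Throughout I use that, when $\tau$ restricts to $\cB$, the heart satisfies ${^{\tau\vert_\cB}}\cB^\heartsuit = \cB \cap {^\tau}\cD^\heartsuit$. Moreover, $\beta$ is t-exact (Lemma~\ref{lemma-beta-exact}), so the induced functor on hearts is exact. Hence kernels and cokernels of morphisms in ${^{\tau\vert_\cB}}\cB^\heartsuit$ agree with those computed in ${^\tau}\cD^\heartsuit$, and the heart is closed under extensions.

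For (iv)$\Rightarrow$(i), first note that right t-exactness of $\gamma\gamma^!$ implies right t-amplitude $\le 1$. So $\tau$ restricts to $\cB$ by Lemma~\ref{lemma-restrict-t-structure-vs-projection-amplitude}, and it remains to check the Serre property.
\begin{itemize}
\item Take $B \in \cB \cap {^\tau}\cD^\heartsuit$ and a short exact sequence $0 \to A' \to B \to A'' \to 0$ in the heart.
\item Applying $\gamma\gamma^!$ to the corresponding triangle and using $\gamma^!(B)=0$ gives $\gamma\gamma^!(A'') \cong \gamma\gamma^!(A')[1]$.
\item By (iv) this object lies in ${^\tau}\cD^{\le -1}$, while $A'' \in {^\tau}\cD^{\ge 0}$. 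Hence the counit $\gamma\gamma^!(A'') \to A''$ is zero.
\item By adjunction the counit corresponds to $\id_{\gamma^!(A'')}$, so $\gamma^!(A'')=0$, i.e.\ $A'' \in \cB$.
\item Then $A' \in \cB$ as well, since $\cB$ is triangulated.
\end{itemize}
Together with closure under extensions, this gives (i).

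For (ii)$\Rightarrow$(iv) and (iii)$\Rightarrow$(iv), take $D \in {^\tau}\cD^{\le 0}$ and consider the triangle $\gamma\gamma^!(D) \to D \to \beta\beta^*(D)$. By Lemma~\ref{lemma-restrict-t-structure-vs-projection-amplitude}, $\beta\beta^*(D) \in {^\tau}\cD^{\le 0}$ and $\gamma\gamma^!(D) \in {^\tau}\cD^{\le 1}$. The long exact sequence~\eqref{eq:les} then gives an exact sequence
\begin{equation*}
{^\tau}\cH^0(D) \to {^\tau}\cH^0(\beta\beta^*(D)) \to {^\tau}\cH^1(\gamma\gamma^!(D)) \to 0 .
\end{equation*}
Since truncations preserve $\cB$, the middle term lies in ${^{\tau\vert_\cB}}\cB^\heartsuit$. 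I then show that ${^\tau}\cH^1(\gamma\gamma^!(D))$ lies in $\cB$ as well.
\begin{itemize}
\item Under (iii) this is immediate, since it is a quotient of an object of the heart of $\cB$.
\item Under (ii), the image of the first map is a subobject of the middle term, hence lies in the heart of $\cB$. The cokernel is then a cokernel of a morphism in ${^{\tau\vert_\cB}}\cB^\heartsuit$, hence lies in it by the exactness remark above.
\end{itemize}
Finally, the truncation morphism $\gamma\gamma^!(D) \to \tau^{\ge 1}\gamma\gamma^!(D) = {^\tau}\cH^1(\gamma\gamma^!(D))[-1]$ goes from an object of $\cC$ to an object of $\cB = \cC^\perp$, so it vanishes. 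A truncation morphism $X \to \tau^{\ge 1}X$ is zero only if $\tau^{\ge 1}X=0$. Therefore $\gamma\gamma^!(D) \in {^\tau}\cD^{\le 0}$, proving (iv).

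The main point needing care is the use of the long exact sequence together with orthogonality to kill ${^\tau}\cH^1$. I expect no serious obstacle; the rest is bookkeeping with adjunctions and Lemma~\ref{lemma-restrict-t-structure-vs-projection-amplitude}.
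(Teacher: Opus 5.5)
Your proof is correct and is essentially the paper's argument. The two key steps match the paper's: killing ${}^\tau\cH^1(\gamma\gamma^!(D))$ via the long exact sequence plus $\Hom(\cC,\cB)=0$, and forcing $\gamma^!(A'')=0$ by adjunction and $\Hom$-orthogonality. The only difference is routing: the paper gets (ii)$\iff$(iii) directly from the two-out-of-three property of the triangulated subcategory $\cB$, whereas you pass (ii) through (iv) with an image/cokernel argument, which works equally well.
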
 

\begin{proof}
\ref{tau-restricts-cB-ssc} $\iff$ \ref{tau-restricts-cB-cus} $\iff$ \ref{tau-restricts-cB-cuq}:  
Note that a short exact sequence in~${^{\tau}}\cD^{\heartsuit}$
is the same as a distinguished triangle in~$\cD$ whose terms lie in~${^{\tau}}\cD^{\heartsuit}$,
and similarly for short exact sequences in~${^{\tau\vert_{\cB}}}\cB^{\heartsuit}$.
Since~$\cB \subset \cD$ is a triangulated subcategory,
it follows that~${^{\tau\vert_{\cB}}}\cB^{\heartsuit}$ is automatically closed
under extensions in~${^{\tau}}\cD^{\heartsuit}$,
and it is closed under subobjects if and only if it is closed under quotients. 

\ref{tau-restricts-cB-cuq} $\implies$ \ref{projection-C-t-amplitude-0}: 
By Lemma~\ref{lemma-restrict-t-structure-vs-projection-amplitude}
we already know that~$\gamma \circ \gamma^!$ has right t-amplitude~$\leq 1$ with respect to~$\tau$.
Thus it suffices to show that for any~$D \in {^\tau}\cD^{\leq 0}$ we have~${^\tau}\cH^1(\gamma \gamma^!(D)) = 0$.
Taking the long exact sequence ~\eqref{eq:les} of cohomology objects associated to the distinguished triangle
\begin{equation*}
\gamma\gamma^!(D) \to D \to \beta\beta^*(D) 
\end{equation*} 
gives a surjection~${^{\tau}}\cH^0(\beta\beta^*(D)) \twoheadrightarrow {^\tau}\cH^1(\gamma\gamma^!(D))$. 
Since~${^{\tau}}\cH^0(\beta\beta^*(D))$ is contained in~${^{\tau\vert_{\cB}}}\cB^{\heartsuit}$
and this category is closed under quotients in~${^\tau}\cD^{\heartsuit}$,
we conclude that~\mbox{${^\tau}\cH^1(\gamma\gamma^!(D)) \in {^{\tau\vert_{\cB}}}\cB^{\heartsuit}$}.
It follows that the canonical morphism 
\begin{equation*}
\gamma\gamma^!(D) \to \tau^{\geq 1}(\gamma \gamma^!(D))  \cong  {^\tau}\cH^1(\gamma\gamma^!(D))[-1]
\end{equation*} 
vanishes, because its source is in~$\cC$ and its target is in~$\cB$.
Thus~${^\tau}\cH^1(\gamma\gamma^!(D)) = 0$, as required.

\ref{projection-C-t-amplitude-0} $\implies$ \ref{tau-restricts-cB-cuq}: 
Let~$0 \to D' \to B \to D'' \to 0$ be an exact sequence in~${^\tau}\cD^{\heartsuit}$
with~\mbox{$B \in {^{\tau\vert_{\cB}}}\cB^{\heartsuit}$}.
We must show that~$D'' \in \cB$, or equivalently that~$\gamma^!(D'') = 0$;
we argue similarly to the proof of the implication~\ref{projection-C-t-amplitude-1} $\implies$ \ref{tau-restricts-cB}
in Lemma~\ref{lemma-restrict-t-structure-vs-projection-amplitude}.
Since~$\gamma^!$ kills the object~$B$, applying it to the exact sequence gives an isomorphism
\begin{equation}
\label{gammaD'1gammaD''}
\gamma^!(D'[1]) \cong \gamma^!(D''). 
\end{equation} 
By adjunction, this isomorphism corresponds to a morphism~${\varphi} \colon \gamma \gamma^!(D'[1]) \to D''$.
The source of~$\varphi$ is contained in~${^{\tau}}\cD^{\leq -1}$ because~$\gamma \circ \gamma^!$ is right t-exact,
while the target is contained in~${^\tau}\cD^{\heartsuit}$.
It follows that~$\varphi = 0$.
Thus the isomorphism~\eqref{gammaD'1gammaD''} must be the zero map, which means that its source and target vanish, as required. 

The equivalence of the statements \ref{tau-restricts-cC-ssc}--\ref{projection-B-t-amplitude-0} follows similarly.
\end{proof} 

Combining the above ingredients, we obtain a useful criterion for the existence of projected t-structures.
\begin{theorem}
\label{theorem-projected-t-structures}
In Setup~\textup{\ref{setup-sod}}, the following conditions are equivalent:
\begin{enumerate}[label={\textup{(\roman*)}}]
\item
\label{projected-t-structures-C-1}
$\tau$ restricts to a t-structure~$\tau\vert_{\cB}$ on~$\cB$
whose heart~${^{\tau\vert_{\cB}}}\cB^{\heartsuit}$ is {a Serre subcategory of}~${^{\tau}}\cD^{\heartsuit}$.
\item
$\gamma \circ \gamma^!$ is right t-exact with respect to~$\tau$.
\item
\label{projected-t-structures-C-2}
$\tau$ {right} projects to a t-structure~$\gamma^!\tau$ on~$\cC$.
\end{enumerate}
Moreover, in this case, if $\tau$ is bounded or noetherian or artinian, then so is $\gamma^!\tau$.

Similarly, the following conditions are equivalent: 
\begin{enumerate}[label={\textup{(\roman*$'$)}}]
\item
\label{projected-t-structures-B-1}
$\tau$ restricts to a t-structure~$\tau\vert_{\cC}$ on~$\cC$
whose heart~${^{\tau\vert_{\cC}}}\cC^{\heartsuit}$ is {a Serre subcategory of}~${^{\tau}}\cD^{\heartsuit}$.
\item
$\beta \circ \beta^*$ is left t-exact with respect to~$\tau$.
\item
\label{projected-t-structures-B-2}
$\tau$ {left} projects to a t-structure~$\beta^*\tau$ on~$\cB$.
\end{enumerate}
Moreover, in this case, if~$\tau$ is bounded or noetherian or artinian, then so is~$\beta^*\tau$.
\end{theorem}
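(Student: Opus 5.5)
The proof will simply assemble results already established, so I will chain the equivalences and then treat the "moreover" clause. Only the first set of equivalences needs an argument. The primed version, about $\cC$, $\beta\circ\beta^*$ and left projection, follows from the same argument with each lemma's primed counterpart, or formally by passing to opposite categories as in Remark~\ref{remark-opposite-category}.

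First, the equivalence \ref{projected-t-structures-C-1}~$\iff$~(ii) is exactly the equivalence \ref{tau-restricts-cB-ssc}~$\iff$~\ref{projection-C-t-amplitude-0} of Lemma~\ref{lemma-restrict-t-structure-ssc}. Second, the equivalence (ii)~$\iff$~\ref{projected-t-structures-C-2} is Lemma~\ref{lemma-project-B-iff-lte}\ref{it:projection-right-criterion}. That lemma requires the right adjoint $\gamma^!$ to exist, and in Setup~\ref{setup-sod} it does, since $\cC$ is right admissible by Lemma~\ref{lemma-sod}.

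For the "moreover" clause, suppose $\tau$ right projects to $\gamma^!\tau$. Lemma~\ref{lemma-projection-tau-cB}\ref{t-induced-bounded-cB} says boundedness is inherited, and Lemma~\ref{lemma-projection-tau-cB}\ref{Phi!tau-noetherian} says noetherianity and artinianity are inherited. That proof was written out only for noetherianity in the right case, with the artinian case called analogous. If a detail is wanted there, I would argue as follows: ${}^\tau\cH^0\circ\gamma$ is fully faithful from ${}^{\gamma^!\tau}\cC^\heartsuit$ to ${}^\tau\cD^\heartsuit$ and right exact. A descending chain of subobjects in $\cC^\heartsuit$ corresponds to a chain of epimorphisms $C\twoheadrightarrow C/C_i$, where each $C/C_i\twoheadrightarrow C/C_{i+1}$. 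Applying ${}^\tau\cH^0\circ\gamma$, which is right exact, gives a chain of quotients of ${}^\tau\cH^0(\gamma C)$, equivalently a descending chain of kernels. By artinianity of $\tau$ this chain stabilizes, and full faithfulness transports the stabilization back to $\cC^\heartsuit$.

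There is no real obstacle here. The only point to check is that the hypotheses of each cited lemma are met in Setup~\ref{setup-sod}.
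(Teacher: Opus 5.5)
Your proposal is correct and is exactly the paper's proof. The equivalence (i)$\iff$(ii) comes from Lemma~\ref{lemma-restrict-t-structure-ssc}, (ii)$\iff$(iii) from Lemma~\ref{lemma-project-B-iff-lte}, and the inheritance of boundedness, noetherianity and artinianity from Lemma~\ref{lemma-projection-tau-cB}, with the primed statements obtained dually. One slip in your optional artinian detail: for a descending chain $C_{i+1}\subset C_i$ the natural epimorphisms are $C/C_{i+1}\twoheadrightarrow C/C_i$, not the reverse; with the direction fixed, your argument (the kernels in ${}^\tau\cH^0(\gamma(C))$ form a descending chain, which stabilizes, and the fully faithful functor ${}^\tau\cH^0\circ\gamma$ reflects isomorphisms) goes through.
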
 

\begin{proof}
All of the claims follow by combining Lemmas~\ref{lemma-restrict-t-structure-ssc},~\ref{lemma-project-B-iff-lte}, and~\ref{lemma-projection-tau-cB}.
\end{proof}

\subsection{Projected t-structures from stability conditions}
\label{ss:projected-stability}

There is an interesting application of the criterion of Theorem~\ref{theorem-projected-t-structures}
in the context of stability conditions.
Before stating this result, we briefly recall a few relevant facts, referring to \cite[\S12]{stability-families} for further details. 

Given a homomorphism~$v \colon \rK_0(\cD) \to \Lambda$ from the Grothendieck group of~$\cD$
to a finite rank free abelian group, \emphsf{a pre-stability condition on~$\cD$ with respect to~$v$}
consists of a pair~$\sigma = (\tau,Z)$ where~$\tau$ is a bounded t-structure
and~$Z \colon \Lambda \to \bC$ is a group homomorphism satisfying suitable compatibility properties.
A pre-stability condition gives rise to a notion of (semi)stable objects, which are ordered by their \emphsf{phase}~$\phi \in \bR$.
The subcategories~$\cP_{\sigma}(\phi) \subset \cD$ of $\sigma$-semistable objects of phase~$\phi$
for varying~$\phi \in \bR$ form \emphsf{a slicing} of~$\cD$;
in these terms, we can describe the heart~${^\tau}\cD^{\heartsuit} = \cP_{\sigma}(0,1]$
as the extension closure of the subcategories~$\cP_{\sigma}(\phi)$ for~$\phi \in (0,1]$.

\emphsf{A stability condition} is a pre-stability condition satisfying the so-called support property.

\begin{proposition}
\label{proposition-pre-stability-tau} 
Let~$\cD$ be a $\kk$-linear triangulated category for a field~$\kk$.
Let~$\sigma = (\tau, Z)$ be a pre-stability condition on~$\cD$.
Let~$E \in \cD$ be an exceptional object which is $\sigma$-stable of phase~$1$, let
\begin{equation*}
\cD = \langle \cB, E \rangle \quad \text{and} \quad \cD = \langle E, \cC \rangle 
\end{equation*} 
be the corresponding semiorthogonal decompositions,
and let~$\beta \colon \cB \to \cD$ and~$\gamma \colon \cC \to \cD$ be the inclusion functors.
Then~$\tau$ left and right projects to bounded t-structures~$\beta^*\tau$ and~$\gamma^!\tau$ on~$\cB$ and~$\cC$,
which are noetherian or artinian if~$\tau$ is noetherian or artinian.
\end{proposition}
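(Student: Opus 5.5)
The plan is to derive both statements from Theorem~\ref{theorem-projected-t-structures}. For~$\cD = \langle E, \cC\rangle$, the component playing the role of ``$\cB$'' in Setup~\ref{setup-sod} is~$\langle E\rangle$. So it suffices to show two things: that~$\tau$ restricts to a t-structure on~$\langle E\rangle$, and that its heart is a Serre subcategory of~${}^\tau\cD^\heartsuit$. For~$\cD = \langle \cB, E\rangle$, the component~$\langle E\rangle$ plays the role of ``$\cC$'', and the required condition is literally the same. Hence one verification gives both the right projection~$\gamma^!\tau$ and the left projection~$\beta^*\tau$. Boundedness, noetherianity and artinianity are then inherited by the final clause of Theorem~\ref{theorem-projected-t-structures}; note that~$\tau$ is bounded because~$\sigma$ is a pre-stability condition.

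First, since~$E$ is $\sigma$-stable of phase~$1$, it lies in~$\cP_\sigma(1) \subset \cP_\sigma(0,1] = {}^\tau\cD^\heartsuit$. Example~\ref{example-E-in-Dheart} then shows that~$\tau$ restricts to~$\langle E\rangle$, with heart~$\set{E^{\oplus n}}$. By Lemma~\ref{lemma-restrict-t-structure-ssc}, being a Serre subcategory is equivalent to closure under quotients in~${}^\tau\cD^\heartsuit$. So it remains to show: if~$E^{\oplus n} \twoheadrightarrow Q$ is an epimorphism in~${}^\tau\cD^\heartsuit$, then~$Q \cong E^{\oplus m}$ for some~$m$.

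\emph{Step 1: $Q$ lies in~$\cP_\sigma(1)$.} Let~$Q_{\min}$ be the last HN factor of~$Q$, of phase~$\phi_{\min} \in (0,1]$. The composition~$E^{\oplus n} \to Q \to Q_{\min}$ is a composition of epimorphisms in the heart, hence is nonzero. Semistable objects of phase~$1$ admit no nonzero maps to semistable objects of smaller phase, so~$\phi_{\min} \geq 1$. Therefore~$Q$ is semistable of phase~$1$.

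\emph{Step 2: induction on~$n$.} If~$Q \neq 0$, some summand~$E \to E^{\oplus n} \to Q$ is nonzero. Both objects lie in~$\cP_\sigma(1)$. The kernel~$K$ of this map in the heart is a subobject of~$E$, and by Step~1 applied to the image, the image is in~$\cP_\sigma(1)$; hence~$K \in \cP_\sigma(1)$ as well. Stability of~$E$ then forces~$K = 0$, so~$E \hookrightarrow Q$ is a monomorphism. The quotient~$Q/E$ is a quotient of~$E^{\oplus n}/E \cong E^{\oplus(n-1)}$, so by induction~$Q/E \cong E^{\oplus m'}$. Thus~$Q$ is an extension of~$E^{\oplus m'}$ by~$E$, and this extension splits because~$\Ext^1(E,E) = 0$ by exceptionality. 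Hence~$Q \cong E^{\oplus(m'+1)}$.

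The main obstacle is the stability bookkeeping inside the heart, namely Step~1 and the injectivity in Step~2. It amounts to the standard facts that~$\cP_\sigma(1)$ is closed under quotients and subobjects in~$\cP_\sigma(0,1]$, and that a stable object is simple there. These facts need only the HN filtrations and phase ordering of a pre-stability condition, not the support property. Once this is in place, everything else is a direct application of Theorem~\ref{theorem-projected-t-structures}.
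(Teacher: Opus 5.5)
Your proof is correct and follows the paper's argument: restrict $\tau$ to $\langle E\rangle$ via Example~\ref{example-E-in-Dheart}, show that the heart $\set{E^{\oplus n}}$ is closed under quotients in ${}^\tau\cD^\heartsuit$ using stability of $E$, and conclude by Lemma~\ref{lemma-restrict-t-structure-ssc} and Theorem~\ref{theorem-projected-t-structures}; your Steps~1--2 fill in the quotient claim that the paper only asserts. One small correction: the step $K \in \cP_\sigma(1)$ does not follow from HN filtrations and phase ordering alone but uses the central charge (additivity gives $Z(K) = Z(E) - Z(I) \in \bR$, which for a nonzero object of the heart forces all its HN factors to have phase $1$), though you are right that the support property is not needed.
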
 

\begin{proof}
By Example~\ref{example-E-in-Dheart}, 
the t-structure~$\tau$ restricts to a t-structure on the triangulated
subcategory~$\langle E \rangle \subset \cD$ generated by~$E$,
with heart~$\langle E \rangle^{\heartsuit} = \set{E^{\oplus n} \sth n \geq 0}$.
We claim that~$\langle E \rangle^{\heartsuit}$ is closed under quotients in~${^\tau}\cD^{\heartsuit}$.
Indeed, since~$E$ is $\sigma$-stable of phase~$1$, if~$E^{\oplus n} \to D$ is a surjection in~${^\tau}\cD^{\heartsuit}$,
then it follows that~$D \cong E^{\oplus m}$ for some~$0 \leq m \leq n$; in particular, $D \in \langle E \rangle^\heartsuit$.
Now the result follows from Lemma~\ref{lemma-restrict-t-structure-ssc} and Theorem~\ref{theorem-projected-t-structures}.
\end{proof} 

\begin{remark}
Proposition~\ref{proposition-pre-stability-tau} can be upgraded to a criterion for projecting stability conditions on~$\cD$
to its semiorthogonal components~$\cB$ and~$\cC$.
In a sequel to this paper, we will develop this and other extensions
of results in this paper to the case of stability conditions.
\end{remark} 

If we assume that~$\sigma$ is a stability condition, then the existence of a noetherian projected t-structure on $\cC$ 
follows without assuming the t-structure underlying~$\sigma$ itself is noetherian:

\begin{corollary}
\label{corollary-stability-noetherian-t-structure}
Let~$\cD$ be a $\kk$-linear triangulated category for a field~$\kk$.
Let~$\sigma = (\tau, Z)$ be a stability condition on~$\cD$.
Let~$E \in \cD$ be an exceptional object which is $\sigma$-stable, and let 
\begin{equation*}
\cD = \langle \cB, E \rangle \quad \text{and} \quad \cD = \langle E, \cC \rangle 
\end{equation*} 
be the corresponding semiorthogonal decompositions. 
Then~$\cB$ and~$\cC$ admit noetherian bounded t-strucures and artinian bounded t-structures.
\end{corollary}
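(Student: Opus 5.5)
The plan is to reduce to Proposition~\ref{proposition-pre-stability-tau} by changing the heart within the $\widetilde{\mathrm{GL}}^+_2(\bR)$-orbit of $\sigma$, i.e.\ by rotating the slicing. Since $E$ is $\sigma$-stable, it has some phase $\phi_0$, so $E \in \cP_\sigma(\phi_0)$. Acting on $\sigma$ by the element of $\widetilde{\mathrm{GL}}^+_2(\bR)$ that rotates phases by $1-\phi_0$, we get a stability condition $\sigma'$ for which $E$ is $\sigma'$-stable of phase $1$. The heart of $\sigma'$ is $\cP_\sigma(\phi_0-1,\phi_0]$. This is a bounded t-structure because the slicing is locally finite, which the support property guarantees. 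Proposition~\ref{proposition-pre-stability-tau} then shows that $\tau' = \tau_{\sigma'}$ left and right projects to bounded t-structures on $\cB$ and $\cC$, and that these are noetherian (respectively artinian) whenever $\tau'$ is.

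So the key point is to pick the rotation so that the heart $\cP_\sigma(\phi_0-1,\phi_0]$ is noetherian, or in a second choice artinian, while keeping $E$ at the boundary phase $1$. The heart $\cP(\psi-1,\psi]$ is of finite length whenever it is artinian and noetherian. It is known (Bridgeland, and \cite[Lemma~4.4]{bridgeland-stability}-type arguments) that for a stability condition with support property, the heart $\cP(\psi-1,\psi]$ is of finite length when the central charge image $Z(\Lambda)$ is discrete in the relevant sense. More generally, the tilted hearts $\cP(\psi-1,\psi]$ are noetherian when $\psi$ is such that $\cP(\psi)$ contains only the objects whose charges lie on a ray containing discretely many lattice points. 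The cleanest route I would try is the following. First, deform $\sigma$ slightly in the space of stability conditions so that $Z$ becomes rational, meaning $Z(\Lambda) \subset \bQ \oplus i\bQ$. Stability of the exceptional object $E$ is an open condition, because stable objects remain stable under small deformation, so $E$ stays stable. For rational $Z$, the heart $\cP(0,1]$ is noetherian, since $\Im Z$ is discrete and nonnegative on it (the standard Bridgeland/Abramovich--Polishchuk argument). By the same discreteness, the heart $\cP(-1/2,1/2]$ is also noetherian, and a similar argument applies to any heart whose boundary ray is rational.

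The main obstacle is to combine this with the requirement that $E$ has phase exactly $1$ in the rotated heart. For that I need noetherianity of $\cP(\phi_0-1,\phi_0]$ where $\phi_0$ is the phase of $E$. Since $Z(E)$ lies in $\bQ\oplus i\bQ$, the ray through $Z(E)$ is rational. Applying a rational rotation-and-scaling in $\mathrm{GL}_2(\bQ)$ that sends this ray to $\bR_{<0}$ preserves both the rationality and the discreteness of the imaginary part. After this transformation the heart $\cP(0,1]$ is noetherian and contains $E$ in phase $1$.

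For artinianity, I would use the dual heart $\cP[0,1)$ in the phase convention where $E$ has phase $0$. Equivalently, I would pass to $\cD^{\op}$ as in Remark~\ref{remark-opposite-t-structure}, where the opposite stability condition makes $E$ stable of phase $1$ with noetherian heart. Dualizing then gives an artinian bounded heart on $\cD$ whose left-boundary phase contains $E$. One subtlety is that Proposition~\ref{proposition-pre-stability-tau} is stated for phase $1$. In the opposite category the roles of left and right projection swap, but both $\cB$ and $\cC$ are covered, so this is harmless.
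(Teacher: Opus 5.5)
Your argument is essentially the paper's: normalize $\sigma$ so that the central charge is rational, which makes the heart noetherian by the Abramovich--Polishchuk argument, while $E$ stays stable of phase exactly $1$; then apply Proposition~\ref{proposition-pre-stability-tau}, and obtain the artinian versions by running the same argument on $\cD^{\op}$ with $(\tau^{\op},\bar Z)$ --- your order (deform first, then a rational rotation--scaling by $-\overline{Z(E)}$) makes the phase-$1$ normalization explicit, and since Proposition~\ref{proposition-pre-stability-tau} already covers $\cB$ you do not need the paper's mutation step. One correction to an aside you never actually use: discreteness of $Z(\Lambda)$ does not make $\cP(\psi-1,\psi]$ of finite length (for a curve $C$ with $Z=-\deg+i\,\mathrm{rk}$ the heart $\Coh(C)$ is noetherian but not artinian), so artinianity genuinely requires the passage to $\cD^{\op}$.
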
 

\begin{proof}
First, we prove that~$\cC$ admits a noetherian bounded t-structure.
We reduce to Proposition~\ref{proposition-pre-stability-tau}. 
Namely, suppose that~$E$ has phase~$\phi$.
By rotating, we may find a new stability condition~$\sigma' = (\tau', Z')$
for which~$E \in \cP_{\sigma'}(1)$ is stable of phase~$1$.
Then, by Bridgeland's deformation theorem and openness of stability of~$E$,
up to a small deformation of the central charge~$Z'$ we may assume that its image is contained in~$\bQ \oplus i \bQ$.
In this case, the t-structure~$\tau'$ is noetherian by~\cite[Proposition~5.0.1]{AP06},
so we may apply Proposition~\ref{proposition-pre-stability-tau} to conclude.

Since mutation through the object~$E$ gives an equivalence~$\cB \simeq \cC$,
it follows that $\cB$ also admits a noetherian bounded t-structure.

To deduce the artinian version of the result, 
we first observe that~$\sigma$ gives a stability condition on the opposite category~$\cD^{\op}$. 
More precisely, assume that $\sigma$ is a stability condition with respect to~$v \colon \rK_0(\cD) \to \Lambda$. 
Let $v^{\op} \colon \rK_0(\cD^{\op}) \to \Lambda$ be the composition of $v$ with the natural identification $\rK_0(\cD^{\op}) \cong \rK_0(\cD)$, 
let $\tau^{\op}$ be the opposite t-structure as in Remark~\ref{remark-opposite-t-structure}, 
and let $Z^{\op} \colon \Lambda \to \bC$ be the conjugate of $Z$, i.e. $Z^\op(v) = \overline{Z(v)}$ for $v \in \Lambda$.
Then it is easy to see that the the pair~$\sigma^{\op} = (\tau^{\op}, Z^{\op})$ is a stability condition on~$\cD^{\op}$ with respect to~$v^{\op}$. 
Moreover, the object~$E \in \cD^{\op}$ is still exceptional and~$\sigma^{\op}$-stable, and there  are semiorthogonal decompositions
\begin{equation*}
\cD^{\op} = \langle E, \cB^{\op} \rangle \quad \text{and} \quad 
\cD^{\op} = \langle \cC^{\op}, E \rangle. 
\end{equation*} 
By what we have already shown, $\cB^{\op}$ and $\cC^{\op}$ admit noetherian t-structures. By Remark~\ref{remark-opposite-t-structure}, the corresponding opposite t-structures on $\cB$ and $\cC$ are artinian.
\end{proof}

\subsection{Restricting t-structures to complements of exceptional sequences} 
\label{section-complements-es}

Finally, we discuss an application of Lemma~\ref{lemma-restrict-t-structure-vs-projection-amplitude}
to restricting t-structures to a semiorthogonal component~$\cB \subset \cD$.
In contrast to the results in \S\ref{section-restricting-t-structures},
the following criterion is given in terms of the complement of~$\cB$ (as opposed to properties of~$\cB$ itself).
For the result, we need our categories to be enhanced,
where an enhanced triangulated category can either be taken to mean a DG category or stable $\infty$-category.

\begin{lemma}
\label{lemma:restriction-criterion}
Let~$\cD$ be a $\kk$-linear enhanced triangulated category for a field~$\kk$,
equipped with a semiorthogonal decomposition~$\cD = \langle \cB, \cC \rangle$.
Let~$\tau$ be a t-structure on~$\cD$, and assume~$\cC$ is generated by an exceptional collection~$F_1,\dots,F_n$
such that~$F_i \in {}^\tau\cD^\heartsuit$ and
\begin{equation}
\label{eq:rhom-le1}
\RHom(F_i,D) \in \Db(\kk)^{\le 1}
\quad
\text{for all~$D \in {}^\tau\cD^{\le 0}$},
\end{equation}
where~$\Db(\kk)$ is equipped with the standard t-structure.
Then~$\tau$ restricts to a t-structure on~$\cB$.
\end{lemma}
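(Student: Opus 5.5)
The plan is to verify the criterion of Lemma~\ref{lemma-restriction-t-structure}: it suffices to show that for every $B \in \cB$ the truncation $\tau^{\le 0}B$ again lies in $\cB$. By Lemma~\ref{lemma-sod} we have $\cB = \cC^\perp$. Since $\cC$ is generated by $F_1,\dots,F_n$, membership in $\cB$ is equivalent to $\RHom(F_i,-) = 0$ for all $i$. So the whole argument reduces to a vanishing statement for $\RHom(F_i, \tau^{\le 0}B)$, which I would obtain by squeezing this complex between two incompatible bounds in $\Db(\kk)$.

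Concretely, fix $B \in \cB$ and apply $\RHom(F_i,-)$ to the truncation triangle $\tau^{\le 0}B \to B \to \tau^{\ge 1}B$. Since $\RHom(F_i,B) = 0$, we get an isomorphism
\begin{equation*}
\RHom(F_i, \tau^{\le 0}B) \cong \RHom(F_i, \tau^{\ge 1}B)[-1].
\end{equation*}
The left-hand side lies in $\Db(\kk)^{\le 1}$ by the hypothesis~\eqref{eq:rhom-le1}, since $\tau^{\le 0}B \in {}^\tau\cD^{\le 0}$.

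For the right-hand side, use that $F_i \in {}^\tau\cD^\heartsuit \subset {}^\tau\cD^{\le 0}$. By $\Hom$-orthogonality we then have $\Hom(F_i, \tau^{\ge 1}B[m]) = 0$ for all $m \le 0$, because $\tau^{\ge 1}B[m] \in {}^\tau\cD^{\ge 1-m} \subset {}^\tau\cD^{\ge 1}$. Hence $\RHom(F_i,\tau^{\ge 1}B) \in \Db(\kk)^{\ge 1}$, and after the shift $[-1]$ the right-hand side lies in $\Db(\kk)^{\ge 2}$. An object of $\Db(\kk)^{\le 1} \cap \Db(\kk)^{\ge 2}$ is zero, so $\RHom(F_i,\tau^{\le 0}B) = 0$ for every $i$.

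It follows that $\tau^{\le 0}B \in \cC^\perp = \cB$, and Lemma~\ref{lemma-restriction-t-structure} then gives that $\tau$ restricts to $\cB$. Equivalently, one could phrase this through Lemma~\ref{lemma-restrict-t-structure-vs-projection-amplitude}, but the direct route above avoids describing $\gamma\gamma^!$ explicitly via dual collections.

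The only step needing care is the degree bookkeeping on each side, which is where the hypothesis~\eqref{eq:rhom-le1} (amplitude $\le 1$ rather than $\le 0$) is exactly what is needed. The enhancement assumption does not appear to be needed in this argument; the only property of the $F_i$ used is that $\cC$ is the smallest triangulated subcategory containing them, so that $\cC^\perp$ is detected by the $F_i$.
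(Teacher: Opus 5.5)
Your proof is correct, and it takes a more direct route than the paper's.

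\textbf{The paper's route.} It uses the enhancement and the Hille--Perling universal extensions to build a tilting generator $\ol{F}$ of $\cC$. It then identifies $\cC$ with the bounded derived category of modules over $\Lambda = \Hom(\ol{F},\ol{F})$ and puts the corresponding t-structure $\tau_\cC$ on $\cC$. Next it shows that $\gamma^!$ has right t-amplitude $\le 1$ and that $\gamma$ is right t-exact, so $\gamma\gamma^!$ has right t-amplitude $\le 1$. It concludes by Lemma~\ref{lemma-restrict-t-structure-vs-projection-amplitude}.

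\textbf{Your route.} You run the degree-squeeze from the implication (iii)$\Rightarrow$(i) in the proof of that lemma directly, testing the truncation triangle against $\RHom(F_i,-)$ instead of against $\gamma^!$. The bookkeeping is right. $\Hom(F_i,\tau^{\le 0}B[n])$ vanishes for $n \ge 2$ by \eqref{eq:rhom-le1}. For $n \le 1$ it is isomorphic to $\Hom(F_i,\tau^{\ge 1}B[n-1])$, which vanishes because $F_i$ is connective and $\tau^{\ge 1}B[n-1] \in {}^\tau\cD^{\ge 1}$. Vanishing against all $F_i$ then propagates to the thick subcategory $\cC$ they generate.

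\textbf{What each approach buys.} Your argument needs neither the enhancement, nor exceptionality, nor homological finite-dimensionality, nor that $F_i$ lie in the heart. It uses only that $F_i \in {}^\tau\cD^{\le 0}$, the hypothesis \eqref{eq:rhom-le1}, and that the $F_i$ generate $\cC$. Your closing remark about what is used should also list connectivity of the $F_i$. The paper's argument produces more structure: an explicit t-structure on $\cC$ for which $\gamma^!$ has t-amplitude in $[0,1]$ and $\gamma$ is right t-exact. This is how the authors place the construction of BLMS inside their framework. Remark~\ref{remark-BLMS} concedes that their proof is less direct; your argument is essentially the direct one.
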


\begin{proof}
Note that for all~$i, j$ we have~$\Ext^p(F_i,F_j) = 0$ for~$p \ne 0,1$:
indeed, for~$p < 0$ this follows from~$F_i,F_j \in {}^\tau\cD^\heartsuit$
and for~$p > 1$ this holds by assumption~\eqref{eq:rhom-le1}.
Moreover, for~$p \in \{0,1\}$ the space~$\Ext^p(F_i,F_j)$ is finite-dimensional
because~$F_i$ are homologically finite-dimensional by Definition~\ref{def:exceptional}.
Let~$\ol{F}_i$ be the universal extension of~$F_i$ by~$F_{i+1}, \dots, F_n$
as in~\cite[Definition~4.1]{HP}.
Then by~\cite[Theorem~4.4]{HP} the direct sum~$\ol{F} \coloneqq \oplus \ol{F}_i$
is a homologically finite-dimensional tilting generator for~$\cC$.
In particular (due to our assumption that $\cD$, and hence also $\cC$, admits an enhancement),
there is an equivalence
\begin{equation}
\label{eq:cc-lambda}
{\cC \simeq \Db(\Lambda\text{-}\mathrm{mod}),
\qquad
C \mapsto \RHom(\ol{F}, C),}
\end{equation}
where~$\Lambda \coloneqq \Hom(\ol{F}, \ol{F})$ is a finite-dimensional algebra
and~$\Lambda\text{-}\mathrm{mod}$ is the category of finitely generated right $\Lambda$-modules.
The standard t-structure on~$\Db(\Lambda\text{-}\mathrm{mod})$
corresponds under this equivalence to the t-structure~$\tau_\cC$ on~$\cC$ defined by
\begin{align*}
{}^{\tau_\cC}\cC^{\le 0} & = \set{ C \in \cC \sth {\RHom(\ol{F}, C)} \in \Db(\kk)^{\le 0} } , \\
\quad
{}^{\tau_\cC}\cC^{\ge 0} & = \set{ C \in \cC \sth {\RHom(\ol{F}, C)} \in \Db(\kk)^{\ge 0} },
\end{align*}
where in both cases~$\Db(\kk)$ is equipped with the standard t-structure.

Let~$\gamma \colon \cC \to \cD$ denote the inclusion functor with right adjoint~$\gamma^!$.
We will show that the endofunctor~$\gamma \circ \gamma^! \colon \cD \to \cD$ has right t-amplitude~$\leq 1$ with respect to~$\tau$, from which the result then follows by Lemma~\ref{lemma-restrict-t-structure-vs-projection-amplitude}.

To see this, first note that for any object $D \in \cD$, we have
\begin{equation*}
\RHom(\ol{F}, \gamma^!(D)) \cong \RHom(\ol{F}, D)
\end{equation*}
by adjunction. 
If~$D \in {}^\tau\cD^{\le 0}$, we have~$\RHom(F_i, D) \in \Db(\kk)^{\le 1}$ for all~$i$ by assumption~\eqref{eq:rhom-le1};
then, since~$\ol{F}$ is an iterated extension of the~$F_i$,
we conclude that~$\RHom(\ol{F}, D) \in \Db(\kk)^{\le 1}$ as well.
By the description of the t-structure~$\tau_{\cC}$ above, we find that~$\gamma^!(D) \in {}^{\tau_\cC}\cC^{\le 1}$. 
In other words, $\gamma^!$ has right t-amplitude~$\leq 1$ with respect to~$\tau$ and~$\tau_{\cC}$.

Similarly, if~$D \in {}^\tau\cD^{\ge 0}$, we have~$\RHom(F_i, D) \in \Db(\kk)^{\ge 0}$ for all~$i$ because~$F_i \in {}^\tau\cD^\heartsuit$, and 
hence~\mbox{$\gamma^!(D) \in {}^{\tau_\cC}\cC^{\ge 0}$}. 
Thus, $\gamma^!$ is left t-exact with respect to $\tau$ and $\tau_{\cC}$,
which by Lemma~\ref{lemma-t-exactness-adjoints} implies that~$\gamma$
is right t-exact with respect to~$\tau_{\cC}$ and~$\tau$.

Since $\gamma^!$ has right t-amplitude $\leq 1$ and $\gamma$ is right t-exact, their composition 
$\gamma \circ \gamma^!$ has right t-amplitude $\leq 1$, as required. 
\end{proof}

\begin{remark}
\label{remark-BLMS}
Lemma~\ref{lemma:restriction-criterion} gives a different perspective
on the construction of restricted t-structures from~\cite[\S4]{BLMS}.
We note that the hypotheses of our Lemma~\ref{lemma:restriction-criterion} are stronger
than those in~\cite[Lemma~4.3]{BLMS}, and our proof is somewhat less direct.
However, we included Lemma~\ref{lemma:restriction-criterion} as it shows how~\cite[\S4]{BLMS}
fits into the framework of this paper.
\end{remark}

\begin{example}
\label{example-BLMS}
Let~$\cD$ be a $\kk$-linear enhanced triangulated category for a field~$\kk$,
and assume~$\cD$ has a Serre functor~$\rS_{\cD}$.
Let~$\cD = \langle \cB, \cC \rangle$ be a semiorthogonal decomposition.
Let~$\tau$ be a t-structure on $\cD$, and assume~$\cC$ is generated by an exceptional collection~$F_1,\dots,F_n$
such that~$\rS_\cD(F_i) \in {}^\tau\cD^{\ge -1}$.
Then~$\rS_\cD(F_i)[-1] \in {}^\tau\cD^{\ge 0}$, and using Serre duality we find that
\begin{equation*}
\Ext^p(F_i,D)^\vee \cong \Ext^{-p}(D, \rS_\cD(F_i)) \cong \Ext^{1-p}(D, \rS_\cD(F_i)[-1])
\end{equation*}
vanishes for all~$D \in {}^\tau\cD^{{\le 0}}$ and all~$p$ such that~$1 - p < 0$, i.e., for~$p > 1$.
Therefore, the hypothesis of Lemma~\ref{lemma:restriction-criterion} is satisfied, and hence~$\tau$ restricts to~$\cB$.
This recovers (a version of)~\cite[Corollary~4.4]{BLMS}.
\end{example}

%%%%%%%%%%%%%%%%%%%%%%%%%%%%%%%%%%%%%%%

\section{Induced t-structures} 
\label{section-induced-t-structures} 

In this section, we discuss induced t-structures,
our third and most general method for constructing a t-structure on a triangulated subcategory from one on the ambient category.

In~\S\ref{ss:induced-def} we define induced t-structures and discuss their basic properties. 
In~\S\ref{subsection-reduction-t-amplitude} we develop a technique for reducing the t-amplitude
of an idempotent endofunctor by passing to a new t-structure.
In~\S\ref{ss:proof} we prove our main result, Theorem~\ref{main-theorem},
as well as Theorem~\ref{theorem-induce-via-exceptional-sequence} and other complements.
In the auxiliary subsections~\S\ref{subsection-t-exactability} and~\S\ref{section-perverse-t-structures}, 
we explain an approach to extending our results to much wider generality, 
as well as a construction of perverse t-structures for other perversities
\`{a} la Bridgeland in the context of Theorem~\ref{main-theorem}.

\subsection{Definition and basic properties} 
\label{ss:induced-def}

\begin{definition}
Let~$\cD$ be a triangulated category with a t-structure~$\tau = ({}^\tau\cD^{\leq 0}, {}^\tau\cD^{\geq 0})$.
Let~$\cC$ be a triangulated category with a fully faithful triangulated functor $\gamma \colon \cC \to \cD$.
We say that:
\begin{enumerate}[label={\textup{(\alph*)}}]
\item
$\tau$ \emphsf{connectively induces a t-structure} on~$\cC$
if the pair~$\cin{\cC} = (\cC^{\leq 0}, \cC^{\geq 0})$ where
\begin{align}
\label{induced-Cleq0}
\cC^{\leq 0} & = {^\tau}\cD^{\le 0} \cap \cC,
\\
\label{induced-Cgeq0}
\cC^{\geq 0} & =
\set{ C \in \cC \sth \Hom(C',C) = 0 \text{ \textup{for all} } C' \in \cC^{\leq 0}[1]},
\end{align}
defines a t-structure on~$\cC$, in which case~$\cin{\cC}$ is called the \emphsf{connectively induced t-structure} on~$\cC$.

\item
$\tau$ \emphsf{coconnectively induces a t-structure} on~$\cC$
if the pair~$\ccin{\cC} = (\cC^{\leq 0}, \cC^{\geq 0})$ where
\begin{align}
\label{induced-Bgeq0}
\cC^{\geq 0} & = {^\tau}\cD^{\ge 0} \cap \cC,
\\
\label{induced-Bleq0}
\cC^{\leq 0} & =
\set{ C \in \cC \sth \Hom({C,C'}) = 0 \text{ \textup{for all} } C' \in \cC^{\geq 0}[-1]},
\end{align}
defines a t-structure on~$\cC$,
in which case~$\ccin{\cC}$ is called the \emphsf{coconnectively induced t-structure} on~$\cC$.
\end{enumerate}
\end{definition}

The following lemma summarizes the basic properties of induced t-structures.
\begin{lemma}
\label{lemma-tauB-basic-properties}
Let~$\cD$ be a triangulated category with a t-structure~$\tau$.  
Let~$\cC$ be a triangulated category with a fully faithful triangulated functor $\gamma \colon \cC \to \cD$. 
Assume either: 
\begin{enumerate}[label={\textup{(\alph*)}}]
\item
\label{it:induction-right}
$\tau$ connectively induces a t-structure~$\cin{\cC}$ on $\cC$; or
\item
\label{it:induction-left}
$\tau$ coconnectively induces a t-structure~$\ccin{\cC}$ on $\cC$.
\end{enumerate}
Then the induced t-structure has the following properties: 
\begin{enumerate}[label={\textup{(\arabic*)}}]
\item
\label{it:induced-gamma}
$\gamma \colon \cC \to \cD$ is right t-exact with respect to $\cin{\cC}$ and $\tau$ in case~\ref{it:induction-right}, 
and left t-exact with respect to~$\ccin{\cC}$ and~$\tau$ in case~\ref{it:induction-left}.

\item
\label{it:induced-gamma-bullet} 
In case~\ref{it:induction-right}, if $\gamma$ admits a right adjoint $\gamma^! \colon \cD \to \cC$, then $\gamma^!$ is left t-exact with respect to $\tau$ and $\cin{\cC}$. 
In case~\ref{it:induction-left}, if $\gamma$ admits a left adjoint $\gamma^* \colon \cD \to \cC$, then 
$\gamma^*$ is right t-exact with respect to~$\tau$ and~$\ccin{\cC}$. 

\item
\label{it:induced-amplitude}
In case~\ref{it:induction-right}, if $\gamma$ admits a right adjoint $\gamma^! \colon \cD \to \cC$ and~$\gamma \circ \gamma^!$ has right t-amplitude~$\leq b$ with respect to~$\tau$ for an integer~$b$,
then~$\gamma^!$ has t-amplitude in~$[0,b]$ with respect to~$\tau$ and~$\cin{\cC}$. 
In case~\ref{it:induction-left},
if $\gamma$ admits a left adjoint~$\gamma^*$ and~$\gamma \circ \gamma^*$ has left t-amplitude~$\geq a$ with respect to~$\tau$ for an integer~$a$,
then~$\gamma^*$ has t-amplitude in~$[a,0]$ with respect to~$\tau$ and~$\ccin{\cC}$.
\item
\label{it:induced-bounded}
In case~\ref{it:induction-right}, if~$\tau$ is bounded above, then so is~$\cin{\cC}$.
In case~\ref{it:induction-left}, if~$\tau$ is bounded below, then so is~$\ccin{\cC}$.
\end{enumerate}
\end{lemma}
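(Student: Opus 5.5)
By the duality of Remark~\ref{remark-opposite-category}, applied to the opposite categories with the opposite t-structure, it suffices to treat case~\ref{it:induction-right}. Case~\ref{it:induction-left} then follows formally, since passing to opposites swaps connective and coconnective parts, left and right adjoints, and bounded above and bounded below. So write $\cin{\cC} = (\cC^{\le 0}, \cC^{\ge 0})$ with $\cC^{\le 0} = {}^\tau\cD^{\le 0}\cap\cC$.

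For~\ref{it:induced-gamma}, right t-exactness of $\gamma$ is immediate: by definition~\eqref{induced-Cleq0}, $\gamma(\cC^{\le 0}) \subset {}^\tau\cD^{\le 0}$.

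For~\ref{it:induced-gamma-bullet}, Lemma~\ref{lemma-t-exactness-adjoints} applied to $\gamma$ and $\gamma^!$ turns the right t-exactness of $\gamma$ into left t-exactness of $\gamma^!$.

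For~\ref{it:induced-amplitude}, left amplitude $\ge 0$ is exactly~\ref{it:induced-gamma-bullet}. For the right amplitude, take $D \in {}^\tau\cD^{\le 0}$. By hypothesis $\gamma\gamma^!(D) \in {}^\tau\cD^{\le b}$, so $\gamma^!(D)[b] \in {}^\tau\cD^{\le 0}\cap\cC = \cC^{\le 0}$. Hence $\gamma^!(D) \in \cC^{\le b}$, as required.

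For~\ref{it:induced-bounded}, let $C \in \cC$. Since $\tau$ is bounded above, $\gamma(C) \in {}^\tau\cD^{\le n}$ for some $n$. Then $C[n] \in {}^\tau\cD^{\le 0}\cap \cC = \cC^{\le 0}$, so $C \in \cC^{\le n}$. Thus $\cC = \bigcup_n \cC^{\le n}$ and $\cin{\cC}$ is bounded above.

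There is no real obstacle here. The only point needing care is that the shifts of $\cC^{\le 0}$ are computed inside $\cC$, which is harmless because $\gamma$ is a fully faithful triangulated functor and therefore commutes with shifts. One should also confirm that the duality in Remark~\ref{remark-opposite-category} carries connectively induced t-structures to coconnectively induced ones. This holds because the opposite of the definition~\eqref{induced-Cleq0}--\eqref{induced-Cgeq0} is exactly~\eqref{induced-Bgeq0}--\eqref{induced-Bleq0}.
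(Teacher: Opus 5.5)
Your proposal is correct and follows essentially the same route as the paper: (1) and (4) directly from the definition of the connective part, (2) via Lemma~\ref{lemma-t-exactness-adjoints}, and (3) by combining (2) with the definition of ${}^{\cin{\cC}}\cC^{\le 0}$, with case (b) handled by duality. One small citation fix: the duality you need for induced t-structures is Remark~\ref{remark-connectively-induced-opposite}, not Remark~\ref{remark-opposite-category}, though you correctly identify what must be checked.
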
 

\begin{proof}
The property~\ref{it:induced-gamma} holds by the definition of~$\tau_{\cC}$.
The property~\ref{it:induced-gamma-bullet} then follows from Lemma~\ref{lemma-t-exactness-adjoints}.
The property~\ref{it:induced-bounded} also holds by the definition of~$\tau_{\cC}$.

Finally, we prove~\ref{it:induced-amplitude} in case~\ref{it:induction-right}, case~\ref{it:induction-left} being analogous.
So, assume that $\gamma \circ \gamma^!$ has right t-amplitude~$\leq b$ with respect to~$\tau$.
As we already know that~$\gamma^!$ is left t-exact with respect to~$\tau$ and~$\cin{\cC}$,
it suffices to show that $\gamma^!$ has right t-amplitude~$\leq b$.
If~$D \in {^\tau}\cD^{\leq 0}$, then by our assumption~$\gamma\gamma^!(D) \in {^\tau}\cD^{\leq b}$,
which by the definition of~$\tau_{\cC}$ means that~$\gamma^!(D) \in {^{\tau_{\cC}}}\cC^{\leq b}$;
in other words, $\gamma^!$ has right t-amplitude~$\leq b$, as required.
\end{proof}

\begin{remark}
\label{remark-connectively-induced-opposite}
Similar to the case of projected t-structures discussed in Remark~\ref{remark-opposite-category},
the theories of connectively and coconnectively induced t-structure are formally equivalent under the passage to opposite categories.
Namely, let~$\cD$ be a triangulated category with a t-structure~$\tau$,
and let $\cD^{\op}$ be the opposite category, which by Remark~\ref{remark-opposite-t-structure}
is equipped with the opposite t-structure $\tau^{\op}$.
Let $\gamma \colon \cC \to \cD$ be a fully faithful triangulated functor,
and let~$\gamma^\op \colon \cC^\op \to \cD^\op$ be its opposite.
Then it follows immediately from the definitions that $\tau$ connectively induces a t-structure~$\tau^-_{\cC}$ on $\cC$
if and only if $\tau^{\op}$ coconnectively induces a t-structure $(\tau^{\op})_{\cC^{\op}}^{+}$ on $\cC^{\op}$,
and in this case, $(\tau^{\op})_{\cC^{\op}}^{+} = (\tau^-_{\cC})^\op$, i.e.
$(\tau^{\op})_{\cC^{\op}}^{+}$ is the opposite of the t-structure $\tau^-_{\cC}$.
\end{remark}

The notion of induced t-structures generalizes that of restricted and projected t-structures
studied in~\S\ref{section-restricting-t-structures} and~\S\ref{section-projected-t-structures}:
 
\begin{lemma}
\label{lemma-projected-is-induced}
Let~$\cD$ be a triangulated category equipped with a t-structure~$\tau = ({}^\tau\cD^{\leq 0}, {}^\tau\cD^{\geq 0})$. 
Let~$\cC$ be a triangulated category with a fully faithful triangulated functor $\gamma \colon \cC \to \cD$.
\begin{enumerate}[label={\textup{(\arabic*)}}]
\item
\label{it:res-ind}
If~$\tau$ restricts to a t-structure on~$\cC$,
then~$\tau$ connectively and coconnectively induces t-structures on~$\cC$,
and the restricted t-structure~$\tau\vert_\cC$ and the induced t-structures~$\tau^\pm_{\cC}$ coincide.
\item
\label{it:proj-right-ind}
If~$\tau$ right projects to a t-structure on~$\cC$, then $\tau$ connectively induces a t-structure on~$\cC$,
and the projected t-structure~$\gamma^!\tau$ and the induced t-structure~$\cin{\cC}$ coincide.
\item
\label{it:proj-left-ind}
If~$\tau$ left projects to a t-structure on~$\cC$, then~$\tau$ coconnectively induces a t-structure on~$\cC$,
and the projected t-structure~$\gamma^*\tau$ and the induced t-structure~$\ccin{\cC}$ coincide.
\end{enumerate}
\end{lemma}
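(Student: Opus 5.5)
The plan is to check that the connective parts agree; the coconnective parts then agree automatically. A t-structure is determined by its connective part via~\eqref{eq:cdm-cdp}, and the coconnective part in~\eqref{induced-Cgeq0} is exactly the shifted right orthogonal of $\cC^{\le 0}$. So if some t-structure $\tau'$ on $\cC$ has ${}^{\tau'}\cC^{\le 0} = {}^\tau\cD^{\le 0}\cap\cC$, its coconnective part equals~\eqref{induced-Cgeq0}. Hence the pair $\cin{\cC}$ coincides with $\tau'$, and in particular it is a t-structure. The coconnective statements are dual: the connective part in~\eqref{induced-Bleq0} is the left orthogonal of the coconnective part. Alternatively, part~\ref{it:proj-left-ind} can be deduced from part~\ref{it:proj-right-ind} by passing to opposite categories, using Remark~\ref{remark-opposite-category} and Remark~\ref{remark-connectively-induced-opposite}.

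For part~\ref{it:res-ind}, take $\tau' = \tau\vert_\cC$. By Definition~\ref{definition-restriction-t-structure}, its connective part is ${}^\tau\cD^{\le 0}\cap\cC$, which is exactly~\eqref{induced-Cleq0}. By the observation above, $\tau$ connectively induces $\tau\vert_\cC$. Its coconnective part is ${}^\tau\cD^{\ge 0}\cap\cC$, which is exactly~\eqref{induced-Bgeq0}. By the dual observation, $\tau$ also coconnectively induces $\tau\vert_\cC$.

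For part~\ref{it:proj-right-ind}, take $\tau' = \gamma^!\tau$. Lemma~\ref{lemma-projection-tau-cB}\ref{beta*t-geq0} in case~\ref{it:projection-right} gives ${}^{\gamma^!\tau}\cC^{\le 0} = {}^\tau\cD^{\le 0}\cap\cC$, and we conclude as before. For part~\ref{it:proj-left-ind}, the same lemma in case~\ref{it:projection-left} gives ${}^{\gamma^*\tau}\cC^{\ge 0} = {}^\tau\cD^{\ge 0}\cap\cC$, and we apply the dual observation.

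There is no real obstacle. The only input needing care is the cited identification of the connective part of the projected t-structure, and it is already available. Behind it lies the following argument. If $D\in{}^\tau\cD^{\le 0}$, then $\gamma\gamma^!D\in{}^\tau\cD^{\le 0}$ by Lemma~\ref{lemma-project-B-iff-lte}, which gives one inclusion. Conversely, $C\cong\gamma^!\gamma C$ gives the other.
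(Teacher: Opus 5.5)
Your proposal is correct and follows essentially the same route as the paper. Part (1) is immediate from the definitions, since the restricted t-structure already has the required connective (resp.\ coconnective) part. Parts (2) and (3) reduce to the identifications ${}^{\gamma^!\tau}\cC^{\le 0} = {}^\tau\cD^{\le 0}\cap\cC$ and ${}^{\gamma^*\tau}\cC^{\ge 0} = {}^\tau\cD^{\ge 0}\cap\cC$ from Lemma~\ref{lemma-projection-tau-cB}\ref{beta*t-geq0}, and the fact that a t-structure is determined by either half does the rest. The paper leaves implicit the orthogonality argument that you spell out.
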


\begin{proof}
Property~\ref{it:res-ind} follows immediately from the definitions.
For~\ref{it:proj-right-ind} and~\ref{it:proj-left-ind}, it suffices to show that~$\gamma^!\tau$ and~$\gamma^*\tau$
satisfy~${^{\gamma^!\tau}}\cC^{\leq 0} ={}^\tau\cD^{\leq 0} \cap \cC$
and~${^{\gamma^*\tau}}\cC^{\geq 0} ={}^\tau\cD^{\geq 0} \cap \cC$,
which holds by Lemma~\ref{lemma-projection-tau-cB}\ref{beta*t-geq0}.
\end{proof} 

\subsection{Reduction of t-amplitude}
\label{subsection-reduction-t-amplitude} 

As we have seen in \S\ref{section-restricting-t-structures} and~\S\ref{section-projected-t-structures}
the condition that $\tau$ restricts or projects to a t-structure on $\cC$ is quite strong;
for instance, by Theorem~\ref{theorem-projected-t-structures} the condition
that $\tau$ right or left projects to~$\cC$ is equivalent to the right or left t-exactness
of the corresponding projection functor.
We expect that~$\tau$ induces a t-structure under much weaker hypotheses.
The main goal of this section is to prove some criteria along these lines,
including our main result, Theorem~\ref{main-theorem}.

The key ingredient is a method for reducing the t-amplitude of a projection functor,
by passing to a new t-structure adapted to this purpose.
We explain this crucial tilting construction in greater generality than needed for the proof of Theorem~\ref{main-theorem};
as we speculate in~\S\ref{subsection-t-exactability}, the additional generality may be useful
for constructing t-structures on semiorthogonal components in other situations.

We say that an endofunctor~$\Phi \colon \cD \to \cD$ is \emphsf{idempotent}
if there is an isomorphism of functors~\mbox{$\Phi^2 \simeq \Phi$};
for instance, the projection functors~$\beta \circ \beta^*$ and~$\gamma \circ \gamma^!$
of a semiorthogonal decomposition are idempotent.

Recall from Definition~\ref{def:torsion-pair} the definition of a torsion pair~$(\cT,\cF)$ in an abelian category~$\cA$.
In particular, recall that the torsion part~$\cT$ of a torsion pair must be closed under extensions and quotients in~$\cA$,
the torsion-free part~$\cF$ must be closed under extensions and subobjects in~$\cA$,
and that each of them determines the other by~\eqref{eq:ct-cf}. 

\begin{proposition}
\label{proposition-tauPhib}
Let $\cD$ be a triangulated category with a t-structure $\tau$.
Let $\Phi \colon \cD \to \cD$ be an idempotent endofunctor.

\begin{enumerate}[label={\textup{(\alph*)}}]
\item
\label{it:tilting-minus}
Assume~$b \ge 1$ is an integer such that~$\Phi$ has right t-amplitude~$\leq b$ with respect to~$\tau$.
For~$0 \le k \le b$ define the subcategories 
\begin{equation}
\label{eq:cd-minus}
\cD_{\Phi,k}^{-} \coloneqq \set{D \in  {^\tau}\cD^{\leq 0} \sth
\Phi(D) \in {^\tau}\cD^{\leq k} } 
\end{equation} 
of the connective part of $\tau$, as well as the corresponding subcategories 
\begin{equation}
\label{eq:ct-minus}
\cT^-_{\Phi, k} \coloneqq \cD_{\Phi,k}^{-} \cap {^\tau}\cD^{\heartsuit}
\end{equation}
of the heart of $\tau$, so that we have chains of subcategories 
\begin{align*}
 \cD^-_{\Phi,0}  \subset \dots \subset \cD^-_{\Phi, b-1} &\subset \cD^-_{\Phi, b} = {^\tau} \cD^{\leq 0}, \\
  \cT^-_{\Phi,0} \subset \dots \subset \cT^-_{\Phi,b-1} &\subset \cT^-_{\Phi,b} = {}^\tau\cD^\heartsuit.
\end{align*}  
Then the following properties hold:
\begin{enumerate}[label={\textup{(\arabic*)}}]
\item
\label{it:ct-minus}
$\cT^-_{\Phi, b-1}$ is closed under extensions and quotients in~${^\tau}\cD^\heartsuit$.
\item 
\label{it:ct-minus-phi}
For any $D \in \cD^-_{\Phi, k}$, we have ${^\tau}\cH^{k}(\Phi(D)) \in \cT^-_{\Phi,b-1}$.
\item 
\label{it:DPhi-b-1-looks-like-tilt}
We have~$\cD^-_{\Phi,b-1} = \set{D \in {^\tau}\cD^{\leq 0} \sth {^\tau}\cH^0(D) \in \cT^-_{\Phi,b-1}}$.
\end{enumerate}
Further, if~$\cT^-_{\Phi, b-1}$ extends to a torsion pair in~${^\tau}\cD^\heartsuit$, 
then the tilt~$\tau^-_{\Phi, b-1}$ of~$\tau$ with respect to this torsion pair satisfies the following properties:
\begin{enumerate}[label={\textup{(\arabic*)}}, resume]
\item
\label{it:tau-minus}
The connective part of~$\tau^-_{\Phi, b-1}$ is given by ${}^{\tau^-_{\Phi,b-1}}\cD^{\leq 0} = \cD^-_{\Phi, b-1}$. 

\item
\label{it:tau-minus-phi}
The endofunctor~$\Phi$ has right t-amplitude~$\leq b-1$ with respect to~$\tau^-_{\Phi, b-1}$.
\end{enumerate}

\item
\label{it:tilting-plus}
Assume~$a \le -1$ is an integer such that~$\Phi$ has left t-amplitude~$\geq a$ with respect to~$\tau$.
For~$a \le k \le 0$ define the subcategories 
\begin{equation}
\cD_{\Phi,k}^{+} \coloneqq \set{D \in  {^\tau}\cD^{\geq 0} \sth 
\Phi(D) \in {^\tau}\cD^{\geq k} } 
\end{equation}
of the coconnective part of $\tau$, as well as the corresponding subcategories 
\begin{equation}
\label{eq:ct-plus}
\cF^+_{\Phi, k} \coloneqq \cD_{\Phi,k}^{+} \cap {^\tau}\cD^{\heartsuit}
\end{equation}
of the heart of $\tau$, so that we have chains of subcategories 
\begin{align*}
 \cD^+_{\Phi,0}  \subset \dots \subset \cD^{{+}}_{\Phi, {a+1}}  &\subset \cD^+_{\Phi, a} = {^\tau} \cD^{{\geq 0}}, \\
  \cF^+_{\Phi,0}  \subset \dots \subset \cF^+_{\Phi, {a+1}} &\subset \cF^+_{\Phi,a} = {}^\tau\cD^\heartsuit.
\end{align*}  
Then the following properties hold:
\begin{enumerate}[label={\textup{(\arabic*)}}]
\item
\label{it:ct-plus}
$\cF^+_{\Phi, a+1}$ is closed under extensions and subobjects in~${^\tau}\cD^\heartsuit$.
\item
\label{it:ct-plus-phi}
For any $D \in \cD^+_{\Phi,k}$, we have ${^\tau}\cH^{k}(\Phi(D)) \in \cF^+_{\Phi,a+1}$. 
\item
We have~$\cD^+_{\Phi, a+1} =  \set{D \in {^\tau}\cD^{\geq 0} \sth {^\tau}\cH^0(D) \in \cF^+_{\Phi,a+1}}$.
\end{enumerate}
Further, if~$\cF^+_{\Phi, a+1}$ extends to a torsion pair in~${^\tau}\cD^\heartsuit$
then the t-structure~$\tau^+_{\Phi, a+1}$ given by the shift by~$[-1]$
of the tilt of~$\tau$ with respect to this torsion pair satisfies the following properties:
\begin{enumerate}[label={\textup{(\arabic*)}}, resume]
\item
\label{it:tau-plus}
The coconnective part of~$\tau^+_{\Phi, a+1}$ is given by ${}^{\tau^+_{\Phi,a+1}}\cD^{\geq 0} = \cD_{\Phi, a+1}^+$. 

\item
\label{it:tau-plus-phi}
The endofunctor~$\Phi$ has left t-amplitude~$\geq a+1$ with respect to~$\tau^+_{\Phi, a+1}$.
\end{enumerate}
\end{enumerate}
\end{proposition}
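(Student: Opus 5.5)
Part \ref{it:tilting-plus} is the formal dual of part \ref{it:tilting-minus} under passage to opposite categories (Remark~\ref{remark-opposite-t-structure}). I would therefore prove only \ref{it:tilting-minus}. The tools are truncation triangles, long exact cohomology sequences~\eqref{eq:les}, and the idempotence $\Phi^2\simeq\Phi$. All the t-structure notation below refers to $\tau$ unless stated otherwise.

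For \ref{it:ct-minus}, closure under extensions is immediate: if $0\to A'\to A\to A''\to 0$ is exact in the heart and $\Phi(A'),\Phi(A'')\in{}^\tau\cD^{\le b-1}$, then $\Phi(A)$ is an extension of these and lies in ${}^\tau\cD^{\le b-1}$. For quotients, suppose $A\in\cT^-_{\Phi,b-1}$ and $A\twoheadrightarrow A''$ has kernel $A'$. I would apply $\Phi$ to the rotated triangle $A\to A''\to A'[1]$. Here $\Phi(A)\in{}^\tau\cD^{\le b-1}$ by assumption, and $\Phi(A'[1])=\Phi(A')[1]\in{}^\tau\cD^{\le b-1}$ because $\Phi(A')\in{}^\tau\cD^{\le b}$. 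Hence $\Phi(A'')\in{}^\tau\cD^{\le b-1}$. Idempotence is not needed for this step.

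The key step is \ref{it:ct-minus-phi}, and it is where idempotence enters. Let $D\in\cD^-_{\Phi,k}$ and put $X=\Phi(D)\in{}^\tau\cD^{\le k}$, so that $\Phi(X)\cong X$. Apply $\Phi$ to the truncation triangle $\tau^{\le k-1}X\to X\to {}^\tau\cH^k(X)[-k]$. This gives a triangle $\Phi(\tau^{\le k-1}X)\to X\to \Phi({}^\tau\cH^k(X))[-k]$ whose first term lies in ${}^\tau\cD^{\le k-1+b}$. The long exact sequence then gives the following:
\begin{itemize}
\item the cohomology of the third term vanishes in degrees $>k+b$;
\item in degree $k+b$ it is a quotient of ${}^\tau\cH^{k+b}(X)$;
\item ${}^\tau\cH^{k+b}(X)=0$ since $b\ge1$ and $X\in{}^\tau\cD^{\le k}$.
\end{itemize}
Thus $\Phi({}^\tau\cH^k(X))\in{}^\tau\cD^{\le b-1}$, which is exactly the claim. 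I expect this to be the only subtle point: the crude estimate gives only amplitude $\le b$, and the improvement needs both $b\ge1$ and the isomorphism $\Phi(X)\cong X$.

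For \ref{it:DPhi-b-1-looks-like-tilt}, take $D\in{}^\tau\cD^{\le 0}$ and use the triangle $\tau^{\le -1}D\to D\to{}^\tau\cH^0(D)$. Since $\Phi(\tau^{\le-1}D)\in{}^\tau\cD^{\le b-1}$ always, $\Phi(D)\in{}^\tau\cD^{\le b-1}$ holds if and only if $\Phi({}^\tau\cH^0(D))\in{}^\tau\cD^{\le b-1}$. In one direction this uses extension closure; in the other it uses that the cone of a map from an object of ${}^\tau\cD^{\le b-1}$ to one in ${}^\tau\cD^{\le b-1}$ lies in ${}^\tau\cD^{\le b-1}$. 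Property \ref{it:tau-minus} is then immediate from Proposition~\ref{proposition-tilting} together with \ref{it:DPhi-b-1-looks-like-tilt}.

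For \ref{it:tau-minus-phi}, let $D$ lie in the connective part of the tilt, which is $\cD^-_{\Phi,b-1}$ by \ref{it:tau-minus}. We need $\Phi(D)[b-1]$ to lie in that connective part. By definition $\Phi(D)\in{}^\tau\cD^{\le b-1}$, so by \ref{it:DPhi-b-1-looks-like-tilt} it remains to show ${}^\tau\cH^{b-1}(\Phi(D))\in\cT^-_{\Phi,b-1}$. This is \ref{it:ct-minus-phi} with $k=b-1$.
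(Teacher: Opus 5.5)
Your proposal is correct and has the same overall architecture as the paper's proof. Part (b) is handled by duality; the paper just says ``analogous'', and your opposite-category translation checks out, including the $[-1]$ shift convention for $\tau^+_{\Phi,a+1}$. Items (1), (3) and (4) are argued exactly as in the paper. Your extension/cone-closure formulation of (3) is equivalent to the paper's observation that $\Phi(D)\to\Phi({}^\tau\cH^0(D))$ induces an isomorphism on ${}^\tau\cH^b$.

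Two steps take a different, more elementary route.

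For (2), the paper invokes the spectral sequence $\rE_2^{p,q}={}^\tau\cH^p(\Phi({}^\tau\cH^q(\Phi(D))))\Rightarrow{}^\tau\cH^{p+q}(\Phi(D))$. It then reads off the corner isomorphism ${}^\tau\cH^b(\Phi({}^\tau\cH^k(\Phi(D))))\cong{}^\tau\cH^{b+k}(\Phi(D))=0$. Your single truncation triangle of $X=\Phi(D)$ at degree $k$, combined with $\Phi(X)\cong X$, is precisely the elementary content of that corner. It avoids having to construct a spectral sequence, or worry about its convergence, for a merely bounded-above object in a triangulated category with no enhancement. The paper's formulation instead buys uniformity with the analogous spectral-sequence argument in the appendix.

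For (5), the paper splits $D$ by $\tau^{\le -1}D\to D\to{}^\tau\cH^0(D)$. It checks separately, using (2) and (3) for each piece, that $\Phi(\tau^{\le-1}D)$ and $\Phi({}^\tau\cH^0(D))$ lie in ${}^{\tau^-_{\Phi,b-1}}\cD^{\le b-1}$, and then takes the extension. You instead apply (3) directly to $\Phi(D)[b-1]\in{}^\tau\cD^{\le 0}$, and apply (2) once to $D$ itself. This is legitimate because $D\in\cD^-_{\Phi,b-1}$ already, and it is strictly shorter.
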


\begin{proof}
We prove case~\ref{it:tilting-minus}, case~\ref{it:tilting-plus} being analogous.

\ref{it:ct-minus}
The category~$\cT^-_{\Phi,b-1}$ is closed under extensions in~${^\tau}\cD^{\heartsuit}$
because~${^\tau}\cD^{\leq b-1}$ is closed under extensions in~$\cD$.
To show that~$\cT^-_{\Phi,b-1}$ is closed under quotients,
consider an exact sequence~\mbox{$0 \to D' \to T \to D'' \to 0$} in~${^{\tau}}\cD^{\heartsuit}$
with~\mbox{$T \in \cT^-_{\Phi,b-1}$}.
Then we obtain a distinguished triangle
\begin{equation*}
\Phi(T) \to \Phi(D'') \to \Phi(D')[1].
\end{equation*}
The last term is contained in~${^\tau}\cD^{\leq b-1}$ by our assumption that~$\Phi$ has right t-amplitude~\mbox{$\leq b$},
while the first is contained in~${^\tau}\cD^{\leq b-1}$ by the definition of~$\cT^-_{\Phi, b-1}$.
It follows that their extension~$\Phi(D'')$ is also contained in~${^\tau}\cD^{\leq b-1}$,
i.e. the quotient~$D''$ of~$T$ is contained in~$\cT^-_{\Phi,b-1}$.

\ref{it:ct-minus-phi}
Let~$D \in \cD^-_{\Phi,k}$, i.e. $\Phi(D) \in {}^\tau\cD^{\le k}$.
We must show that $\Phi({^\tau}\cH^k(\Phi(D))) \in {^{\tau}}\cD^{\leq b-1}$.
By our assumption on the t-amplitude of $\Phi$,
this is equivalent to the vanishing of~${^\tau}\cH^b(\Phi({^\tau}\cH^k(\Phi(D))))$.
By our assumption that $\Phi$ is idempotent, we have an isomorphism $\Phi(\Phi(D)) \cong \Phi(D)$
and hence a spectral sequence
\begin{equation}
\label{eq:ss}
\rE_2^{p,q} = {^\tau}\cH^p(\Phi({}^\tau\cH^q(\Phi(D)))) \implies {}^\tau\cH^{p+q}(\Phi(D)).
\end{equation}
Since~$\Phi$ has right t-amplitude~$\leq b$ and~$D \in \cD^-_{\Phi,k}$,
the~$\rE_2^{p,q}$ terms vanish unless~$p \leq b$ and~$q \leq k$.
Hence the spectral sequence gives an isomorphism
\begin{equation*}
{}^\tau\cH^b(\Phi({}^\tau\cH^k(\Phi(D)))) \cong {}^\tau\cH^{b+k}(\Phi(D)).
\end{equation*}
But~$\Phi(D) \in {}^\tau\cD^{\le k}$ by definition of~$\cD^-_{\Phi,k}$
while $b + k > k$ by our assumption that~$b$ is positive, so the above object must vanish.

\ref{it:DPhi-b-1-looks-like-tilt}
Let~$D \in {^\tau}\cD^{\leq 0}$ and consider its truncation triangle
\begin{equation*}
\tau^{\leq -1} D \to D \to {^\tau}\cH^0(D).
\end{equation*}
Applying $\Phi$, we obtain a distinguished triangle
\begin{equation}
\label{triangle-PhiD}
\Phi(\tau^{\leq -1} D) \to \Phi(D) \to \Phi({^\tau}\cH^0(D)).
\end{equation}
As $\Phi$ has right t-amplitude $\leq b$ with respect to $\tau$,
the first term in the triangle is in~${^\tau}\cD^{\leq b-1}$ while the last two terms both lie in ${^\tau}\cD^{\leq b}$;
it follows that the map between the last two terms induces an isomorphism on ${^\tau}\cH^b$.
This implies the desired formula for $\cD^-_{\Phi,b-1}$. 

\ref{it:tau-minus}
This is an immediate consequence of~\ref{it:DPhi-b-1-looks-like-tilt} and the definition of tilting (see Proposition~\ref{proposition-tilting}). 

\ref{it:tau-minus-phi}
We must show that if~$D \in {}^{\tau^-_{\Phi,b-1}}\cD^{\leq 0}$ then~$\Phi(D) \in {}^{\tau^-_{\Phi,b-1}}\cD^{\leq b-1}$.
Since~$D \in {^\tau}\cD^{\leq 0}$, we have the distinguished triangle~\eqref{triangle-PhiD} above.
The first term~$\Phi(\tau^{\leq -1} D)$ lies in~${^\tau}\cD^{\leq b-1}$ because~$\Phi$ has right t-amplitude~$\leq b$.
In particular, $\tau^{\le -1}D \in \cD^-_{\Phi,b-1}$ by~\eqref{eq:cd-minus},
hence the degree~$b-1$ cohomology object~${^\tau}\cH^{b-1}(\Phi(\tau^{\leq -1} D))$
lies in~$\cT^-_{\Phi,b-1}$ by~\ref{it:ct-minus-phi} with~$k = b - 1$;
therefore,~$\Phi(\tau^{\leq -1} D) \in {}^{\tau^-_{\Phi,b-1}}\cD^{\leq b-1}$ by~\ref{it:tau-minus} and~\ref{it:DPhi-b-1-looks-like-tilt}.
On the other hand, the last term~$\Phi({^\tau}\cH^0(D))$ lies in~${^\tau}\cD^{\leq b-1}$
because~\mbox{${^\tau}\cH^0(D) \in \cT^-_{\Phi,b-1}$},
and the degree~$b-1$ cohomology object of~$\Phi({^\tau}\cH^0(D))$
lies in~$\cT^-_{\Phi,b-1}$ by~\ref{it:ct-minus-phi} with~$k = b-1$;
hence~\mbox{$\Phi({^\tau}\cH^0(D)) \in {}^{\tau^-_{\Phi,b-1}}\cD^{\leq b-1}$}.
We conclude that~$\Phi(D)$ lies in~${}^{\tau^-_{\Phi,b-1}}\cD^{\leq b-1}$,
being an extension of two objects in this category.
\end{proof}

In the following situation we can verify the extension to a torsion pair.

\begin{corollary}
\label{cor:exactify}
Let~$\cD$ be a triangulated category with a t-structure~$\tau$.
Let~\mbox{$\Phi \colon \cD \to \cD$} be an idempotent endofunctor.

\begin{enumerate}[label={\textup{(\alph*)}}]
\item
\label{it:tau-minus-0}
If~{$\tau$ is noetherian and}~$\Phi$ has right t-amplitude~$\leq 1$ with respect to~$\tau$,
then the subcategory~$\cT^-_{\Phi,0} \subset {}^\tau\cD^{\heartsuit}$ defined in~\eqref{eq:ct-minus}
extends to a torsion pair in~${}^\tau\cD^{\heartsuit}$
and~$\Phi$ is right t-exact with respect to the tilted t-structure~$\tau^-_{\Phi,0}$, whose connective part is 
\begin{equation}
\label{eq:tau-phi-minus}
{}^{\tau^-_{\Phi,0}}\cD^{\leq 0} = \set{ D \in {^\tau}\cD^{\le 0} \mid \Phi(D) \in {^\tau}\cD^{\le 0} }.
\end{equation}

\item
\label{it:tau-plus-0}
If~{$\tau$ is artinian and}~$\Phi$ has left t-amplitude~$\geq -1$ with respect to $\tau$,
then the subcategory~$\cF^+_{\Phi,0} \subset {}^\tau\cD^{\heartsuit}$ defined in~\eqref{eq:ct-plus}
extends to a torsion pair in~${}^\tau\cD^{\heartsuit}$
and~$\Phi$ is left t-exact with respect to the $[-1]$-shifted tilted t-structure~$\tau^+_{\Phi,0}$,
whose coconnective part is
\begin{equation}
\label{eq:tau-phi-plus}
{}^{\tau^+_{\Phi,0}}\cD^{\geq 0} = \set{ D \in {^\tau}\cD^{\ge 0} \mid \Phi(D) \in {^\tau}\cD^{\ge 0} }.
\end{equation}
\end{enumerate}
\end{corollary}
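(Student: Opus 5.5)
Part~\ref{it:tau-minus-0} is the case $b = 1$ of Proposition~\ref{proposition-tauPhib}\ref{it:tilting-minus}, combined with Lemma~\ref{lemma-torsion-pair}. By Proposition~\ref{proposition-tauPhib}\ref{it:tilting-minus}\ref{it:ct-minus} with $b = 1$, the subcategory $\cT^-_{\Phi,0} \subset {}^\tau\cD^\heartsuit$ is closed under extensions and quotients. Since $\tau$ is noetherian, the heart ${}^\tau\cD^\heartsuit$ is a noetherian abelian category. The first half of Lemma~\ref{lemma-torsion-pair} then shows that $(\cT^-_{\Phi,0}, \cF)$ is a torsion pair, where
\begin{equation*}
\cF = \set{ A \in {}^\tau\cD^\heartsuit \sth \Hom(T,A) = 0 \text{ for all } T \in \cT^-_{\Phi,0} }.
\end{equation*}
Thus $\cT^-_{\Phi,0}$ extends to a torsion pair, and Proposition~\ref{proposition-tilting} gives the tilted t-structure $\tau^-_{\Phi,0}$.

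The remaining assertions follow from the second half of Proposition~\ref{proposition-tauPhib}\ref{it:tilting-minus}, again with $b = 1$. Property~\ref{it:tau-minus} identifies the connective part ${}^{\tau^-_{\Phi,0}}\cD^{\le 0}$ with $\cD^-_{\Phi,0}$. By definition~\eqref{eq:cd-minus}, this is exactly the right-hand side of~\eqref{eq:tau-phi-minus}. Property~\ref{it:tau-minus-phi} says that $\Phi$ has right t-amplitude $\le 0$ with respect to $\tau^-_{\Phi,0}$, which is the same as saying $\Phi$ is right t-exact.

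Part~\ref{it:tau-plus-0} is proved the same way, using Proposition~\ref{proposition-tauPhib}\ref{it:tilting-plus} with $a = -1$ and the artinian half of Lemma~\ref{lemma-torsion-pair}. Here $\cF^+_{\Phi,0}$ is closed under extensions and subobjects, so in an artinian heart it is the torsion-free part of a torsion pair. The $[-1]$-shifted tilt $\tau^+_{\Phi,0}$ has coconnective part $\cD^+_{\Phi,0}$, which is~\eqref{eq:tau-phi-plus}, and $\Phi$ is left t-exact with respect to it. Alternatively, part~\ref{it:tau-plus-0} follows from part~\ref{it:tau-minus-0} by passing to opposite categories as in Remark~\ref{remark-opposite-t-structure}, since this swaps the noetherian and artinian conditions.

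There is no genuine obstacle. The only points to check are that the hypothesis $b \ge 1$ (respectively $a \le -1$) of Proposition~\ref{proposition-tauPhib} holds, which it does since $b = 1$ and $a = -1$, and that the indices line up: $b - 1 = 0$ and $a + 1 = 0$.
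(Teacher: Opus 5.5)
Your proposal is correct and follows the paper's proof. Closedness under extensions and quotients from Proposition~\ref{proposition-tauPhib} with $b=1$, combined with Lemma~\ref{lemma-torsion-pair}, gives the torsion pair. Items (4) and (5) of that proposition then give~\eqref{eq:tau-phi-minus} and right t-exactness, and part~(b) is dual (your opposite-category variant is also valid).
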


\begin{proof}
We prove case~\ref{it:tau-minus-0}, case~\ref{it:tau-plus-0} being analogous. 
Since~$\tau$ is noetherian,
combining Proposition~\ref{proposition-tauPhib}\ref{it:ct-minus} and Lemma~\ref{lemma-torsion-pair} shows that $\cT_{\Phi, 0}^-$ extends to a torsion pair in~${}^\tau\cD^\heartsuit$. 
Then applying Proposition~\ref{proposition-tauPhib}\ref{it:tau-minus} and~\ref{proposition-tauPhib}\ref{it:tau-minus-phi}
we obtain~\eqref{eq:tau-phi-minus} and conclude that~$\Phi$ is right t-exact
with respect to the corresponding tilted t-structure~$\tau^-_{\Phi,0}$.
\end{proof}

\subsection{The main theorem and complements} 
\label{ss:proof} 

Now we are ready to prove our main theorem, Theorem~\textup{\ref{main-theorem}},
stated in a more precise form below.

\begin{theorem}
\label{main-theorem-precise}
Let $\cD = \langle \cB, \cC \rangle$ be a semiorthogonal decomposition of a triangulated category. 
Let~$\beta \colon \cB \to \cD$ and~$\gamma \colon \cC \to \cD$ be the inclusion functors,
and~$\beta^* \colon \cD \to \cB$ and~$\gamma^! \colon \cD \to \cC$ their left and right adjoints. 
Let~$\tau$ be a t-structure on~$\cD$.
\begin{enumerate}[label={\textup{(\alph*)}}]
\item
\label{main-theorem-induce-C-precise}
Assume~$\tau$ is noetherian and restricts to a t-structure on~$\cB$.
Then~$\tau$ connectively induces a t-structure~$\cin{\cC}$ on~$\cC$ such that the following properties hold:
\begin{enumerate}[label={\textup{(\arabic*)}}]

\item \label{main-theorem-perverse-t-structure}
There is a t-structure $\cin{\gamma\gamma^!}$ on $\cD$ with connective part given by 
\begin{equation}
\label{eq:tau-gg}
{}^{\cin{\gamma\gamma^!}}\cD^{\le 0} =
\set{ D \in {^\tau}\cD^{\le 0} \mid \gamma\gamma^!(D) \in {^\tau}\cD^{\le 0} }, 
\end{equation}
which right projects to a t-structure $\gamma^!(\cin{\gamma\gamma^!})$ on $\cC$ that coincides with $\cin{\cC}$. 

\item
\label{induced-t-structure-bounded-objects-main-theorem}
The heart of~$\cin{\cC}$ and the subcategories of bounded objects are given by
\begin{equation*}
{}^{\cin{\cC}}\cC^{\heartsuit} = \gamma^! \left( {}^{\cin{\gamma\gamma^!}}\cD^{\heartsuit} \right)
\quad\text{and}\quad
{}^{\cin{\cC}}\cC^{?} = \gamma^! \left({}^{\cin{\gamma \gamma^!}}\cD^? \right)
\quad\text{for}\quad
? \in \{+,-,\mathrm{b}\}. 
\end{equation*}

\item
\label{induced-t-structure-adjoint-main-theorem}
$\gamma^!$ is t-exact with respect to $\cin{\gamma \gamma^!}$ and $\cin{\cC}$.

\item
\label{gamma-right-t-exact-tauC-main-theorem}
$\gamma$ is right t-exact with respect to~$\cin{\cC}$ and~$\cin{\gamma \gamma^!}$.

\item
\label{tau-bounded-implies-tauC-bounded-main-theorem}
If $\tau$ is bounded, then $\cin{\gamma\gamma^!}$ and $\cin{\cC}$ are bounded. 
\end{enumerate}

\item
\label{main-theorem-induce-B-precise}
Assume~$\tau$ is artinian and restricts to a t-structure on~$\cC$.
Then~$\tau$ coconnectively induces a t-structure~$\ccin{\cB}$ on~$\cB$ such that the following properties hold:
\begin{enumerate}[label={\textup{(\arabic*)}}]

\item There is a t-structure $\ccin{\beta\beta^*}$ on $\cD$ with coconnective part given by 
\begin{equation}
\label{eq:ccinbetabeta*}
{}^{\ccin{{\beta\beta^*}}}\cD^{{\ge 0}} = \set{ D \in {^\tau}\cD^{\ge 0} \mid \beta\beta^*(D) \in {^\tau}\cD^{\ge 0} },
\end{equation}
which left projects to a t-structure $\beta^*(\ccin{\beta\beta^*})$ on $\cB$ that coincides with $\ccin{\cB}$. 

\item
\label{induced-t-structure-bounded-objects-main-theorem-b}
The heart of~$\ccin{\cB}$ and the subcategories of bounded objects are given by
\begin{equation*}
{}^{\ccin{\cB}}\cB^{\heartsuit} = \beta^* \left( {}^{\ccin{\beta\beta^*}}\cD^{\heartsuit} \right)
\quad\text{and}\quad
{}^{\ccin{\cB}}\cB^{?} = \beta^* \left({}^{\ccin{\beta\beta^*}}\cD^? \right)
\quad\text{for}\quad
? \in \{+,-,\mathrm{b}\}. 
\end{equation*}

\item
\label{induced-t-structure-adjoint-main-theorem-b}
$\beta^*$ is t-exact with respect to $\ccin{\beta\beta^*}$ and $\ccin{\cB}$.

\item
\label{gamma-right-t-exact-tauC-main-theorem-b}
$\beta$ is left t-exact with respect to~$\ccin{\cB}$ and~$\ccin{\beta\beta^*}$.

\item
\label{tau-bounded-implies-tauC-bounded-main-theorem-b}
If $\tau$ is bounded, then $\ccin{\beta\beta^*}$ and $\ccin{\cB}$ are bounded. 
\end{enumerate}

\end{enumerate} 
\end{theorem}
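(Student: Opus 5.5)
We prove part (a); part (b) then follows by passing to opposite categories, using Remarks~\ref{remark-opposite-t-structure}, \ref{remark-opposite-category} and~\ref{remark-connectively-induced-opposite}. Under this duality noetherian becomes artinian, $\langle \cB,\cC\rangle$ becomes $\langle \cC^\op,\cB^\op\rangle$, and $\beta\beta^*$ becomes the right projection onto $\cB^\op$.

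\emph{Constructing the perverse t-structure.} Since $\tau$ restricts to $\cB$, Lemma~\ref{lemma-restrict-t-structure-vs-projection-amplitude} shows that the idempotent functor $\Phi = \gamma\gamma^!$ has right t-amplitude $\leq 1$ with respect to $\tau$. Because $\tau$ is noetherian, Corollary~\ref{cor:exactify}\ref{it:tau-minus-0} produces a tilt $\cin{\gamma\gamma^!} \coloneqq \tau^-_{\Phi,0}$ of $\tau$. Its connective part is exactly~\eqref{eq:tau-gg}, and $\gamma\gamma^!$ is right t-exact with respect to it. Lemma~\ref{lemma-project-B-iff-lte}\ref{it:projection-right-criterion} then says that $\cin{\gamma\gamma^!}$ right projects to a t-structure $\gamma^!(\cin{\gamma\gamma^!})$ on $\cC$.

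\emph{Identifying the projection with $\cin{\cC}$.} By Lemma~\ref{lemma-projection-tau-cB}\ref{beta*t-geq0}, the connective part of the projected t-structure is ${}^{\cin{\gamma\gamma^!}}\cD^{\le 0}\cap\cC$. For $C\in\cC$ we have $\gamma\gamma^!(C)\cong C$, so~\eqref{eq:tau-gg} reduces this intersection to ${}^\tau\cD^{\le0}\cap\cC$. Since a t-structure is determined by its connective part, the coconnective part must be the shifted right orthogonal~\eqref{induced-Cgeq0}. Hence $\tau$ connectively induces $\cin{\cC}$, and it equals the projection. This is the one point needing care: the projection depends on the tilt, yet its connective part collapses to the naive one precisely because objects of $\cC$ are fixed by $\gamma\gamma^!$.

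\emph{Remaining properties.} Properties (2), (3) and (4) follow directly from Lemma~\ref{lemma-projection-tau-cB}, parts \ref{t-induction-cB-heart}, \ref{beta*-t-exact} and \ref{beta-left-t-exact}, applied to the projected t-structure. For (5), a tilt preserves boundedness by Remark~\ref{remark-characterizing-tilt}, so $\cin{\gamma\gamma^!}$ is bounded whenever $\tau$ is. Then Lemma~\ref{lemma-projection-tau-cB}\ref{t-induced-bounded-cB} gives that $\cin{\cC}$ is bounded.

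There is no real obstacle here, since all the substantive work is contained in Proposition~\ref{proposition-tauPhib} and Corollary~\ref{cor:exactify}. The only steps that need checking are the identification of connective parts above and the bookkeeping of the duality used for part (b).
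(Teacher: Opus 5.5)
Your proposal is correct and matches the paper's proof step by step. You use Lemma~\ref{lemma-restrict-t-structure-vs-projection-amplitude} to get right t-amplitude $\le 1$, Corollary~\ref{cor:exactify} to build the tilt, Lemma~\ref{lemma-project-B-iff-lte} to project it to $\cC$, Lemma~\ref{lemma-projection-tau-cB} together with Remark~\ref{remark-characterizing-tilt} for properties (2)--(5), and $\gamma\gamma^!\gamma \cong \gamma$ to identify the connective part; deriving part (b) via opposite categories is the duality the paper relies on when it calls that case analogous.
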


\begin{proof}
We prove case~\ref{main-theorem-induce-C-precise}, case~\ref{main-theorem-induce-B-precise} being analogous. 
By Lemma~\ref{lemma-restrict-t-structure-vs-projection-amplitude},
if~$\tau$ restricts to a t-structure on~$\cB$, then~$\Phi \coloneqq \gamma \circ \gamma^!$
has right t-amplitude~$\leq 1$ with respect to~$\tau$.
Then Corollary~\ref{cor:exactify} provides a t-structure~$\cin{\gamma\gamma^!} \coloneqq \cin{\Phi,0}$ with connective part~\eqref{eq:tau-gg}, 
with respect to which~$\gamma \circ \gamma^!$ is right t-exact.
By Lemma~\ref{lemma-project-B-iff-lte}\ref{it:projection-right-criterion} this t-structure
right projects to a t-structure $\gamma^!(\cin{\gamma\gamma^!})$ on~$\cC$. 

The items~\ref{induced-t-structure-bounded-objects-main-theorem}--\ref{tau-bounded-implies-tauC-bounded-main-theorem}
hold with~$\cin{\cC}$ replaced by~$\gamma^!(\cin{\gamma\gamma^!})$.
Indeed, if~$\tau$ is bounded, then so is its tilt~$\cin{\gamma\gamma^!}$ (Remark~\ref{remark-characterizing-tilt}),
and the rest of the claims follow from Lemma~\ref{lemma-projection-tau-cB}.

It remains to show that $\gamma^!(\cin{\gamma\gamma^!})$ is connectively induced by~$\tau$, 
i.e. that~${}^{\gamma^!(\cin{\gamma\gamma^!})}\cC^{\le 0} = {}^\tau\cD^{\le 0} \cap \cC$.
Combining Lemma~\ref{lemma-projection-tau-cB}\ref{beta*t-geq0} and~\eqref{eq:tau-gg},
we obtain
\begin{equation*}
{}^{\gamma^!(\cin{\gamma\gamma^!})}\cC^{\le 0} =
{}^{\cin{\gamma\gamma^!}}\cD^{\le 0} \cap \cC =
\set{ C \in \cC \sth \gamma(C) \in {^\tau}\cD^{\le 0} \text{ and } \gamma\gamma^!(\gamma(C)) \in {^\tau}\cD^{\le 0} }. 
\end{equation*}
Since~$\gamma \circ \gamma^! \circ \gamma \cong \gamma$, we see that this is equal to~${}^\tau\cD^{\le 0} \cap \cC$, as required.
\end{proof}

The following lemma gives a mechanism for constructing objects in the hearts of the induced t-structures
in the situation of Theorem~\ref{main-theorem-precise}.

\begin{lemma}
\label{lemma-object-in-heart-tauB}
In the situation of Theorem~\textup{\ref{main-theorem-precise}}, the following hold:
\begin{enumerate}[label={\textup{(\alph*)}}]
\item
\label{it:objects-C} 
Assume that~$\tau$ is noetherian and restricts to a t-structure on~$\cB$.
Let~$D \in \cD$ be an object such that 
\begin{equation*}
\gamma\gamma^!(D) \in {}^\tau\cD^{\leq 0} \quad \text{and} \quad 
\tau^{\leq -1} \gamma\gamma^!(D) \in \cB. 
\end{equation*} 
Then~$\gamma^!(D) \in \cC$ is contained in the heart ${}^{\cin{\cC}}\cC^{\heartsuit}$ of the connectively induced t-structure. 
\item 
\label{it:objects-B}
Assume that~$\tau$ is artinian and restricts to a t-structure on~$\cC$.
Let~$D \in \cD$ be an object such that
\begin{equation*}
\beta\beta^*(D) \in {}^\tau\cD^{\geq 0} \quad \text{and} \quad \tau^{\geq 1} \beta \beta^*(D) \in \cC. 
\end{equation*} 
Then~$\beta^*(D) \in \cB$ is contained in the heart ${}^{\ccin{\cB}}\cB^{\heartsuit}$ of 
the coconnectively induced t-structure. 
\end{enumerate}
\end{lemma}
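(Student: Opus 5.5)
Part~\ref{it:objects-B} follows from part~\ref{it:objects-C} by passing to opposite categories, as in Remarks~\ref{remark-opposite-category} and~\ref{remark-connectively-induced-opposite}. So I only treat part~\ref{it:objects-C}.

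By Theorem~\ref{main-theorem-precise}\ref{main-theorem-induce-C-precise}, $\gamma^!$ is t-exact with respect to $\cin{\gamma\gamma^!}$ and $\cin{\cC}$, and $\gamma^!\gamma\gamma^! \cong \gamma^!$. It therefore suffices to show that $E \coloneqq \gamma\gamma^!(D)$ lies in the heart ${}^{\cin{\gamma\gamma^!}}\cD^{\heartsuit}$. Then $\gamma^!(D) \cong \gamma^!(E) \in \gamma^!\bigl({}^{\cin{\gamma\gamma^!}}\cD^{\heartsuit}\bigr) = {}^{\cin{\cC}}\cC^{\heartsuit}$, using item~\ref{induced-t-structure-bounded-objects-main-theorem}.

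Connectivity is easy. We have $E \in {}^\tau\cD^{\le 0}$ by hypothesis, and $\gamma\gamma^!(E) \cong E \in {}^\tau\cD^{\le 0}$, so $E \in {}^{\cin{\gamma\gamma^!}}\cD^{\le 0}$ by~\eqref{eq:tau-gg}.

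For coconnectivity, recall that $\cin{\gamma\gamma^!}$ is the tilt of $\tau$ along $(\cT^-_{\Phi,0},\cF)$, where $\Phi = \gamma\gamma^!$. By Proposition~\ref{proposition-tilting}, its coconnective part consists of objects in ${}^\tau\cD^{\ge -1}$ whose ${}^\tau\cH^{-1}$ lies in $\cF$. So I need two facts:
\begin{enumerate}
\item[(i)] $\tau^{\le -2}E = 0$;
\item[(ii)] ${}^\tau\cH^{-1}(E) \in \cF$, i.e.\ $\Hom(T, {}^\tau\cH^{-1}(E)) = 0$ for all $T \in \cT^-_{\Phi,0}$.
\end{enumerate}
Equivalently, I can show $\Hom(X, E) = 0$ for every $X \in {}^{\cin{\gamma\gamma^!}}\cD^{\le -1}$, which handles both at once.

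Here is how I would prove this orthogonality. Take $X \in {}^{\cin{\gamma\gamma^!}}\cD^{\le -1}$. Since $E \in \cC$, adjunction gives $\Hom(X,E) = \Hom(\gamma\gamma^!X, E)$. Because $\gamma\gamma^!$ is right t-exact for $\cin{\gamma\gamma^!}$, the object $Y \coloneqq \gamma\gamma^!X$ lies in ${}^{\cin{\gamma\gamma^!}}\cD^{\le -1} \cap \cC$. Now apply $\Hom(Y,-)$ to the truncation triangle $\tau^{\le -1}E \to E \to {}^\tau\cH^0(E)$. Since $\tau^{\le -1}E \in \cB$ by hypothesis and $Y \in \cC$, semiorthogonality gives $\Hom(Y, \tau^{\le -1}E) = 0$. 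This reduces the claim to $\Hom(Y, {}^\tau\cH^0(E)) = 0$.

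Since $Y \in {}^{\cin{\gamma\gamma^!}}\cD^{\le -1} \subset {}^\tau\cD^{\le -1}$ by~\eqref{eq:tilt-inclusions}, and ${}^\tau\cH^0(E) \in {}^\tau\cD^{\heartsuit}$, the $\Hom$ vanishes by $\tau$-orthogonality. Hence $\Hom(X,E)=0$, so $E \in {}^{\cin{\gamma\gamma^!}}\cD^{\ge 0}$ by~\eqref{eq:cdm-cdp}. This completes the argument.

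The main point to watch is the reduction via adjunction to $Y \in \cC$. Without it, $\Hom(X, \tau^{\le -1}E)$ need not vanish, and this is exactly where the hypothesis $\tau^{\le -1}\gamma\gamma^!(D)\in\cB$ is used.
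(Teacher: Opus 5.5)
Your reduction of part~(b) to part~(a) and your check that $E = \gamma\gamma^!(D)$ lies in ${}^{\cin{\gamma\gamma^!}}\cD^{\le 0}$ are fine. The coconnectivity step, however, breaks down, and the intermediate claim $E \in {}^{\cin{\gamma\gamma^!}}\cD^{\heartsuit}$ is in fact false.

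The equality $\Hom(X,E) = \Hom(\gamma\gamma^!X,E)$ for $E \in \cC$ uses the adjunction in the wrong direction. Since $\gamma^!$ is the \emph{right} adjoint of $\gamma$, what holds is $\Hom(E,X) \cong \Hom(E,\gamma\gamma^!X)$. If you apply $\Hom(-,E)$ to the triangle $\gamma\gamma^!X \to X \to \beta\beta^*X$, you would need $\Hom(\cB,\cC) = 0$. But $\cD = \langle \cB, \cC\rangle$ only gives $\Hom(\cC,\cB) = 0$.

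Here is a concrete failure. Take $\cD = \Db(\bP^3)$ with its standard t-structure, $\cB = \langle \cO \rangle$, and $D = \cO(4)$.
\begin{itemize}
\item Since $\RHom(\cO(4),\cO) = \kk[-3]$, the decomposition triangle is $E \to \cO(4) \to \cO[3]$.
\item Hence ${}^\tau\cH^0(E) = \cO(4)$, ${}^\tau\cH^{-2}(E) = \cO$, and all other cohomology of $E$ vanishes.
\item The hypotheses of the lemma hold, since $\tau^{\le -1}E = \cO[2] \in \cB$.
\item Yet $E \notin {}^\tau\cD^{\ge -1}$, and this category contains ${}^{\cin{\gamma\gamma^!}}\cD^{\ge 0}$ by~\eqref{eq:tilt-inclusions}.
\end{itemize}
Explicitly, $X = \cO[2]$ lies in ${}^{\cin{\gamma\gamma^!}}\cD^{\le -1}$ and admits a nonzero morphism to $E$, while $\gamma\gamma^!X = 0$. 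This is consistent with $\gamma$ being only right t-exact with respect to $\cin{\cC}$ and $\cin{\gamma\gamma^!}$.

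The hypothesis $\tau^{\le -1}E \in \cB$ has to be used differently.
\begin{itemize}
\item $E$ is $\cin{\gamma\gamma^!}$-connective, and $\cin{\gamma\gamma^!}$ is the tilt of $\tau$ along the torsion class $\cT^-_{\Phi,0}$ with $\Phi = \gamma\gamma^!$. So ${}^\tau\cH^0(E)$ lies in $\cT^-_{\Phi,0} \subset {}^{\cin{\gamma\gamma^!}}\cD^{\heartsuit}$.
\item Since $\gamma^!$ kills $\tau^{\le -1}E \in \cB$, the truncation triangle gives $\gamma^!(D) \cong \gamma^!(E) \cong \gamma^!({}^\tau\cH^0(E))$.
\item This object lies in ${}^{\cin{\cC}}\cC^{\heartsuit}$ by t-exactness of $\gamma^!$.
\end{itemize}
This is the paper's argument: one projects the heart object ${}^\tau\cH^0(E)$, not $E$ itself.
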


\begin{proof}
We prove part~\ref{it:objects-C}, part~\ref{it:objects-B} being analogous. 
Since~$\gamma\gamma^!\gamma\gamma^!(D) \cong \gamma\gamma^!(D) \in {}^\tau\cD^{\le 0}$,
we deduce from~\eqref{eq:tau-gg} that~$\gamma\gamma^!(D) \in {}^{\cin{\gamma\gamma^!}}\cD^{\le 0}$,
and since the t-structure~$\cin{\gamma\gamma^!}$ is obtained from~$\tau$ by tilting, we have
\begin{equation*}
{}^\tau\cH^0(\gamma\gamma^!(D)) \in {}^{\cin{\gamma\gamma^!}}\cD^\heartsuit.
\end{equation*}
Furthermore, since the connectively induced t-structure~$\cin{\cC}$ is given by the right projection of~$\cin{\gamma\gamma^!}$, we have  
\begin{equation*}
\gamma^!({}^\tau\cH^0(\gamma\gamma^!(D))) \in {}^{\cin{\cC}}\cC^\heartsuit.
\end{equation*}
Finally, applying~$\gamma^!$ to the truncation
triangle~$\tau^{\leq -1} \gamma\gamma^!(D) \to \gamma\gamma^!(D) \to {}^\tau\cH^0(\gamma\gamma^!(D))$
and taking into account that its first term is contained in~$\cB$ and hence goes to zero,
we deduce that 
\begin{equation*}
\gamma^!(D) \cong \gamma^!\gamma\gamma^!(D) \cong \gamma^!({}^\tau\cH^0(\gamma\gamma^!(D))) \in {}^{\cin{\cC}}\cC^\heartsuit, 
\end{equation*} 
as required. 
\end{proof}

Note that in the situation of Theorem~\ref{main-theorem-precise},
it follows from Lemma~\ref{lemma-restrict-t-structure-vs-projection-amplitude}
that~$\gamma \circ \gamma^!$ has right t-amplitude $\leq 1$ with respect to $\tau$,
and hence from Lemma~\ref{lemma-tauB-basic-properties}\ref{it:induced-amplitude}
that~$\gamma^!$ has t-amplitude in~$[0,1]$ with respect to~$\tau$ and~$\cin{\cC}$.
Turning this around, we obtain a recognition principle for connectively induced t-structures on~$\cC$
with~$\gamma^!$ of t-amplitude in~$[0,1]$:

\begin{lemma}
\label{lem:tauC-recognition}
In Setup~\textup{\ref{setup-sod}},
let~$\upsilon$ be a t-structure on~$\cC$ such that the following conditions hold:
\begin{enumerate}[label={\textup{(\arabic*)}}]
\item
\label{it:gamma-shriek-rca1}
The functor~$\gamma^! \colon \cD \to \cC$ has t-amplitude in~$[0,1]$ with respect to~$\tau$ and~$\upsilon$.
\item
\label{it:gamma-shriek-epi}
For any object~$0 \ne C \in {}^{\upsilon}\cC^\heartsuit$,
there exist an object~$D \in {}^\tau\cD^\heartsuit$ and a nonzero morphism~$C \to \gamma^!(D)$.
\end{enumerate}
Then~$\tau$ restricts to a t-structure on~$\cB$ and we have
\begin{equation}
\label{equation-recog-induced-t-structure}
{}^{\upsilon}\cC^{\le 0}  = {^{\tau}}\cD^{\le 0} \cap \cC.
\end{equation}
In particular, $\tau$ connectively induces a t-structure on $\cC$ which coincides with $\upsilon$.
\end{lemma}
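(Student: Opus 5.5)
We first show that $\tau$ restricts to $\cB$. By Lemma~\ref{lemma-restrict-t-structure-vs-projection-amplitude} it suffices to show that $\gamma\circ\gamma^!$ has right t-amplitude~$\le 1$ with respect to~$\tau$. By condition~\ref{it:gamma-shriek-rca1}, $\gamma^!$ has right t-amplitude $\le 1$ for $\tau$ and $\upsilon$. So it is enough to know that $\gamma$ is right t-exact with respect to $\upsilon$ and $\tau$. By Lemma~\ref{lemma-t-exactness-adjoints}, this is equivalent to left t-exactness of $\gamma^!$, which is also part of~\ref{it:gamma-shriek-rca1}. Hence $\gamma\gamma^!({}^\tau\cD^{\le 0})\subset \gamma({}^\upsilon\cC^{\le 1})\subset {}^\tau\cD^{\le 1}$, and $\tau$ restricts to~$\cB$.

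Next we prove~\eqref{equation-recog-induced-t-structure}. The inclusion ${}^\upsilon\cC^{\le 0}\subset{}^\tau\cD^{\le 0}\cap\cC$ is exactly the right t-exactness of $\gamma$ just established. For the reverse inclusion, take $C\in\cC$ with $\gamma(C)\in{}^\tau\cD^{\le 0}$. Then $\gamma^!\gamma(C)\cong C$ lies in ${}^\upsilon\cC^{\le 1}$ by~\ref{it:gamma-shriek-rca1}, and we must show that $H\coloneqq{}^\upsilon\cH^1(C)$ vanishes. There is a canonical morphism $C\to \upsilon^{\ge 1}C\cong H[-1]$. Suppose $H\ne 0$. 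Condition~\ref{it:gamma-shriek-epi} gives $D\in{}^\tau\cD^\heartsuit$ and a nonzero $f\colon H\to\gamma^!(D)$. Since $\gamma^!(D)\in{}^\upsilon\cC^{[0,1]}$ and $H$ lies in the heart, $f$ factors through ${}^\upsilon\cH^0(\gamma^!D)$, where it is a nonzero morphism. Compose $C\to H[-1]\xrightarrow{f[-1]}\gamma^!(D)[-1]$. By adjunction, morphisms $C\to\gamma^!(D)[-1]$ correspond to morphisms $\gamma(C)\to D[-1]$, and these vanish because $\gamma(C)\in{}^\tau\cD^{\le 0}$ and $D[-1]\in{}^\tau\cD^{\ge 1}$. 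So the composite is zero.

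The main obstacle is to derive a contradiction from this vanishing. Write $G=\gamma^!(D)$, which lies in ${}^\upsilon\cC^{[0,1]}$, so $G[-1]\in{}^\upsilon\cC^{[1,2]}$. The map $\Hom(C,G[-1])\leftarrow\Hom(H[-1],G[-1])=\Hom(H,G)$ is injective. Indeed, applying $\Hom(-,G[-1])$ to the triangle $\upsilon^{\le 0}C\to C\to H[-1]$, the obstruction to injectivity is $\Hom(\upsilon^{\le 0}C[1],G[-1])$, which vanishes since $\upsilon^{\le 0}C[1]\in{}^\upsilon\cC^{\le -1}$ and $G[-1]\in{}^\upsilon\cC^{\ge 1}$. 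Therefore $f=0$, a contradiction, and $H=0$. Hence $C\in{}^\upsilon\cC^{\le 0}$, which gives~\eqref{equation-recog-induced-t-structure}.

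Finally, $\upsilon$ is a t-structure with connective part ${}^\tau\cD^{\le0}\cap\cC$. Its coconnective part is therefore determined by~\eqref{eq:cdm-cdp} and agrees with~\eqref{induced-Cgeq0}. So $\tau$ connectively induces a t-structure on~$\cC$, and it equals~$\upsilon$.
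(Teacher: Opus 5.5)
Your proof is correct and takes the same route as the paper: you get right t-exactness of $\gamma$ by adjunction and use Lemma~\ref{lemma-restrict-t-structure-vs-projection-amplitude} to obtain the restriction to $\cB$. You then kill ${}^\upsilon\cH^1(C)$ by pairing the nonzero map from condition~\ref{it:gamma-shriek-epi} with the truncation $C \to {}^\upsilon\cH^1(C)[-1]$ and applying adjunction into ${}^\tau\cD^{\ge 1}$. Your injectivity check for $\Hom(H[-1],G[-1]) \to \Hom(C,G[-1])$ just makes explicit the step the paper states as ``we obtain a nonzero morphism $C[1] \to \gamma^!(D)$.''
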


We leave to the interested reader to formulate a similar recognition principle for coconnectively induced t-structures.

\begin{proof}
Since the functor~$\gamma^!$ is left t-exact by assumption,
its left adjoint~$\gamma$ is right t-exact by Lemma~\ref{lemma-t-exactness-adjoints}; therefore, \ref{it:gamma-shriek-rca1} implies that the composition~$\gamma \circ \gamma^!$ has right t-amplitude~$\leq 1$.
Applying Lemma~\ref{lemma-restrict-t-structure-vs-projection-amplitude}, we conclude that~$\tau$ restricts to a t-structure on~$\cB$.

The forward inclusion in~\eqref{equation-recog-induced-t-structure}
follows from the right t-exactness of the functor~$\gamma$ explained above.
To prove the reverse inclusion, consider an object~$C \in \cC$ such that~$\gamma(C) \in {}^\tau\cD^{\le 0}$.
Then since $\gamma^!$ has right t-amplitude $\leq 1$, we have
\begin{equation*}
C \cong \gamma^!\gamma(C) \in {}^{\upsilon}\cC^{\le 1}.
\end{equation*}
Assume~${}^{\upsilon}\cH^1(C) \ne 0$.
By assumption~\ref{it:gamma-shriek-epi} there is an object~$D \in {}^\tau\cD^\heartsuit$
and a nonzero morphism~$\phi \colon {}^{\upsilon}\cH^1(C) \to \gamma^!(D)$.
Since~$\gamma^!(D) \in {}^\upsilon\cC^{\ge 0}$,
the induced morphism~${}^{\upsilon}\cH^1(C) \to {}^\upsilon\cH^0(\gamma^!(D))$ is also nonzero.
Thus, composing~$\phi$ with the canonical morphism~\mbox{$C[1] \to {}^{\upsilon}\cH^1(C)$},
we obtain a nonzero morphism~$C[1] \to \gamma^!(D)$,
and by adjunction a nonzero morphism~\mbox{$\gamma(C)[1] \to D$},
which is impossible because the source is in~${}^\tau\cD^{\le -1}$ and the target is in~${}^\tau\cD^{\ge 0}$.
Thus, we have~${}^{\upsilon}\cH^1(C) = 0$ and~$C \in {}^{\upsilon}\cC^{\le 0}$.
This completes the proof of~\eqref{equation-recog-induced-t-structure}.
\end{proof}

\begin{remark}
\label{remark-tca-restriction}
The following example shows that assumption~\ref{it:gamma-shriek-epi} in Lemma~\ref{lem:tauC-recognition} is necessary.
Let~$\cD = \cB \oplus \cC$ be a completely orthogonal decomposition,
and let~$\tau = {(\upsilon_\cB, \upsilon_\cC[-1])}$,
where~$\upsilon_\cB$ is a t-structure on~$\cB$
and~$\upsilon_\cC$ is a t-structure on~$\cC$.
Then~$\gamma^!$ has t-amplitude in~$[0,1]$ with respect to~$\tau$ and~$\upsilon_\cC$,
but~${}^{\upsilon_\cC}\cC^{\leq 0} = {^\tau}\cD^{\leq -1} \cap \cC$,
so the formula~\eqref{equation-recog-induced-t-structure} fails.
\end{remark}

\begin{corollary}
\label{cor:geometric}
In the situation of Example~\textup{\ref{example-bridgeland}}, if~$\tau = \tau_X$
is the standard t-structure on~$\cD = \Dm(X)$, 
then the connectively induced t-structure~$\cin{\cC}$ on~$\cC \simeq \Dm(Y)$
coincides with the standard t-structure~$\tau_Y$.
In particular, the subcategory of bounded objects~${}^{\cin{\cC}}\cC^{\mathrm{b}}$ with respect to~$\cin{\cC}$
identifies with~$\Db(Y)$, and the restriction of~$\cin{\cC}$ to this subcategory is the standard t-structure on~$\Db(Y)$.
\end{corollary}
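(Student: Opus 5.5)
The natural route is the recognition principle, Lemma~\ref{lem:tauC-recognition}, applied with $\upsilon = \tau_Y$, transported to $\cC = f^*\Dm(Y)$ along the equivalence $f^* \colon \Dm(Y) \to \cC$. Under this identification $\gamma = f^*$ and $\gamma^! = f_*$; indeed $f_*$ is right adjoint to $f^*$, and on $\Dm$ it lands in $\Dm(Y)$ because $f$ is proper. The lemma then gives that $\tau_X$ connectively induces a t-structure on $\cC$ which coincides with $\tau_Y$. By uniqueness of a t-structure with prescribed connective part, this is the $\cin{\cC}$ of Theorem~\ref{main-theorem-precise}. That theorem also applies, since $\tau_X$ is noetherian and restricts to $\cB$ by Example~\ref{example-bridgeland}, but we will not need it.

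\emph{Condition~\ref{it:gamma-shriek-rca1}.} We must show that $f_*$ has t-amplitude in $[0,1]$ with respect to $\tau_X$ and $\tau_Y$.
\begin{itemize}
\item Left t-exactness of $f_*$ is the standard fact that derived pushforward is left t-exact. Equivalently, $f^*$ is right t-exact, and one applies Lemma~\ref{lemma-t-exactness-adjoints}.
\item Right amplitude $\leq 1$ comes from the fiber dimension bound $\leq 1$, as already used in Example~\ref{example-bridgeland}: for $F \in \Dm(X)^{\le 0}$ we have $f_*F \in \Dm(Y)^{\le 1}$.
\end{itemize}
For this vanishing of $R^if_*$ for $i>1$ on coherent sheaves, I would cite the theorem on formal functions, or the base change estimate $\dim f^{-1}(y) \le 1$, applied to bounded-above complexes via the truncation spectral sequence. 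It is used verbatim in Example~\ref{example-bridgeland}.

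\emph{Condition~\ref{it:gamma-shriek-epi}.} Given $0 \ne G \in \Coh(Y)$, we need $D \in \Coh(X)$ and a nonzero map $G \to f_*D$. Take $D = f^*G$ in the underived sense, i.e.\ $D = \cH^0(f^*G)$. The unit map $G \to f_*f^*G \cong G$ is an isomorphism. Since the derived $f^*G$ lies in $\Dm(X)^{\le 0}$, there is a truncation map $f^*G \to \cH^0(f^*G)=D$. Composing gives
\[
G \to f_*f^*G \to f_*D,
\]
and by adjunction this composite corresponds to the truncation map $f^*G \to D$, which is nonzero unless $D=0$. Finally $D \neq 0$: since $f$ is surjective, the support of $f^*G$ in degree $0$ is $f^{-1}(\operatorname{Supp} G) \neq \emptyset$, by Nakayama on fibers. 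Alternatively, if $\cH^0(f^*G)=0$ then $f^*G\in\Dm(X)^{\le -1}$, so $G\cong f_*f^*G \in \Dm(Y)^{\le 0}$ still, which gives no contradiction; so I would rely on the support/Nakayama argument. This nonvanishing is the one point needing a little care; everything else is formal.

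The remaining claims follow once $\cin{\cC} = \tau_Y$ on $\Dm(Y)$. The bounded objects for $\tau_Y$ are exactly $\Db(Y)$, and the restriction of $\tau_Y$ to its bounded objects is the standard t-structure on $\Db(Y)$. Along the way, the lemma also re-proves that $\tau_X$ restricts to $\cB$.
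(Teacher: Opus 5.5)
Your proof is correct and follows the paper's argument. The paper also applies Lemma~\ref{lem:tauC-recognition} with $\gamma^! = f_*$ and $\upsilon = \tau_Y$, and gets condition~\ref{it:gamma-shriek-rca1} from the fiber-dimension bound. The only difference is the witness in condition~\ref{it:gamma-shriek-epi}: the paper uses a skyscraper $\cO_x$ over a closed point $y \in \operatorname{Supp}(C)$, giving $\Hom(C, f_*\cO_x) \ne 0$, whereas your choice $D = \cH^0(f^*G)$, combined with the adjunction and the Nakayama nonvanishing, works equally well.
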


\begin{proof}
We show that the assumptions of Lemma~\ref{lem:tauC-recognition}
are satisfied for~\mbox{$\cD = \Dm(X)$}, \mbox{$\cC = \Dm(Y)$},
the functor~$\gamma^! = f_* \colon \Dm(X) \to \Dm(Y)$,
and the standard t-structures~\mbox{$\tau = \tau_X$} and~\mbox{$\upsilon = \tau_Y$}.
First, assumption~\ref{it:gamma-shriek-rca1} holds because the dimension of the fibers of~$f$ is~$\le 1$.
Furthermore, let~$0 \ne C \in {}^{\tau_Y}(\Dm(Y))^\heartsuit = \Coh(Y)$,
let~$y \in Y$ be a closed point in the support of~$C$, and let~$x \in X$ be a closed point such that~$f(x) = y$
(which exists because~$f$ is surjective).
Then
\begin{equation*}
\Hom(C,f_*(\cO_x)) \cong \Hom(C,\cO_y) \ne 0,
\end{equation*}
hence~\ref{it:gamma-shriek-epi} holds.
Therefore, Lemma~\ref{lem:tauC-recognition} applies to give the corollary.
\end{proof}

Finally, together with our results from~\S\ref{section-via-simple-generators},
we deduce the following strengthened version of Theorem~\ref{theorem-induce-via-exceptional-sequence} from the introduction.

\begin{theorem}
\label{theorem-induce-via-simple-objects}
Let~$\cD$ be a $\kk$-linear triangulated category for a field~$\kk$, with a t-structure~$\tau$.
Let $\cD = \langle \cB, \cC \rangle$ be a semiorthogonal decomposition.
\begin{enumerate}[label={\textup{(\alph*)}}]
\item
\label{induce-via-simple-B}
If~$\tau$ is noetherian and~$\cB$ is thickly generated
by a finite set of objects~$B_1, \dots, B_n\in {}^\tau\cD^\heartsuit$ in the heart of~$\tau$ such that
\begin{equation}
\label{Bi-simple-orthogonal}
\Hom(B_i,B_i) = \kk \text{ for all~$i$, and }
\Hom(B_i,B_j) = 0  \text{ for all~$i \ne j$},
\end{equation}
then~$\tau$ connectively induces a t-structure~$\cin{\cC}$ on~$\cC$, which is bounded if~$\tau$ is bounded.

\item
\label{induce-via-simple-C}
If~$\tau$ is artinian and~$\cC$ is thickly generated
by a finite set of objects~$C_1, \dots, C_n\in {}^\tau\cD^\heartsuit$ in the heart of~$\tau$ such that
\begin{equation}
\label{Ci-simple-orthogonal}
\Hom(C_i,C_i) = \kk \text{ for all~$i$, and }
\Hom(C_i,C_j) = 0  \text{ for all~$i \ne j$},
\end{equation}
then~$\tau$ coconnectively induces a t-structure~$\ccin{\cB}$ on~$\cB$, which is bounded if~$\tau$ is bounded.
\end{enumerate}
\end{theorem}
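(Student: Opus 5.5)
The plan is to reduce to Theorem~\ref{main-theorem-precise}. By Remark~\ref{remark-connectively-induced-opposite} (passing to opposite categories swaps the noetherian and artinian properties and the two halves of the statement), it suffices to prove part~\ref{induce-via-simple-B}. So the goal is to show that, under the hypotheses of~\ref{induce-via-simple-B}, the t-structure $\tau$ restricts to a t-structure on $\cB$. Then Theorem~\ref{main-theorem-precise}\ref{main-theorem-induce-C-precise} gives the connectively induced t-structure $\cin{\cC}$, bounded when $\tau$ is.

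To get the restriction, apply Proposition~\ref{prop:simple-heart} to $B_1,\dots,B_n$. Let $\cB' \subset \cD$ be the thick triangulated subcategory of objects all of whose $\tau$-cohomology objects lie in $\langle B_1,\dots,B_n\rangle_{\mathrm{ext}}$. The proposition says $\tau$ restricts to $\cB'$, and hence to $\cB'\cap{}^\tau\cD^{\mathrm b}$, which is the triangulated subcategory generated by the $B_i$. Since $\cB'\cap{}^\tau\cD^{\mathrm b}$ is thick in $\cD$ (both $\cB'$ and ${}^\tau\cD^{\mathrm b}$ are thick), it equals the thick closure of the $B_i$, which is $\cB$ by hypothesis.

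The step I expect to be the only real subtlety is this thickness and equality check. The triangulated subcategory generated by the $B_i$ is contained in $\cB$. Conversely, $\cB'\cap{}^\tau\cD^{\mathrm b}$ is thick and contains the $B_i$, so it contains $\cB$. It is also contained in $\cB$, since each bounded object of it is a finite extension of shifts of objects in $\langle B_1,\dots,B_n\rangle_{\mathrm{ext}}$. Hence $\cB = \cB'\cap{}^\tau\cD^{\mathrm b}$.

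Because the truncation functors of $\tau$ preserve both $\cB'$ and ${}^\tau\cD^{\mathrm b}$, Lemma~\ref{lemma-restriction-t-structure} shows that $\tau$ restricts to $\cB$. Applying Theorem~\ref{main-theorem-precise}\ref{main-theorem-induce-C-precise}, which needs $\tau$ noetherian, then finishes part~\ref{induce-via-simple-B}. Part~\ref{induce-via-simple-C} is the same argument applied to $\cC$, using Theorem~\ref{main-theorem-precise}\ref{main-theorem-induce-B-precise}, which needs $\tau$ artinian.
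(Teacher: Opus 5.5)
Your proposal is correct and follows the paper's own argument: Proposition~\ref{prop:simple-heart} shows that $\tau$ restricts to $\cB$ (respectively $\cC$), and then Theorem~\ref{main-theorem} applies. Your identification of $\cB$ with the thick subcategory $\cB'\cap{}^\tau\cD^{\mathrm{b}}$ spells out a step the paper leaves implicit.
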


\begin{proof}
Proposition~\ref{prop:simple-heart} shows that $\tau$ restricts to a t-structure on $\cB$ in case~\ref{induce-via-simple-B} or on $\cC$ in case~\ref{induce-via-simple-C}, so we may apply Theorem~\ref{main-theorem}.
\end{proof}

\begin{proof}[Proof of Theorem~\textup{\ref{theorem-induce-via-exceptional-sequence}}]
Since exceptional objects are simple, the required result
follows from Theorem~\ref{theorem-induce-via-simple-objects}
by taking either~$B_i = E_i$ or~\mbox{$C_i = E_i$}.
\end{proof}

\subsection{t-exactability and perverse t-structures}
\label{subsection-t-exactability} 

The t-amplitude assumptions of Corollary~\ref{cor:exactify} that allowed us to find a t-structure
with respect to which the given idempotent endofunctor~$\Phi$ is right or left t-exact are rather restrictive.
In this somewhat speculative subsection, we discuss an approach to extending this corollary
to idempotent endofunctors with more general t-amplitude,
and in turn Theorem~\ref{main-theorem} to more general semiorthogonal decompositions
(for which we do not require the given t-structure on~$\cD$ to restrict to one of the components).

We confine ourselves to the right t-exact version of the story;
the interested reader will easily find the corresponding statements for the left t-exact version.

With this goal in mind, we introduce the following definition. 

\begin{definition}
\label{definition-t-exactable} 
Let~$\cD$ be a triangulated category with a t-structure $\tau$.
Let~$\Phi \colon \cD \to \cD$ be an idempotent endofunctor.
We say that $\Phi$ is \emphsf{right t-exactable with respect to $\tau$}
if the pair~$\cin{\Phi} = ({^{\cin{\Phi}}}\cD^{\leq 0}, {^{\cin{\Phi}}}\cD^{\geq 0})$ where
\begin{equation}
\label{eq:tau-perverse-minus}
\begin{aligned}
{^{\cin{\Phi}}}\cD^{\leq 0} & = \set{ D \in {^{\tau}}\cD^{\leq0} \sth \Phi(D) \in {^\tau}\cD^{\leq 0}}, \\
{^{\cin{\Phi}}}\cD^{\geq 0} & = \set{ D \in \cD 
\sth \Hom(D',D) = 0
\text{ \textup{for all} } D' \in {^{\cin{\Phi}}}\cD^{\leq 0}[1]},
\end{aligned}
\end{equation}
defines a t-structure on $\cD$, in which case $\tau^-_{\Phi}$ is called the \emphsf{right $\Phi$-perverse t-structure} associated to~$\tau$.
\end{definition} 

\begin{remark}
Let us explain the choice of terminology above: 
\begin{enumerate}
\item
If~$\Phi$ is right t-exactable with respect to~$\tau$, then it follows directly from the idempotency of~$\Phi$
that~$\Phi$ is right t-exact with respect to~$\tau^-_{\Phi}$.
\item 
In the context of Example~\ref{example-bridgeland}, Bridgeland defined a perverse t-structure on~$\Dm(X)$.
As we will explain later in~\S\ref{section-perverse-t-structures}, this t-structure (for a suitable choice of perversity)
coincides with~$\tau^-_{\Phi}$ for~$\Phi = f^* \circ f_*$.
\end{enumerate}
\end{remark}

The t-exactability of the projection functor for a semiorthogonal component
implies the existence of a well-behaved induced t-structure,
generalizing Theorem~\ref{main-theorem-precise} to this setting.

\begin{theorem}
\label{theorem-t-exactable-induced-tauC}
Let~$\cD$ be a triangulated category with a t-structure~$\tau = ({}^\tau\cD^{\leq 0}, {}^\tau\cD^{\geq 0})$.
Let~$\cC$ be a triangulated category with a fully faithful triangulated functor~$\gamma \colon \cC \to \cD$.
Assume that~$\gamma$ admits a right adjoint~$\gamma^! \colon \cD \to \cC$
and that the composition~$\gamma \circ \gamma^!$ is right t-exactable with respect to~$\tau$.
Then~$\tau$ connectively induces a t-structure~$\cin{\cC}$ on~$\cC$ such that the following properties hold:
\begin{enumerate}[label={\textup{(\arabic*)}}]

\item
\label{tauC-gamma!tau}
The right $\gamma \gamma^!$-perverse t-structure~$\cin{\gamma \gamma^!}$ right
projects to a t-structure $\gamma^!(\cin{\gamma \gamma^!})$ on~$\cC$ which coincides with~$\cin{\cC}$.

\item
\label{induced-t-structure-bounded-objects}
The heart of~$\cin{\cC}$ and the subcategories of bounded objects are given by
\begin{equation*}
{}^{\cin{\cC}}\cC^{\heartsuit} = \gamma^! \left( {}^{\cin{\gamma\gamma^!}}\cD^{\heartsuit} \right)
\quad\text{and}\quad
{}^{\cin{\cC}}\cC^{?} = \gamma^! \left({}^{\cin{\gamma \gamma^!}}\cD^? \right)
\quad\text{for}\quad
? \in \{+,-,\mathrm{b}\}. 
\end{equation*}

\item
\label{induced-t-structure-adjoint}
$\gamma^!$ is t-exact with respect to $\cin{\gamma \gamma^!}$ and $\cin{\cC}$.

\item
\label{gamma-right-t-exact-tauC}
$\gamma$ is right t-exact with respect to~$\cin{\cC}$ and~$\cin{\gamma \gamma^!}$.

\item
\label{tau-bounded-implies-tauC-bounded}
Assume that~$\tau$ is bounded and~$\gamma \circ \gamma^!$ has finite right t-amplitude with respect to~$\tau$.
Then~$\cin{\gamma\gamma^!}$ and~$\cin{\cC}$ are bounded.
\end{enumerate} 
\end{theorem}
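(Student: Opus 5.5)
Set $\Phi \coloneqq \gamma\circ\gamma^!$. By assumption $\cin{\Phi}$ is a t-structure on $\cD$, and $\Phi$ is right t-exact with respect to it. Indeed, if $D\in{}^{\cin{\Phi}}\cD^{\le 0}$, then $\Phi(D)\in{}^\tau\cD^{\le 0}$ and $\Phi(\Phi(D))\cong\Phi(D)\in{}^\tau\cD^{\le 0}$, so $\Phi(D)\in{}^{\cin{\Phi}}\cD^{\le 0}$. The proof then mirrors that of Theorem~\ref{main-theorem-precise}, with Corollary~\ref{cor:exactify} replaced by this hypothesis.

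\textbf{Projection.} By Lemma~\ref{lemma-project-B-iff-lte}\ref{it:projection-right-criterion}, $\cin{\Phi}$ right projects to a t-structure $\gamma^!(\cin{\Phi})$ on $\cC$. Lemma~\ref{lemma-projection-tau-cB}\ref{beta*t-geq0} gives ${}^{\gamma^!(\cin\Phi)}\cC^{\le 0}={}^{\cin\Phi}\cD^{\le 0}\cap\cC$. For $C\in\cC$ we have $\Phi(\gamma C)\cong\gamma C$, so this set equals ${}^\tau\cD^{\le 0}\cap\cC$. A t-structure is determined by its connective part via~\eqref{eq:cdm-cdp}, and its coconnective part is then exactly~\eqref{induced-Cgeq0}. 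Hence $\tau$ connectively induces $\cin{\cC}=\gamma^!(\cin\Phi)$, which proves~\ref{tauC-gamma!tau}. Items~\ref{induced-t-structure-bounded-objects}, \ref{induced-t-structure-adjoint} and~\ref{gamma-right-t-exact-tauC} follow directly from Lemma~\ref{lemma-projection-tau-cB}, parts \ref{t-induction-cB-heart}, \ref{beta*-t-exact} and~\ref{beta-left-t-exact}.

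\textbf{Boundedness~\ref{tau-bounded-implies-tauC-bounded}.} By Lemma~\ref{lemma-projection-tau-cB}\ref{t-induced-bounded-cB}, it suffices to show that $\cin\Phi$ is bounded. Let $\Phi$ have right t-amplitude $\le b$ with $b\ge 0$.
\begin{itemize}
\item For the connective side, ${}^\tau\cD^{\le -b}\subset{}^{\cin\Phi}\cD^{\le 0}\subset{}^\tau\cD^{\le 0}$. The first inclusion holds because $D\in{}^\tau\cD^{\le -b}$ gives $\Phi(D)\in{}^\tau\cD^{\le 0}$.
\item Taking right orthogonals via~\eqref{eq:cdm-cdp} gives ${}^\tau\cD^{\ge 0}\subset{}^{\cin\Phi}\cD^{\ge 0}\subset{}^\tau\cD^{\ge -b}$.
\end{itemize}
Thus the aisles and co-aisles of $\tau$ and $\cin\Phi$ are sandwiched between shifts of one another. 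Consequently ${}^{\cin\Phi}\cD^{\pm}={}^\tau\cD^{\pm}$, and since $\tau$ is bounded, so is $\cin\Phi$.

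The main obstacle is in the first step: checking that the projected t-structure's coconnective part agrees with the orthogonal description. This needs only that the connective parts agree together with~\eqref{eq:cdm-cdp}, so it should be routine. The one point needing care is the direction of the inclusions under orthogonals in the boundedness step.
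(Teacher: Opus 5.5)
Your proposal is correct and follows essentially the same route as the paper: show that $\gamma\circ\gamma^!$ is right t-exact for $\cin{\gamma\gamma^!}$ by idempotency, project via Lemma~\ref{lemma-project-B-iff-lte}, and identify the connective part using $\gamma\gamma^!\gamma\cong\gamma$ and Lemma~\ref{lemma-projection-tau-cB}. For boundedness you use the same sandwich ${}^{\tau}\cD^{\le -b}\subset{}^{\cin{\gamma\gamma^!}}\cD^{\le 0}\subset{}^{\tau}\cD^{\le 0}$ and its orthogonal counterpart, exactly as in the paper, and your orthogonal inclusions go in the correct direction.
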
 

\begin{proof}
Everything holds by the same argument as in the proof of Theorem~\ref{main-theorem-precise},
except for~\ref{tau-bounded-implies-tauC-bounded}. 
If $b \geq 0$ is an integer such that~$\gamma \circ \gamma^!$ has right t-amplitude~$\leq b$ with respect to~$\tau$,
then we have chains of inclusions
\begin{equation*}
{}^{\tau}\cD^{\le -b} \subset {}^{\cin{\gamma \gamma^!}}\cD^{\le 0} \subset {}^{\tau}\cD^{\le 0}
\qquad
\text{and}\qquad
{}^{\tau}\cD^{\ge 0} \subset {}^{\cin{\gamma \gamma^!}}\cD^{\ge 0} \subset {}^{\tau}\cD^{\ge -b},
\end{equation*}
where the first holds by \eqref{eq:tau-perverse-minus} and the second by passing to (shifted) orthogonals. 
In particular, we see that if the t-structure~$\tau$ is bounded, then so is~$\cin{\gamma\gamma^!}$,
which by Lemma~\ref{lemma-projection-tau-cB}\ref{t-induced-bounded-cB}
implies the same for~$\cin{\cC} = \gamma^!(\cin{\gamma \gamma^!})$.
\end{proof}

The above result puts a premium on the following question. 

\begin{question}
\label{question-t-exactable}
Let~$\cD$ be a triangulated category with a t-structure~$\tau = ({}^\tau\cD^{\leq 0}, {}^\tau\cD^{\geq 0})$.
Given a right admissible subcategory $\cC \subset \cD$,
under what conditions can we guarantee that its projection functor~$\gamma \circ \gamma^!$
is right t-exactable with respect to~$\tau$?
\end{question} 

More generally, we could ask for conditions which guarantee that an idempotent endofunctor is right t-exactable. 
Proposition~\ref{proposition-tauPhib} suggests the following inductive approach to this question. 

\begin{remark}[t-exactability via tilting]
\label{remark-t-exactable-tilting}
Let $\cD$ be a triangulated category equipped with a t-structure~$\tau$. 
Let $\Phi \colon \cD \to \cD$ be an idempotent endofunctor
which has finite right t-amplitude with respect to~$\tau$, i.e. there exists an integer~$b \in \bZ$
such that~$\Phi$ has right t-amplitude~$\leq b$ with respect to~$\tau$. 
Then we may try to iteratively apply Proposition~\ref{proposition-tauPhib}
to construct a sequence of t-structures~$\tau_{\Phi,k}^-$ for~$0 \leq k \leq b$ on~$\cD$
such that~$\tau_{\Phi,b}^{-} = \tau$, each $\tau_{\Phi, k-1}^-$ is a tilt of $\tau_{\Phi,k}^{-}$, and the connective parts of the t-structures are given by
\begin{equation}
\label{tauPhi-k-via-tilting}
{}^{\tau^-_{\Phi,k}}\cD^{\leq 0} = \set{D \in  {^\tau}\cD^{\leq 0} \sth
\Phi(D) \in {^\tau}\cD^{\leq k} } ;
\end{equation}
in particular, for~$k = 0$ this would show that~$\Phi$ is right t-exactable and~$\tau_{\Phi}^- = \tau_{\Phi, 0}^-$
is the right $\Phi$-perverse t-structure. 
Note that if t-structures~$\tau_{\Phi, k}^-$ with connective parts~\eqref{tauPhi-k-via-tilting} really exist,
then we have obvious inclusions
\begin{equation*}
{}^{\tau^-_{\Phi,k}}\cD^{\leq -1} \subset {}^{\tau^-_{\Phi,k-1}}\cD^{\leq 0} \subset {}^{\tau^-_{\Phi,k}}\cD^{\leq 0},
\end{equation*}
which by Remark~\ref{remark-characterizing-tilt} 
imply that~$\tau_{\Phi, k-1}^-$ is necessarily obtained from~$\tau_{\Phi,k}^-$ by tilting.

The issue with this construction is the assumption in Proposition~\ref{proposition-tauPhib}
that~$\cT_{\Phi, b-1}^-$ extends to a torsion pair in ${^{\tau}}\cD^{\heartsuit}$.
Of course, if we assume that $\tau$ is noetherian, then this is automatic at the first step,
so we can construct~$\tau_{\Phi,b-1}^-$, as in the proof of Corollary~\ref{cor:exactify}.
However, the tilt~$\tau_{\Phi,b-1}^{-}$ is potentially non-noetherian, so it is not clear
that we can apply Proposition~\ref{proposition-tauPhib}
to~$\tau_{\Phi,b-1}^-$ and continue the induction. 
It would be very interesting to find criteria guaranteeing that the t-structures~$\tau_{\Phi,k}^-$ exist for~$k < b-1$. 
\end{remark}

The finiteness assumption on the t-amplitude of $\Phi$ in Remark~\ref{remark-t-exactable-tilting} is quite mild: 

\begin{remark}
\label{remark-boundedness-functors-spc}
Let~$\kk$ be a field and let~$\cD$ be a smooth proper $\kk$-linear category in the sense of~\cite{NCHPD};
for instance, we may take~$\cD$ to be (the natural enhancement of)~$\Db(X)$
where~$X$ is a smooth proper variety over~$\kk$~\cite[Lemma~4.9]{NCHPD}.
Let~$\tau$ be a bounded t-structure on~$\cD$.
Then any~$\kk$-linear endofunctor~$\Phi \colon \cD \to \cD$ has bounded t-amplitude with respect to~$\tau$,
i.e. $\Phi$ has t-amplitude in~$[a,b]$ for some integers~$a,b \in \bZ$~\cite[Proposition~3.1]{quasi-convergence-DHL}.
\end{remark} 

\begin{remark}[An approach to Question~\ref{question-soc-t-structure}]
\label{remark-conjecture-soc-t-structure-approach}
Let~$X$ be a smooth proper variety over a field~$\kk$
with a $\kk$-linear semiorthogonal decomposition~$\Db(X) = \langle \cB , \cC \rangle$.
If~$\tau$ is any bounded t-structure on~$\Db(X)$ --- for instance~$\tau = \tau_X$ the standard t-structure ---
then by Remark~\ref{remark-boundedness-functors-spc} the projection functor~$\gamma \circ \gamma^!$ has bounded t-amplitude with respect to~$\tau$.
Whenever it can be shown that~$\gamma \circ \gamma^!$ is right t-exactable
with respect to~$\tau$ (for instance by iterated tilting as in Remark~\ref{remark-t-exactable-tilting}),
then we obtain by Theorem~\ref{theorem-t-exactable-induced-tauC} 
an induced bounded t-structure on~$\cC$. 
In this way, a sufficiently robust answer to Question~\ref{question-t-exactable}
may lead to a solution to Question~\ref{question-soc-t-structure}.
\end{remark} 

\subsection{Other perversities}
\label{section-perverse-t-structures}

In the setting of Example~\ref{example-bridgeland},
Bridgeland \cite{bridgeland-flops} constructed a perverse t-structure $\tau_{X,p}^{\perv}$ on $\Dm(X)$
for any integer $p \in \bZ$, which can be thought of as a choice of perversity
(with $p = -1$ being the ``standard'' perversity).
In this auxiliary subsection, we generalize this story
to the setting of Theorem~\ref{main-theorem-precise}\ref{main-theorem-induce-C-precise};
as before, we leave the formulations and proofs of the left versions of the results to the interested reader.

Note that Theorem~\ref{main-theorem-precise}\ref{main-theorem-induce-C-precise}
shows the projection functor~$\gamma \circ \gamma^!$ is right t-exactable,
and the corresponding right $\gamma\gamma^!$-perverse t-structure $\cin{\gamma \gamma^!}$
right projects to a t-structure on~$\cC$ which coincides with the t-structure~$\cin{\cC}$ connectively induced by~$\tau$.
The next result shows that~$\cin{\gamma \gamma^!}$ is not unique with these properties
(under a mild additional assumption on~$\cB \subset \cD$).

\begin{lemma}
\label{lem:perv-p} 
Assume we are in the setting of Theorem~\textup{\ref{main-theorem-precise}}\ref{main-theorem-induce-C-precise}
and that the subcategory~\mbox{$\cB \subset \cD$} is admissible.
Let~$\tau\vert_\cB$ be the restricted t-structure on~$\cB$ and~$\cin{\cC}$ the connectively induced t-structure on~$\cC$. 
Then for each~$p \in \bZ$ there is a t-structure~$\tau^{\perv}_p$ on~$\cD$ such that
\begin{align*}
{^{\tau^{\perv}_p}}\cD^{\leq 0} & =
\set{ D \in \cD \sth \gamma^!(D) \in {^{\cin{\cC}}}\cC^{\leq 0}
\textup{ and } \Hom(D, \beta(B)) = 0 \textup{ for all } B \in {^{\tau\vert_{\cB}}}\cB^{\ge p + 1} },
\\
{^{\tau^{\perv}_p}}\cD^{\geq 0} & =
\set{ D \in \cD \sth \gamma^!(D) \in {^{\cin{\cC}}}\cC^{\geq 0}
\textup{ and } \Hom(\beta(B), D) = 0 \textup{ for all } B \in {^{\tau\vert_{\cB}}}\cB^{\le p - 1} }.
\end{align*}
Moreover, each t-structure~$\tau^{\perv}_p$ right projects to a t-structure on~$\cC$ which coincides with~$\cin{\cC}$.
\end{lemma}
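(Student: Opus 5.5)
The plan is to obtain $\tau^{\perv}_p$ by gluing t-structures along a semiorthogonal decomposition of $\cD$ in which $\cC$ is the left piece and $\cB$ the right piece.

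Since $\cB \subset \cD$ is admissible, $\beta$ has a right adjoint $\beta^!$. Lemma~\ref{lemma-sod} then gives a second semiorthogonal decomposition $\cD = \langle \cC', \cB \rangle$ with $\cC' = {}^\perp\cB$. Concretely, I would use the glued t-structure of Collins--Polishchuk in the ``recollement'' form, where $\cB$ sits on the right. The projection onto $\cB$ is then $\beta\beta^!$, and the projection onto $\cC'$ is the left adjoint of the inclusion $\cC' \to \cD$. The identification $\cC' \simeq \cD/\cB \simeq \cC$ is realized by $\gamma^!$ restricted to $\cC'$, since $\gamma^!$ kills $\cB$ and is an equivalence on any complement of $\cB$.

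The standard gluing recipe for a recollement $\cB \to \cD \to \cD/\cB$, with t-structures $\tau\vert_\cB[-p]$ on $\cB$ and $\cin{\cC}$ on the quotient, is as follows (shift conventions to be adjusted). The connective part consists of $D$ with $\gamma^!D \in \cC^{\le 0}$ and $\beta^*D \in \cB^{\le p}$, where $\beta^*$ is the left adjoint. The coconnective part consists of $D$ with $\gamma^!D \in \cC^{\ge 0}$ and $\beta^!D \in \cB^{\ge p}$. By adjunction, $\beta^*D \in {}^{\tau\vert_\cB}\cB^{\le p}$ is equivalent to $\Hom(D,\beta B)=0$ for all $B \in \cB^{\ge p+1}$, and similarly on the other side. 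This matches the stated formulas exactly.

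The BBD gluing theorem needs only the recollement data: $\beta$ with both adjoints, $\gamma^!$, and the quotient functor with both adjoints, namely $\gamma$ and the left adjoint coming from $\cC'$. So I would verify the axioms directly in the following order.

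First, shift compatibility is clear.

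Second, for $\Hom$-orthogonality, take $D \in \cD^{\le 0}$ and $D' \in \cD^{\ge 1}$ and use the triangle $\beta\beta^!D' \to D' \to \gamma\gamma^!D'$. Here $\Hom(D,\gamma\gamma^!D') = \Hom(\gamma^!D \ldots)$ needs care, since $\gamma$ is the left adjoint of $\gamma^!$, not the right. I would instead use the triangle $\gamma\gamma^!D \to D \to \beta\beta^*D$ on the source:
\begin{itemize}
\item $\Hom(\beta\beta^*D,D') = \Hom(\beta^*D,\beta^!D')$, which vanishes since $\beta^*D \in \cB^{\le p}$ and $\beta^!D' \in \cB^{\ge p+1}$;
\item $\Hom(\gamma\gamma^!D,D') = \Hom(\gamma^!D,\gamma^!D')$, which vanishes by the t-structure $\cin{\cC}$.
\end{itemize}

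Third, for generation, I would follow BBD 1.4.10. Truncate $\gamma^!D$ in $\cC$ and lift via $\gamma$; then correct in $\cB$ using truncations of $\tau\vert_\cB$ and the octahedral axiom. This step is the main obstacle; I would copy the BBD construction verbatim in triangulated language.

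Finally, for the projection statement, Lemma~\ref{lemma-project-B-iff-lte} reduces it to right t-exactness of $\gamma\gamma^!$ with respect to $\tau^{\perv}_p$. Take $D$ in the connective part. Then $\gamma^!(\gamma\gamma^!D) = \gamma^!D \in \cC^{\le 0}$, and $\Hom(\gamma\gamma^!D,\beta B) = 0$ by semiorthogonality. So $\gamma\gamma^!D$ lies in the connective part. The projected t-structure $\gamma^!\tau^{\perv}_p$ then has connective part $\gamma^!(\cD^{\le 0}) = \cC^{\le 0}$, using $\gamma^!\gamma \cong \id$ and the definition. A t-structure is determined by its connective part, so $\gamma^!\tau^{\perv}_p = \cin{\cC}$.
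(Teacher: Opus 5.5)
Your proposal is correct and follows the paper's own proof. The paper likewise obtains $\tau^{\perv}_p$ by BBD gluing (Th\'eor\`eme~1.4.10, with the recollement data supplied by the admissibility of $\cB$) of the shifted $\tau\vert_\cB$ with $\cin{\cC}$. It then checks right t-exactness of $\gamma\gamma^!$ exactly as you do, and concludes via Lemma~\ref{lemma-project-B-iff-lte} and a comparison of connective parts.

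One small slip: the complement in $\cD = \langle \cC', \cB \rangle$ is $\cC' = \cB^\perp$, not ${}^\perp\cB$, which is just $\cC$. This plays no role in the argument, since the generation axiom can simply be quoted from BBD as the paper does.
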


\begin{proof}
The t-structure~$\tau^{\perv}_p$ is obtained by gluing (as defined in~\cite[{Th\'eor\`em}~1.4.10]{BBDG},
whose assumptions are satisfied by the admissibility of~$\cB$)
of the t-structure~$\tau\vert_{\cB}$ shifted by~$p$ with the t-structure~$\cin{\cC}$.
We note that the functor~$\gamma \circ \gamma^!$ is right t-exact with respect to each~$\tau^{\perv}_p$;
indeed, if~$D \in {^{\tau^{\perv}_p}}\cD^{\leq 0}$ and~$D' \coloneqq \gamma\gamma^!(D)$ then 
\begin{align*}
& \gamma^!(D') =
\gamma^!\gamma\gamma^!(D) \cong
\gamma^!(D) \in {^{\cin{\cC}}}\cC^{\leq 0},
\\
& \Hom(D', \beta(B)) =
\Hom(\gamma\gamma^!(D), \beta(B)) \cong
\Hom(\gamma^!(D), \gamma^!(\beta(B))) = 0
\quad\text{for all~$B \in \cB$},
\end{align*} 
and hence~$D' \in {^{\tau^{\perv}_p}}\cD^{\leq 0}$.
Thus, 
Lemma~\ref{lemma-project-B-iff-lte} implies that~$\tau^{\perv}_p$ right projects to~$\cC$.
Therefore, by Lemma~\ref{lemma-projection-tau-cB}\ref{beta*t-geq0}
to show that the projected t-structure~$\gamma^!(\tau^{\perv}_p)$ is connectively induced by~$\tau$,
it remains to identify~${}^{\gamma^!\tau^{\perv}_p} \cC^{\le 0} = {}^{\tau^{\perv}_p} \cD^{\le 0} \cap \cC$
with~${^{\cin{\cC}}}\cC^{\leq 0}$.
For this we note that
\begin{equation*}
{}^{\tau^{\perv}_p} \cD^{\le 0} \cap \cC =
\set{ C \in \cC \sth \gamma^!(\gamma(C)) \in {^{\cin{\cC}}}\cC^{\leq 0}
\text{ and } \Hom(\gamma(C), \beta(B)) = 0 \text{ for all } B \in {^{\tau\vert_{\cB}}}\cB^{> p} },
\end{equation*}
where the first condition is equivalent to~$C \in {^{\cin{\cC}}}\cC^{\leq 0}$ and the second condition is vacuous.
\end{proof}

When the component $\cB$ is zero, so that~$\cD = \cC$, then
the perverse t-structures~$\tau_p^{\perv}$ are all equal to~$\tau$.
Otherwise, these t-structures are distinct and precisely two of them can be obtained from~$\tau$ by tilting:

\begin{proposition}
If~$\cB$ is nonzero, then the t-structure~$\tau^{\perv}_p$ constructed in Lemma~\textup{\ref{lem:perv-p}}
is a tilt of~$\tau$ if and only if~$p \in \{-1,0\}$.
Moreover, the torsion parts~${\cT^{\perv}_p} \coloneqq {}^\tau\cD^\heartsuit \cap {^{\tau^{\perv}_p}}\cD^{\leq 0}$
of the corresponding torsion pairs in~${}^\tau\cD^\heartsuit$ are
\begin{align}
{\cT^{\perv}_0} &=
\set{ D \in {}^\tau\cD^\heartsuit \sth
\gamma\gamma^!(D) \in {}^\tau\cD^{\le 0} },
\\
\label{cT-1}
{\cT^{\perv}_{-1}} &=
\set{ D \in {}^\tau\cD^\heartsuit \sth
\gamma\gamma^!(D) \in {}^\tau\cD^{\le 0} \textup{ and }
\Hom(D,\beta(B)) = 0  \textup{ for all } B \in {}^{\tau\vert_{\cB}}\cB^\heartsuit},
\end{align}
In particular, we have~$\tau^{\perv}_0 = \cin{\gamma\gamma^!}$.
\end{proposition}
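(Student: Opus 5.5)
The plan is to use the characterization of tilts in Remark~\ref{remark-characterizing-tilt}: $\tau^{\perv}_p$ is a tilt of $\tau$ exactly when
\begin{equation*}
{}^\tau\cD^{\le -1} \subset {}^{\tau^{\perv}_p}\cD^{\le 0} \subset {}^\tau\cD^{\le 0}.
\end{equation*}
First I will rewrite the connective part of $\tau^{\perv}_p$ in a more usable form. Since $\cB$ is admissible, $\Hom(D,\beta(B)) = \Hom(\beta^*(D),B)$. By the orthogonality formula~\eqref{eq:cdm-cdp} for $\tau\vert_\cB$, the second condition then says $\beta^*(D) \in {}^{\tau\vert_\cB}\cB^{\le p}$. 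For the first condition, Theorem~\ref{main-theorem-precise} gives ${}^{\cin{\cC}}\cC^{\le 0} = {}^\tau\cD^{\le 0}\cap\cC$, so $\gamma^!(D) \in {}^{\cin{\cC}}\cC^{\le 0}$ if and only if $\gamma\gamma^!(D) \in {}^\tau\cD^{\le 0}$. I will also use three t-exactness facts:
\begin{itemize}
\item $\beta^*$ is right t-exact with respect to $\tau$ and $\tau\vert_\cB$, as shown in the proof of Lemma~\ref{lemma-restrict-t-structure-vs-projection-amplitude};
\item $\gamma^!$ has t-amplitude in $[0,1]$ with respect to $\tau$ and $\cin{\cC}$, by Lemma~\ref{lemma-tauB-basic-properties}\ref{it:induced-amplitude};
\item $\gamma$ is right t-exact with respect to $\cin{\cC}$ and $\tau$.
\end{itemize}

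\emph{Sufficiency of $p \in \{-1,0\}$.} For the first inclusion, take $D \in {}^\tau\cD^{\le -1}$. Then $\gamma^!(D) \in {}^{\cin{\cC}}\cC^{\le 0}$ by the amplitude bound, and $\beta^*(D) \in {}^{\tau\vert_\cB}\cB^{\le -1} \subset {}^{\tau\vert_\cB}\cB^{\le p}$ whenever $p \ge -1$. For the second inclusion, take $D \in {}^{\tau^{\perv}_p}\cD^{\le 0}$ and use the triangle $\gamma\gamma^!(D) \to D \to \beta\beta^*(D)$. Its first term lies in ${}^\tau\cD^{\le 0}$ because $\gamma$ is right t-exact. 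Its last term lies in ${}^\tau\cD^{\le p}$ because $\beta$ is t-exact for the restricted t-structure. Hence $D \in {}^\tau\cD^{\le 0}$ whenever $p \le 0$.

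\emph{Necessity.} Because $\cB \ne 0$, I pick a nonzero $B_0$ in the heart of $\tau\vert_\cB$. This is the one delicate point: it needs some nondegeneracy of $\tau\vert_\cB$. I would obtain it from a nonzero object of $\cB$ having a nonzero $\tau$-cohomology object, which by Lemma~\ref{lemma-restriction-t-structure} again lies in $\cB$. If the boundedness needed for this is not available from the setting, I would add it as a standing assumption.
\begin{itemize}
\item If $p \le -2$, then $D = \beta(B_0)[1]$ lies in ${}^\tau\cD^{\le -1}$. But $\beta^*(D) = B_0[1] \notin {}^{\tau\vert_\cB}\cB^{\le p}$, so the first inclusion fails.
\item If $p \ge 1$, then $D = \beta(B_0)[-1]$ satisfies $\gamma^!(D) = 0$ and $\beta^*(D) = B_0[-1] \in {}^{\tau\vert_\cB}\cB^{\le 1} \subset {}^{\tau\vert_\cB}\cB^{\le p}$. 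So $D \in {}^{\tau^{\perv}_p}\cD^{\le 0}$, but $D \notin {}^\tau\cD^{\le 0}$, so the second inclusion fails.
\end{itemize}

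\emph{Torsion parts.} Now take $D \in {}^\tau\cD^\heartsuit$. By right t-exactness, $\beta^*(D) \in {}^{\tau\vert_\cB}\cB^{\le 0}$ automatically. For $p=0$ the condition on $\beta^*$ is therefore vacuous, which gives $\cT^{\perv}_0$. For $p=-1$ the condition becomes $\beta^*(D) \in {}^{\tau\vert_\cB}\cB^{\le -1}$. Given $\beta^*(D) \in {}^{\tau\vert_\cB}\cB^{\le 0}$, this is equivalent to $\Hom(\beta^*(D),B)=0$ for all $B$ in the heart, which is~\eqref{cT-1} by adjunction.

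Finally, to show $\tau^{\perv}_0 = \cin{\gamma\gamma^!}$ I compare connective parts, since a t-structure is determined by its connective part. By the rewriting above, ${}^{\tau^{\perv}_0}\cD^{\le 0}$ consists of those $D$ with $\gamma\gamma^!(D) \in {}^\tau\cD^{\le 0}$ and $\beta^*(D) \in {}^{\tau\vert_\cB}\cB^{\le 0}$. We have already shown this set lies in ${}^\tau\cD^{\le 0}$, so it is contained in~\eqref{eq:tau-gg}. Conversely, any $D$ in~\eqref{eq:tau-gg} lies in ${}^\tau\cD^{\le 0}$, so $\beta^*(D) \in {}^{\tau\vert_\cB}\cB^{\le 0}$ by right t-exactness, and the two sets agree.
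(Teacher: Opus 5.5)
Your argument is correct and is essentially the paper's proof. Both test the two inclusions of Remark~\ref{remark-characterizing-tilt} using the triangle $\gamma\gamma^!(D) \to D \to \beta\beta^*(D)$ and the t-exactness properties of $\beta$, $\beta^*$, $\gamma$, $\gamma^!$, then read off the torsion parts; the paper identifies $\tau^{\perv}_0$ with $\cin{\gamma\gamma^!}$ by matching torsion classes rather than connective parts, which is equivalent.

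Your caveat about the necessity direction is genuine, and the paper's ``if and only if'' silently relies on it: one needs a nonzero object in the heart of $\tau\vert_\cB$. For instance, the degenerate t-structure $\tau = (\cD, 0)$ is noetherian (its heart is zero) and restricts to $\cB$, yet it gives $\tau^{\perv}_p = \tau$ for every $p$.
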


\begin{proof}
Recall from Remark~\ref{remark-characterizing-tilt} that~$\tau^{\perv}_p$ is a tilt of~$\tau$
if and only if there are inclusions
\begin{equation}
\label{eq:tau-tau-perv}
{}^\tau\cD^{\leq -1} \subset
{}^{\tau^{\perv}_p}\cD^{\leq 0} \subset
{}^\tau\cD^{\leq 0}.
\end{equation}
So, we test for which~$p$ each of these inclusions hold. 

To test the second inclusion, let~$D \in {}^{\tau^{\perv}_p}\cD^{\le 0}$ and consider the decomposition triangle
\begin{equation*}
\gamma\gamma^!(D) \to D \to \beta\beta^*(D).
\end{equation*}
Since~$\gamma^!(D) \in {^{\cin{\cC}}}\cC^{\leq 0}$ by the definition of~$\tau^{\perv}_p$,
we have~$\gamma\gamma^!(D) \in {}^\tau\cD^{\le 0}$ by Lemma~\ref{lemma-tauB-basic-properties}\ref{it:induced-gamma};
therefore, $D \in {}^\tau\cD^{\le 0}$ if and only if~$\beta\beta^*(D) \in {}^\tau\cD^{\le 0}$.
Since~$\beta$ is t-exact by Lemma~\ref{lemma-beta-exact}\ref{beta-t-exact}, 
this holds if and only if~$\beta^*(D) \in {}^{\tau\vert_\cB}\cB^{\le 0}$,
which is equivalent to~$\Hom(\beta^*(D), B) = 0$ for any~$B \in {}^{\tau\vert_\cB}\cB^{\ge 1}$.
Using the adjunction between~$\beta^*$ and~$\beta$ and the definition of~$\tau^{\perv}_p$,
we finally see that the second inclusion in~\eqref{eq:tau-tau-perv} holds if and only if~$p \le 0$.

To test the first inclusion in~\eqref{eq:tau-tau-perv}, let~$D \in {}^\tau\cD^{\le -1}$.
Then the first condition in the definition of~$\tau^{\perv}_p$ holds for all~$p$
because~$\gamma^!$ has right t-amplitude~$\le 1$
by Lemma~\ref{lemma-restrict-t-structure-vs-projection-amplitude}
and Lemma~\ref{lemma-tauB-basic-properties}\ref{it:induced-amplitude}.
On the other hand, the second condition in the definition of~$\tau^{\perv}_p$ holds for all~$D \in {}^\tau\cD^{\le -1}$
if and only if~$\beta({}^{\tau\vert_\cB}\cB^{\ge p + 1}) \subset {}^\tau\cD^{\ge 0}$. 
Since~$\beta$ is t-exact, we conclude that the first inclusion in~\eqref{eq:tau-tau-perv} holds
if and only if~$p \ge -1$.

It remains to describe~${\cT^{\perv}_p} = {}^\tau\cD^\heartsuit \cap {}^{\tau^{\perv}_p}\cD^{\le 0}$ for~$p \in \{-1,0\}$.
In either case, the condition~$\gamma^!(D) \in {}^{\cin{\cC}}\cC^{\le 0}$ in the definition of~$\tau^{\perv}_p$
is equivalent to~$\gamma\gamma^!(D) \in {}^\tau\cD^{\le 0}$
by the definition of~${}^{\cin{\cC}}\cC^{\le 0}$,
and it remains to test the condition~$\Hom(D, \beta(B)) = 0$ for all~$B \in {^{\tau\vert_{\cB}}}\cB^{\ge p + 1}$.

If~$p = 0$ this condition is vacuous because~$D \in {}^\tau\cD^{\le 0}$
and~$\beta({^{\tau\vert_{\cB}}}\cB^{\ge 1}) \subset {}^\tau\cD^{\ge 1}$ by t-exactness of~$\beta$.
This completes the proof of the formula for~${\cT^{\perv}_0}$,
and since this formula coincides with~\eqref{eq:ct-minus} for~$\Phi = \gamma\gamma^!$ and~$k = 0$,
we conclude that~$\tau^{\perv}_0 = \cin{\gamma\gamma^!}$.

Finally, if~$p = -1$, for all~$D \in {}^\tau\cD^\heartsuit$ and~$B \in {^{\tau\vert_{\cB}}}\cB^{\ge 0}$ we have
\begin{equation*}
\Hom(D, \beta(B)) \cong
\Hom(D, \tau^{\le 0}(\beta(B))) \cong
\Hom(D, \beta({(\tau\vert_\cB)^{\le 0}}(B))),
\end{equation*}
and it remains to note that~$(\tau\vert_\cB)^{\le 0}(B) \cong {}^{\tau\vert_\cB}\cH^0(B) \in {^{\tau\vert_{\cB}}}\cB^\heartsuit$.
\end{proof}

\begin{remark}
Motivated by the above proposition, the definition of $\tau_{-1}^{\perv}$ can be extended
to the situation where $\cB \subset \cD$ is not assumed admissible (as it was in Lemma~\ref{lem:perv-p}). 
Namely, arguing as in Proposition~\ref{proposition-tauPhib}\ref{it:ct-minus}, one can check directly that in the situation of Theorem~\textup{\ref{main-theorem-precise}}\ref{main-theorem-induce-C-precise},
the subcategory ${\cT^{\perv}_{-1}} \subset {^\tau}\cD^{\heartsuit}$ defined by~\eqref{cT-1}
is closed under extensions and quotients, and hence (as~$\tau$ is noetherian) 
extends to a torsion pair $({\cT^{\perv}_{-1}}, {\cF^{\perv}_{-1}})$. 
Then we may define $\tau_{-1}^{\perv}$ as the tilt of $\tau$ with respect to this torsion pair.
\end{remark}

%%%%%%%%%%%%%%%%%%%%%%%%%%%%%%%%%%%%%%%%%%%%%%%%%%%%%%%

\section{Applications}
\label{section-applications}

In this section, we work out the applications of our results, 
including those described in~\S\ref{subsection-intro-applications}. 

\subsection{Birational contractions}

We start with a few birational examples.

\begin{example}
\label{ex:point-blowup}
Let~$f \colon X \to Y$ be the blowup of a smooth variety~$Y$ in a smooth codimension~$2$ subvariety~$Z \subset Y$,
with exceptional divisor~$E \subset X$.
Let~$p \colon E \to Z$ and~$i \colon E \to X$ be the natural morphisms.
Then the functors
\begin{align*}
\cB &\coloneqq \Db(Z) \to \Db(X), \qquad 
F \mapsto i_*p^*F \otimes \cO_X({E}), 
\\
\cC &\coloneqq \Db(Y) \to \Db(X), \qquad
G \mapsto f^*G,
\end{align*}
are fully faithful and induce a semiorthogonal decomposition~$\Db(X) = \langle \cB, \cC \rangle$.
It is easy to see that the embedding functor of~$\cB = \Db(Z)$
is t-exact with respect to the standard t-structures~$\tau_Z$ and~$\tau_X$,
and hence, by Lemma~\ref{lemma-beta-exact}, the standard t-structure~$\tau_X$ restricts to~$\cB$.
Then by Theorem~\ref{main-theorem}, $\tau_X$ connectively induces a t-structure on~$\cC$.
Furthermore, using the argument of Corollary~\ref{cor:geometric}, 
we can apply Lemma~\ref{lem:tauC-recognition} to deduce that the induced t-structure on~$\cC = \Db(Y)$
coincides with the standard t-structure~$\tau_Y$.

Note that in this example, the subcategory~$\cC = f^*\Db(Y) \subset \Db(X)$ is also left admissible,
with the corresponding semiorthogonal decomposition
\begin{equation*}
\Db(X) = \langle \cC, \cB \otimes \cO_X(-E) \rangle.
\end{equation*}
Moreover, the embedding functor~$i_* \circ p^*$ of the second component is t-exact up to shift
with respect to the dual standard t-structures of~$\Db(Z)$ and~$\Db(X)$ (see Example~\ref{example-dual-standard-t-structure}); indeed,
\begin{align*}
\cRHom(i_*p^*(F), \omega^\bullet_X) &\cong
i_*\cRHom(p^*(F), \omega^\bullet_E) \\ &\cong
i_*(\cRHom(p^*(F), p^*\omega^\bullet_Z) \otimes \omega^\bullet_{E/Z}) \cong
i_*(p^*\cRHom(F, \omega^\bullet_Z) \otimes \omega^\bullet_{E/Z}),
\end{align*}
and~$\omega^\bullet_{E/Z}$ is a line bundle shifted by~1.
Therefore, by Lemma~\ref{lemma-beta-exact} and Theorem~\ref{main-theorem}
the dual standard t-structure~$\tau_X^\vee$ coconnectively induces a t-structure on~$\cC$.
Explicitly, the coconnective part of the induced t-structure is given by
\begin{align*}
{}^{\tau_\cC^+}\cC^{\ge 0}
&= \set{ G \in \cC \sth f^*G \in {}^{\tau_X^\vee}\Db(X)^{\ge 0} } \\
&= \set{ G \in \cC \sth \cRHom(f^*G, \omega^\bullet_X) \in {}^{\tau_X}\Db(X)^{\le 0} } \\
&= \set{ G \in \cC \sth f^*\cRHom(G, \omega^\bullet_Y) \otimes \omega^\bullet_{X/Y} \in {}^{\tau_X}\Db(X)^{\le 0} }.
\end{align*}
Since~$\omega^\bullet_{X/Y} \cong \cO_X(E)$ is a line bundle,
the endofunctor~$- \otimes \omega^\bullet_{X/Y}$ is a t-exact autoequivalence of~$\Db(X)$ with respect to the standard t-structure.
It follows that~${}^{\tau_\cC^+}\cC^{\ge 0}$ is obtained from the connectively induced
(hence standard) t-structure on~$\cC = \Db(Y)$ by the Grothendieck duality functor~$\cRHom(-, \omega^\bullet_Y)$.
Thus, the coconnectively induced t-structure is the dual standard t-structure of~$\Db(Y)$. 
\end{example}

\begin{example}
\label{ex:node-blowup}
Let~$f \colon X \to Y$ be the blowup in an ordinary double point of a surface with exceptional curve~$C \subset X$.
Then the functor~$f_* \colon \Db(X) \to \Db(Y)$ is a Verdier localization
with the kernel generated by the sheaf~$\cO_C(-1) \in \Db(X)$ (see~\cite[Theorem~5.8]{KS24}).
However, the object~$\cO_C(-1)$ is not exceptional (but it is spherical),
nor does it give a semiorthogonal decomposition for~$\Db(X)$;
instead, as in Example~\ref{example-bridgeland} there is a semiorthogonal decomposition of the bounded above categories
\begin{equation*}
\Dm(X) = \langle \ker(f_*), f^*\Dm(Y) \rangle,
\end{equation*}
and the standard t-structure $\tau_X$ of~$\Dm(X)$ restricts to a t-structure on~$\ker(f_*)$.
Applying Corollary~\ref{cor:geometric}, we see that $\tau_X$ connectively induces the standard t-structure on~$\Dm(Y)$
(which in turn restricts to the standard t-structure on~$\Db(Y)$).
\end{example}

\begin{example}
\label{ex:flopping-contraction}
Let~$f \colon X \to Y$ be a flopping contraction of threefolds.
As in Example~\ref{example-bridgeland}, there is a semiorthogonal decomposition of the bounded above categories
\begin{equation*}
\Dm(X) = \langle \ker(f_*), f^*\Dm(Y) \rangle,
\end{equation*}
and the standard t-structure $\tau_X$ of~$\Dm(X)$ restricts to a t-structure on~$\ker(f_*)$.
Moreover, by Corollary~\ref{cor:geometric}, $\tau_X$ connectively induces the standard t-structure on~$\Dm(Y)$ (which in turn restricts to the standard t-structure on $\Db(Y)$).
\end{example}

As the above examples show, the connectively induced t-structure is often geometrically meaningful. 

\subsection{Exceptional objects} 
Let $\kk$ be a field, let~$\cD$ be a $\kk$-linear triangulated category with a t-structure~$\tau$,
and let~$E \in \cD$ be an exceptional object contained in the heart~${}^\tau\cD^\heartsuit$. 
Let
\begin{equation}
\label{D-E-B-C}
\cD = \langle \cB, E \rangle \quad \text{and} \quad 
\cD = \langle E, \cC \rangle 
\end{equation} 
be the corresponding semiorthogonal decompositions (Lemma~\ref{lemma-exceptional-sequence}).
The length one exceptional sequence~\mbox{$E_1 = E$}
satisfies the hypotheses of Theorem~\ref{theorem-induce-via-exceptional-sequence},
so if~$\tau$ is noetherian, then it induces a t-structure~$\cin{\cC}$
on the semiorthogonal component~$\cC$, which is bounded if $\tau$ is bounded.
Similarly, by Theorem~\ref{theorem-induce-via-simple-objects}\ref{induce-via-simple-C},
if~$\tau$ is artinian, then it induces a t-structure~$\ccin{\cB}$ on~$\cB$, which is bounded if~$\tau$ is bounded.
Below, we focus on the component $\cC$ because in our examples the t-structure $\tau$ on $\cD$ will be noetherian.

\subsubsection{Fano varieties}

Let~$X$ be a Fano variety for which Kodaira vanishing holds (automatic, for instance, if the base field is characteristic~$0$).
If we write~$\omega_X = \cO_X(-i)$ where~$\cO_X(1)$ is an ample line bundle and~$i \geq 1$,
then the sequence~$\cO_X, \cO_X(1), \dots, \cO_X(i-1)$ is exceptional.
Thus, there is a semiorthogonal decomposition 
\begin{equation}
\label{definition-RX}
\Db(X) = \langle \cO_X, \cO_X(1), \dots, \cO_X(i-1), {\cR_X} \rangle,
\end{equation}
where~$\cR_X$ is called the \emphsf{residual category},
which has been the subject of many recent works. 

For instance, let  
\begin{equation*}
X \subset \bP(w_0, w_1, \dots,  w_n) 
\end{equation*} 
be a smooth hypersurface of degree~$d < w \coloneqq \sum w_k$, where the right-hand side
is a weighted projective space over a field~$\kk$ considered as a stack.
Then we may take~$i = w - d$ above, and the residual category~$\cR_X$ is fractional Calabi--Yau \cite[Corollary~4.2]{K19}.
For example, when~$X$ has index~$1$, i.e. when $d = w- 1$,
then the more precise statement is that~$\rS_X^d \simeq  [{(n+1)d - 2w}]$.

Note that for~$i \ge 2$ the forward $\Hom$-vanishing condition of Theorem~\ref{theorem-induce-via-exceptional-sequence}
fails for~\eqref{definition-RX}; for that reason below we only consider the case where~$i = 1$.
Applying Theorem~\ref{theorem-induce-via-exceptional-sequence} in this case gives the following result.

\begin{theorem}
\label{thm:fcy}
Let~$X \subset \bP(w_0,w_1,\dots,w_n)$ be a Fano hypersurface of degree~$\sum w_k - 1$, and hence of index~$1$.
Then the standard t-structure on~$\Db(X)$  connectively induces a bounded t-structure
on the fractional Calabi--Yau category~$\cR_X$
defined by the semiorthogonal decomposition~$\Db(X) = \langle \cO_X, {\cR_X} \rangle$.
\end{theorem}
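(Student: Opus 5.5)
The plan is to apply Theorem~\ref{theorem-induce-via-exceptional-sequence} with the length-one exceptional sequence $E_1 = \cO_X$, taking $\cD = \Db(X)$ with its standard t-structure $\tau_X$. For a length-one sequence the forward $\Hom$-vanishing condition~\eqref{eq:hom-vanishing-intro} is vacuous. The remaining hypotheses to check are therefore: $\Db(X)$ is $\kk$-linear, $\tau_X$ is noetherian and bounded, $\cO_X$ is exceptional, and $\cO_X$ lies in the heart $\Coh(X)$. Once these hold, Theorem~\ref{theorem-induce-via-exceptional-sequence} gives the connectively induced t-structure $\cin{\cR_X}$ on the component $\cR_X$ in $\Db(X) = \langle \cO_X, \cR_X\rangle$, and it is bounded because $\tau_X$ is.

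First, $X$ is a smooth proper Deligne--Mumford stack (a smooth hypersurface in the weighted projective stack), hence a noetherian algebraic stack. So by Example~\ref{ex:tau-x}, $\tau_X$ on $\Db(X)$ is a bounded noetherian t-structure with heart $\Coh(X)$. Clearly $\cO_X \in \Coh(X)$.

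The main point is exceptionality of $\cO_X$. Here $\RHom(\cO_X,\cO_X) = \rH^\bullet(X,\cO_X)$. Since the coarse moduli space is projective and the stack is tame (in characteristic $0$, or more generally when the weights are invertible), cohomology is finite-dimensional, so $\cO_X$ is homologically finite-dimensional. I would compute $\rH^\bullet(X,\cO_X)$ from the Koszul sequence $0 \to \cO_{\bP}(-d) \to \cO_{\bP} \to \cO_X \to 0$ on $\bP = \bP(w_0,\dots,w_n)$. On the weighted projective stack, $\rH^\bullet(\cO_\bP(k))$ is concentrated in degree $0$ for $k \ge 0$, vanishes for $-w < k < 0$, and lives in degree $n$ only when $k \le -w$. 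Because $d = w - 1$, the twist $-d = 1-w$ lies in $(-w,0)$, so $\rH^\bullet(\cO_\bP(-d)) = 0$ and $\rH^\bullet(X,\cO_X) = \rH^\bullet(\bP,\cO_\bP) = \kk$. This is the same Kodaira-type vanishing used to produce the sequence~\eqref{definition-RX} with $i = w-d = 1$. The semiorthogonal decomposition itself then comes from Lemma~\ref{lemma-exceptional-sequence}.

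The only genuine obstacle is the stacky setting: one must confirm that Example~\ref{ex:tau-x} and the finiteness of $\RHom$ apply to $X$ as a stack. I would handle this by citing the standard computation of cohomology of $\cO(k)$ on weighted projective stacks, for instance via $\bZ$-graded local cohomology of the polynomial ring $\kk[x_0,\dots,x_n]$. The fractional Calabi--Yau property of $\cR_X$ needs no proof here, since it is quoted from \cite{K19}.
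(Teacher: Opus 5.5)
Your proposal is correct and follows the paper's own argument. The paper applies Theorem~\ref{theorem-induce-via-exceptional-sequence} to the length-one exceptional sequence $E_1 = \cO_X \in \Coh(X)$, for which forward $\Hom$-vanishing is vacuous, and takes the exceptionality of $\cO_X$ from the Kodaira-type vanishing discussion before the theorem; your Koszul computation of $\rH^\bullet(X,\cO_X) \cong \kk$ supplies that step in detail (and since $\mathbb{G}_m$ is linearly reductive, the graded local cohomology computation works over any field, so your hedge on the characteristic is unnecessary).
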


\begin{remark}
By browsing through~\cite[Section~4.3]{K19} and~\cite[Section~5.1]{serre-functors-soc}
one can find many other examples of Fano varieties of index~$1$ 
whose residual category~$\cR_X$ is (almost) fractional Calabi--Yau.
In all these cases, Theorem~\ref{theorem-induce-via-exceptional-sequence} applies to produce a bounded t-structure on~$\cR_X$.
\end{remark}

\subsubsection{Categorical resolution of a nodal cubic curve}

Consider the quiver with relations
\begin{equation*}
\label{Q}
Q \coloneqq \left(
\begin{tikzcd}
\mathop{\bullet}\limits \arrow[r, "a_2"', bend right] \arrow[r, "a_1", bend left] &
\mathop{\bullet}\limits \arrow[r, "b_1", bend left] \arrow[r, "b_2"', bend right] &
\mathop{\bullet}\limits
\end{tikzcd}\Bigg{\vert}\quad b_2a_1 = b_1a_2 =0
\right).
\end{equation*}
Let~$\kk$ be a field, let~$\Rep(Q)$ denote the category of finite-dimensional representations of~$Q$ over~$\kk$,
and let~$\Db(Q) \coloneqq \Db(\Rep(Q))$ be the bounded derived category of~$\Rep(Q)$.
Then
\begin{equation*}
E_+ \coloneqq
\left(
\begin{tikzcd}
\kk \arrow[r, "1", bend left] \arrow[r, "0"', bend right] &
\kk \arrow[r, "1", bend left] \arrow[r, "0"', bend right] &
\kk
\end{tikzcd} \right)
\qquad\text{and}\qquad
E_- \coloneqq
\left(
\begin{tikzcd}
\kk \arrow[r, "0", bend left] \arrow[r, "1"', bend right] &
\kk \arrow[r, "0", bend left] \arrow[r, "1"', bend right] &
\kk
\end{tikzcd} \right)
\end{equation*}
are exceptional objects of~$\Db(Q)$, which give rise to semiorthogonal decompositions
\begin{equation}
\label{eq:bondal}
    \Db(Q) = \langle E_+, \cC_+ \rangle = \langle E_-, \cC_- \rangle.
\end{equation}
Geometrically, the categories~$\cC_{\pm}$ can also be described
as crepant categorical resolutions of a nodal cubic curve (see~\cite[\S3.5]{K16}), related by a flop equivalence.
Theorem~\ref{theorem-induce-via-exceptional-sequence} shows:

\begin{theorem}
\label{theorem-bondal-soc}
The standard t-structure on~$\Db(Q)$ connectively induces a bounded
t-structure on the semiorthogonal components~$\cC_\pm \subset \Db(Q)$ defined by~\eqref{eq:bondal}.
\end{theorem}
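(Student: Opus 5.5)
The plan is to apply Theorem~\ref{theorem-induce-via-exceptional-sequence} with $n = 1$, taking $E_1 = E_\pm$ and $\cD = \Db(Q)$ with its standard t-structure. For $n=1$ the forward $\Hom$-vanishing condition~\eqref{eq:hom-vanishing-intro} is vacuous, so only three hypotheses remain to be checked.

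\textbf{The t-structure.} The standard t-structure on $\Db(Q) = \Db(\Rep(Q))$ is bounded. Its heart $\Rep(Q)$ consists of finite-dimensional representations, so it is a finite length category and in particular noetherian. Concretely, any ascending chain of subobjects of a representation stabilizes because the total dimension is finite.

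\textbf{Membership in the heart.} Each $E_\pm$ is a representation of $Q$, i.e.\ an object of $\Rep(Q)$, so $E_\pm$ lies in the heart. One should also confirm that $E_\pm$ genuinely satisfies the relations $b_2a_1 = b_1a_2 = 0$. For $E_+$ we have $a_1 = b_1 = 1$ and $a_2 = b_2 = 0$, so $b_2a_1 = 0$ and $b_1a_2 = 0$. The case of $E_-$ is symmetric.

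\textbf{Exceptionality.} This is asserted in the text preceding~\eqref{eq:bondal}. If one wanted to verify it, the steps would be as follows.
\begin{itemize}
\item $E_\pm$ is homologically finite-dimensional. This holds because $\Rep(Q)$ has finite global dimension: the quiver is acyclic with monomial-type relations, giving global dimension at most $2$, and all $\Hom$ spaces are finite-dimensional.
\item $\Hom(E_\pm,E_\pm) = \kk$. This follows because $E_\pm$ is a thin representation whose nonzero arrows connect all three vertices.
\item The higher self-extensions vanish. This can be computed from the projective resolution of $E_\pm$ via the standard resolution for bound quiver algebras.
\end{itemize}
This $\Ext$ computation is the only step with real content, but since the paper states exceptionality it can be taken as given.

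\textbf{Conclusion.} With these hypotheses in place, Theorem~\ref{theorem-induce-via-exceptional-sequence} applied to the decompositions~\eqref{eq:bondal} yields connectively induced t-structures $\cin{\cC_\pm}$ on $\cC_\pm$. They are bounded because the standard t-structure on $\Db(Q)$ is bounded.
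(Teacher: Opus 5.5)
Your proposal is correct and follows the paper's own argument. The paper likewise obtains the result directly from Theorem~\ref{theorem-induce-via-exceptional-sequence}, applied to the length-one exceptional sequence $E_\pm$ in the heart $\Rep(Q)$ of the noetherian bounded standard t-structure, where the forward $\Hom$-vanishing condition is vacuous. As in your proposal, the paper takes the exceptionality of $E_\pm$ as known rather than verifying it.
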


\begin{remark}
Note that~$\{E_+,E_-\}$ is a $\Hom$-orthogonal pair of $\Hom$-simple objects in~$\Dm(Q)$.
Therefore, by Proposition~\ref{prop:simple-heart} the standard t-structure of~$\Dm(Q)$
restricts to the subcategory~\mbox{$\cB \subset \Dm(Q)$} consisting of objects~$B \in \Dm(Q)$
such that every cohomology~$\cH^n(B)$ (with respect to the standard t-structure)
belongs to the subcategory~$\langle E_+, E_- \rangle_{\mathrm{ext}} \subset \Rep(Q)$.
Moreover, it follows from~\cite[\S3.5]{K16} that there is a semiorthogonal decomposition
\begin{equation*}
\Dm(Q) = \langle \cB, \Dm(C) \rangle,
\end{equation*}
where~$C$ is a nodal cubic curve.
Hence, the standard t-structure of~$\Dm(Q)$ connectively induces a t-structure on~$\Dm(C)$, 
and using the recognition principle (Lemma~\ref{lem:tauC-recognition})
it is not hard to see that the induced t-structure is the standard t-structure of~$\Dm(C)$,
which, as usual, restricts to the standard t-structure on~$\Db(C)$.
\end{remark}

The original interest in decomposition~\eqref{eq:bondal} and category~$\cC = \cC_+$
is that it gives a counterexample to the Jordan--H\"{o}lder property for semiorthogonal decompositions:
the category~$\Db(Q)$ has a full exceptional collection of length~$3$,
whereas the category~$\cC$ does not admit any exceptional objects  (see~\cite{kuznetsov-jordan-holder}).

More recently, Haiden and Wu~\cite{haiden-wu} showed that the category~$\cC$
is pathological from another point of view: it does not admit a stability condition.
Motivated by this, they asked whether the category~$\cC$ admits a bounded t-structure.
Theorem~\ref{theorem-bondal-soc} answers their question.

\subsubsection{Brill--Noether modifications}

Let~$C$ be a smooth proper curve over a field~$\kk$.
In~\cite{kuznetsov-alexeev}, the \emphsf{augmentation}~$\Db(\cO,C)$ of~$C$ was defined
as the triangulated category obtained by gluing~$\Db(\kk)$ to~$\Db(C)$.
By construction, there is a semiorthogonal decomposition
\begin{equation}
\label{DOC-sod} 
\Db(\cO, C) = \langle {E_0}, \Db(C) \rangle
\end{equation} 
where~$E_0$ is an exceptional object satisfying
\begin{equation*}
\RHom_{\Db(\cO,C)}(E_0,F) = \RHom_{\Db(C)}(\cO_C, F)
\quad\text{for any~$F \in \Db(C)$.}
\end{equation*}
Therefore, given~$V \in \Db(\kk)$, $F \in \Db(C)$, and~$\phi \in \Hom(V \otimes \cO_C, F)$, we obtain an object
\begin{equation}
\label{VFphi}
(V, F, \phi) \coloneqq \cone(V {{} \otimes E_0} \to F)[-1] \in \Db(\cO,C),
\end{equation}
and all objects of $\Db(\cO, C)$ can be described in this way. 

The augmentation~$\Db(\cO,C)$ carries a standard bounded t-structure~$\tau$ with the heart
\begin{equation*}
{}^\tau \Db(\cO,C)^\heartsuit \coloneqq \set{ (V,F,\phi) \sth V \in \Vect_{\kk}^{\fd}~\text{and}~F \in \Coh(C) },
\end{equation*}
the category of \emphsf{ generalized coherent systems}.
It is easy to see that~$\tau$ is noetherian.

Therefore, if~${E} = (V, F, \phi) \in {}^\tau \Db(\cO,C)^\heartsuit$ is any exceptional object,
Theorem~\ref{theorem-induce-via-exceptional-sequence} shows that~$\tau$ induces a t-structure
on the category~$\cC$ defined by the semiorthogonal decomposition
\begin{equation*}
\Db(\cO, C) = \langle E, \cC \rangle.
\end{equation*} 
When~$E = E_0$ this decomposition coincides with~\eqref{DOC-sod}, so~$\cC = \Db(C)$,
and using the recognition principle of Lemma~\ref{lem:tauC-recognition}
it is easy to see that the induced t-structure on~$\cC$ is the standard one.
However, there are other much more interesting choices for~$E$.

Following~\cite{kuznetsov-alexeev}, for any~$F \in \Coh(C)$ we can consider its \emphsf{augmentation}
\begin{equation*}
\fa(F) = (\rH^0(C, F), F, \ev_F) \in {}^\tau\Db(\cO, C)^\heartsuit
\end{equation*}
where~$\ev_F \colon \rH^0(C, F) \otimes \cO_C \to F$ is the evaluation morphism.
By~\cite[Proposition~3.12]{kuznetsov-alexeev}, if~$L$ is a Brill--Noether--Petri (BNP) extremal line bundle on~$C$,
meaning that the Petri map
\begin{equation*}
\rH^0(C, L) \otimes \rH^0(C, L^{\vee} \otimes \omega_C) \to \rH^0(C, \omega_C)
\end{equation*}
is an isomorphism, then the object~$\fa(L)$ is exceptional.
In this case, the semiorthogonal component~$\mathscr{BN}_L(C) \subset \Db(C)$ defined by the semiorthogonal decomposition
\begin{equation*}
\Db(\cO, C) = \langle \fa(L), \BN_L(C) \rangle 
\end{equation*}
is called the \emphsf{BN-modification} of~$C$ with respect to~$L$.

Since~$\fa(L) \in {}^\tau \Db(\cO,C)^\heartsuit$, summarizing the above discussion, we have the following result.

\begin{theorem}
\label{theorem-BN-modification}
Let~$L$ be a BNP extremal line bundle on a smooth proper curve~$C$.
Then the standard t-structure on the augmentation~$\Db(\cO, C)$
connectively induces a bounded t-structure on the BN-modification~$\BN_L(C)$ of~$C$ with respect to~$L$.
\end{theorem}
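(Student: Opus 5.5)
The plan is to apply Theorem~\ref{theorem-induce-via-exceptional-sequence} with a single exceptional object, so that the forward $\Hom$-vanishing condition~\eqref{eq:hom-vanishing-intro} is vacuous. Concretely, we take $\cD = \Db(\cO,C)$ with its standard t-structure $\tau$, and the length-one exceptional sequence $E_1 = \fa(L)$. This gives the semiorthogonal decomposition $\Db(\cO,C) = \langle \fa(L), \BN_L(C)\rangle$. Three hypotheses must then be checked: (i) $\tau$ is noetherian and bounded; (ii) $\fa(L)$ is exceptional; (iii) $\fa(L)$ lies in the heart ${}^\tau\Db(\cO,C)^\heartsuit$.

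For (ii) we simply cite \cite[Proposition~3.12]{kuznetsov-alexeev}, which says that BNP extremality of $L$ makes $\fa(L)$ exceptional. We should also note homological finite-dimensionality. This holds because $\Db(\cO,C)$ is glued from $\Db(\kk)$ and $\Db(C)$ with $C$ smooth proper, so all $\RHom$ spaces are finite.

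For (iii), recall that $\fa(L) = (\rH^0(C,L), L, \ev_L)$. Here $\rH^0(C,L)$ is a finite-dimensional vector space and $L \in \Coh(C)$, so $\fa(L)$ is a generalized coherent system and hence lies in the heart by definition.

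For (i), boundedness is part of the description of $\tau$. We would justify it by observing that $\tau$ is the glued t-structure (Lemma~\ref{lemma-t-structure-gluing}) of the standard t-structures on $\Db(\kk)$ and $\Db(C)$ with respect to~\eqref{DOC-sod}. Gluing preserves boundedness, and the Hom-condition needed for gluing holds because $\RHom(\cO_C,F)$ has no negative Ext groups for a sheaf $F$.

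The only step needing genuine thought is noetherianity of the heart. The heart consists of triples $(V,F,\phi)$, and the projections $(V,F,\phi)\mapsto V$ and $(V,F,\phi)\mapsto F$ are exact functors to $\Vect^{\fd}_\kk$ and $\Coh(C)$. Take an ascending chain of subobjects of $(V,F,\phi)$. Its images in $V$ stabilize because $V$ is finite-dimensional, and its images in $F$ stabilize because $\Coh(C)$ is noetherian. Since a morphism of triples that is an isomorphism on both components is an isomorphism, the chain itself stabilizes. With (i)--(iii) verified, Theorem~\ref{theorem-induce-via-exceptional-sequence} shows that $\tau$ connectively induces a t-structure on $\BN_L(C)$, bounded because $\tau$ is bounded.
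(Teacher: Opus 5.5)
Your main argument is the same as the paper's. You apply Theorem~\ref{theorem-induce-via-exceptional-sequence} to the length-one sequence $\fa(L)$, take exceptionality from \cite[Proposition~3.12]{kuznetsov-alexeev}, note membership in the heart by definition, and use noetherianity of the standard t-structure. The paper only asserts noetherianity; your proof of it via the two exact, jointly conservative projections is fine.

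One auxiliary step is wrong as written. The standard t-structure is \emph{not} the gluing of the standard t-structures on $\Db(\kk)$ and $\Db(C)$ along~\eqref{DOC-sod}. Lemma~\ref{lemma-t-structure-gluing} requires $\Hom(B_1,B_2[m])=0$ for all $m\le 0$, and this includes $m=0$. But $\Hom(E_0,F)=\rH^0(C,F)$ is typically nonzero, for instance when $F=\cO_C$. In fact that pair is not a t-structure: the object $(\kk,\cO_C,\id)$ has no truncation triangle for it.

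The correct gluing comes from the triangle $F[-1]\to(V,F,\phi)\to V\otimes E_0$. You should glue the standard t-structure on $\langle E_0\rangle$ with the \emph{shifted} t-structure on $\Db(C)$ whose heart is $\Coh(C)[-1]$. Then the gluing condition reads $\Hom(E_0,F[m-1])=\rH^{m-1}(C,F)=0$ for $m\le 0$, which holds. The glued heart is exactly the category of generalized coherent systems, and boundedness follows.

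This corrected gluing also justifies the exactness claims in your noetherianity argument. The functors $(V,F,\phi)\mapsto V$ and $(V,F,\phi)\mapsto F$ are induced by $\pr_1$ and $\pr_2[1]$, which are t-exact for this gluing. Since the paper simply takes boundedness of the standard t-structure as given, this is a fixable slip in a side justification, not a gap in the main proof.
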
 

\begin{example}
\label{example-cubic-threefold}
If~$X \subset \bP^4$ is a cubic threefold with a node,
then the residual category~$\cR_X$ is defined by the semiorthogonal decomposition
\begin{equation*}
\Db(X) = \langle \cO_X, \cO_X(1), \cR_X \rangle.
\end{equation*} 
The category~$\cR_X$ is not smooth and proper, but it admits two strongly crepant categorical resolutions~$\widetilde{\cR}_{X,\pm}$,
defined as appropriate semiorthogonal components of the blowup of~$X$ in the node;
moreover, there is an equivalence~$\widetilde{\cR}_{X,\pm} \simeq \BN_{L_\pm}(C)$,
where~$C$ is the associated curve of genus~$4$
and~$L_{\pm}$ are the trigonal line bundles (see~\cite[Proposition~A.1]{kuznetsov-alexeev}).
Thus, we obtain a bounded t-structure on~$\widetilde{\cR}_{X,\pm}$.

Moreover, the bounded above version~$\cR_X^- \subset \Dm(X)$ of the category~$\cR_X$
embeds fully and faithfully into the bounded above version~$\Dm(\cO,C)$ of the augmentation of~$C$.
It is easy to see that~$\{\fa(L_+), \fa(L_-)\}$ is a $\Hom$-orthogonal pair of $\Hom$-simple objects, hence 
by Proposition~\ref{prop:simple-heart} the standard t-structure of~$\Dm(\cO,C)$
restricts to the subcategory~$\cB \subset \Dm(\cO,C)$ consisting of objects~$B \in \Dm(\cO,C)$
such that every cohomology~$\cH^n(B)$ (with respect to the standard t-structure)
belongs to the subcategory~$\langle \fa(L_+), \fa(L_-) \rangle_{\mathrm{ext}} \subset {}^\tau\Db(\cO, C)^\heartsuit$,
and it is not hard to show that there is a semiorthogonal decomposition
\begin{equation*}
\Dm(\cO,C) = \langle \cB, \cR_X^- \rangle.
\end{equation*}
Therefore, the standard t-structure of~$\Dm(\cO,C)$ connectively induces a t-structure on~$\cR_X^-$,
which restricts to a bounded t-structure on~$\cR_X$.
\end{example}

It is worth noting that the category~$\widetilde{\cR}_{X,\pm}$ above is a priori given
as the semiorthogonal complement of an exceptional sequence
to which Theorem~\ref{theorem-induce-via-exceptional-sequence} does not directly apply,
but we are nonetheless able to apply the theorem by embedding~$\widetilde{\cR}_{X,\pm}$
as a semiorthogonal component in a different category (namely~$\Db(\cO, C)$).
In this way, Theorem~\ref{theorem-induce-via-exceptional-sequence} may apply to many more examples than it would appear on the surface.

\subsection{Enriques surfaces} 

Yet another situation where Theorem~\ref{theorem-induce-via-exceptional-sequence} readily applies
is when there is a completely orthogonal exceptional collection.
To give an interesting such example, let~$X$ be an Enriques surface
defined over an algebraically closed field~$\kk$ of characteristic not equal to~$2$.
Then by~\cite[Proposition~3.5]{LNSZ} there exists an exceptional sequence of line bundles~$L_1, \dots, L_{10}$,
and we may define the \emphsf{residual category} of~$X$ by the semiorthogonal decomposition
\begin{equation}
\label{RX-enriques}
\Db(X) = \langle L_1, \dots, L_{10},  \cR_X  \rangle. 
\end{equation} 
When~$X$ is generic, the exceptional sequence turns out to be completely orthogonal
(and was originally constructed in~\cite{zube}).
For arbitrary~$X$, complete orthogonality fails; in fact, even the Hom-vanishing condition~$\Hom(L_i,L_j) = 0$ fails,
so we cannot directly apply Theorem~\ref{theorem-induce-via-exceptional-sequence}.
Still, for arbitrary~$X$ we can construct a t-structure on~$\cR_X$ using Theorem~\ref{theorem-induce-via-simple-objects}.

\begin{theorem}
\label{thm:enriques}
Let~$X$ be an Enriques surface defined over an algebraically closed field~$\kk$ of characteristic not equal to~$2$.
Then the standard t-structure on~$\Db(X)$ induces a bounded t-structure on the residual category~$\cR_X$.
\end{theorem}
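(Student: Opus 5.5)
The plan is to apply Theorem~\ref{theorem-induce-via-simple-objects}\ref{induce-via-simple-B} to the decomposition $\Db(X) = \langle \cB, \cR_X\rangle$ with $\cB = \langle L_1,\dots,L_{10}\rangle$. The standard t-structure $\tau_X$ is noetherian and bounded, so everything reduces to one claim: $\cB$ is thickly generated by finitely many objects $B_1,\dots,B_n$ of $\Coh(X)$ that are $\Hom$-simple and pairwise $\Hom$-orthogonal.

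If $X$ is generic, the sequence is completely orthogonal. Then $\Hom(L_i,L_j)=0$ for $i\neq j$ and $\Hom(L_i,L_i)=\kk$, so we may take $B_i = L_i$ and we are done. Equivalently, Theorem~\ref{theorem-induce-via-exceptional-sequence} applies directly.

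For arbitrary $X$, I would use the explicit description of the sequence from \cite{LNSZ}. There the $L_i$ are built from an isotropic $10$-sequence of nodal or elliptic half-pencil classes, and the failure of $\Hom(L_i,L_j)=0$ for $i<j$ comes from effective differences $L_j - L_i$, namely configurations of $(-2)$-curves $R$. The idea is to mutate or replace the $L_i$ inside $\cB$ by torsion sheaves and line bundles. Whenever $\Hom(L_i,L_j)\neq 0$ with $L_j = L_i(R)$ for a smooth rational curve $R$, the cone of $L_i\to L_j$ is $\cO_R(R\cdot L_j)$, a spherical-type but exceptional-in-this-context torsion sheaf. This sheaf lies in $\cB$, in $\Coh(X)$, and has $\End=\kk$.

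Concretely, the plan is to pick a generating set of $\cB$ consisting of the line bundles $L_i$ that head each "chain" together with sheaves $\cO_{R}(a)$ on the relevant $(-2)$-curves, with twists chosen appropriately. The key steps are then:
\begin{enumerate}
\item Check that these objects thickly generate $\cB$. This holds because each $L_j$ is an iterated extension of the chosen generators.
\item Verify $\Hom$-simplicity, which is easy: $\End(\cO_R(a)) = \kk$ and $\End(L)=\kk$.
\item Verify pairwise $\Hom$-vanishing in both directions. This uses $\Hom(\cO_R(a), L)=0$ (torsion to torsion-free), $\Hom(L,\cO_R(a)) = H^0(\bP^1,\cO(a-L\cdot R))$, which vanishes if the degree is negative, and $\Hom(\cO_R(a),\cO_{R'}(b))=0$ for distinct curves with $R\neq R'$, or for the same curve with $b<a$ or $b\neq a$ as appropriate.
\end{enumerate}

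The main obstacle is this combinatorial case analysis. The intersection numbers $L_i\cdot R$ must be controlled across all possible degenerate configurations of nodal curves permitted by \cite{LNSZ}. The twists must also be arranged so that every cross-$\Hom$ vanishes, since a simple-generator set need not come from the original sequence. I would first try taking the generators to be the "simple objects" of the restricted heart, namely the minimal nonzero objects of $\cB\cap\Coh(X)$. To do this I would show directly that $\tau_X$ restricts to $\cB$, using Lemma~\ref{lemma:restriction-criterion} in its left-orthogonal form together with the Serre duality check $\rS(L_i)=L_i\otimes\omega_X[2]$ as in Example~\ref{example-BLMS}. This would give $\tau_X$ restricting to the relevant component, after which Theorem~\ref{main-theorem} applies without any explicit simple generators. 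This is the backup route if the explicit case analysis becomes unwieldy.
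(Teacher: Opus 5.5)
Your overall strategy is the paper's: apply Theorem~\ref{theorem-induce-via-simple-objects}\ref{induce-via-simple-B} to $\cB=\langle L_1,\dots,L_{10}\rangle$, keeping the $L_i$ in the generic, completely orthogonal case, and otherwise replacing them by line bundles together with cones that are line bundles on $(-2)$-curves. As written, though, the proposal stops exactly where the proof has its content. The generators are never specified (``twists chosen appropriately''), and the $\Hom$-vanishing is left as an open case analysis over ``all possible degenerate configurations''.

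The missing input is the precise shape of the sequence in \cite[Proposition~3.5]{LNSZ}. It splits into blocks $L_{i,0},\dots,L_{i,n_i-1}$ that are mutually completely orthogonal, with $L_{1,0},\dots,L_{c,0}$ completely orthogonal. Moreover $L_{i,j}\cong L_{i,0}(R_{i,1}+\dots+R_{i,j})$, where $R_{i,1},\dots,R_{i,n_i-1}$ is a chain of smooth rational $(-2)$-curves, and distinct chains are disjoint.

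With this there is nothing to choose and no case analysis. Take the consecutive cones $B_{i,0}=L_{i,0}$ and $B_{i,j}=\cone(L_{i,j-1}\to L_{i,j})\cong L_{i,j}\otimes\cO_{R_{i,j}}$, so each curve carries exactly one generator. The checks are then immediate:
\begin{itemize}
\item Homs between different blocks vanish because $B_{i,j}\in\langle L_{i,0},\dots,L_{i,n_i-1}\rangle$.
\item $\Hom(B_{i,j},B_{i,0})=0$ since a torsion sheaf maps trivially to a line bundle.
\item $\Hom(B_{i,j},B_{i,k})=0$ for $j\ne k\ge 1$ by purity, since the supports are distinct curves.
\item The only computation is $\Hom(B_{i,0},B_{i,j})=\rH^0(R_{i,j},\cO_X(R_{i,1}+\dots+R_{i,j})\vert_{R_{i,j}})=\rH^0(\cO_{\bP^1}(-1-\delta_{j,1}))=0$, using the chain intersection numbers.
\end{itemize}
Generation then follows by induction from the triangles $L_{i,j-1}\to L_{i,j}\to B_{i,j}$.

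Your fallback route does not work. Lemma~\ref{lemma:restriction-criterion} and Example~\ref{example-BLMS}, in either the stated or the opposite form, give restriction of $\tau$ to an \emph{orthogonal} of an exceptional collection. Applied to the $L_i$, they could at best give restriction to $\cR_X$ or to $\cB^\perp$, never to $\cB$ itself. To reach $\cB$ you would need $\cR_X$ (equivalently $\cB^\perp$) to be generated by an exceptional collection. That is impossible: it would give a full exceptional collection on $X$, whereas $\rK_0(X)$ has $2$-torsion.

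The Serre-duality hypothesis also fails. $\rS(L_i)=L_i\otimes\omega_X[2]$ lies in ${}^{\tau_X}\Db(X)^{\heartsuit}[2]$, not in ${}^{\tau_X}\Db(X)^{\ge -1}$; for instance $\Ext^2(L_i,L_i\otimes\omega_X)=\kk$. In the paper, restriction of $\tau_X$ to $\cB$ is an \emph{output} of Proposition~\ref{prop:simple-heart} applied to the explicit simple generators, so there is no way around constructing them.

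A minor point: $\cO_R(a)$ on a $(-2)$-curve of an Enriques surface is spherical, not exceptional, because $\omega_X\vert_R\cong\cO_R$. Only $\Hom$-simplicity is needed, so this is harmless.
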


\begin{proof}
It is enough to show that the subcategory~$\cB \coloneqq \langle L_1, \dots, L_{10} \rangle \subset \Db(X)$
satisfies the hypotheses of Theorem~\ref{theorem-induce-via-simple-objects}\ref{induce-via-simple-B}.
For this, recall that the exceptional sequence from~\cite[Proposition~3.5]{LNSZ}
consists of subcollections~$L_{i, 0},\dots, L_{i,n_i-1}$ for~$1 \leq i \leq c$
such that~$L_{1,0},\dots,L_{c,0}$ is a completely orthogonal exceptional collection of line bundles and
\begin{equation*}
L_{i,j} \cong L_{i,0}(R_{i,1} + \dots + R_{i,j}),
\end{equation*}
where~$R_{i,1},R_{i,2},\dots,R_{i,n_i-1}$ is a chain of rational $(-2)$-curves.
Moreover, different chains do not intersect, hence 
different subcollections are completely orthogonal, and~$\sum n_i = 10$.

Now for~$1 \leq i \leq c$, consider the following collection of objects
\begin{equation*}
B_{i,0} \coloneqq L_{i,0}
\quad \text{and} \quad
B_{i,j} \coloneqq \cone(L_{i,j-1} \to L_{i,j}) \cong L_{i,0}(R_{i,1} + \dots + R_{i,j}) { \otimes \cO_{R_{i,j}}} \text{ for $j \geq 1$.}
\end{equation*}
Then~$B_{i,0}$ are line bundles on~$X$ and~$B_{i,j}$ are line bundles on rational curves in~$X$;
hence these objects are $\Hom$-simple, i.e. 
satisfy the first condition in the hypothesis~\eqref{Bi-simple-orthogonal} of Theorem~\ref{theorem-induce-via-simple-objects}\ref{induce-via-simple-B}.
Now let us check $\Hom$-orthogonality, i.e. the second condition in~\eqref{Bi-simple-orthogonal}.

First, the objects~$B_{i,j}$ and~$B_{i,k}$ with~$j,k \ge 1$ and~$j \ne k$
are line bundles supported on distinct curves in~$X$; hence~$\Hom(B_{i,j},B_{i,k}) = 0$.
Similarly, $\Hom(B_{i,j},B_{i,0}) = 0$ for any~$j \ge 1$ because~$B_{i,j}$ is a torsion sheaf while~$B_{i,0}$ is torsion free.
Next, for any~$j \ge 1$ we have
\begin{align*}
\Hom(B_{i,0}, B_{i,j}) &=
\Hom(L_{i,0}, L_{i,0}(R_{i,1} + \dots + R_{i,j}) {\otimes \cO_{R_{i,j}}}) \\ &=
\rH^0(R_{i,j}, \cO_X(R_{i,1} + \dots + R_{i,j})\vert_{R_{i,j}}) =
\rH^0(R_{i,j}, \cO_{R_{i,j}}({-\delta_{j,1} - 1})) = 0 
\end{align*}
because the curves form a chain and~$R_{i,j}$ is a $(-2)$-curve.
Finally, $B_{i,j} \in \langle L_{i,0}, \dots, L_{i,n_i-1} \rangle$ by construction; 
hence~$B_{i,j}$ and~$B_{i',j'}$ are orthogonal if~$i \ne i'$.

It remains to note that by definition~$L_{i,0} = B_{i,0}$ and there are distinguished triangles
\begin{equation*}
L_{i,j-1} \to L_{i,j} \to B_{i,j}, 
\end{equation*}
so it follows by induction on~$j$ that all~$L_{i,j}$ are contained in the triangulated subcategory of~$\Db(X)$ generated by the~$B_{i,j}$.
Therefore, this category coincides with~$\cB$.
\end{proof}

\begin{remark}
For a generic Enriques surface, the situation is analogous to 
the blowup of a smooth surface in $10$ distinct points, where the exceptional curves are disjoint. 
For special Enriques surfaces, the situation is instead analogous
to an iterated blowup of a smooth surface,
where the exceptional curves form a few chains.
\end{remark}

\subsection{(Quasi)phantom categories}
\label{ss:phantoms}
Recall that a semiorthogonal component~$\cP \subset \Db(X)$ of the derived category of a smooth projective variety is a \emphsf{quasiphantom} if its Grothendieck group~$\rK_0(\cP)$ is finite and its Hochschild homology~$\HH_\bullet(\cP)$ vanishes,
and a quasiphantom is a \emphsf{phantom} if~$\rK_0(\cP) = 0$.
We will see that our construction of bounded t-structures applies to the phantom categories
of B\"{o}hning--Graf von Bothmer--Katzarkov--Sosna~\cite{phantoms-bohning}, Krah~\cite{krah},
and others~\cite{KKLLMMPRV},
as well as to the quasiphantom categories of Alexeev--Orlov~\cite{AO}, Galkin--Schinder~\cite{GS},
and B\"{o}hning--Graf von Bothmer--Sosna~\cite{BBS}.

All these quasiphantom categories~$\cP$ are defined by a semiorthogonal decomposition of the form 
\begin{equation}
\label{eq:phantom}
    \Db(X) = \langle L_1, L_2, \dots, L_d, {\cP} \rangle,
\end{equation}
where~$X$ and~$L_i$ are one of the following and we work over the complex numbers: 
\begin{enumerate}[label={\textup{(\alph*)}}]
\item
\label{it:barlow}
$X$ is a generic determinantal Barlow surface,
$L_i$ are described in~\cite[Proposition~4.22]{phantoms-bohning}, $d = 11$;
\item
\label{it:rational}
$X$ is the blowup of~$\bP^2$ in~$10$ points in general position,
$L_i$ are described in~\cite[Theorem~1.1]{krah}, $d = 13$; 
\item
\label{it:more-rational}
$X$ is the blowup of~$\bF_2$ or~$\bP^2$ in~$9$, $10$, or~$11$ points in general position,
$L_i$ are described in~\cite[Theorems~1.3, 3.7, and~1.1]{KKLLMMPRV}, and $d = 13$, $13$, or~$14$, respectively;
\item
\label{it:burniat}
$X$ is a primary Burniat surface,
$L_i$ are described in~\cite[Theorem~4]{AO}, $d = 6$;
\item
\label{it:beauville}
$X$ is the Beauville surface,
$L_i$ are described in~\cite[Theorem~3.5]{GS}, $d = 4$;
\item
\label{it:godeaux}
$X$ is the classical Godeaux surface,
$L_i$ are obtained from~\cite[Theorem~8.2]{BBS} by a small modification
explained in the proof of Theorem~\ref{theorem-phantoms} below, and~$d = 11$.
\end{enumerate}
The categories~$\cP$ are phantoms or quasiphantoms by~\cite[Theorem~1.1]{phantoms-bohning}, \cite[Corollary~5.4]{krah}, 
\cite[Theorems~1.3, 3.7, and~1.1]{KKLLMMPRV}, 
\cite[Theorem~5]{AO}, \cite[Proposition~3.10]{GS}, and~\cite[Theorem~1.1]{BBS}.

\begin{theorem}
\label{theorem-phantoms}
Let~$\cP$ be one of the phantom or quasiphantom categories~\ref{it:barlow}--\ref{it:godeaux}.
Then the standard t-structure on~$\Db(X)$ connectively induces a bounded t-structure on~$\cP$.
\end{theorem}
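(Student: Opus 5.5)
The plan is to reduce Theorem~\ref{theorem-phantoms} to Theorem~\ref{theorem-induce-via-exceptional-sequence}. The standard t-structure $\tau_X$ on $\Db(X)$ is bounded and noetherian (Example~\ref{ex:tau-x}), and the objects $L_1,\dots,L_d$ are line bundles, so they lie in the heart $\Coh(X)$. Once we check the forward $\Hom$-vanishing condition
\begin{equation*}
\Hom(L_i,L_j) = \rH^0(X, L_i^{\vee}\otimes L_j) = 0 \quad \text{for all } i<j,
\end{equation*}
Theorem~\ref{theorem-induce-via-exceptional-sequence} applied to~\eqref{eq:phantom} gives a bounded t-structure on $\cP$ connectively induced by $\tau_X$. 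For each of the cases \ref{it:barlow}--\ref{it:godeaux} the proof is therefore a check that $L_j\otimes L_i^{\vee}$ has no global sections when $i<j$. If this fails for the given ordering, I would fall back on Theorem~\ref{theorem-induce-via-simple-objects}\ref{induce-via-simple-B} with suitable simple generators, as was done for Enriques surfaces.

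The check should use the structure of each exceptional collection. In all these constructions the collection is exceptional in the sense that $\RHom(L_j,L_i)=0$ for $j>i$. This means $L_i\otimes L_j^{\vee}$ is acyclic, and by Serre duality on a surface $L_j\otimes L_i^{\vee}\otimes\omega_X$ is acyclic as well. That is not the statement we need. What we need is $\rH^0(L_i^{\vee}\otimes L_j)=0$, the forward direction, and the cited papers typically compute it as part of verifying exceptionality.

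\emph{Surfaces of general type} (cases \ref{it:barlow}, \ref{it:burniat}, \ref{it:beauville}, \ref{it:godeaux}). The collections are built so that the differences $L_j-L_i$ are non-effective, often torsion-twisted or with negative intersection against an ample class. I will cite those computations. In the Godeaux case \ref{it:godeaux} I expect that the collection of~\cite{BBS} has nonzero forward Homs for some pair. The ``small modification'' would then be to reorder or replace the collection, for example by a mutation or a dual collection, keeping the same complement $\cP$ while achieving the vanishing. I would try passing to the dual collection $L_d^{\vee},\dots,L_1^{\vee}$ twisted by $\omega_X$, which in many cases turns backward vanishing into forward vanishing.

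\emph{Rational cases} (cases \ref{it:rational} and \ref{it:more-rational}). Here $L_i=\cO(a_iH-\sum m_{ik}E_k)$ on a blowup of $\bP^2$ or $\bF_2$ in points in general position. The vanishing of $\rH^0(L_j-L_i)$ reduces to showing that the divisor class $(a_j-a_i)H-\sum(m_{jk}-m_{ik})E_k$ is not effective. I would use either a negative degree $a_j-a_i<0$, or the expected-dimension count for general points together with the fact that nonnegativity fails: $\chi<0$ together with $h^2=0$ does not suffice, so one needs an honest argument, such as intersecting with a nef class like the anticanonical class or $H-E_k$ to get negative degree.

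\emph{Main obstacle.} The main difficulty is these case-by-case non-effectivity checks, particularly for Krah-type collections where $\rH^0$ vanishing for general points may require a genuine geometric argument rather than numerics, together with the Godeaux modification. For the rational cases my first attempt is to find, for each pair $i<j$, a nef class $N$ with $N\cdot(L_j-L_i)<0$.
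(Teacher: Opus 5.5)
For cases \ref{it:barlow}--\ref{it:beauville} your plan is the paper's argument. You apply Theorem~\ref{theorem-induce-via-exceptional-sequence} to line bundles in $\Coh(X)$, and the forward vanishing $\Hom(L_i,L_j)=0$ for $i<j$ is simply cited from the literature, so no hand computation of non-effectivity is needed.

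The genuine gap is case \ref{it:godeaux}. You correctly suspect that the collection $L^0_1,\dots,L^0_{11}$ of \cite[Theorem~8.2]{BBS} violates forward vanishing; indeed $\Hom(L^0_2,L^0_3)\cong\bC$. But the fix you commit to cannot work. For $M_k = L_{d+1-k}^\vee\otimes\omega_X$ and $k<l$,
\[
\Hom(M_k,M_l)\cong\Hom(L_{d+1-k}^\vee,L_{d+1-l}^\vee)\cong\Hom(L_{d+1-l},L_{d+1-k}),
\]
which is a forward $\Hom$ of the original collection. So dualizing, reversing and twisting only permutes the forward $\Hom$'s, and the bad group reappears as $\Hom\bigl((L^0_3)^\vee\otimes\omega_X,(L^0_2)^\vee\otimes\omega_X\bigr)\cong\bC$. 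In addition, the complement in that decomposition is the image of $\cP$ under $\cRHom(-,\cO_X)$, which is only anti-equivalent to $\cP$. Even a working version would therefore give on $\cP$ a t-structure coconnectively induced by (a shift of) $\tau_X^\vee$, not one connectively induced by $\tau_X$. The other fallbacks also fail:
\begin{itemize}
\item The pair $(L^0_2,L^0_3)$ cannot be swapped, since $\Hom(L^0_2,L^0_3)\neq 0$ would then violate exceptionality.
\item General mutations replace line bundles by complexes that need not lie in the heart.
\item The fallback to Theorem~\ref{theorem-induce-via-simple-objects} is not substantiated.
\end{itemize}

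The missing idea is the Serre-functor rotation. Since $\rS_X^{-1}\cong(-)\otimes\cO_X(-K_X)[-2]$, one has
\[
\Db(X)=\langle L^0_1,L^0_2,L^0_3,\dots,L^0_{11},\cP^0\rangle=\langle L^0_3,\dots,L^0_{11},\cP^0,L^0_1(-K_X),L^0_2(-K_X)\rangle .
\]
Mutating $\cP^0$ past the last two line bundles gives $\Db(X)=\langle L_1,\dots,L_{11},\cP\rangle$ with
\[
(L_1,\dots,L_{11})=(L^0_3,\dots,L^0_{11},L^0_1(-K_X),L^0_2(-K_X)).
\]
These are still line bundles in the heart, and $\cP\simeq\cP^0$. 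The pair $(L^0_2,L^0_3)$ is no longer forward. The vanishing of all forward $\Hom$'s of the new collection, including the new ones $\Hom(L^0_j,L^0_i(-K_X))$ for $i\in\{1,2\}$ and $j\ge 3$, is what the paper takes from the proof of \cite[Proposition~5.8]{K15}. This is exactly the ``small modification'' built into the definition of case~\ref{it:godeaux}. Without it, or some other explicit collection of sheaves with forward vanishing, your argument does not cover that case.
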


\begin{proof}
To apply Theorem~\ref{theorem-induce-via-exceptional-sequence} we only need
to check that~\eqref{eq:hom-vanishing-intro} holds.
This was checked in~\cite[Proposition~7.1]{phantoms-bohning} for case~\ref{it:barlow},
\cite[Proof of Theorem~1.1]{krah} for case~\ref{it:rational}, 
\cite{KKLLMMPRV} for case~\ref{it:more-rational},
and~\cite[Propositions~5.3 and~5.5]{K15} for cases~\ref{it:burniat} and~\ref{it:beauville}.

In the last case~\ref{it:godeaux} the exceptional collection~$L^0_1,\dots,L^0_{11}$ from~\cite[Theorem~8.2]{BBS}
does not satisfy the forward $\Hom$-vanishing condition~\eqref{eq:hom-vanishing-intro}
(in fact, $\Hom(L^0_2,L^0_3) \cong \bC$, see the proof of~\cite[Lemma~5.7]{K15}),
so we replace it by the collection
\begin{equation*}
(L_1,L_2,L_3,L_4,\dots,L_{11}) = (L^0_3, L^0_4, \dots, L^0_{11}, L^0_1(-K_X), L^0_2(-K_X))
\end{equation*}
Then, the proof of~\cite[Proposition~5.8]{K15} verifies~\eqref{eq:hom-vanishing-intro}; 
hence Theorem~\ref{theorem-induce-via-exceptional-sequence} applies
and we conclude that~$\cP$ has a bounded t-structure.
\end{proof}

\begin{remark}\label{remark-phantoms-question}
    It would be interesting to find a conceptual explanation
    for the vanishing of the groups $\Hom(L_i, L_j)$ for $i < j$
    in the exceptional sequences defining the phantoms.
    In particular, the above argument suggests the following question:
    Is it possible that any phantom~$\cP$ can be realized as the orthogonal complement of such an exceptional sequence of coherent sheaves?
    A positive answer would imply the existence of a bounded t-structure on~$\cP$.
\end{remark}

As we mentioned in the introduction,
from the existence of a bounded t-structure, we also deduce the following result, which answers a question of Ben Antieau.

\begin{corollary}
\label{corollary-coconnective-DG-algebra}
Let~$\cP$ be one of the phantom or quasiphantom categories~\ref{it:barlow}--\ref{it:godeaux}.
Then there exists a classical generator~$G \in \cP$,  i.e. an object which thickly generates~$\cP$,
with the property that~$\Ext^{i}(G,G) = 0$ for~$i < 0$.
In particular, $A = \RHom(G,G)$ is a coconnective DG algebra such that~$\Dperf(A)$ is a phantom or quasiphantom category.
\end{corollary}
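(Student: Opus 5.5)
By Theorem~\ref{theorem-phantoms}, the standard t-structure on~$\Db(X)$ connectively induces a bounded t-structure~$\cin{\cP}$ on~$\cP$. The plan is to build~$G$ directly from the heart~$\cA \coloneqq {}^{\cin{\cP}}\cP^\heartsuit$. Negative Ext groups vanish between objects of a heart: for~$A, A' \in \cA$ and~$i<0$ we have~$A[-i] \in \cP^{\le -1}$ and~$A' \in \cP^{\ge 0}$, so~$\Ext^i(A,A') = \Hom(A[-i],A') = 0$ by $\Hom$-orthogonality. It therefore suffices to find a single object~$G \in \cA$, or a finite direct sum of objects of~$\cA$, that classically generates~$\cP$. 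Such a sum again lies in~$\cA$, and the vanishing~$\Ext^{<0}(G,G)=0$ is then automatic.

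To find generators in the heart, I will start from an arbitrary classical generator~$G_0$ of~$\cP$. One exists because~$\cP$ is an admissible subcategory of~$\Db(X)$ and~$\Db(X)$ has a classical generator: if~$H$ generates~$\Db(X)$, then~$\gamma^!(H)$ generates~$\cP$, since~$\gamma^!$ is essentially surjective and exact. Because~$\cin{\cP}$ is bounded, the object~$G_0$ has only finitely many nonzero cohomology objects~${}^{\cin{\cP}}\cH^n(G_0) \in \cA$. Through its truncation triangles,~$G_0$ is a finite iterated extension of the shifts~${}^{\cin{\cP}}\cH^n(G_0)[-n]$. Set~$G \coloneqq \bigoplus_n {}^{\cin{\cP}}\cH^n(G_0)$, a finite sum lying in~$\cA$. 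Then~$G_0$ belongs to the thick subcategory generated by~$G$, and so~$G$ classically generates~$\cP$.

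The final claim is the standard Morita statement: for an enhanced idempotent-complete category with classical generator~$G$, the functor~$\RHom(G,-)$ gives an equivalence~$\cP \simeq \Dperf(A)$ with~$A = \RHom(G,G)$. Here~$\cP$ inherits an enhancement from the DG enhancement of~$\Db(X)$ and is idempotent complete, being admissible in the idempotent-complete category~$\Db(X)$. The algebra~$A$ is coconnective because~$H^i(A) = \Ext^i(G,G) = 0$ for~$i<0$.

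There is no real obstacle; the main input is the bounded t-structure from Theorem~\ref{theorem-phantoms}. The only points that need some care are the existence of an initial classical generator of~$\cP$, handled by~$\gamma^!(H)$ as above, and the boundedness of~$\cin{\cP}$, which is what makes~$G$ a finite sum.
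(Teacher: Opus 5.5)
Your proposal is correct and follows essentially the same route as the paper. Both arguments project a classical generator of $\Db(X)$ to $\cP$, replace it by the direct sum of its cohomology objects with respect to the bounded induced t-structure so that it lies in the heart, and conclude via the equivalence $\cP \simeq \Dperf(A)$ for the enhanced, idempotent-complete category $\cP$. The only difference is that you verify directly, via truncation triangles, that this direct sum is still a classical generator, whereas the paper cites this step from the literature.
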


\begin{proof}
More generally, suppose $\cC \subset \Db(X)$ is a semiorthogonal component of a smooth proper variety
which admits a bounded t-structure~{$\tau_\cC$}.
Then $\cC$ admits a classical generator $G$, given by the projection of a classical generator of $\Db(X)$
(which exists for instance by~\cite{neeman-generators}).
Replacing~$G$ by the direct sum of its cohomology objects with respect to~{$\tau_\cC$},
we may assume that~$G \in {{}^{\tau_\cC}}\cC^{\heartsuit}$ (see~\cite[Lemma~B.1]{serre-functors-soc}),
and thus $\Ext^i(G,G) = 0$ for $i < 0$.
Moreover, since~$\cC$ is enhanced, there is an equivalence~$\cC \simeq \Dperf(A)$
where~$A = \RHom(G,G)$ (see for instance~\cite[Theorem~7.1.2.1]{HA}).
\end{proof}

\begin{remark}
In the case where $\cP$ is Krah's phantom~\ref{it:rational},
Mattoo~\cite{Mattoo} recently independently proved Corollary~\ref{corollary-coconnective-DG-algebra} by a different, more computational argument:
he constructed an explicit generator~$G$ for which the vanishing~$\Ext^i(G,G) = 0$ for~$i < 0$
can be calculated using a spectral sequence. 
Even more recently, Mattoo's method was applied in~\cite{MXY}
to prove Corollary~\ref{corollary-coconnective-DG-algebra} for the phantom on the blowup of~$\bP^2$ in~$11$ points from~\ref{it:more-rational}.
\end{remark}

\begin{remark}
There is a notable example to which our construction of bounded t-structures does not directly apply: 
the phantoms $\cP$ constructed by Gorchinskiy and Orlov \cite{phantoms-orlov} by taking the tensor product
of two appropriate quasiphantoms of distinct types~\ref{it:burniat}, \ref{it:beauville}, or~\ref{it:godeaux}.
However, Corollary~\ref{corollary-coconnective-DG-algebra} still holds for such a phantom~$\cP$,
because the tensor product of coconnective DG algebras over a field is coconnective.
\end{remark} 

As promised in the introduction, we now describe some explicit objects in the heart of an induced bounded t-structure on Krah's phantom. 

\begin{example}
\label{example-mattoo-in-heart}
Let~$X$ be the blowup of~$\bP^{2}$ in~$10$ points in general position.
We consider Krah's phantom~$\cP$ from case~\ref{it:rational} above, defined by
\begin{equation*}
\Db(X) = \langle L_1, \dots, L_{13}, \cP \rangle
\end{equation*}
where the $L_i$ are defined in~\cite[Theorem~1.1]{krah}.
Following~\cite[\S2.1]{Mattoo}, we also consider the dual semiorthogonal decomposition
\begin{equation*}
\Db(X) = \langle \cP', L_{13}^\vee, \dots, L_1^\vee \rangle,
\end{equation*}
where~$\cP'$ is 
antiequivalent to~$\cP$ via the dualization functor; in particular, $\cP'$ is also a phantom category.
By Theorem~\ref{theorem-phantoms} the standard t-structure~$\tau_X$ on~$\Db(X)$
connectively induces a bounded t-structure~$(\tau_X)_{\cP}^-$ on~$\cP$.

Let~$\rho \colon \cP' \to \Db(X)$ be the inclusion
and~$\rho^* \colon \Db(X) \to \cP'$ its left adjoint.
In~\cite[\S5.2]{Mattoo}, Mattoo constructs an interesting class of objects in~$\cP'$ of the form
\begin{equation*}
P' = \rho^* i_* M,
\end{equation*}
where~$i \colon C \to X$ is a carefully chosen curve in~$X$ and~$M$ is a suitable line bundle on~$C$.
In~\cite[Proposition~5.10]{Mattoo}, it is shown that~$\rho(P')$ is concentrated in degrees~$[0,1]$
with respect to the standard t-structure~$\tau_X$ and, moreover, that
\begin{itemize}
\item
${}^{\tau_X}\cH^1(\rho(P'))$ is a sum of copies of~$L_{13}^\vee$, and
\item
${}^{\tau_X}\cH^0(\rho(P'))$ can be written as an extension of a sum of copies of~$L_{13}^\vee$ and~$L_{12}^\vee$ by~$i_*M$.
\end{itemize}
Now, consider the object
\begin{equation*}
P \coloneqq (\rho(P'))^\vee[1] \in \cP.
\end{equation*}
The above description of the cohomology objects of~$P'$ implies that~$P$ is concentrated in degrees~$[-2,0]$
with respect to the standard t-structure~$\tau_X$ and, moreover, that
\begin{equation*}
{}^{\tau_X}\cH^0(P) \cong i_*(M^\vee) \otimes \cO_X(C),
\end{equation*}
while~${}^{\tau_X}\cH^p(P)$ for~$p \in \{-2,-1\}$ are sums of copies of~$L_{12}$ and~$L_{13}$;
in particular, $\tau_X^{\le -1}(P)$ is contained in~$\langle L_1, \dots, L_{13} \rangle$.
Hence, we may apply Lemma~\ref{lemma-object-in-heart-tauB}\ref{it:objects-C}
to conclude that~$P \in {}^{{(\tau_X)^-_{\cP}}}\cP^{\heartsuit}$.
\end{example}

\begin{remark}
In~\cite{Mattoo}, several other interesting examples of objects in $\cP'$ with vanishing negative self-Ext groups are constructed. 
It seems likely that they also correspond to objects contained in the heart of  the induced t-structure~$(\tau_X)^-_{\cP}$, but we have not carried out the necessary computations to verify this.
Similarly, using the techniques of~\cite{Mattoo}, we expect that it is possible to construct interesting objects in the hearts of the induced t-structures on other phantoms from Theorem~\ref{theorem-phantoms}.
\end{remark} 

%%%%%%%%%%%%%%%%%%%%%%%%%%%%%%%%%%%%%%%%%%%%%%%%%%%%%%%

\appendix

\section{Induced t-structures via compactly generated t-structures}
\label{sec:ind}

In this appendix we sketch an alternative construction of the induced t-structure
in Theorem~\ref{main-theorem}\ref{main-theorem-induce-C}.
The proof requires a mild extra hypothesis,
but we believe it is still worth presenting since the ideas involved may be useful in other settings. 

\subsection{Compactly generated t-structures} 
\label{section-cg-t-structure} 
In what follows we assume that~$\cD$ is identified with the subcategory of compact objects
in a compactly generated cocomplete triangulated category~$\hcD$, i.e. $\cD = \hcD^c$. 
Note that this assumption implies that~$\cD$ is idempotent complete. 

There are at least two ways such an identification may be achieved.
First, it could be a priori given: for instance, if~$\cD = \Dperf(X)$ for a quasi-compact quasi-separated scheme~$X$,
then~$\hcD = \Dqc(X)$ has the required property.
Second, if~$\cD$ is enhanced, i.e., given as the homotopy category of a small DG category or stable $\infty$-category,
one can take~$\hcD$ to be the Ind-completion of the enhanced category
(see~\cite[\S5.2.5]{HTT} or~\cite[Appendix~B]{serre-functors-soc} for details).

Now, given a category~$\hcD$ as above, for any t-structure $\tau$ on $\cD$ we define a t-structure~$\htau$ on~$\hcD$ by
\begin{equation}
\label{eq:htau}
\begin{aligned}
{}^{\htau}\hcD^{\le 0} & \coloneqq \big\langle {}^\tau\cD^{\le 0} \big\rangle_{\oplus,\ext}, \\ 
{}^{\htau}\hcD^{\ge 0} & \coloneqq {\set{ D \in \hcD \sth \Hom(D',D) = 0 \text{ \textup{for all} } D' \in {}^{\htau}\hcD^{\le 0}[1]}},
\end{aligned}
\end{equation}
i.e., ${}^{\htau}\hcD^{\le 0}$ is the closure of~${}^\tau\cD^{\le 0}$ in~$\hcD$ with respect to infinite coproducts and extensions
and~${}^{\htau}\hcD^{\ge 0}$ is its shifted orthogonal;
by~\cite[Theorem~A.1]{AJS:tstructures} (see also~\cite[Theorem~2.3.3]{passage-weakly-approximable}
and~\cite[Propositions~1.4.4.11 and~1.4.4.13]{HA}),  this is indeed a t-structure.
Moreover, it follows easily from this construction that the inclusion functor $\cD \to \hcD$ is t-exact with respect to $\tau$ and $\htau$; in other words, $\htau$ restricts to the t-structure $\tau$ on $\cD$.  

For our purposes, the crucial feature of this construction is the following result.

\begin{lemma}
\label{lemma:hphi-exactness}
If~$\tau$ is noetherian, then its heart~${}^\tau\cD^\heartsuit$ is a Serre subcategory of the heart~${}^{\htau}\hcD^\heartsuit$ of $\htau$,
i.e., ${}^\tau\cD^\heartsuit$ is closed in~${}^{\htau}\hcD^\heartsuit$ under taking subobjects, quotient objects, and extensions.
\end{lemma}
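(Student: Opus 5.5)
The plan is to check the three closure properties directly, using three inputs: the inclusion $\cD \to \hcD$ is t-exact with respect to $\tau$ and $\htau$; $\cD \subset \hcD$ is a thick triangulated subcategory; and ${}^\tau\cD^{\heartsuit}$ is noetherian.

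First, two preliminary observations. Since the inclusion is t-exact, we have ${}^\tau\cD^{\heartsuit} = \cD \cap {}^{\htau}\hcD^{\heartsuit}$, and the induced functor ${}^\tau\cD^{\heartsuit} \to {}^{\htau}\hcD^{\heartsuit}$ is exact. In particular, kernels, cokernels and images of morphisms between objects of ${}^\tau\cD^{\heartsuit}$ are the same whether computed in ${}^\tau\cD^\heartsuit$ or in ${}^{\htau}\hcD^\heartsuit$.

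\emph{Extensions.} Let $0 \to A' \to E \to A'' \to 0$ be exact in ${}^{\htau}\hcD^\heartsuit$ with $A', A'' \in {}^\tau\cD^\heartsuit$. Then $E$ is the cone of the morphism $A''[-1] \to A'$, which lies in $\cD$. So $E \in \cD \cap {}^{\htau}\hcD^\heartsuit = {}^\tau\cD^\heartsuit$.

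\emph{A detection lemma.} If $Y \in {}^{\htau}\hcD^\heartsuit$ satisfies $\Hom(T,Y) = 0$ for all $T \in {}^\tau\cD^\heartsuit$, then $Y = 0$. To see this, take any $C \in {}^\tau\cD^{\le 0}$. Since $\tau^{\le -1}C \in {}^{\htau}\hcD^{\le -1}$ and $Y \in {}^{\htau}\hcD^{\ge 0}$, we get $\Hom(C,Y) = \Hom({}^\tau\cH^0(C), Y) = 0$. Hence $Y$ is right orthogonal to ${}^\tau\cD^{\le 0}$. Right orthogonality is preserved under coproducts and extensions, so $Y$ is right orthogonal to ${}^{\htau}\hcD^{\le 0}$. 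By \eqref{eq:htau} this gives $Y \in {}^{\htau}\hcD^{\ge 1}$, and since $Y$ is also in the heart, $Y = 0$.

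\emph{Subobjects (the main step).} Let $K \hookrightarrow A$ be a monomorphism in ${}^{\htau}\hcD^\heartsuit$ with $A \in {}^\tau\cD^\heartsuit$. For each morphism $T \to K$ with $T \in {}^\tau\cD^\heartsuit$, the image of the composite $T \to A$ is a subobject of $A$ that lies in ${}^\tau\cD^\heartsuit$ (by exactness of the inclusion) and is contained in $K$. Finite sums of such subobjects are again of this form, using $T_1 \oplus T_2 \to K$. By noetherianity of ${}^\tau\cD^\heartsuit$ there is a maximal such subobject $K_0 \subset K$, and $K_0 \in {}^\tau\cD^\heartsuit$.

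We claim that $K/K_0$ admits no nonzero morphism from any $T \in {}^\tau\cD^\heartsuit$. Suppose $T \to K/K_0$ is nonzero, and pull back the extension $0 \to K_0 \to K \to K/K_0 \to 0$ along it. This gives an extension $0 \to K_0 \to E \to T \to 0$ together with a morphism $E \to K$. By the extension step, $E \in {}^\tau\cD^\heartsuit$. The image of $E \to K$ contains $K_0$ strictly, which contradicts maximality. The detection lemma then gives $K = K_0 \in {}^\tau\cD^\heartsuit$.

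\emph{Quotients.} If $A \twoheadrightarrow Q$ is an epimorphism in ${}^{\htau}\hcD^\heartsuit$ with kernel $K$, then $K \in {}^\tau\cD^\heartsuit$ by the subobject step. So $Q$, being the cokernel of $K \to A$, lies in $\cD$ and hence in ${}^\tau\cD^\heartsuit$.

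The main obstacle is the subobject step. This is where noetherianity is essential: it guarantees that the maximal ``compactly generated part'' $K_0$ exists, and the detection lemma then upgrades $K_0$ to all of $K$.
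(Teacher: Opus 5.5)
Your proof is correct, and it differs from the paper's only in the subobject step. The extension and quotient steps match the paper: it calls extensions obvious, and it deduces quotients from subobjects via the kernel exactly as you do.

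For subobjects, the paper invokes an external structural theorem from the cited work on weakly approximable categories: every object of ${}^{\htau}\hcD^\heartsuit$ is a filtered colimit of objects of ${}^\tau\cD^\heartsuit$. Applied to $D' \subset D$, after replacing the terms by their images in $D$, noetherianity makes the system stabilize, so $D'$ equals one of its terms.

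You instead prove only a much weaker statement: every nonzero object of ${}^{\htau}\hcD^\heartsuit$ receives a nonzero morphism from ${}^\tau\cD^\heartsuit$. This follows directly from the definition of ${}^{\htau}\hcD^{\le 0}$ as a coproduct--extension closure. You make up the difference with a maximality argument. Noetherianity produces the largest subobject $K_0 \subset K$ generated by objects of ${}^\tau\cD^\heartsuit$. The pullback construction, combined with closure under extensions, then shows that $K/K_0$ receives no nonzero maps from ${}^\tau\cD^\heartsuit$, so it vanishes.

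Your route is self-contained: it uses nothing about $\htau$ beyond its defining formulas and the t-exactness of $\cD \to \hcD$. The paper's route is shorter given the citation, and it records the stronger fact that ${}^{\htau}\hcD^\heartsuit$ is generated by ${}^\tau\cD^\heartsuit$ under filtered colimits.
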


\begin{proof}
This is essentially~\cite[Remark~C.6.8.10]{SAG}; we sketch the argument for reader's convenience.

First, let~$D \in {}^\tau\cD^\heartsuit \subset {}^{\htau}\hcD^\heartsuit$,
and let~$D' \subset D$ be a subobject in~${}^{\htau}\hcD^\heartsuit$.
By \cite[Theorem~3.0.1]{passage-weakly-approximable},  
we can write~$D'$ as a filtered colimit of objects~$D_\alpha$ in~${}^\tau\cD^\heartsuit$;
up to replacing the~$D_{\alpha}$ with their image in~$D$, we may assume that the maps~$D_{\alpha} \to D'$ are inclusions.
Then the noetherian property implies that~$D' = D_\alpha$ for some $\alpha$ and therefore~$D' \in {}^\tau\cD^\heartsuit$.
This proves that~${}^\tau\cD^\heartsuit$ is closed under taking subobjects.

Similarly, if~$D \to {D''}$ is an epimorphism in~${}^{\htau}\hcD^\heartsuit$,
its kernel in~${}^{\htau}\hcD^\heartsuit$ is in~${}^{\tau}\cD^\heartsuit$ by the previous argument; 
hence~${D''}$ is in~$\cD$, and as it is also in~${}^{\htau}\hcD^\heartsuit$, it is in~${}^{\tau}\cD^\heartsuit$.

Finally, closedness of~${}^{\tau}\cD^\heartsuit$ under extensions is obvious.
\end{proof}

Next, assume given a fully faithful functor~$\gamma \colon \cC \to \cD$ from a triangulated category~$\cC$
such that~$\cC$, when regarded via~$\gamma$ as a subcategory of~$\cD$, is idempotent complete in~$\cD$.
Then there is a cocomplete category~$\hcC$ such that~$\hcC^c = \cC$
and such that~$\gamma$ extends to a continuous (i.e. commuting with arbitrary coproducts)
fully faithful functor~$\hgamma \colon \hcC \to \hcD$.
Indeed, identifying~$\cC$ with a subcategory of~$\cD$ via~$\gamma$
we can define~$\hcC \subset \hcD$ as the coproduct closure of~$\cC$ in~$\hcD$. 
Then the inclusion~$\cC \subset \hcC^c$ is obvious.
Conversely, if~$C \in \hcC^c$ then since~$C \in \hcC$ we can write~$C = \colim C_\alpha$ where~$C_{\alpha} \in \cC$;
as~$C$ is compact in~$\hcC$, it follows that~$C$ is a direct summand of some~$C_{\alpha}$,
and hence contained in~$\cC$ by idempotent completeness.
This proves that~$\hcC^c = \cC$, and by construction the inclusion~$\hcC \hookrightarrow \hcD$ gives the required extension of~$\gamma$.

Note that the continuity of~$\hgamma$ implies the existence of a right adjoint functor~$\hgamma^! \colon \hcD \to \hcC$,
so since~$\hgamma$ is fully faithful, we obtain a semiorthogonal decomposition
\begin{equation}
\label{eq:sod-hcd}
\hcD = \langle \widehat{\cB}, \hcC \rangle.
\end{equation}
Furthermore, since~$\hgamma(\cC) \subset \cD$ (i.e., $\hgamma$ preserves compactness),
it follows that~$\hgamma^!$ is continuous.
Indeed, for any~$C \in \cC$ and a set of objects~$D_\alpha \in \hcD$ we have
\begin{multline*}
\Hom(C, \hgamma^!(\oplus D_\alpha)) \cong
\Hom(\hgamma(C), \oplus D_\alpha) \cong
\oplus \Hom(\hgamma(C), D_\alpha) \\ \cong
\oplus \Hom(C, \hgamma^!(D_\alpha)) \cong
\Hom(C, \oplus \hgamma^!(D_\alpha))
\end{multline*}
by adjunction, compactness of~$\hgamma(C)$, adjunction again, and compactness of~$C$; since~$\cC$ generates~$\hcC$, it follows that the canonical morphism~$\oplus \hgamma^!(D_\alpha) \to \hgamma^!(\oplus D_\alpha)$
is an isomorphism.
Note also that if~$\gamma$ has a right adjoint~$\gamma^! \colon \cD \to \cC$ then~$\hgamma^!\vert_\cD = \gamma^!$;
in particular, $\hgamma^!(\cD) \subset \cC$.

\subsection{Alternative proof of {Theorem~\ref{main-theorem}\ref{main-theorem-induce-C}}}

We work in the following setting, which is that of Theorem~\ref{main-theorem}\ref{main-theorem-induce-C} together with a mild additional assumption on the existence of an embedding of $\cD$ into a cocomplete triangulated category, as in \S\ref{section-cg-t-structure} above.

\begin{setup}
\label{setup-alternative-proof}
Let $\cD$ be a triangulated category which is equal to the subcategory of compact objects in a compactly generated cocomplete triangulated category $\hcD$. 
Let $\cD = \langle \cB, \cC \rangle$ be a semiorthogonal decomposition. 
Let $\tau$ be a noetherian t-structure on $\cD$ which restricts to a t-structure on $\cB$. 
\end{setup} 

By the discussion in~\S\ref{section-cg-t-structure}, in this situation we obtain a t-structure~$\htau$ on~$\hcD$, a category~$\hcC$,
and a continuous extension~$\hgamma \colon \hcC \to \hcD$ of the inclusion functor~$\gamma \colon \cC \to \cD$
with continuous right adjoint~$\hgamma^! \colon \hcD \to \hcC$.

We consider
\begin{equation}
\label{eq:tau-cc}
\begin{aligned}
\cC^{\le 0} &\coloneqq \set{ C \in \cC \sth \gamma(C) \in {}^\tau\cD^{\le 0} },\\
\cC^{\ge 0} &\coloneqq {\set{ C \in \cC \sth \Hom(C',C) = 0 \text{ \textup{for all} } C' \in \cC^{\leq 0}[1]}}.
\end{aligned}
\end{equation}
We aim to show it is a t-structure.
To prove this we first define a t-structure $\htau_{\cC}$ on the cocomplete category~$\hcC$, by taking
\begin{equation}
\label{eq:htau-cc}
\begin{aligned}
{}^{\htau_\cC}\hcC^{\le 0} &\coloneqq \big\langle \cC^{\le 0} \big\rangle_{\oplus,\ext},
\\
{}^{\htau_\cC}\hcC^{\ge 0} &\coloneqq {\set{ C \in \hcC \sth \Hom(C',C) = 0 \text{ \textup{for all} } C' \in \hcC^{\leq 0}[1]}}.
\end{aligned}
\end{equation}
As before, \cite[Theorem~A.1]{AJS:tstructures} implies that~$\htau_\cC$ is a t-structure.
To prove Theorem~\ref{main-theorem}, we will show that~$\htau_{\cC}$
restricts to a t-structure on~$\cC$ which is given by~\eqref{eq:tau-cc}.

First, observe that the functor~$\hgamma \colon \hcC \to \hcD$ is right t-exact with respect to~$\htau_\cC$ and~$\htau$;
this follows immediately from~\eqref{eq:htau}, \eqref{eq:tau-cc}, \eqref{eq:htau-cc}, and the continuity of~$\hgamma$.
The crucial observation is that the functor~$\hgamma^! \colon \hcD \to \hcC$ has t-amplitude in~$[0,1]$, i.e. the following holds.

\begin{lemma}
\label{lemma-Phi!-amplitude}
We have~$\hgamma^!({}^{\htau}\hcD^{\ge 0}) \subset {}^{\htau_\cC}\hcC^{\ge 0}$
and~$\hgamma^!({}^{\htau}\hcD^{\le 0}) \subset {}^{\htau_\cC}\hcC^{\le 1}$.
\end{lemma}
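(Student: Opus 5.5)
For the first inclusion I would argue by adjunction together with the definitions \eqref{eq:htau} and \eqref{eq:htau-cc}. Since $\hgamma$ is right t-exact with respect to $\htau_\cC$ and $\htau$, its right adjoint $\hgamma^!$ is left t-exact by Lemma~\ref{lemma-t-exactness-adjoints}. Directly: let $D \in {}^{\htau}\hcD^{\ge 0}$ and $C' \in {}^{\htau_\cC}\hcC^{\le 0}[1]$. Then $\Hom(C',\hgamma^!D) = \Hom(\hgamma C', D)$, and $\hgamma C' \in {}^{\htau}\hcD^{\le -1}$, so this vanishes. This gives $\hgamma^!({}^{\htau}\hcD^{\ge 0}) \subset {}^{\htau_\cC}\hcC^{\ge 0}$.

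For the second inclusion, first note that ${}^{\htau}\hcD^{\le 0}$ is generated under coproducts and extensions by ${}^\tau\cD^{\le 0}$. Since $\hgamma^!$ is continuous and triangulated, and ${}^{\htau_\cC}\hcC^{\le 1}$ is closed under coproducts and extensions (it is a connective part of a t-structure), it suffices to show $\hgamma^!(D) \in {}^{\htau_\cC}\hcC^{\le 1}$ for $D \in {}^\tau\cD^{\le 0}$. For such $D$ we have $\hgamma^!D = \gamma^!D \in \cC$. By Lemma~\ref{lemma-restrict-t-structure-vs-projection-amplitude}, the hypothesis that $\tau$ restricts to $\cB$ gives $\gamma\gamma^!(D) \in {}^\tau\cD^{\le 1}$. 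Hence $\gamma^!(D)[1] \in \cC^{\le 0}$ by \eqref{eq:tau-cc}, so $\gamma^!(D) \in {}^{\htau_\cC}\hcC^{\le 1}$.

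The only point needing care is the reduction in the last step: $\hgamma^!$ must preserve the closure operations. This holds because $\hgamma^!$ is continuous (established in \S\ref{section-cg-t-structure}) and exact, so it sends coproducts and extensions of compact connective objects to coproducts and extensions of their images.

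Notably, this argument does not use noetherianity. I expect noetherianity, through Lemma~\ref{lemma:hphi-exactness}, to enter only in the subsequent step, where one shows that $\htau_\cC$ restricts to $\cC$.
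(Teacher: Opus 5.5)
Your proof is correct and matches the paper's argument: you get the first inclusion from the right t-exactness of $\hgamma$ by adjunction, and the second from $\gamma\gamma^!({}^\tau\cD^{\le 0}) \subset {}^\tau\cD^{\le 1}$ (Lemma~\ref{lemma-restrict-t-structure-vs-projection-amplitude}) together with the definitions \eqref{eq:htau} and \eqref{eq:htau-cc}. The only difference is that you spell out the reduction to compact objects via the continuity and exactness of $\hgamma^!$, which the paper leaves implicit.
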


\begin{proof}
As we mentioned above, the functor~$\hgamma \colon \hcC \to \hcD$ is right t-exact,
so Lemma~\ref{lemma-t-exactness-adjoints} implies its right adjoint~$\hgamma^! \colon \hcD \to \hcC$
is left t-exact and the first inclusion holds.
For the second inclusion, note that by Lemma~\ref{lemma-restrict-t-structure-vs-projection-amplitude}
and the assumption that~$\tau$ restricts to~$\cB$,
we have an inclusion~$\gamma \gamma^!({^\tau}\cD^{\leq 0}) \subset {^\tau}\cD^{\leq 1}$.
By the definitions~\eqref{eq:htau} and~\eqref{eq:htau-cc} of~${}^{\htau}\hcD^{\le 0}$ and~${}^{\htau_\cC}\hcC^{\le 0}$,
this implies the desired inclusion~$\hgamma^!({}^{\htau}\hcD^{\le 0}) \subset {}^{\htau_\cC}\hcC^{\le 1}$. 
\end{proof}

Now we use the above results to prove the following. 

\begin{proposition}
\label{prop:ch0}
If~$C \in \cC \cap {}^{\htau_\cC}\hcC^{\le 0}$ then~$C_0 \coloneqq {}^{\htau_\cC}\cH^0(C)$ is contained in~$\cC$.
\end{proposition}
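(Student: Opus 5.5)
The plan is to compare $C$ with its image in $\cD$, use noetherianity of ${}^\tau\cD^\heartsuit$ through Lemma~\ref{lemma:hphi-exactness}, and transport finiteness back to $\hcC$ via $\hgamma^!$, using the amplitude bound of Lemma~\ref{lemma-Phi!-amplitude}. I have not closed the argument; the gap is in the last step and is flagged there.

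\emph{Step 1: reduction to $\gamma(C)\in{}^\tau\cD^{\le 0}$.} Since $\hgamma$ is right t-exact, $\hgamma(C)\in{}^{\htau}\hcD^{\le 0}$. Since $\htau$ restricts to $\tau$ on $\cD$, this gives $\gamma(C)\in{}^\tau\cD^{\le 0}$, i.e.\ $C\in\cC^{\le 0}$ in the sense of~\eqref{eq:tau-cc}. Put $A\coloneqq{}^\tau\cH^0(\gamma(C))\in{}^\tau\cD^\heartsuit$.

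\emph{Step 2: $C_0$ is controlled by a noetherian object.} The map $C\to C_0$ has fiber in ${}^{\htau_\cC}\hcC^{\le -1}$. By right t-exactness of $\hgamma$, ${}^{\htau}\cH^0(\hgamma(C))=A\to{}^{\htau}\cH^0(\hgamma(C_0))$ is an epimorphism in ${}^{\htau}\hcD^\heartsuit$. By Lemma~\ref{lemma:hphi-exactness}, ${}^{\htau}\cH^0(\hgamma(C_0))$ lies in ${}^\tau\cD^\heartsuit$, so it is compact and noetherian.

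\emph{Step 3: torsion decomposition in the heart of $\tau$.} By Corollary~\ref{cor:exactify} (with $\Phi=\gamma\gamma^!$) there is a torsion pair $(\cT,\cF)$ in ${}^\tau\cD^\heartsuit$ with $\cT=\cT^-_{\Phi,0}$. For $F\in\cF$ I want $\hgamma^!(F)\in{}^{\htau_\cC}\hcC^{\ge 1}$. Since ${}^{\htau_\cC}\hcC^{\le 0}$ is generated under coproducts and extensions by $\cC^{\le 0}$, it suffices to check $\Hom(C',\hgamma^!F)=0$ for $C'\in\cC^{\le 0}$. By adjunction this equals $\Hom(\gamma C',F)=\Hom({}^\tau\cH^0(\gamma C'),F)$, and ${}^\tau\cH^0(\gamma C')\in\cT$ because $\gamma\gamma^!\gamma C'\cong\gamma C'\in{}^\tau\cD^{\le 0}$ together with Proposition~\ref{proposition-tauPhib}\ref{it:DPhi-b-1-looks-like-tilt}. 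Hence these Hom groups vanish. Consequently, for any object of ${}^\tau\cD^{\le 0}$, replacing its $\cH^0$ by the torsion part $T_A\subset A$ does not change ${}^{\htau_\cC}\cH^0$ of $\hgamma^!$. So I may assume ${}^\tau\cH^0(\gamma(C))\in\cT$, i.e.\ $\gamma(C)$ lies in the heart-level torsion data of the tilt $\cin{\gamma\gamma^!}$.

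\emph{Step 4: descending to $\cC$ (the main obstacle).} One would like to identify $C_0$ with $\gamma^!$ of a compact object, namely the $\cin{\gamma\gamma^!}$-cohomology $\cH^0$ of $\gamma(C)$. This would reproduce the first proof, where $\gamma^!$ is t-exact from $\cin{\gamma\gamma^!}$, but the goal is to avoid the tilt. The alternative I would try first is a noetherian-chain argument. Write $C_0$ as a filtered colimit of images of compact objects from $\cC^{\le 0}$ (possible because ${}^{\htau_\cC}\hcC^{\le 0}$ is compactly generated). Push the resulting increasing chain forward by ${}^{\htau}\cH^0\circ\hgamma$ into the noetherian object of Step~2, so that it stabilizes. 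Then pull back using the counit isomorphism $\hgamma^!\hgamma\cong\id$, together with the vanishing of ${}^{\htau_\cC}\cH^0\hgamma^!$ on torsion-free objects from Step~3, to conclude that $C_0$ equals a finite stage, which lies in $\cC$ by idempotent completeness.

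The delicate point is exactly this pull-back. ${}^{\htau}\cH^0\circ\hgamma$ need not be faithful on ${}^{\htau_\cC}\hcC^\heartsuit$, so stabilization in $\hcD$ does not obviously force stabilization in $\hcC$. Closing the argument requires controlling the kernels via the $[0,1]$ amplitude of $\hgamma^!$ (Lemma~\ref{lemma-Phi!-amplitude}) and Step~3.
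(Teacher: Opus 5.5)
\textbf{There is a genuine gap.} Step~4 is left open, and the noetherian-chain strategy sketched there cannot close it, because it targets a finiteness problem that is not actually present. By Step~1, $C\in\cC^{\le 0}$, and $C_0$ is already ${}^{\htau_\cC}\cH^0$ of the compact object $C$ itself. So the required presentation of $C_0$ by images of ${}^{\htau_\cC}\cH^0(C')\to C_0$ with $C'\in\cC^{\le 0}$ is achieved trivially at a single stage ($C'=C$), and stabilization gains nothing.

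What must actually be shown is that ${}^{\htau_\cC}\cH^0$ of an object of $\cC^{\le 0}$ is compact. A ``finite stage'' is a quotient of an object of exactly that form, so concluding that it lies in $\cC$ is the proposition itself. Idempotent completeness does not help, since such an image is not a direct summand of a compact object. Step~3 is also vacuous: ${}^\tau\cH^0(\gamma(C))$ automatically lies in $\cT^-_{\Phi,0}$ for $\Phi=\gamma\gamma^!$, because $\gamma\gamma^!\gamma(C)\cong\gamma(C)\in{}^\tau\cD^{\le 0}$. So the torsion pair plays no role, and it imports the tilting machinery this appendix is meant to bypass.

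\textbf{The missing idea} is to track the degree $-1$ cohomology as well, and to rebuild $C_0$ from $\hgamma(C_0)$ via $\hgamma^!$. Apply $\hgamma$ to $\htau_\cC^{\le -1}(C)\to C\to C_0$ and use right t-exactness of $\hgamma$. This gives ${}^{\htau}\cH^0(\hgamma(C_0))\cong{}^\tau\cH^0(\gamma(C))$, and shows that ${}^{\htau}\cH^{-1}(\hgamma(C_0))$ is a quotient of ${}^\tau\cH^{-1}(\gamma(C))$. By Lemma~\ref{lemma:hphi-exactness}, both therefore lie in ${}^\tau\cD^\heartsuit$; this is the only place noetherianity enters.

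Now use four facts: $C_0\cong\hgamma^!\hgamma(C_0)$, $C_0$ lies in the heart, $\hgamma(C_0)\in{}^{\htau}\hcD^{\le 0}$, and $\hgamma^!$ has t-amplitude in $[0,1]$ by Lemma~\ref{lemma-Phi!-amplitude}. Then the spectral sequence $\rE_2^{p,q}={}^{\htau_\cC}\cH^p\hgamma^!({}^{\htau}\cH^q\hgamma(C_0))$ has only the two columns $p\in\{0,1\}$ and degenerates. It forces ${}^{\htau_\cC}\cH^1\hgamma^!({}^{\htau}\cH^0\hgamma(C_0))=0$ and ${}^{\htau_\cC}\cH^0\hgamma^!({}^{\htau}\cH^{-1}\hgamma(C_0))=0$, and yields the exact sequence
\begin{equation*}
0\to\hgamma^!\big({}^{\htau}\cH^{-1}\hgamma(C_0)\big)[1]\to C_0\to\hgamma^!\big({}^{\htau}\cH^{0}\hgamma(C_0)\big)\to 0 .
\end{equation*}
Since $\hgamma^!\vert_\cD=\gamma^!$, both outer terms lie in $\cC$, hence so does $C_0$. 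The kernel you wanted to control in Step~4 is precisely the left-hand term. It is handled by compactness of ${}^{\htau}\cH^{-1}\hgamma(C_0)$, not by a chain condition.

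(If you are willing to use Section~\ref{section-induced-t-structures}, as your Step~3 already does, the route you set aside closes immediately. The truncation triangles of $\cin{\cC}$ from Theorem~\ref{main-theorem-precise} are also $\htau_\cC$-truncation triangles, since for fixed $Y$ the class of $Z\in\hcC$ with $\Hom(Z,Y)=0$ is closed under coproducts and extensions. But then the appendix is no longer an independent proof.)
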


\begin{proof}
Consider the triangle~${\htau_\cC}^{\le -1}(C) \to C \to C_0$.
Applying the functor~$\hgamma$, considering the cohomology with respect to~$\htau$,
and taking into account that~$\hgamma$ is right t-exact, we obtain
\begin{equation*}
{}^{\htau}\cH^0(\hgamma(C_{-1})) \to {}^{\htau}\cH^{-1}(\hgamma(C)) \to {}^{\htau}\cH^{-1}(\hgamma(C_0)) \to
0 \to {}^{\htau}\cH^0(\hgamma(C)) \to {}^{\htau}\cH^0(\hgamma(C_0)) \to 0,
\end{equation*}
where~$C_{-1} = {}^{\htau_\cC}\cH^{-1}(C)$.
Since~$C \in \cC$, we have~$\hgamma(C) \cong \gamma(C) \in \cD$, and hence
\begin{equation*}
{}^{\htau}\cH^i(\hgamma(C)) \cong {}^{\tau}\cH^i(\gamma(C)) \in {}^\tau\cD^\heartsuit
\end{equation*}
for all $i$. 
Furthermore, since~${}^\tau\cD^\heartsuit$ is closed under taking quotients in ${^{\htau}}\hcD^{\heartsuit}$ by Lemma~\ref{lemma:hphi-exactness},
we obtain
\begin{equation}
\label{eq:ch-phi-e0}
{}^{\htau}\cH^{-1}(\hgamma(C_0)) \in {}^\tau\cD^\heartsuit
\qquad\text{and}\qquad
{}^{\htau}\cH^0(\hgamma(C_0)) \in {}^\tau\cD^\heartsuit.
\end{equation}

Next, consider the spectral sequence (of a similar flavor as~\eqref{eq:ss})
\begin{equation*}
\rE_2^{p,q} = {}^{\htau_\cC}\cH^p\hgamma^!({}^{\htau}\cH^q\hgamma(C_0)) \implies
{}^{\htau_\cC}\cH^{p+q}(\hgamma^!\hgamma(C_0)) =
{}^{\htau_\cC}\cH^{p+q}(C_0).
\end{equation*}
The right-hand side vanishes unless~$p + q = 0$ by the definition of~$C_0$.
On the other hand, by Lemma~\ref{lemma-Phi!-amplitude} the left-hand side vanishes unless~$p \in \{0,1\}$ and~$q \le 0$.
Therefore, the spectral sequence degenerates, which has the following consequences. 

First, it implies that~${}^{\htau_\cC}\cH^1\hgamma^!({}^{\htau}\cH^0\hgamma(C_0)) = 0$; hence by Lemma~\ref{lemma-Phi!-amplitude} we have 
\begin{equation*}
{}^{\htau_\cC}\cH^0\hgamma^!({}^{\htau}\cH^0\hgamma(C_0)) =
\hgamma^!({}^{\htau}\cH^0\hgamma(C_0)) , 
\end{equation*} 
which by~\eqref{eq:ch-phi-e0} is contained in $\gamma^!({}^\tau\cD^\heartsuit) \subset \cC$. 

Similarly, it implies that~${}^{\htau_\cC}\cH^0\hgamma^!({}^{\htau}\cH^{-1}\hgamma(C_0)) = 0$;
hence again by Lemma~\ref{lemma-Phi!-amplitude} we have
\begin{equation*}
{}^{\htau_\cC}\cH^1\hgamma^!({}^{\htau}\cH^{-1}\hgamma(C_0)) =
\hgamma^!({}^{\htau}\cH^{-1}\hgamma(C_0))[1], 
\end{equation*}
which again by \eqref{eq:ch-phi-e0} is contained in $\cC$. 

Finally, the spectral sequence gives an exact sequence
\begin{equation*}
0 \to
{}^{\htau_\cC}\cH^1\hgamma^!({}^{\htau}\cH^{-1}\hgamma(C_0)) \to
C_0 \to
{}^{\htau_\cC}\cH^0\hgamma^!({}^{\htau}\cH^0\hgamma(C_0)) \to
0. 
\end{equation*}
Since we showed the first and last terms are in~$\cC$, we conclude that~$C_0 \in \cC$.
\end{proof}

\begin{proof}[Proof of {Theorem~\textup{\ref{main-theorem}\ref{main-theorem-induce-C}}}
in Setup~\textup{\ref{setup-alternative-proof}}]
First, it follows immediately from Proposition~\ref{prop:ch0} that for any integer~$n$,
if~$C \in \cC \cap {}^{\htau_\cC}\hcC^{\le n}$ then~${}^{\htau_\cC}\cH^n(C) \in \cC$.
Applying induction, it is easy to deduce from this that for such~$C$ and any integer~$m$,
we have~$\htau_\cC^{\ge m}(C) \in \cC$,
and therefore also~$\htau_\cC^{\le m}(C) \in \cC$.

Next let~$C \in \cC$ be arbitrary.
By Lemma~\ref{lemma-Phi!-amplitude} we have~$\hgamma^!\htau^{\ge m+1}\hgamma(C) \in {}^{\htau_\cC}\hcC^{\ge m + 1}$ for any integer~$m$,
and hence from the distinguished triangle
\begin{equation*}
\hgamma^!\htau^{\le m}\hgamma(C) \to C \to \hgamma^!\htau^{\ge m+1}\hgamma(C) 
\end{equation*} 
we find 
\begin{equation}
\label{htauCleqmC}
\htau_\cC^{\le m}(C) \cong
\htau_\cC^{\le m}\hgamma^!\htau^{\le m}\hgamma(C). 
\end{equation}
By Lemma~\ref{lemma-Phi!-amplitude} we have~$\hgamma^!\htau^{\le m}\hgamma(C) \in {^{\htau_{\cC}}}\hcC^{\le m+1}$.
On the other hand, since~$\hgamma(C) = \gamma(C) \in \cD$,
we have~$\htau^{\le m}\hgamma(C) = \tau^{\le m}\gamma(C) \in {}^\tau\cD^{\le m}$,
and hence~$\hgamma^!\htau^{\le m}\hgamma(C) \in \gamma^!({}^\tau\cD^{\le m}) \subset \cC$.
Together, this shows that 
\begin{equation*}
\hgamma^!\htau^{\le m}\hgamma(C)  \in \cC \cap {^{\htau_{\cC}}}\hcC^{\le m+1}. 
\end{equation*}
Thus from~\eqref{htauCleqmC} and the previous paragraph, we conclude that~$\htau_\cC^{\le m}(C) \in \cC$.
This proves that~$\htau_{\cC}$ restricts to a t-structure on~$\cC$,
given by~$\tau_{\cC} = ({}^{\htau_\cC}\hcC^{\le 0} \cap \cC, {}^{\htau_\cC}\hcC^{\geq 0} \cap \cC)$.

We claim that~$\tau_{\cC}$ is the connectively induced t-structure on~$\cC$, i.e. that
\begin{equation*}
{}^{\htau_\cC}\hcC^{\le 0} \cap \cC = \{ C \in \cC \mid \gamma(C) \in {}^\tau\cD^{\le 0} \}. 
\end{equation*}
By the definition of~$\htau_\cC$, the right-hand side is contained in the left.
Conversely, if~$C$ is an object in the left-hand side,
then~$\gamma(C) \in {^{\htau}}\hcD^{\leq 0} \cap \cD$ by the right t-exactness of~$\hgamma$,
but~${^{\htau}}\hcD^{\leq 0} \cap \cD = {^\tau}\cD^{\le 0}$ by the construction of~$\htau$.

It remains to prove that if~$\tau$ is bounded, then~$\tau_{\cC}$ is bounded.
First, note that since~$\tau_{\cC}$ is connectively induced by~$\tau$,
it is bounded above by Lemma~\ref{lemma-tauB-basic-properties}\ref{it:induced-bounded}.
Next note that~$\tau_{\cC}$ is left separated, i.e., $\bigcap_{n \in \bZ} {^{\tau_{\cC}}}\cC^{\leq n} = \set{ 0 }$;
indeed, since~$\tau$ is bounded we have~\mbox{$\bigcap_{n \in \bZ} {^{\tau}}\cD^{\leq n} = \set{ 0}$},
so the claim follows from the right t-exactness of the inclusion functor~$\gamma \colon \cC \to \cD$
with respect to~$\tau_{\cC}$ and~$\tau$.
To prove that~$\tau_{\cC}$ is bounded below, it then suffices to show that for any object~\mbox{$C \in \cC$},
its cohomology objects~${^{\tau_{\cC}}}\cH^n(C)$ vanish for~$n \ll 0$.
As in the proof of Proposition~\ref{prop:ch0}, Lemma~\ref{lemma-Phi!-amplitude} implies that the $\rE_2^{p,q}$ terms in the spectral sequence 
\begin{equation*}
\rE_2^{p,q} = {}^{\tau_\cC}\cH^p\gamma^!({}^{\tau}\cH^q\gamma(C)) \implies
{}^{\tau_\cC}\cH^{p+q}(C)
\end{equation*} 
vanish unless $p \in \set{0,1}$. 
For any integer $n$, this yields a short exact sequence
\begin{equation*}
0 \to
{}^{\tau_\cC}\cH^1\gamma^!({}^{\tau}\cH^{n-1}\gamma(C)) \to
{^{\tau_{\cC}}}\cH^n(C) \to
{}^{\tau_\cC}\cH^0\gamma^!({}^{\tau}\cH^n\gamma(C)) \to
0. 
\end{equation*} 
Since $\tau$ is bounded, it follows that ${^{\tau_{\cC}}}\cH^n(C)$ vanishes for all but finitely many $n$.
\end{proof}

%%%%%%%%%%%%%%%%%%%%%%%%%%%%%%%%%%%%%%%%%%%%%%%%%%%%%%%

\newcommand{\etalchar}[1]{$^{#1}$}
\providecommand{\bysame}{\leavevmode\hbox to3em{\hrulefill}\thinspace}
\providecommand{\MR}{\relax\ifhmode\unskip\space\fi MR }
% \MRhref is called by the amsart/book/proc definition of \MR.
\providecommand{\MRhref}[2]{%
  \href{http://www.ams.org/mathscinet-getitem?mr=#1}{#2}
}
\providecommand{\href}[2]{#2}

%%%%%%%%%%%%%%%%%%%%%%%%%%%%%%%%%%%%%%%%%%%%%%%%%%%%%%%

\end{document}